\newtheorem{thm}{Theorem}[subsection]
\newtheorem{prop}[thm]{Proposition}
\newtheorem{lem}[thm]{Lemma}
\newtheorem{cor}[thm]{Corollary}
\theoremstyle{definition}
\newtheorem{defn}[thm]{Definition}
\theoremstyle{remark}
\newtheorem{remk}[thm]{Remark}
\newtheorem{remks}[thm]{Remarks}
\newtheorem{exm}[thm]{Example}
\newtheorem{exms}[thm]{Examples}
\newtheorem{notat}[thm]{Notation}
\numberwithin{equation}{subsection}
\newcommand{\CH}{{\rm CH}}
\newcommand{\Spec}{{\rm Spec \,}}
\newcommand{\ds}{{/\kern-3pt/}}
\renewcommand{\TH}{{\operatorname{TCH}}}
\newcommand{\un}{\underline}
\renewcommand{\dim}{\text{\rm dim}}
\newcommand{\tuborg}{\left\{\begin{array}{ll}}
\newcommand{\sluttuborg}{\end{array}\right.}
\renewcommand{\mod}{ {\rm \ mod \ } }
\begin{document}
\title[Presentations of Milnor $K$-groups and norms]{Geometric presentations of Milnor $K$-groups of certain Artin algebras and Bass-Tate-Kato norms}
%\title[Vanishing cycles]{De Rham-Witt forms as vanishing cycles of motivic cohomology of a nilpotent scheme}
\author{Jinhyun Park}
\address{Department of Mathematical Sciences, KAIST, 291 Daehak-ro Yuseong-gu, Daejeon, 34141, Republic of Korea (South)}
\email{jinhyun@mathsci.kaist.ac.kr; jinhyun@kaist.edu}

%\maketitle

\keywords{algebraic cycle, motivic cohomology, vanishing cycle, Milnor $K$-theory, de Rham-Witt form, Artin ring}

\subjclass[2020]{Primary 14C25; Secondary 19D45, 14F42}

\begin{abstract}
For an arbitrary field $k$, and an arbitrary regular henselian local $k$-scheme $X$ of dimension $1$ with the residue field $k$, we introduce two subcomplexes of the higher Chow complexes of $X$ using certain extended face intersection conditions. We define suitable equivalence relations on them, and prove that their Milnor range cycle class groups offer geometric presentations of the improved (Gabber-Kerz) Milnor $K$-groups of Artin local $k$-algebras of the embedding dimension $1$, and their relative groups, generalizing the theorem of Nesterenko-Suslin and Totaro.

Using these, we prove the existence of the norm and trace maps for the Milnor $K$-groups of the Artin local algebras associated to arbitrary finite extensions of fields, generalizing the Bass-Tate and Kato norms on the Milnor $K$-theory of fields.
\end{abstract}

\maketitle

\setcounter{tocdepth}{1}

\tableofcontents

\section{Introduction}

\subsection{Motivation and the results}
Computing and finding presentations of certain (relative) $K$-groups have been two interconnected and important problems since 1970s (see, e.g. W. van der Kallen \cite{vdK}, \cite{vdK2}, and also the 1978 Helsinki ICM lecture \cite{vdK ICM}, as well as S. Bloch \cite{Bloch 1973}, \cite{Bloch 1975} and F. Keune \cite{Keune} for some of the earliest results).

Around the year 1990, an interesting new way that offers a presentation of the Milnor $K$-group of a field $k$ emerged: by Nesterenko-Suslin \cite{NS} and B. Totaro \cite{Totaro}, there is an isomorphism $K_n ^M (k) \simeq \CH^n (k, n)$ with the higher Chow group of $\Spec (k)$ (see \S \ref{sec:Milnor K}, \S \ref{sec:HC} and Theorem \ref{thm:NST}). Since $\CH^n (k, n)= z^n (k, n) / \partial z^n (k, n+1)$ for two free abelian groups, namely higher Chow cycles, coming from some $0$-cycles on an affine $n$-space and the ``boundaries" of some $1$-cycles on an affine $(n+1)$-space, it offers a presentation of geometric origin, which we would like to call a \emph{geometric} (or motivic, if one prefers) presentation of $K_n ^M (k)$. This isomorphism is functorial on the category of fields, while it also respects the so-called the transfer structure, that there is the commutative diagram
$$
\xymatrix{ K_n ^M (k')\ar[d] ^{ {\rm N}_{k'/k}} \ar[r] ^{\simeq} & \CH^n (k', n) \ar[d] ^{\pi_*} \\
K_n ^M (k) \ar[r] ^{\simeq} & \CH^n (k, n),}
$$
for each finite extension $k \hookrightarrow k'$ and its associated morphism $\pi: \Spec (k') \to \Spec (k)$, where ${\rm N}_{k'/k}$ is the norm of Bass-Tate-Kato norm (\cite{BassTate} and \cite{Kato}) and $\pi_*$ is the push-forward of higher Chow groups in S. Bloch \cite{Bloch HC}. 

This result is now almost 35 years old, but it seems to the author that its philosophical implications and lessons haven't been explored enough. Here is one problem that surprisingly hasn't yet been answered (and will be answered in this article after a correction): suppose we have a finite extension $k \hookrightarrow k'$ of fields. Then do we have a norm operation ${\rm N}_{k'/k}: K^M_n (k'_{m+1}) \to K^M _n (k_{m+1})$ satisfying various requirements, such as the transitivity, where $R_{m+1}: = R[t]/(t^{m+1})$ for any ring $R$?

Recent decades, we learned that the original Milnor $K$-theory may have rather erratic behaviors when the residue fields have small cardinalities, but the improved Milnor $K$-theory $\widehat{K}^M_n (-)$ of Gabber and Kerz \cite{Kerz finite}, onto which the original $K^M_n (-)$ maps surjectively, offers a better platform. For instance, for a finite \emph{separable} extension $k \hookrightarrow k'$, using \cite{Kerz finite} we do have the push-forward ${\rm N}_{k'/k}: \widehat{K}_n ^M (k'_{m+1}) \to \widehat{K}_n ^M (k_{m+1})$. But we knew little about its existence if the extension $k \hookrightarrow k'$ is not separable.

The main question behind this article grew out of attempts to understand this point via the geometry of algebraic cycles: \emph{do we possibly have a good geometric presentation of $\widehat{K}_n ^M (k_{m+1})$ in terms of certain cycle class group?} It is not always the case, but often algebraic cycles behave nicely with respect to push-forwards, so if the first question is answered affirmatively, we may be able to \emph{define} the norms ${\rm N}_{k'/k}$ for any finite extension via the push-forwards of cycles.

In the simplest class of Artin local $k$-algebras $k_{m+1}$ over an arbitrary field $k$, we prove in \S \ref{sec:local case} that these are indeed possible, where the relevant new Chow groups are defined in \S \ref{sec:our cycle}. Here, one can say that finding the right definition is half of the problem:

\begin{thm}\label{thm:intro main v d}
Let $k$ be an arbitrary field and let $m, n \geq 1$ be integers. Let $X$ be any regular henselian local $k$-scheme of dimension $1$ with the residue field $k$. 
\begin{enumerate}
\item We have an isomorphism
$$
 \widehat{K}_n ^M (k_{m+1}) \simeq \CH_{{\rm d}} ^n (X / (m+1), n),
 $$
to the Chow group of ${\rm d}$-cycles of $X$ (Definition \ref{defn:dsf}). This gives a presentation of $\widehat{K}_n ^M (k_{m+1})$.
\item If $r$ is an integer such that $1 \leq r \leq m+1$, we have an isomorphism
$$
 \widehat{K}_n ^M (k_{m+1}, (t^r)) \simeq \CH_{{\rm v}, \geq r}^n (X/ (m+1), n),
 $$
to the Chow group of ${\rm v}$-cycles of order $\geq r$ of $X$ (Definition \ref{defn:M_v r=1}). This gives a presentation of $\widehat{K}_n ^M (k_{m+1}, (t^r))$.
\end{enumerate}
\end{thm}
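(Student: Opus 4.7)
The strategy is to adapt the symbol-to-cycle correspondence of Nesterenko--Suslin \cite{NS} and Totaro \cite{Totaro} to this Artin local setting. Fix a uniformiser $t \in \mathcal{O}_X$ so that reduction induces an identification $\mathcal{O}_X/(t^{m+1}) \simeq k_{m+1}$. Since $X$ is henselian, every unit $a \in k_{m+1}^\times$ lifts to a unit $\tilde{a} \in \mathcal{O}_X^\times$, unique modulo $1+(t^{m+1})$. I would define the symbol map
\[
\phi \colon \widehat{K}^M_n(k_{m+1}) \longrightarrow \CH_{\rm d}^n(X/(m+1), n),
\qquad
\{a_1, \ldots, a_n\} \longmapsto [(\tilde{a}_1, \ldots, \tilde{a}_n)] \subset X \times \square^n,
\]
and prove that it is an isomorphism by exhibiting an inverse via a normal-form representation of ${\rm d}$-cycles.

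\medskip

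For well-definedness of $\phi$ I would check, in order: independence of the chosen lifts $\tilde{a}_i$ (two such lifts differ by an element of $1+(t^{m+1})$, and the resulting graph cycles agree after the truncation appearing in the target); multilinearity, by the standard cubical-cycle computation as in \cite{NS}; and the Steinberg relation $\{a,1-a\}=0$ whenever $a, 1-a \in k_{m+1}^\times$. The Steinberg step is the crucial algebro-geometric input: one lifts Totaro's parametric curve $u \mapsto (u, 1-u)$ to a curve $C \subset X \times \square^2$ and evaluates its cubical boundary. The extended face-intersection conditions in \defref{defn:dsf} are precisely what force $C$ to be an admissible ${\rm d}$-cycle and make its boundary land in the prescribed equivalence ideal. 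The further relations of Gabber--Kerz needed to pass from $K^M$ to $\widehat{K}^M$ should be realised analogously, by boundaries of $1$-cycles parametrised by unipotent translates, which remain inside the ${\rm d}$-complex by construction.

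\medskip

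For the inverse I would prove a normal-form theorem: every class in $\CH_{\rm d}^n(X/(m+1), n)$ admits a representative given by a $\Z$-linear combination of graph cycles of $n$-tuples of units, obtained by moving a general cycle into generic position using the room afforded by the ${\rm d}$-face condition. One then defines $\psi$ on such a representative by reading off the corresponding symbol in $\widehat{K}^M_n(k_{m+1})$, and checks that the image of a boundary of any admissible $1$-cycle is a sum of the symbol relations handled in the previous paragraph. Part (2) is obtained by running the same argument on the subcomplex of ${\rm v}$-cycles of order $\geq r$ (\defref{defn:M_v r=1}): under $\phi$, relative symbols of the form $\{1+t^r x, a_2, \ldots, a_n\}$ correspond exactly to graphs whose first coordinate vanishes to order $\geq r$ on the special fibre, which is the defining condition of $\CH^n_{{\rm v}, \geq r}$.

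\medskip

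The main obstacle I anticipate is the Steinberg relation. Over $k_{m+1}$ the nilpotent parameter $t$ imposes tangential conditions along the special fibre that are absent in the pure field case, so Totaro's curve does not a priori lift into the admissible complex without modification; the design of ${\rm d}$- and ${\rm v}$-cycles in \S\ref{sec:our cycle} is presumably dictated by precisely this compatibility requirement. Once the correct lift is constructed and its boundary calculation carried out, the remainder of the proof---mutual inversion of $\phi$ and $\psi$ and the extension to the relative case---should proceed along lines formally parallel to \cite{NS} and \cite{Totaro}.
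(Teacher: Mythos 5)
Your overall strategy matches the paper's: the forward map is a graph/symbol map $\{a_1,\ldots,a_n\}\mapsto \Gamma_{(\tilde a_1,\ldots,\tilde a_n)}$, and the inverse direction is built on a reduction-to-graph-cycles argument. The paper's analogue of your normal-form step is Proposition \ref{prop:triangular} together with the degree-vector induction in Lemma \ref{lem:4.4.6} and Proposition \ref{prop:strong graph}; your reading of the Steinberg lift (a lift of Totaro's parametric curve checked to satisfy $(GP)_*$ and $(SF)_*$) also matches what is carried out in Proposition \ref{prop:gr mult n infty}.

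There are, however, two genuine gaps in your outline that the paper handles quite differently. First, you propose to realize the ``further relations of Gabber--Kerz'' by exhibiting additional boundaries parametrised by unipotent translates; the improved group $\widehat{K}^M_n$ is not presented by a tractable list of explicit extra relations, and it is not clear such cycles exist or suffice. The paper instead establishes finite push-forwards for the ${\rm d}$- and ${\rm v}$-complexes (Lemma \ref{lem:fpf}) and then invokes the universal property of Gabber--Kerz (Theorem \ref{thm:Khat_univ}) to factor the graph map through $\widehat{K}^M_n$ automatically (Corollary \ref{cor:graph final}). This is a substantive structural step that your sketch does not supply. Second, your inverse $\psi$ is defined by ``reading off symbols from normal-form representatives,'' which leaves well-definedness on homology unaddressed: one must show that the symbol-sum attached to $\partial W$ vanishes for every admissible $W\in z^n_{\rm d}(X,n+1)$, and this cannot be checked component-by-component since the faces of $W$ need not themselves be in normal form. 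The paper instead defines a regulator $\phi_{\rm d}: z^n_{\rm d}(X,n)\to \widehat{K}^M_n(\mathbb{F})$ valued in the fraction field via the Bass--Tate--Kato norm, proves $\phi_{\rm d}\circ\partial=0$ by Suslin's Weil reciprocity on the normalized generic fibre of $\overline W$, and only \emph{then} uses the surjectivity of the graph map plus Kerz's Gersten conjecture to land the image inside $\widehat{K}^M_n(A)$ (Theorem \ref{thm:gr_n iso summary}). Without that norm-and-reciprocity mechanism (or some substitute), the inverse in your proposal is not actually shown to be well-defined.

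Once those two points are repaired the remainder of your outline (the relative case for order $\geq r$, and the passage to mod $I^{m+1}$) does line up with Theorems \ref{thm:local main 1} and \ref{thm:local main 2}.
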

These are generalizations to the Artin algebra $k_{m+1}$ of Nesterenko-Suslin \cite{NS} and B. Totaro \cite{Totaro} proven for fields (see Theorem \ref{thm:NST}). The reader may also wonder if analogues of Theorems \ref{thm:intro main v d} and \ref{thm:intro main norm} may hold for more general $k$-algebras $R$. The author guesses that it may work at least when $R$ is a regular local $k$-algebra essentially of finite type over $k$, and this is being studied in a separable project. In the process, the results of this article plays a significant role as well.

\medskip

The geometric presentations of Theorem \ref{thm:intro main v d} and the push-forward structures (Lemma \ref{lem:fpf}) on the cycle class groups imply the existence of the following norm / trace maps, which we would like to call still by the name Bass-Tate-Kato norms, as they generalize to $k_{m+1}$ the classical Bass-Tate-Kato norm of \cite{BassTate} and \cite{Kato} for fields:

\begin{thm}\label{thm:intro main norm}
Let $k$ be an arbitrary field and let $m, n \geq 1$ be integers. Let $1 \leq r \leq m+1$ be an integer. Then for each finite extension $k \hookrightarrow k'$ of fields, there exist operations
$$
{\rm N}_{k'/k}: \widehat{K}^M_n (k'_{m+1}) \to \widehat{K}^M_n (k_{m+1}), \mbox{ and}
$$
$$
{\rm Tr}_{k/k} : \widehat{K}^M_n (k'_{m+1}, (t^r)) \to \widehat{K}^M _n (k_{m+1}, (t^r))
$$
such that these are transitive for a tower of finite extensions of fields.
\end{thm}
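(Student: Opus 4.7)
The plan is to deduce Theorem \ref{thm:intro main norm} by combining the geometric presentations of Theorem \ref{thm:intro main v d} with the cycle-theoretic push-forward structures of Lemma \ref{lem:fpf}, which is really where the substantive work is located.

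Concretely, for a given finite extension $k \hookrightarrow k'$, let $A$ (resp.\ $A'$) be the henselization of $k[t]_{(t)}$ (resp.\ $k'[t]_{(t)}$), and set $X := \Spec A$, $X' := \Spec A'$. These are regular henselian local $1$-dimensional $k$-schemes with residue fields $k$ and $k'$, and $A/(t^{m+1}) = k_{m+1}$, $A'/(t^{m+1}) = k'_{m+1}$. Because henselization commutes with finite base change along $k \to k'$, one has $A' \simeq A \otimes_k k'$, a free $A$-module of rank $[k':k]$; hence the induced map $\pi : X' \to X$ is finite and flat, with $\pi^\ast t = t$, ramification index $1$, and residue extension $k'/k$.

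Next I would invoke Lemma \ref{lem:fpf} to transport this finite morphism into a proper push-forward $\pi_\ast$ between the cycle complexes underlying $\CH^n_{{\rm d}}(X'/(m+1), n)$ and $\CH^n_{{\rm d}}(X/(m+1), n)$, and analogously for the ${\rm v}$-complexes. With Theorem \ref{thm:intro main v d}(1) one then defines
\begin{equation*}
 {\rm N}_{k'/k} : \widehat{K}^M_n(k'_{m+1}) \xrightarrow{\sim} \CH^n_{{\rm d}}(X'/(m+1), n) \xrightarrow{\pi_\ast} \CH^n_{{\rm d}}(X/(m+1), n) \xrightarrow{\sim} \widehat{K}^M_n(k_{m+1}),
\end{equation*}
and similarly ${\rm Tr}_{k'/k}$ using Theorem \ref{thm:intro main v d}(2) together with the push-forward on $\CH^n_{{\rm v}, \geq r}$. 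For a tower $k \subset k' \subset k''$, choosing compatible henselizations $X \leftarrow X' \leftarrow X''$ yields $(\pi \circ \pi')_\ast = \pi_\ast \circ \pi'_\ast$ on our cycle complexes, from which the transitivity of both ${\rm N}$ and ${\rm Tr}$ follows. Independence of the resulting operation from the auxiliary choice of $X$, up to canonical isomorphism, is built into the naturality of the isomorphisms of Theorem \ref{thm:intro main v d}.

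The heart of the argument is really packaged inside Lemma \ref{lem:fpf}: one must show that the cycle-theoretic push-forward $\pi_\ast$ preserves the extended face intersection conditions of Definitions \ref{defn:dsf} and \ref{defn:M_v r=1}. I expect the main obstacle to lie in the purely inseparable case of $k'/k$, where the residue extension introduces $p$-power multiplicities in $\pi_\ast$; however, since the horizontal divisors $\{t^i = 0\}$ used to formulate the ${\rm d}$- and ${\rm v}$-conditions pull back on the nose via $\pi^\ast t = t$, these conditions (including the order $\geq r$ requirement) are preserved, and the map descends to the equivalence classes defining the new Chow groups.
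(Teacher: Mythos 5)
Your proposal is correct and essentially reproduces the paper's own argument (given in \S 6.2): define ${\rm N}_{k'/k}$ and ${\rm Tr}_{k'/k}$ as the conjugation of the cycle-theoretic push-forward $\pi_\ast$ from Lemma \ref{lem:fpf}-(4) by the isomorphisms of Theorems \ref{thm:local main 1} and \ref{thm:local main 2}, and deduce transitivity from the transitivity of push-forwards on cycle complexes. Your explicit choice of $X = \Spec\bigl((k[t]_{(t)})^h\bigr)$ is a permissible instance of the paper's general $X$, and your identification $A' \simeq A \otimes_k k'$ (which is local henselian since $A$ is henselian and $k'/k$ is finite with $k \otimes_k k' = k'$ the sole residue factor) is what makes Lemma \ref{lem:fpf}-(3),(4) applicable.
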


There is another very nontrivial application of the above theorems, which proves that there is an isomorphism
$$
\mathbb{W}_m \Omega_k ^{n-1} \simeq \widehat{K}_n ^M (k_{m+1}, (t))
$$
between the group of the big de Rham-Witt forms of Hesselholt-Madsen \cite{HeMa} over an arbitrary field $k$, and the relative Milnor $K$-group. This has been known for ${\rm char} (k) =0$ for years, but it has yet been open for ${\rm char} (k) =p>0$ in general.

This is discussed in a separate article \cite{Park vanishing}, and the result answers an old question on the computation of the relative Milnor $K$-group of Artin local $k$-algebras of the embedding dimension $1$ whose residue field is $k$, without any conditions on $k$.

\subsection{Sketch of the main ideas}\label{sec:intro sketch}

To find a geometric presentation of the groups $\widehat{K}_n ^M (k_{m+1})$ as well as the relative groups $\widehat{K}_n ^M (k_{m+1}, (t^r))$, we borrow some geometric intuitions from the deformation theory: observing that $\Spec (k_{m+1})$ is a fat point of the embedding dimension $1$ whose underlying reduced point is $0 \in \mathbb{A}_k ^1$, take any integral regular henselian local $k$-scheme $X= \Spec (A)$ of dimension $1$ whose residue field is $k=A/I$. This gives a $1$-dimensional ``perturbative" neighborhood of $0$ in a suitable sense. We define two subcomplexes of the cubical version of the higher Chow complexes of $X$ of S. Bloch \cite{Bloch HC},
\begin{equation}\label{eqn:subcomplexes intro}
z^q_{{\rm v}}(X, \bullet) \hookrightarrow z^q_{{\rm d}} (X, \bullet) \hookrightarrow z^q (X, \bullet),
\end{equation}
consisting of the cycles called the \emph{strict vanishing cycles} (in short, ${\rm v}$-cycles) and the \emph{dominant cycles} (in short ${\rm d}$-cycles) (see Definitions \ref{defn:HC}, \ref{defn:dsf} and \ref{defn:vanishing cycle}). Here, the ${\rm d}$-cycles are algebraic cycles over $X$  dominant over $X$ with some additional intersection-theoretic properties, while the ${\rm v}$-cycles are ${\rm d}$-cycles that have the empty (i.e. vanishing) special fibers over the unique closed point of $X$. 

To give a further intuition behind the reason why we use these strict vanishing cycles for the relative Milnor $K$-groups, recall first that to define the cubical version of higher Chow groups, we use the affine space $\square^n = (\mathbb{P}^1 \setminus \{ 1 \})^n$, and for each coordinate $y_i$, the ``infinity" is given by the equations $y_i = 1$. Geometrically, the strict vanishing cycles have the closures whose reductions to the special fiber give $y_i = 1$ for some $ 1 \leq i \leq n$. This is analogous to the behavior of a Milnor symbol $\{ a_1, \cdots, a_n \} \in K_n ^M (A)$ with $a_i \equiv 1 \mod I$ for some $i$, which gives $\{ \bar{a}_1, \cdots, \bar{a}_n\} = 0$ in $K_n ^M (A/I)$. See Examples \ref{exm:good eg} and \ref{exm:bad eg}, for concrete examples and non-examples of such ${\rm v}$-cycles.

This definition of the strict vanishing cycles is reminiscent of the following Lemma \ref{lem:KS} for generators of the relative Milnor $K$-theory from Kato-Saito \cite[Lemma 1.3.1, p.26]{KS}: %Gorchinskiy-Tyurin \cite[Lemma 2.2]{GT}:

\begin{lem}\label{lem:KS} Let $R$ be a local ring and let $I \subset R$ be an ideal. Then the relative Milnor $K$-group $K_n ^M (R, I)=\ker (K_n ^M (R) \to K_n ^M (R/I))$ is generated by the elements of the form $\{ a_1, \cdots, a_n \}$ with $a_i \in R^{\times}$ for all $1 \leq i \leq n$, but for some $1 \leq i_0\leq n$, we have $a_{i_0} \in \ker (R^{\times} \to (R/I)^{\times})$.
\end{lem}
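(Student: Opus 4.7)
The plan is to prove the two inclusions separately. One direction is essentially formal, and the other I would obtain by constructing a set-theoretic section of the reduction map modulo the ``obvious'' candidate for the kernel.

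Let $L \subseteq K_n^M(R)$ denote the subgroup generated by symbols $\{a_1, \ldots, a_n\}$ with $a_j \in R^\times$ and at least one $a_{i_0} \in 1 + I$. The inclusion $L \subseteq K_n^M(R, I)$ is immediate, since each such generator has $a_{i_0} \equiv 1 \pmod{I}$ and therefore vanishes upon reduction. The content of the lemma is the reverse inclusion $K_n^M(R, I) \subseteq L$.

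For this, the key use of the locality of $R$ is that $I \subseteq \mathfrak{m}_R$, so every unit of $R/I$ lifts to a unit of $R$; fix a set-theoretic lift $\lambda \colon (R/I)^\times \to R^\times$. I would then define
$$s \colon K_n^M(R/I) \longrightarrow K_n^M(R)/L, \qquad s(\{\bar{b}_1, \ldots, \bar{b}_n\}) := \{\lambda(\bar{b}_1), \ldots, \lambda(\bar{b}_n)\} \bmod L,$$
and verify well-definedness in three points: (i) independence from the choice of lift --- if $\tilde{b}' = \tilde{b}\, u$ with $u \in 1+I$, then the two symbols differ by an element of $L$; (ii) multiplicativity in each slot --- $\lambda(\bar{a})\lambda(\bar{b})$ is another lift of $\bar{a}\bar{b}$, so differs from $\lambda(\bar{a}\bar{b})$ by a factor in $1+I$; (iii) the Steinberg relation --- if $\bar{a}, 1 - \bar{a} \in (R/I)^\times$, then with $\tilde{a} := \lambda(\bar{a})$ the element $1 - \tilde{a}$ is again a unit in $R$ (because $R$ is local and $1 - \bar{a}$ is a unit in $R/I$), and so $\{\tilde{a}, 1-\tilde{a}\} = 0$ already in $K_n^M(R)$, a fortiori modulo $L$.

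The final step is the observation that, writing $\pi$ for the reduction $K_n^M(R) \to K_n^M(R/I)$, the composite $s \circ \pi$ agrees with the canonical projection $K_n^M(R) \twoheadrightarrow K_n^M(R)/L$: on a generator $\{a_1, \ldots, a_n\}$ both sides equal its class modulo $L$, by well-definedness (i). Consequently any element of $\ker(\pi) = K_n^M(R, I)$ maps to zero in $K_n^M(R)/L$, which forces $K_n^M(R, I) \subseteq L$, as desired. The main obstacle --- and the only place where locality is truly used --- is point (iii): without it, a lift of a Steinberg pair of units need not itself be a Steinberg pair, and the map $s$ would fail to descend to Milnor $K$-theory. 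The remaining verifications are careful but routine.
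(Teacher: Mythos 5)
The paper itself gives no proof of this lemma; it is stated and attributed to Kato--Saito \cite[Lemma 1.3.1, p.~26]{KS}. Your proof is correct, and in fact it reconstructs precisely the argument one finds in that reference: let $L \subseteq K_n^M(R)$ be the subgroup generated by symbols having a slot in $1+I$, observe $L \subseteq K_n^M(R,I)$ trivially, and show $K_n^M(R)/L \to K_n^M(R/I)$ is injective by exhibiting a section built from set-theoretic lifts of units.

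A few remarks to confirm the details you flagged. The inclusion $L \subseteq K_n^M(R,I)$ uses that $\{ \cdots , 1, \cdots\} = 0$ in $K_n^M(R/I)$, which follows at once from multilinearity. For the well-definedness check (i), changing all $n$ lifts simultaneously produces, after expanding by multilinearity, $2^n - 1$ correction terms rather than one; each has a slot in $1+I$ and hence lies in $L$, so the conclusion stands. You correctly isolated the two genuine uses of locality: the surjectivity of $R^\times \to (R/I)^\times$ (so that $\lambda$ exists, and so that $\ker(R^\times \to (R/I)^\times)$ is exactly $1+I$, since $1+\mathfrak{m}_R \subseteq R^\times$ and $I \subseteq \mathfrak{m}_R$ once $I \ne R$), and the fact that $1-\tilde a$ is a unit in $R$ whenever $1-\bar a$ is a unit in $R/I$, so that the lifted pair satisfies a genuine Steinberg relation. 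The only cosmetic point: in (iii) the lift $\lambda(1-\bar a)$ need not equal $1-\lambda(\bar a)$, but they differ by a factor in $1+I$, which you do use; stating this explicitly, as you did, is exactly what makes the argument close. Since the paper only cites the source, there is nothing further to compare; your proof is a complete and self-contained version of the standard splitting argument.
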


\medskip

For the subcomplexes in \eqref{eqn:subcomplexes intro}, for each integer $m \geq 1$, we define the \emph{mod $I^{m+1}$-equivalence} (see \S \ref{sec:mod t^{m+1}}) to obtain the morphism of complexes
\begin{equation}\label{eqn:subcomplexes intro m}
z_{{\rm v}} ^q (X/ (m+1), \bullet) \hookrightarrow z_{{\rm d}} ^q (X/ (m+1), \bullet).
\end{equation} 
Similar versions were considered in some earlier articles of the author, e.g. Park-\"Unver \cite{PU Milnor}, Park \cite{Park MZ}, but the definitions in this article are different, and could be regarded as improvements.

For each $1 \leq m \leq \infty$, we then have the short exact sequence of complexes of abelian groups
\begin{equation}\label{eqn:main ses intro}
0 \to z^q _{{\rm v}} (X/(m+1), \bullet) \to z^q_{{\rm d}} (X/(m+1), \bullet) \overset{{\rm ev}}{\to} z^q (k, \bullet) \to 0,
\end{equation}
where ${\rm ev}$ is the reduction modulo $I$. When $n=q$, we define $\CH^n_{\rm v} (X/ (m+1), n)$ (resp. $\CH^n _{\rm d} (X/ (m+1), n)$) to be the $n$-th homology groups of the complexes in \eqref{eqn:subcomplexes intro m}, called the Chow group of the \emph{strict vanishing} (resp. \emph{dominant}) cycles mod $I^{m+1}$.

More generally, we define the groups $\CH^n_{{\rm v}, \geq r}(X, n)$ and $\CH^n_{{\rm v}, \geq r}(X/ (m+1), n)$ of the strict vanishing cycles of order $\geq r$, extending $r=1$ to all $r \geq 1$. %, although they are not absolutely required for the purpose of the main theorem.

\medskip

In \S \ref{sec:local case}, especially in Theorems \ref{thm:local main 1} and \ref{thm:local main 2}, we give the presentations of the Milnor $K$-groups by the cycle class groups, and this proves Theorem \ref{thm:intro main v d}.

\medskip

The proof of Theorem \ref{thm:intro main v d} is obtained in terms of an inductive moving lemma, and it occupies most of \S \ref{sec:local case}. A similar argument was used in J. Park \cite{Park MZ}, but since our definitions of the cycle class groups are somewhat different from \emph{ibid.}, and since our argument, via induction on the set $\mathbb{N}^n$ with the lexicographic order, improves that of \emph{ibid.}, we present a full proof. It is interesting to remark that the implications of this moving lemma turn out to be \emph{stronger} than the ones deduced from some $t$-adic analytic arguments, used in an earlier version of this article with the function field $k((t))$ of $X= \Spec (k[[t]])$, similar to the one used in Park-\"Unver \cite{PU Milnor}. This seemed to have rendered the previous $t$-adic arguments rather obsolete, so we no longer keep them in this version.

\medskip

Theorem \ref{thm:intro main v d} and \ref{thm:intro main norm} improve our understanding of the Milnor $K$-groups via cycle class groups. Conversely, the Milnor $K$-groups could say something nontrivial about the cycle class groups as well:

\begin{cor}
Let $k$ be a field. Let $X$ be a regular henselian local $k$-scheme of dimension $1$ with the residue field $k$. Let $m, n, r\geq 1$ be integers such that $1 \leq r \leq m+1$. Then we have the short exact sequence
\begin{equation}\label{eqn:ses 1 intro}
0 \to \CH_{{\rm v}, \geq r} ^n (X/ (m+1), n) \overset{\iota}{ \to} \CH_{{\rm d}} ^n (X/ (m+1), n) \to \CH ^n_{{\rm d}} (X/r, n) \to 0.
\end{equation}
The sequence splits when $r=1$.
\end{cor}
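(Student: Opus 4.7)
The plan is to apply \thmref{thm:intro main v d} (together with \thmref{thm:NST} for the boundary case $r=1$) to translate \eqref{eqn:ses 1 intro} into a sequence of Milnor $K$-groups, whereupon the claim reduces to the defining short exact sequence for the relative group. Parts (1) and (2) of \thmref{thm:intro main v d} identify the first two terms of \eqref{eqn:ses 1 intro} with $\widehat{K}_n^M(k_{m+1}, (t^r))$ and $\widehat{K}_n^M(k_{m+1})$, respectively. For the third term, I would apply \thmref{thm:intro main v d}(1) with $m+1$ replaced by $r$ (when $r \geq 2$), or \thmref{thm:NST} (when $r = 1$, so that $k_r = k$), to obtain $\CH^n_{\rm d}(X/r, n) \simeq \widehat{K}_n^M(k_r)$.

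Next, under these identifications, I would verify that $\iota$ matches the defining inclusion of the relative group into $\widehat{K}_n^M(k_{m+1})$ and that the second arrow matches the pushforward induced by the ring surjection $k_{m+1} \twoheadrightarrow k_r$. This amounts to a naturality check for the isomorphisms produced in \secref{sec:local case}: on the geometric side, the inclusion $z^q_{{\rm v},\geq r} \hookrightarrow z^q_{\rm d}$ is designed (cf.\ the discussion around \lemref{lem:KS}) to be the geometric avatar of the relative-symbol inclusion, while coarsening the equivalence from mod-$I^{m+1}$ to mod-$I^r$ on cycles is the geometric counterpart of passing from $k_{m+1}$ to $k_r$ on the Milnor side. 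Granting this dictionary, \eqref{eqn:ses 1 intro} becomes
$$
0 \to \widehat{K}_n^M(k_{m+1}, (t^r)) \to \widehat{K}_n^M(k_{m+1}) \to \widehat{K}_n^M(k_r) \to 0,
$$
whose exactness at the first two positions is the very definition of the relative Milnor $K$-group, and whose surjectivity at the right end follows by symbol lifting: since $k_{m+1} \twoheadrightarrow k_r$ is a surjection of local rings, every unit in $k_r$ lifts to a unit in $k_{m+1}$, hence every generating Milnor symbol lifts.

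For the splitting when $r=1$, the canonical field-of-constants inclusion $k \hookrightarrow k_{m+1}$ is a ring homomorphism whose composite with the quotient $k_{m+1} \twoheadrightarrow k$ equals $\id_k$; applying the functor $\widehat{K}_n^M(-)$ therefore yields the desired section of $\widehat{K}_n^M(k_{m+1}) \twoheadrightarrow \widehat{K}_n^M(k)$. The main obstacle I anticipate is the naturality check in the second paragraph; once this compatibility is confirmed---and it should follow directly from the explicit maps constructed in the proof of \thmref{thm:intro main v d}---both the exactness and the splitting of \eqref{eqn:ses 1 intro} follow formally.
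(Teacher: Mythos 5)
Your proposal is correct and follows essentially the same route as the paper's own proof (Corollary~\ref{cor:ses final}): apply the isomorphisms of Theorems~\ref{thm:local main 1} and~\ref{thm:local main 2} to convert \eqref{eqn:ses 1 intro} into the definitional short exact sequence $0 \to \widehat{K}^M_n(k_{m+1}, (t^r)) \to \widehat{K}^M_n(k_{m+1}) \to \widehat{K}^M_n(k_r) \to 0$. The naturality check you flag (that $\iota$ and the right-hand map on the cycle side go over to the inclusion and the reduction map on the $K$-theory side under the graph isomorphisms) is elided in the paper as well and indeed follows from the explicit construction of $gr_{{\rm d}}$ and $gr_{{\rm v},\geq r}$, and your treatment of the $r=1$ degenerate case and the splitting via the coefficient section $k \hookrightarrow k_{m+1}$ is the intended argument.
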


The injectivity of $\iota$ in \eqref{eqn:ses 1 intro} is nontrivial to establish purely in terms of cycles. However, via the identifications in Theorem \ref{thm:intro main v d} and the exact sequence 
\begin{equation}\label{eqn:ses 0 intro}
 0 \to \widehat{K}^M_n (k_{m+1}, (t^r)) \to \widehat{K}^M_n (k_{m+1})     \to  \widehat{K}^M_n (k^r) \to 0,
\end{equation}
 one deduces \eqref{eqn:ses 1 intro} for the cycle class groups.
 
 \medskip
 
 Finally, recall that we can regard $\widehat{K}_n ^M (k_{m+1})$ as the motivic cohomology group ${\rm H}_{\mathcal{M}} ^n (k_{m+1}, \mathbb{Z} (n))$ of Elmanto-Morrow  \cite[Theorem 1.8]{EM}. One notes that Theorem \ref{thm:intro main v d} can be rephrased from this perspective as:

\begin{thm}\label{thm:main interpretation}
Let $k$ be an arbitrary field and $m, n \geq 1$ be integers. 

Then for any integral regular henselian local $k$-scheme $X$ of dimension $1$ with the residue field $k$, we have isomorphisms between the motivic cohomology and the Chow group of ${\rm d}$-cycles
\begin{equation}\label{eqn:7 interpretation}
{\rm H}_{\mathcal{M}} ^n (k_{m+1}, \mathbb{Z} (n)) \simeq \widehat{K}_n ^M (k_{m+1}) \simeq \CH_{{\rm d}} ^n (X/ (m+1), n), \ \ \ \mbox{ and }
\end{equation}
and between the group of the vanishing cycles of the motivic cohomology and the Chow group of ${\rm v}$-cycles.
$$
\ker ({\rm ev}_{t=0}: {\rm H}_{\mathcal{M}} ^n (k_{m+1}, \mathbb{Z} (n)) \to {\rm H}_{\mathcal{M}} ^n (k, \mathbb{Z} (n)) \simeq \CH_{{\rm v}} ^n (X/ (m+1), n).
$$
\end{thm}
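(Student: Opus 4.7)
The plan is to derive Theorem \ref{thm:main interpretation} as a direct combination of Theorem \ref{thm:intro main v d} with the recent identification of motivic cohomology of Artin local algebras with improved Milnor $K$-theory due to Elmanto-Morrow. First, I would invoke \cite[Theorem 1.8]{EM}, which provides the natural isomorphism
$$
{\rm H}_{\mathcal{M}}^n(k_{m+1}, \mathbb{Z}(n)) \simeq \widehat{K}_n^M(k_{m+1}),
$$
valid over an arbitrary base field. Composing this with the isomorphism $\widehat{K}_n^M(k_{m+1}) \simeq \CH_{\rm d}^n(X/(m+1), n)$ provided by Theorem \ref{thm:intro main v d}(1) immediately yields the chain \eqref{eqn:7 interpretation}.

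For the second assertion, I would track the evaluation map ${\rm ev}_{t=0}$ through each of the three pictures. The ring surjection $k_{m+1} \twoheadrightarrow k$ induces ${\rm ev}_{t=0}$ on motivic cohomology, and by naturality of the Elmanto-Morrow comparison it corresponds to the canonical projection $\widehat{K}_n^M(k_{m+1}) \twoheadrightarrow \widehat{K}_n^M(k)$, whose kernel is $\widehat{K}_n^M(k_{m+1}, (t))$ by definition of the relative improved Milnor $K$-group. Applying Theorem \ref{thm:intro main v d}(2) in the case $r = 1$ then produces
$$
\widehat{K}_n^M(k_{m+1}, (t)) \simeq \CH_{{\rm v}, \geq 1}^n(X/(m+1), n) = \CH_{\rm v}^n(X/(m+1), n),
$$
which delivers the stated identification.

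The only point requiring genuine verification, and hence the main obstacle, is the compatibility of the three evaluation-type surjections involved: ${\rm ev}_{t=0}$ on motivic cohomology, the canonical reduction $\widehat{K}_n^M(k_{m+1}) \to \widehat{K}_n^M(k)$ on improved Milnor $K$-groups, and the cycle-level reduction ${\rm ev}$ appearing in the short exact sequence \eqref{eqn:main ses intro}. Naturality of the Elmanto-Morrow comparison handles the first of these, while for the second I would appeal to the explicit construction of the isomorphism in Theorem \ref{thm:intro main v d}: the Milnor symbol $\{a_1, \dots, a_n\}$ over $k_{m+1}$ is represented by a ${\rm d}$-cycle whose mod $I$ reduction is precisely the Nesterenko-Suslin-Totaro cycle representing $\{\bar a_1, \dots, \bar a_n\}$ on $\Spec(k)$. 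Once this compatibility is in place, the identification of the kernels follows by a straightforward diagram chase against the short exact sequences \eqref{eqn:main ses intro} and \eqref{eqn:ses 0 intro}.
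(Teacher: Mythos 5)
Your proposal is correct and matches the paper's own treatment: the paper presents Theorem~\ref{thm:main interpretation} as a rephrasing of Theorem~\ref{thm:intro main v d} (proved later as Theorems~\ref{thm:local main 1} and~\ref{thm:local main 2}) combined with the Elmanto--Morrow identification \cite[Theorem 1.8]{EM}, and does not give a separate argument. Your extra care in tracking the compatibility of the three evaluation maps and noting that the graph cycle for $\{a_1,\dots,a_n\}$ reduces mod $I$ to the Nesterenko--Suslin--Totaro cycle is sound diligence but not a different route.
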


This theorem shows in particular that when we write $X= \Spec (A)$, we have various options in choosing any such $k$-algebras $A$ between the henselization $\mathcal{O}_{\mathbb{A}^1, 0} ^h$ and the completion $\widehat{\mathcal{O}}_{\mathbb{A}^1, 0} \simeq k[[t]]$ of $\mathcal{O}_{\mathbb{A}^1, 0} $. Between these two extremities, probably there are infinitely many such regular henselian local rings where the above assumptions hold. This answers a question posed to the author by Mark Spivakovski during the 2nd PRIMA Congress at Oaxaca, Mexico, in August 2017, where the author presented a version of the isomorphism similar to \eqref{eqn:7 interpretation} in  \cite{PU Milnor} with $k[[t]]$.

On the other hand, in a private communication years ago with Matthew Morrow, concerning construction of the motivic cohomology of $\Spec (k_{m+1})$, he suggested an idea of using algebraic cycles over Ind-smooth schemes such as $\Spec (\mathcal{O}_{\mathbb{A}^1, 0} ^h)$ (see e.g. \cite[Lemma 04GV]{stacks}) instead of the cycles over $\Spec (k[[t]])$. The reason behind this might be that the higher Chow group functor behaves nicely on smooth schemes, and it commutes with filtered colimits, so the construction with the henselization, instead of the completion, could potentially work better in general.

The author agrees to the point, and the above Theorem \ref{thm:main interpretation} shows that, at least in the Milnor range, it is possible to use \emph{both} $\Spec (\mathcal{O}_{\mathbb{A}^1, 0} ^h)$ and $\Spec (k[[t]])$. However, there indeed is a possibility that this coincidence was just a fluke because we concentrated on the Milnor range in this article. At least in \cite{Park vanishing}, we will see that this flexibility helps in establishing that $\mathbb{W}_m \Omega_k ^{n-1} \simeq \widehat{K}_n ^M (k_{m+1}, (t))$, as it is a bit harder to work with the elements of the henselization than the completion. The situation may become clearer when we compute a cycle class group in an off-Milnor range, for which the author is working on in another project with Sinan \"Unver at this moment.

 \medskip

 \bigskip
 
 \textbf{Conventions:} In this paper, unless said otherwise $k$ is an arbitrary field of arbitrary characteristic, a $k$-scheme is a noetherian separated $k$-scheme of finite Krull dimension, but it is not necessarily of finite type over $k$. The fiber product $\times$ means $\times_k$ unless said otherwise.

\section{Recollection of certain definitions and results}\label{sec:2 recollection}

 We recall a few basic definitions and notations we use. %There is nothing new in \S \ref{sec:2 recollection}, and they are included purely for the convenience of the reader.

\subsection{Recollection on Milnor $K$-theory}\label{sec:Milnor K}

Let $R$ be a commutative ring with $1$. In \S \ref{sec:Milnor K}, we recall the classical definition of the Milnor $K$-groups of rings, e.g. from J. Milnor \cite{Milnor IM}, and discuss a few basic relevant notions.

\subsubsection{The Milnor $K$-groups}

For the multiplicative group $R^{\times}$, regarded as a $\mathbb{Z}$-module, consider the tensor $\mathbb{Z}$-algebra
$$
T_{\mathbb{Z}} R^{\times} := \bigoplus_{n\geq 0} T_n R^{\times},
$$
where $T_0 R^{\times} = \mathbb{Z}$, $T_1 R^{\times} = R^{\times}$, and $T_n R^{\times}= R^{\times} \otimes \cdots \otimes R^{\times}$ is the $n$-fold self tensor product over $\mathbb{Z}$ for $n \geq 2$. The juxtaposition homomorphism
$$ 
T_{n_1} R^{\times} \otimes_{\mathbb{Z}} T_{n_2} R^{\times} \to T_{n_1 + n_2} R^{\times}
$$
for $n_1, n_2 \geq 0$ naturally defines a structure of a $\mathbb{Z}$-graded noncommutative ring structure on $T_{\mathbb{Z}} R^{\times}$. For the two-sided ideal ${\rm St} \subset T_{\mathbb{Z}} R^{\times}$ generated by all elements of the form $a \otimes (1-a)$ over all $a \in R^{\times}$ such that $1-a \in R^{\times}$, the $\mathbb{Z}$-graded ring
$$ 
K_* ^M (R):= T_{\mathbb{Z}} R^{\times} / {\rm St},
$$
is the Milnor $K$-ring of $R$. Its $n$-th graded piece $K_n ^M (R)$ is the $n$-th Milnor $K$-group of $R$. It is known that each $K_n ^M (R)$ is abelian and $K_* ^M (R)$ is a graded-commutative ring. This $K_n ^M (-)$ defines a covariant functor
$$ 
K_n ^M : ({\rm Rings}) \to ({\rm Ab})
$$
from the category of commutative rings with $1$ to the category of abelian groups. 

\medskip

Suppose we are given an ideal $I \subset R$. Define the relative $K$-group to be
$$
K_n ^M (R, I):= \ker \left( K_n ^M (R) \to K_n ^M (R/I) \right).
$$

In case there is a section $s: R/I \to R$ of the homomorphism $R \to R/I$, by the functoriality we deduce the decomposition
$$
 K_n ^M (R) = K_n ^M (R/ I) \oplus K_n ^M (R , I).
 $$

\subsubsection{The improved Milnor $K$-groups of Gabber-Kerz}\label{sec:Milnor Kerz}

Recall the improved Milnor $K$-theory of  Gabber and M. Kerz \cite{Kerz finite}. This is a functor $\widehat{K}_n ^M$ with a surjective transform $K_n ^M ( -) \to \widehat{K}_n ^M (-)$ satisfying a universal property, which roughly says the following: whenever there is a natural transform $\psi : K_n ^M (-) \to F$ on a suitable subcategory $\mathcal{C}$ of the category of rings, where $F$ has the \'etale transfer property, i.e. for each finite \'etale morphism $A \to B$ of local rings in $\mathcal{C}$, we have the norm (transfer) map $F (B) \to F (A)$, then $\psi$ factors via $\widehat{K}_n ^M(-)$. 

We recall some other essential properties of $\widehat{K}_n ^M$ needed:

\begin{thm}[{\cite[Proposition 10]{Kerz finite}}]\label{thm:Khat_univ} For local rings $A$, we have:
\begin{enumerate}
\item The natural homomorphism $K_n ^M (A) \to \widehat{K}_n ^M (A)$ is surjective.
\item The above is the identity map in the following cases:
\begin{enumerate}
\item When $n=1$, for all local rings $A$.
\item When $A$ is a field, for all $n \geq 1$.
\item There exists a universal integer $M_n>1$ (call it the $n$-th Kerz number), such that when the residue field of $A$ has the cardinality $> M_n$. In particular, when $A$ is a $k$-algebra over an infinite field $k$.
\end{enumerate}
\item For each finite \'etale homomorphism $A \to B$, there exists the norm map ${\rm N}_{B/A} : \widehat{K}^M _n (B) \to \widehat{K}^M_n (A)$, and the norm maps are subject to a few properties including the transitivity.
\item Gersten conjecture holds for $\widehat{K}_n ^M$, i.e. when $A$ is a local domain and $F={\rm Frac} (A)$, the natural homomorphism $\widehat{K}^M_n (A) \to \widehat{K}^M_n (F)$ is injective.
\end{enumerate}

\end{thm}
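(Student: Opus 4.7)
The statement packages several main results of Kerz \cite{Kerz finite}, so a proof proposal should recall his construction of $\widehat{K}_n^M$ and then verify each item. I would begin by reviewing the definition: for a local ring $A$, introduce the auxiliary local ring $A(t)$ obtained from $A[t]$ by localizing at the multiplicative set of polynomials whose coefficients generate the unit ideal; its residue field is a rational function field over that of $A$, hence always infinite. One then defines $\widehat{K}_n^M(A)$ as a canonical subquotient of $K_n^M(A(t))$ (concretely, as the equalizer of the two evaluations $K_n^M(A(t_1)) \rightrightarrows K_n^M(A(t_1,t_2))$ coming from $t \mapsto t_1$ and $t \mapsto t_2$) and verifies that $K_n^M(A)$ maps into this equalizer. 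Property (1) is then immediate from the construction.

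For (2) I would argue case by case. When $n=1$, both $K_1^M$ and $\widehat{K}_1^M$ compute $A^\times$ for a local ring, so they coincide. When $A$ is a field, the Steinberg relation already suffices to verify the universal property, so no further identifications are forced and the two groups agree. The statement about the Kerz number in (2c) is the arithmetically delicate point: one shows that whenever the residue field of $A$ has sufficiently many elements, any ``extra'' relation present in $\widehat{K}_n^M$ can be rewritten via standard Steinberg relations by a generic specialization argument inside $A(t)$, for which a certain number of distinct units in the residue field are needed to avoid bad fibers; tracking the cardinality produces an explicit bound $M_n$.

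For (3), I would construct the norm along a finite \'etale extension $A \to B$ by working first at the level $A(t) \to B(t)$, applying the classical Bass--Tate--Kato norms on the Milnor $K$-theories of the residue fields at the height-one primes and using Weil reciprocity to obtain a well-defined map on the equalizers, then verifying transitivity and the projection formula by functoriality. Property (4) reduces to the Gabber--Kerz Gersten resolution of the improved Milnor $K$-sheaf on a regular local ring, from which injectivity into the generic fiber is formal. The main obstacle throughout would be (2c) together with (3): Kerz's extra relations must be tight enough to force \'etale transfers to exist, yet loose enough to agree with $K_n^M$ for fields and for rings with large residue fields, and calibrating this balance is the heart of \cite{Kerz finite}. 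For the purposes of the present paper one may simply cite the theorem.
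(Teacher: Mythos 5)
The paper offers no proof of this statement: it is a recollection of Kerz's results and is cited as \cite[Proposition 10]{Kerz finite} without argument, exactly as your final sentence recommends. Your outline of Kerz's construction via the auxiliary local ring $A(t)$ and the equalizer is essentially accurate, and your summary of items (2)--(4) is a fair account of the landscape.

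However, your treatment of item (1) has a gap. If $\widehat{K}_n^M(A)$ is \emph{defined} as the equalizer of $K_n^M(A(t_1)) \rightrightarrows K_n^M(A(t_1,t_2))$, then it is indeed easy to check that $K_n^M(A)$ maps \emph{into} the equalizer, but it is not at all immediate that this map is \emph{onto}. The equalizer is a subgroup of $K_n^M(A(t))$, and surjectivity amounts to showing that every ``$t$-independent'' element of $K_n^M(A(t))$ is representable by Milnor symbols with entries in $A^\times$ rather than $A(t)^\times$. That is a genuine theorem, requiring the infinitude of the residue field of $A(t)$, specialization maps, and generic-position manipulations of symbols; it is one of the substantive results of Kerz's paper, not a formal consequence of the definition. (The alternative of taking $\widehat{K}_n^M(A)$ to be the \emph{image} of $K_n^M(A)$ in the equalizer would make surjectivity tautological, but would then shift the burden onto proving the universal property and the transfers.) For the present paper nothing hinges on this since the theorem is cited wholesale, but your sketch should not present surjectivity as automatic.
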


We have the following version of Lemma \ref{lem:KS} for $\widehat{K}_n ^M (k_{m+1}, (t))$:

\begin{lem}\label{lem:KS hat}
Suppose $1 \leq r \leq m+1$. Then the group $\widehat{K}_n ^M (k_{m+1}, (t^r))$ is generated by the symbols $\{ a_1, a_2, \cdots, a_n \}$, where $a_i \in k_{m+1} ^{\times}$ and $a_1 \equiv 1 \mod t^r$. 
\end{lem}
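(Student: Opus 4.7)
The plan is to deduce Lemma \ref{lem:KS hat} from the classical Kato-Saito presentation of Lemma \ref{lem:KS} via the universal surjection $K_n^M \twoheadrightarrow \widehat{K}_n^M$ of Theorem \ref{thm:Khat_univ}(1). First I would apply Lemma \ref{lem:KS} with $R = k_{m+1}$ and $I = (t^r)$ to get that $K_n^M(k_{m+1}, (t^r))$ is generated by symbols $\{a_1, \ldots, a_n\}$ with each $a_i \in k_{m+1}^\times$ and some $a_{i_0} \equiv 1 \bmod t^r$. By the graded anti-commutativity of the Milnor symbol, we may transpose $a_{i_0}$ to the first slot (up to a sign), obtaining generators of the required form $\{a_1, a_2, \ldots, a_n\}$ with $a_1 \equiv 1 \bmod t^r$.

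Next I would compare the classical and improved relative sequences. The surjection $k_{m+1} \twoheadrightarrow k_r := k[t]/(t^r)$ induces a surjection $k_{m+1}^\times \twoheadrightarrow k_r^\times$ (a lift of a unit is a unit since the two local rings share the same residue field), and hence $K_n^M(k_{m+1}) \twoheadrightarrow K_n^M(k_r)$ is surjective. Thus we have a commutative diagram with exact rows,
\begin{equation*}
\xymatrix{
0 \ar[r] & K_n^M(k_{m+1}, (t^r)) \ar[r] \ar[d] & K_n^M(k_{m+1}) \ar[r] \ar@{->>}[d] & K_n^M(k_r) \ar[r] \ar@{->>}[d] & 0 \\
0 \ar[r] & \widehat{K}_n^M(k_{m+1}, (t^r)) \ar[r] & \widehat{K}_n^M(k_{m+1}) \ar[r] & \widehat{K}_n^M(k_r) \ar[r] & 0,
}
\end{equation*}
where the lower row is \eqref{eqn:ses 0 intro} and the two rightmost vertical maps are surjective by Theorem \ref{thm:Khat_univ}(1). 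A direct diagram chase (or the snake lemma) then forces the leftmost vertical map to be surjective.

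Finally, since the generators from the first step map to symbols $\{a_1, \ldots, a_n\}$ in $\widehat{K}_n^M(k_{m+1})$ with $a_1 \equiv 1 \bmod t^r$, whose classes manifestly lie in $\widehat{K}_n^M(k_{m+1}, (t^r))$, we conclude that such symbols generate $\widehat{K}_n^M(k_{m+1}, (t^r))$. There is no serious obstacle here, as the lemma is essentially a compatibility statement between the Kato-Saito generating set and the universal quotient $\widehat{K}_n^M$; the only point requiring care is the surjectivity of $K_n^M(k_{m+1}) \to K_n^M(k_r)$ needed to invoke the snake lemma, and this follows at once from the fact that units lift.
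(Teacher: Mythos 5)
Your proposal follows the paper's stated outline---Kato--Saito generators, anti-symmetry, and the universal surjection $K_n^M \to \widehat{K}_n^M$---but there is a gap in the snake-lemma step. The snake lemma applied to your diagram gives the exact sequence
\[
\ker(\beta) \to \ker(\gamma) \to \cok(\alpha) \to \cok(\beta) = 0,
\]
where $\alpha, \beta, \gamma$ denote the three vertical maps from left to right. Surjectivity of $\beta$ and $\gamma$ therefore only shows that $\cok(\alpha)$ is a quotient of $\ker(\gamma)$; it does not force $\cok(\alpha) = 0$. To conclude that $\alpha$ is surjective you additionally need the induced map on kernels $\ker(\beta) \to \ker(\gamma)$ to be surjective, i.e., that $\ker\bigl(K_n^M(k_{m+1}) \to \widehat{K}_n^M(k_{m+1})\bigr)$ surjects onto $\ker\bigl(K_n^M(k_r) \to \widehat{K}_n^M(k_r)\bigr)$.

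This extra surjectivity is automatic when $r = 1$, since then $k_r = k$ is a field and $\ker(\gamma) = 0$ by Theorem \ref{thm:Khat_univ}(2)(b); it is likewise automatic when $|k| > M_n$ by Theorem \ref{thm:Khat_univ}(2)(c). But for $r \geq 2$ with a finite residue field satisfying $|k| \leq M_n$, the ring $k_r$ is a non-field Artin local ring and $\ker(\gamma)$ may be nonzero, so your ``direct diagram chase'' does not by itself close the argument. The paper's own one-line proof is equally terse here and does not explicitly address this point, so it is plausibly what the author meant by ``immediate''; nonetheless, the snake lemma alone does not ``force'' the surjectivity of the leftmost map, and a complete argument would need to establish the surjectivity $\ker(\beta) \twoheadrightarrow \ker(\gamma)$, for instance by examining the explicit generators of $\ker(K_n^M \to \widehat{K}_n^M)$ in Kerz's construction and using that $k_{m+1}^\times \twoheadrightarrow k_r^\times$ lifts them.
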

\begin{proof}
This immediately follows from Lemma \ref{lem:KS} by the anti-symmetricity and the surjectivity of $K_n ^M (k_{m+1} ) \to \widehat{K}_n ^M (k_{m+1})$. 
\end{proof}

We leave the following easy but important point:

\begin{lem}\label{lem:higher rel Milnor K}
Let $m, n, r \geq 1$ be integers, and suppose that $1 \leq r \leq m+1$. 

Let $\mathfrak{R}: \widehat{K} ^M_n (k_{m+1}, (t)) \to \widehat{K}^M _n (k_{r}, (t))$ be the canonical homomorphism induced from the surjection $k_{m+1} \to k_r$. Then we have a canonical isomorphism
$$
 \ker \mathfrak{R}  \simeq \widehat{K}_n ^M (k_{m+1}, (t^r)).
 $$
\end{lem}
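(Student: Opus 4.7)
The plan is to deduce this identification from a straightforward diagram chase using the factorization of the reduction maps $k_{m+1} \twoheadrightarrow k_r \twoheadrightarrow k$ together with the functoriality of $\widehat{K}_n^M$. I would organize the argument around the fact that all three relevant relative groups sit as subgroups of the common ambient group $\widehat{K}_n^M(k_{m+1})$, with inclusions governed by the inclusion of ideals $(t^r) \subseteq (t)$.

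First I would set up notation: let $\pi_* : \widehat{K}_n^M(k_{m+1}) \to \widehat{K}_n^M(k_r)$ and $\rho_* : \widehat{K}_n^M(k_r) \to \widehat{K}_n^M(k)$ be the functorial maps induced by the surjections $k_{m+1} \twoheadrightarrow k_r$ and $k_r \twoheadrightarrow k$, so that their composition is the reduction $\widehat{K}_n^M(k_{m+1}) \to \widehat{K}_n^M(k)$. I would then verify that $\mathfrak{R}$ is well-defined: for $x \in \widehat{K}_n^M(k_{m+1},(t))$, the image $\rho_*\pi_*(x) = 0$ by definition, so $\pi_*(x) \in \ker \rho_* = \widehat{K}_n^M(k_r,(t))$, which allows us to set $\mathfrak{R} := \pi_*|_{\widehat{K}_n^M(k_{m+1},(t))}$.

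Next I would record the inclusion $\widehat{K}_n^M(k_{m+1},(t^r)) \subseteq \widehat{K}_n^M(k_{m+1},(t))$: any element killed by $\pi_*$ is automatically killed by $\rho_* \circ \pi_*$, which is precisely the reduction map to $\widehat{K}_n^M(k)$. With this in hand the computation of the kernel is direct: an element $x \in \widehat{K}_n^M(k_{m+1},(t))$ lies in $\ker \mathfrak{R}$ if and only if $\pi_*(x) = 0$, i.e.\ if and only if $x \in \ker\bigl(\widehat{K}_n^M(k_{m+1}) \to \widehat{K}_n^M(k_r)\bigr) = \widehat{K}_n^M(k_{m+1},(t^r))$. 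Combining with the previous inclusion yields the asserted canonical isomorphism.

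There is no substantial obstacle in this argument; it is essentially a bookkeeping check that the kernels along the two-step filtration $(t^r) \subseteq (t) \subseteq k_{m+1}$ behave compatibly under $\widehat{K}_n^M$. The only delicate point, if one wishes to call it that, is making sure the identification is genuinely \emph{canonical}: this follows because each of the three relative groups is defined intrinsically as the kernel of a functorial reduction map, and the identification $\ker \mathfrak{R} = \widehat{K}_n^M(k_{m+1},(t^r))$ is an equality of subgroups of $\widehat{K}_n^M(k_{m+1})$, not merely an abstract isomorphism.
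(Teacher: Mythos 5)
Your proof is correct and rests on exactly the same observations as the paper's: that all three relative groups are defined as kernels of functorial reduction maps, that $\widehat{K}_n^M(k_{m+1},(t^r)) = \ker\pi_* \subseteq \ker(\rho_*\circ\pi_*) = \widehat{K}_n^M(k_{m+1},(t))$, and that $\mathfrak{R}$ is the restriction of $\pi_*$. The paper packages this bookkeeping via the snake lemma applied to the displayed diagram (using $\ker i_2 = \ker i_3 = 0$ and $\operatorname{coker} i_2 \xrightarrow{\sim} \operatorname{coker} i_3$), whereas you compute the kernel directly, which is more elementary and arguably cleaner since it makes the identification manifestly an equality of subgroups of $\widehat{K}_n^M(k_{m+1})$ rather than an abstract isomorphism extracted from an exact sequence. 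Either route is fine; the snake-lemma version would additionally require observing that $\mathfrak{R}$ is surjective (or invoking the weak form of the snake lemma), a point your direct computation of $\ker\mathfrak{R}$ sidesteps entirely.
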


\begin{proof}
This follows immediately from the snake lemma applied to the following commutative diagram with the exact rows
$$
\xymatrix{
0 \ar[r] & \ker \mathfrak{R} \ar[r] \ar[d]^{i_1} & \widehat{K}^M_n (k_{m+1}, (t)) \ar[r] ^{\mathfrak{R}} \ar[d]^{i_2} & \widehat{K}_n ^M (k_r, (t)) \ar[r] \ar[d]^{i_3} & 0 \\
0 \ar[r] & \widehat{K}_n ^M (k_{m+1}, (t^r)) \ar[r] & \widehat{K}_n ^M (k_{m+1}) \ar[r] & \widehat{K}_n ^M (k_r) \ar[r] & 0}
$$
because $\ker (i_2) = \ker (i_3) = 0$ and ${\rm coker} (i_2) = {\rm coker} (i_3) = \widehat{K}_n ^M (k)$. 
\end{proof}

\begin{remk}
Later in Theorem \ref{thm:local main 2}, we will show that $\widehat{K}_n ^M (k_{m+1} , (t^r))$ for $1 \leq r \leq m+1$ is isomorphic to a cycle class group $ \CH^n_{{\rm v}, \geq r } (X/ (m+1), n),$ to be defined in Definition \ref{defn:M_v r=1}. Assuming a result $\widehat{K}^M (k_{m+1}, (t)) \simeq \mathbb{W}_m\Omega_k ^{n-1}$ proven in a following-up paper \cite{Park vanishing}, where the group on the right is the big de Rham-Witt forms of Hesselholt-Madsen \cite{HeMa}, Lemma \ref{lem:higher rel Milnor K} offers a description of the kernels of the restriction maps 
$$
\mathfrak{R}: \mathbb{W}_m \Omega_k ^{n-1} \to \mathbb{W}_{m'} \Omega_k ^{n-1}
$$
for $1 \leq m' \leq m$ in terms of algebraic cycles. Such a cycle-theoretic description of $\ker \mathfrak{R}$ is new, and it is not immediate from the cycle-theoretic description of $\mathbb{W}_m \Omega_k ^{n-1}$ as the additive higher Chow group $\TH^n (k, n;m)$ in K. R\"ulling \cite{R}.
\qed
\end{remk}

\subsection{Higher Chow cycles}\label{sec:HC}

Recall the definition of the cubical version of the higher Chow cycles together with some relevant notations we need. The original simplicial version was defined by S. Bloch \cite{Bloch HC}. (For a bit of general discussions on the relation between the simplicial and the cubical versions, one can also read the introduction of \cite{Park localization}). Here, we suppose that $X$ is an equidimensional scheme of finite Krull dimension.

\medskip

Let $\overline{\square}:= \mathbb{P}_k ^1$ and $\square = \overline{\square}  \setminus \{ 1 \}$. For $n \geq 1$, we let $\square^n$ (resp. $\overline{\square}^n$) be the $n$-fold self-fiber product of $\square$ (resp. $\overline{\square}$) over $k$, while we let $\square^0=\overline{\square}^0:= \Spec (k)$. We use $(y_1, \cdots, y_n) \in \square^n$ (resp. $\in \overline{\square}^n$) for the coordinates.

Let $X$ be an equidimensional $k$-scheme. Write $\square_X ^n:= X \times \square_k ^n$ (resp. $\overline{\square}_X ^n:= X \times \overline{\square} ^n$). If $X= \Spec (A)$, then we often write $\square_A^n$ (resp. $\overline{\square}_A^n$). 

\medskip

When $Z \subset X \times \square_k ^n$ is a closed subscheme, we denote its closure in $X \times \overline{\square}_k ^n$ by $\overline{Z}$.

Suppose $n \geq 1$. Using $\{ 0, \infty \} \subset \square_k $, we define faces (resp. extended faces): a \emph{face} of $\square_X ^n$ (resp. an \emph{extended face} of $\overline{\square}_X ^n$) is a closed subscheme defined by a finite set of equations of the form $\{ y_{i_1} = \epsilon_1, \cdots, y_{i_s} = \epsilon _s\}$, where $1 \leq i_1 < i_2 < \cdots< i_s \leq n$ with $\epsilon_j \in \{ 0, \infty\}$. The case $s=0$ is allowed and it means empty set of equations, in which case the corresponding face (resp. extended face) is the entire space $\square_X^n$ (resp. $\overline{\square}_X ^n$). When $s=1$, we have the codimension $1$ faces (resp. extended faces), and the one given by $\{y_i = \epsilon\}$ is denoted by $F_{i} ^{\epsilon}$ or $F_{i, X} ^{\epsilon}$ (resp. $\overline{F}_i ^{\epsilon}$ or $\overline{F}_{i, X} ^{\epsilon}$). 

\begin{remk}
The extended faces are not needed for higher Chow groups, but they will be crucial in \S \ref{sec:our cycle} and beyond, especially in defining the extended general position condition, and the extended special fiber condition, respectively, in \S \ref{sec:SF* condition}. This is one of new geometric ingredients exploited in this article.
\qed
\end{remk}

\begin{defn}[S. Bloch]\label{defn:HC}
Let $X$ be an equidimensional $k$-scheme. Let $ \un{z}^q (X, 0):= z^q (X)$, the group of codimension $q$-cycles on $X$. %, i.e. the free abelian group on the irreducible closed subschemes of $X$ of codimension $q$.

\begin{enumerate}
\item (GP) For $n \geq 1$, let $\un{z}^q (X, n)$ be the subgroup of $z^q (\square_X ^n)$ consisting of the irreducible closed subschemes on $\square_X ^n$ that intersect all faces of $\square_X^n$ properly.

The acronym (GP) stands for ``general position", and the members of $\un{z}^q (X, n)$ are called higher Chow cycles.

\item The subgroup $\un{z}^q (X, n)_{\rm degn} \subset \un{z}^q (X, n)$ of degenerate cycles is generated by the cycles obtained as pull-backs of cycles via various projections $\square_X ^n \to \square_X ^i$, $0 \leq i < n$, defined by omitting some coordinates. We let
$$
 z^q (X, n):= \frac{ \un{z}^q (X, n)}{ \un{z} ^q (X, n)_{\rm degn}}.
$$

\item For each face $F_{i, X} ^{\epsilon}$ of codimension $1$, taking the intersection with the closed immersion $\iota_{i} ^{\epsilon} : \square_X ^{n-1}  \hookrightarrow \square_X ^n$ of the face $F_{i, X} ^{\epsilon}$ induces a face homomorphism 
$$
 \partial_i ^{\epsilon} : z^q (X, n) \to z^q (X, n-1).
 $$
The maps $\{ \partial _i ^{\epsilon} \}$ satisfy the cubical identities, giving a cubical abelian group $\{ \un{n} \mapsto z^q (X, n) \}$. If we let $\partial:= \sum_{i=1} ^n (-1)^i (\partial_i ^{\infty} - \partial_i ^0)$, we check that $\partial \circ \partial = 0$ using the cubical formalism. Thus $(z^q (X, \bullet), \partial)$ is a complex of abelian groups. Its $n$-th homology is denoted by $\CH^q (X, n)$ and is called the $n$-th higher Chow group of $X$ of codimension $q$.\qed
\end{enumerate}
\end{defn}

We use the following result by Nesterenko-Suslin \cite{NS} and B. Totaro \cite{Totaro}:
\begin{thm}\label{thm:NST}
Let $k$ be a field. Then we have an isomorphism $K^M_n (k) \simeq \CH^n (k, n)$ such that the Milnor symbol $\{ a_1, \cdots, a_n \}$ with $a_i \in k^{\times}$ is mapped to the closed point on $\square^n_k$ given by the system $\{ y_1 = a_1, \cdots, y_n = a_n \}$. 
\end{thm}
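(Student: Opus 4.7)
The plan is to construct an explicit homomorphism $\phi : K^M_n(k) \to \CH^n(k,n)$ sending the Milnor symbol $\{a_1, \ldots, a_n\}$ to the class of the $k$-rational closed point $p_{\un{a}} := (a_1, \ldots, a_n) \in \square^n_k$, verify that the Milnor relations are respected on the cycle side, and then establish bijectivity by comparing a Gersten-type localization sequence for $\CH^n(\square^1_k, n)$ with Milnor's own exact sequence for the function field $k(t)$.

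First I would check that $\phi$ descends from the tensor algebra to $K^M_n(k)$. The point $p_{\un{a}}$ lies in $z^n(k,n)$ because each $a_i \in k^\times$ avoids both $0$ and $\infty$, so $p_{\un{a}}$ meets every proper face in the empty set, which is trivially in codimension $n$. For multilinearity in the $i$-th slot I would exhibit an explicit $1$-cycle $C \subset \square^{n+1}_k$ whose boundary realizes $p_{(a_1,\ldots,ab,\ldots,a_n)} - p_{(a_1,\ldots,a,\ldots,a_n)} - p_{(a_1,\ldots,b,\ldots,a_n)}$, obtained by fixing $y_j = a_j$ for $j \neq i, n+1$ and imposing a rational relation between $y_i$ and $y_{n+1}$ whose specializations at $y_{n+1} \in \{0, \infty\}$ recover the three desired points. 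For the Steinberg relation $\{a, 1-a\} = 0$, the curve in $\square^2_k$ cut out by $y_1 + y_2 = 1$ suffices: its closure meets $\{y_1 = 0\}$ and $\{y_2 = 0\}$ at $(0,1)$ and $(1,0)$, both excluded from $\square$, so after multiplying with the constant coordinates $a_3, \ldots, a_n$ the only surviving boundary contribution is $\pm p_{(a,\,1-a,\,a_3,\ldots,a_n)}$.

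For bijectivity I would follow the inductive strategy of Totaro and Nesterenko-Suslin via the localization sequence
\begin{equation*}
0 \to \CH^n(k, n) \to \CH^n(\square^1_k, n) \overset{\partial}{\to} \bigoplus_{x \in \square^1_k \setminus \{0,\infty\}} \CH^{n-1}(k(x), n-1) \to 0,
\end{equation*}
which should match term by term with the Bass-Tate-Milnor exact sequence
\begin{equation*}
0 \to K^M_n(k) \to K^M_n(k(t)) \to \bigoplus_{x} K^M_{n-1}(k(x)) \to 0.
\end{equation*}
The case $n=1$ is classical (both sides reduce to $k^\times$), and induction on $n$ closes the argument, provided $\phi$ intertwines the tame residue maps on the Milnor side with the face maps induced by the closures in $\overline{\square}^n_k$.

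The main obstacle will be establishing the middle localization sequence for $\CH^n(\square^1_k, n)$ and the compatibility with Bass-Tate residues. This has two key ingredients: a moving lemma showing that every class in $\CH^n(\square^1_k, n)$ is represented by a cycle in sufficiently good position with respect to $\{y_1 = 0\}$ and $\{y_1 = \infty\}$; and a cycle-theoretic incarnation of the Bass-Tate generation lemma, which says that $K^M_n(k(t))$ is generated over $K^M_n(k)$ by symbols $\{\pi, b_2, \ldots, b_n\}$ with $\pi$ monic irreducible. Both steps require delicate manipulation of the closures $\overline{Z} \subset \overline{\square}^n_k$ and the induced contributions at the faces $\overline{F}_i^\epsilon$, and together they constitute the core technical content of \cite{NS} and \cite{Totaro}.
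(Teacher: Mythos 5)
The paper treats this statement as a recalled result of Nesterenko--Suslin and Totaro rather than proving it, but the overall architecture you sketch -- the graph map $\{a_1,\ldots,a_n\} \mapsto (a_1,\ldots,a_n)$, verification of the Steinberg and multilinearity relations via explicit $1$-cycles, and bijectivity by matching the localization sequence against the Bass--Tate--Milnor sequence -- is indeed the standard route and is the right one. The paper itself replays Totaro's relation-checking arguments in the $A$-linear setting in Proposition \ref{prop:gr mult n infty}, so your strategy is compatible with the ambient framework.

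However, your proposed Steinberg cycle is wrong on several counts. You take the curve $\{y_1 + y_2 = 1\}$ in $\square^2_k$, multiplied with the constant coordinates $a_3,\ldots,a_n$, and claim its only surviving boundary is $\pm p_{(a,1-a,a_3,\ldots,a_n)}$. First, this cycle lives in $\square^n_k$ and has codimension $n-1$, so it belongs to $z^{n-1}(k,n)$, not to $z^n(k,n+1)$; its boundary cannot land in $z^n(k,n)$, so it cannot kill the class of $p_{(a,1-a,\ldots)}$ in $\CH^n(k,n)$ even in principle. Second, the curve $\{y_1 + y_2 = 1\}$ passes through the point $(\infty,\infty)$, which lies in $\square^2$ since $\infty \in \square = \mathbb{P}^1 \setminus \{1\}$; this is the codimension-$2$ face $\{y_1 = \infty\} \cap \{y_2 = \infty\}$, and a $1$-dimensional cycle meeting a $0$-dimensional face is an improper intersection, so the cycle is not even admissible. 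Third, your boundary inventory is incomplete: you check only $\{y_1 = 0\}$ and $\{y_2 = 0\}$, but $\partial_1^\infty$ and $\partial_2^\infty$ each contribute the non-excluded point $(\infty,\infty,a_3,\ldots)$, which does not vanish. The correct cycle is the one Totaro uses (and which the paper reproduces as $W_1$ in the proof of Proposition \ref{prop:gr mult n infty}): the closure of the graph of $x \mapsto (x,\,1-x,\,\tfrac{a-x}{1-x},\,a_3,\ldots,a_n)$ inside $\square^{n+1}_k$. The extra coordinate $y_3 = \tfrac{a-x}{1-x}$ is what forces the closure to avoid $(\infty,\infty,\ldots)$ -- at $x = \infty$ one finds $y_3 = 1 \notin \square$ -- and what makes exactly one face nonempty, namely $\partial_3^0 = p_{(a,1-a,a_3,\ldots,a_n)}$. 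If you fix this, the rest of your plan goes through as in \cite{NS} and \cite{Totaro}.
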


\section{${\rm d}$-cycles, ${\rm v}$-cycles and mod $I^{m+1}$-equivalence}\label{sec:our cycle}

In \S \ref{sec:our cycle}, we define the cycles of our interest step-by-step, and an equivalence relation on them. We study their essential properties as well.

From \S \ref{sec:DF condition} to \S \ref{sec:vanishing}, we introduce certain conditions to impose on algebraic cycles so as to define the main objects called the ${\rm d}$-cycles and the ${\rm v}$-cycles. The following diagram summarizes  the conditions to be defined and how they are related:

$$
{\rm v}\mbox{-cycles} \ \tuborg {\rm d}\mbox{-cycles} \ \tuborg (GP)_* \Rightarrow  \ \ (GP) : \ \ \mbox{ higher Chow cycles} \\
											(SF)_* \Rightarrow \tuborg (DO) \\ (SF) \sluttuborg
											 \sluttuborg \\
							\mbox{pre-vanishing} \ \ \ \   \Rightarrow \ \ \ \   (SF) \sluttuborg
$$

In \S \ref{sec:higher vanishing} and \S \ref{sec:mod t^{m+1}}, we discuss some additional notions, such as the mod $I^{m+1}$-equivalence, and several properties.

\subsection{Dominance / flatness conditions $(DO)$ / $(DF)$}\label{sec:DF condition}
The cycle groups of our primary interest to be defined later in \S \ref{sec:dsf} and \S \ref{sec:vanishing} are given by certain subcomplexes of the higher Chow complexes of henselian local $k$-schemes, subject to a few extra requirements discussed from \S \ref{sec:DF condition} to \S \ref{sec:SF* condition}. We start with the following basic lemma:

\begin{lem}\label{lem:basic Milnor 1}
Let $X$ and $B$ be integral $k$-scheme of finite dimension. Let $Z \subset X \times B$ be an integral closed subscheme. Consider the morphism $Z \hookrightarrow X \times B \to X$ composed with the projection.

\begin{enumerate}
\item Suppose $X$ is regular of dimension $1$. If $Z \to X$ is dominant, then it is flat.
\item Suppose $B$ is proper over $k$. If $Z \to X$ is dominant, then it is proper and surjective.
\item Suppose $B$ is proper over $k$. Then the following are equivalent:
\begin{enumerate}
\item $Z \to X$ is affine.
\item $Z \to X$ is finite.
\item $Z \to X$ is quasi-finite.
\end{enumerate}
\end{enumerate}
\end{lem}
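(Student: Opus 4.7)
The plan is to handle the three parts in order, as each is a fairly standard application of well-known commutative-algebraic and morphism-theoretic facts.

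For (1), I would reduce flatness to a local statement at each point $z \in Z$ over $x = f(z) \in X$, where $f: Z \to X$ is the composition. Since $X$ is regular of dimension $1$, either $x$ is the generic point (so $\mathcal{O}_{X,x}$ is a field, and flatness is automatic) or $x$ is a closed point, in which case $\mathcal{O}_{X,x}$ is a DVR. Because $Z$ is integral and $f$ is dominant, the homomorphism $\mathcal{O}_{X,x} \to \mathcal{O}_{Z,z}$ factors through an injection into the function field $k(Z)$; in particular $\mathcal{O}_{Z,z}$ is a torsion-free $\mathcal{O}_{X,x}$-module. Over a DVR, torsion-freeness is equivalent to flatness, so $f$ is flat at $z$.

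For (2), I would use that properness is preserved by base change and by closed immersions. Since $B \to \Spec(k)$ is proper, the projection $X \times B \to X$ is proper; as $Z \hookrightarrow X \times B$ is a closed immersion, the composite $Z \to X$ is proper as well. A proper morphism is closed, so its image is closed in $X$; combined with the dominance hypothesis, the image is both closed and dense, and $X$ is irreducible, so the image must equal $X$. Hence $f$ is surjective.

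For (3), I would first note that the argument in (2) shows $f: Z \to X$ is proper regardless of dominance, since it only used that $B$ is proper over $k$ and that $Z$ is closed in $X \times B$. Given this, the three implications are each standard: a proper affine morphism is finite (by the classical fact that proper plus affine equals finite), any finite morphism is both affine and quasi-finite, and a proper quasi-finite morphism between noetherian schemes is finite by Grothendieck's form of Zariski's main theorem. Chaining these yields (a) $\Leftrightarrow$ (b) $\Leftrightarrow$ (c).

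I expect no real obstacle here: the only mildly delicate point is part (1), where one might worry that $Z \to X$ is not assumed to be of finite type, but this is harmless because the criterion ``torsion-free equals flat over a DVR'' holds for arbitrary modules. The overall plan is simply to unwind each claim into a textbook lemma about flatness, properness, or finiteness and apply it directly.
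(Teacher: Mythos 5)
Your proposal is correct and follows essentially the same route as the paper: for (1) the paper simply cites Hartshorne III.9.7, whose standard proof is exactly the DVR/torsion-free argument you spell out; for (2) and (3) the paper likewise reduces to the properness of $Z \to X$ coming from the closed immersion into $X \times B$ and the properness of the projection, then invokes Zariski's Main Theorem for the quasi-finite $\Rightarrow$ finite implication. Your explicit observation that the properness of $Z \to X$ in (3) does not depend on dominance is a small but correct clarification that the paper leaves implicit.
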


\begin{proof}
The proofs are all easy, but let us give sketches with references.

(1) This is elementary e.g. \cite[Ch.III Proposition 9.7, p.257]{Hartshorne}.

\medskip

(2) Since the closed immersion $Z \hookrightarrow X$ and the projection $X \times B \to X$ are both proper, the composite $Z \to X$ is also proper. Since $Z \to X$ is dominant and closed, the image of $Z$ in $X$ is all of $X$, thus $Z \to X$ is surjective. %Hence together with (1), it is faithfully flat. 

\medskip

(3) Since $Z \to X$ is proper, the equivalence (a) $\Leftrightarrow$ (b) is obvious. The implication (b) $\Rightarrow$ (c) is trivial.

Suppose (c). Then a proper quasi-finite morphism is finite by e.g. \cite[Ch. III Exercise 11.2, p.280]{Hartshorne} or by an application of Zariski's Main Theorem. Thus (b) and (c) are equivalent.
%(4) Since $X \times B_0$ is affine and $Z$ is its closed subscheme, $Z$ is also affine, which is (a) of part (3).
\end{proof}

By taking $B= \square^n$ or $\overline{\square}^n$ for $n \geq 0$ in the above, we obtain:

\begin{cor}\label{cor:proper int t=0}
Let $X$ be an integral $k$-scheme of finite dimension. Let $n \geq 0$ be an integer, and let $Z \subset X \times \square^n$ be an integral closed subscheme. Let $\overline{Z}$ be its Zariski closure in $X \times \overline{\square}^n$.

Suppose that
\begin{enumerate}
\item %[$({\rm \textbf{CD}})_n$]
 $Z \subset X \times \square^n$ is of codimension $n$,
\item  %[$({\rm \textbf{DF}})$] 
$\overline{Z} \to X$ is dominant and flat.
\end{enumerate}

Then $\overline{Z} \to X$ is finite and faithfully flat.
\end{cor}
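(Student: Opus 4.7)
The plan is to deduce everything from Lemma~\ref{lem:basic Milnor 1} applied to $B = \overline{\square}^n = (\mathbb{P}^1)^n$, which is proper over $k$. The easy half of the corollary follows at once: since $\overline{Z} \to X$ is dominant by hypothesis and $B$ is proper, part (2) of the lemma gives that $\overline{Z} \to X$ is proper and surjective. Combined with the flatness assumption, this yields faithful flatness. It remains to establish finiteness, for which, by part (3) of the lemma (the implication (c) $\Rightarrow$ (b)), it suffices to prove quasi-finiteness.

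To prove quasi-finiteness, I would run a standard dimension count. Since $Z$ has codimension $n$ in $X \times \square^n$ and $X$ is integral of finite Krull dimension, $\dim Z = \dim X$. Taking Zariski closure in $X \times \overline{\square}^n$ preserves dimension, so $\dim \overline{Z} = \dim X$ as well. Because $\overline{Z}$ is integral and $\overline{Z} \to X$ is dominant, the generic point of $\overline{Z}$ maps to the generic point of $X$, and the function field extension $K(\overline{Z})/K(X)$ has transcendence degree $\dim \overline{Z} - \dim X = 0$, hence is algebraic. As $\overline{Z} \hookrightarrow X \times \overline{\square}^n \to X$ is of finite type, the generic fiber $\overline{Z}_{K(X)}$ is then a $0$-dimensional scheme.

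Flatness then propagates this: for the flat, finite type morphism $\overline{Z} \to X$ with $X$ integral, the dimension of the fibers is constant on the image, which is all of $X$ by surjectivity. Thus every fiber is $0$-dimensional, so $\overline{Z} \to X$ is quasi-finite, and Lemma~\ref{lem:basic Milnor 1}(3) concludes that it is finite. (Alternatively, one could invoke Zariski's Main Theorem directly on the proper, quasi-finite morphism to conclude finiteness.)

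The main obstacle is the mild bookkeeping in the dimension computation — one must use that $X$ is integral (so that $K(X)$ is a field) and noetherian of finite Krull dimension, and that the Zariski closure of an integral subscheme of the finite type $X$-scheme $X \times \overline{\square}^n$ preserves dimension. Both of these are consequences of the standing conventions of the paper, so no further technical machinery is needed.
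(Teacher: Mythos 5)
Your argument is correct and follows essentially the same route as the paper's proof: apply Lemma~\ref{lem:basic Milnor 1}(2) together with flatness to get faithful flatness, use the dimension count $\dim Z = \dim \overline{Z} = \dim X$ plus flatness to deduce quasi-finiteness, and finish with Lemma~\ref{lem:basic Milnor 1}(3). The only difference is that you spell out the generic-fiber dimension computation that the paper leaves implicit.
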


\begin{proof}
Since $\overline{Z} \to X$ is dominant, by Lemma \ref{lem:basic Milnor 1}-(2), $\overline{Z} \to X$ is surjective. Since this is assumed to be flat, it is faithfully flat.

\medskip

Since $\dim \ Z = \dim \ \overline{Z} = \dim \ X $, and $\overline{Z} \to X$ is faithfully flat of finite type, it is quasi-finite of relative dimension $0$. Thus by Lemma \ref{lem:basic Milnor 1}-(3), the morphism $\overline{Z} \to X$ is finite. 
\end{proof}

\begin{remk}
If $n=0$ in Corollary \ref{cor:proper int t=0}, then that $Z \subset X$ is of codimension $0$ means precisely that $Z=X$. So, it is generally harmless to assume $n \geq 1$ to omit this trivial case.
\qed
\end{remk}

Let's define the following terminologies:

\begin{defn}\label{defn:DF}
Let $X$ be an integral $k$-scheme of finite dimension. Let $n \geq 0$ be an integer. For a closed subscheme $Z \subset X \times \square^n$, we let $\overline{Z}$ be its Zariski closure in $X \times \overline{\square}^n$. 
\begin{enumerate}
\item We say that $Z$ satisfies the property $(DO)$ if for each face $F \subset \square^n$ such that $Z \cap (X \times F ) \not = \emptyset$, each component $W \subset Z \cap (X \times F)$ is dominant over $X$. 

\item If each $\overline{W} \to X$ is dominant and flat, then we say $Z$ satisfies the property $(DF)$.

\item If $Z$ is a cycle on $X \times \square^n$, we say that $Z$ satisfies $(DO)$ (resp. $(DF)$) if each component of $Z$ satisfies the property $(DO)$ (resp. $(DF)$).
\qed
\end{enumerate}
\end{defn}

\begin{remk}\label{remk:0 DO}
When $\dim \ X = 0$, both of the conditions $(DO)$ and $(DF)$ hold automatically for all nonempty cycles.
\qed
\end{remk}

\begin{remk}
When $X$ is regular of dimension $1$, the condition $(DO)$ and $(DF)$ are equivalent by Lemma \ref{lem:basic Milnor 1}-(1). Here, we bothered to define $(DF)$ still because in the future when we need to work with higher dimensional schemes, the author expects that it is probably necessary to impose the flatness.
\qed
\end{remk}

\subsection{Special fiber condition $(SF)$}\label{sec:SF condition}

For various versions of Chow groups and some moving lemmas on them, one often considered a condition of proper intersection with respect to a given set $\mathcal{C}$ of subschemes in  $X$ (see e.g. \cite{KP moving}, \cite{KP Jussieu}, \cite{KP sfs}). The special fiber condition below is along this line, with respect to a given fixed closed point $p \in X$:

\begin{defn}\label{defn:SF condition}
Let $X$ be an integral $k$-scheme of finite dimension. Let $n \geq 0$ be an integer. Let $Z \subset X \times \square^n$ be an integral closed subscheme and let $\overline{Z}$ be its Zariski closure in $X \times \overline{\square}^n$. Let $p \in X$ be a given fixed closed point. 

\begin{enumerate}
\item We say that $Z$ satisfies the condition $(SF)$ with respect to $p$, if the intersection $Z \cap (\{ p \} \times F)$ is proper on $X \times \square^n$  for each face $F \subset \square^n$.

\item In case the reference to $p$ is not needed (e.g. when $X$ is local), we simply say $Z$ satisfies the condition $(SF)$.

\item If $Z$ is a cycle on $X \times \square^n$, we say that $Z$ satisfies $(SF)$, if each component satisfies the property $(SF)$.\qed
\end{enumerate}
\end{defn}

\begin{remk}\label{remk:0 SF}
When $\dim \ X = 0$ we have $\{ p \} = X$, so the condition $(SF)$ always holds.
\qed
\end{remk}

\begin{remk}\label{remk:SF n=0 silly}
Suppose $n=0$. Then $\square^0 = \Spec (k)$, and $F= \Spec (k)$ is the only face. Hence $Z \cap ( \{ p \} \times F) = Z \cap \{ p \}$. This is either empty (when $p \not \in Z$) or exactly $\{p \}$ (when $p \in Z$), in which case its codimension in $X$ is $d=\dim \ X$. So, when we let $c\geq 0$ to be the codimension of $Z$ in $X$, the intersection is proper on $X$ if and only if either $p \not \in Z$, or $c=0$, in which case $Z=X$. 

If $X$ is local, then $Z$ necessarily contains $p$. So, for $n=0$ and $X$ local, the case $Z=X$ is the only possibility to have $(SF)$.
\qed
\end{remk}

When $X$ is local, the closed subschemes satisfying $(SF)$ have the following useful properties in Lemmas \ref{lem:proper int face} and \ref{lem:codim>n}:

\begin{lem}\label{lem:proper int face}

Let $X$ be an integral local $k$-scheme of finite dimension with the unique closed point $p$. Let $n \geq 1$ be an integer. Let $Z \subset X \times \square^n$ be an integral closed subscheme, and let $\overline{Z}$ be its closure in $X \times \overline{\square}^n$.

Suppose that
\begin{enumerate}
\item $Z \subset X \times \square^n$ is of codimension $n$, and
\item $Z$ satisfies $(SF)$.
\end{enumerate}

Then for each proper face $F \subsetneq \square^n$, we have
$$
Z \cap (X \times F) = \emptyset.
$$
\end{lem}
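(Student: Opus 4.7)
The plan is a proof by contradiction: assume $W$ is a nonempty irreducible component of $Z\cap(X\times F)$ for some proper face $F\subsetneq\square^n$ of codimension $c\geq 1$, and derive a contradiction.

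The first step uses $(SF)$ directly on $F$ itself. Since $Z$ has codimension $n$ and $\{p\}\times F$ has codimension $\dim X+c$ in the ambient $X\times\square^n$ of dimension $\dim X+n$, the sum of codimensions exceeds the ambient dimension, so the proper-intersection requirement forces $Z\cap(\{p\}\times F)=\emptyset$. In particular $W\cap(\{p\}\times F)=\emptyset$, i.e. writing $\pi\colon X\times\square^n\to X$ for the projection, $p\notin\pi(W)$.

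The second step passes to the Zariski closure $\overline{W}\subset X\times\overline{\square}^n$. Because $\overline{\square}^n$ is proper over $k$, the projection $X\times\overline{\square}^n\to X$ is proper, so $\pi(\overline{W})$ is a nonempty closed subset of $X$; the locality of $X$ forces $p\in\pi(\overline{W})$. This yields a point $q=(p,\bar y)\in\overline{W}$ with $\bar y\in\overline{F}$. If no coordinate of $\bar y$ equals $1$, then $\bar y\in\overline{F}\cap\square^n=F$, and $q\in\overline{W}\cap(X\times\square^n)=W$ gives $q\in W\cap(\{p\}\times F)$, contradicting step one.

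The main obstacle is the remaining subcase, in which some coordinate $\bar y_j=1$ so that $q$ sits in the boundary $\{p\}\times(\overline{F}\setminus F)$. In that situation $W\subsetneq\overline{W}$, so $\overline{W}$ is an irreducible closed subscheme of $\overline{Z}$ strictly larger than $W$. Invoking the codimension-$n$ hypothesis (interpreted in the sense of Definition~\ref{defn:HC}, so $\dim Z=\dim X$) together with the finiteness of $\overline{Z}\to X$ guaranteed, in the main dimension-one setting, by Corollary~\ref{cor:proper int t=0} and Lemma~\ref{lem:basic Milnor 1}, one obtains $\dim\overline{Z}=\dim X$. A dichotomy on the irreducible inclusion $\overline{W}\subset\overline{Z}$ then closes the argument: either $\dim\overline{W}=\dim\overline{Z}$, which by irreducibility forces $\overline{W}=\overline{Z}$ and hence $W=Z$, so $Z\subset X\times F$, returning a contradiction with the proper-intersection count of step one; or $\dim\overline{W}<\dim\overline{Z}$, which for $\dim X=1$ means $\overline{W}$ is a single closed point, contradicting the strict inclusion $W\subsetneq\overline{W}$. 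Controlling this boundary subcase is the delicate geometric point, and it foreshadows the systematic use of the extended face conditions $(GP)_*$ and $(SF)_*$ in the later sections.
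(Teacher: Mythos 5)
Your steps 1 and 2 track the paper's argument closely: the same codimension count (codimension of $Z$ plus codimension of $\{p\}\times F$ exceeds $\dim X+n$) forces $Z\cap(\{p\}\times F)=\emptyset$, and locality of $X$ plus properness of $X\times\overline{\square}^n\to X$ produces a point of $\overline{W}$ lying over $p$. But the paper never runs into your boundary subcase, because it does not pass to the closure of a component at all: it takes a closed point $x$ of the assumed nonempty intersection $Z\cap(X\times F)$, uses the proper composite $\{x\}\hookrightarrow\overline{Z}\to X$ to place $x$ over $p$, concludes $Z\cap(\{p\}\times F)\neq\emptyset$, and then the same dimension count inside $\{p\}\times F$ gives the contradiction. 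Your detour through $\overline{W}$ is what creates the case $\bar y_j=1$ in the first place, and that is exactly where your argument fails to close.

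Concretely, branch (a) of your dichotomy is a non sequitur. From $\overline{W}=\overline{Z}$ you get $Z\subset X\times F$, and you claim this contradicts step 1; it does not. Step 1 only says $Z\cap(\{p\}\times F)=\emptyset$, which is perfectly compatible with $Z\subset X\times F$ when $Z$ has empty fiber over $p$: this is precisely the pre-vanishing configuration (think of the part of $\{y_1=1+t,\ y_2=0\}$ lying in $\square^2_X$, whose only point over $p$ has $y_1=1$ and hence is absent from $X\times\square^n$). Nothing established in steps 1--2 excludes this picture, so the delicate case is left open rather than resolved. Moreover, the dimension bookkeeping you use to set up the dichotomy is not licensed by the hypotheses: the lemma is stated for local $X$ of arbitrary finite dimension, with no dominance or flatness assumption on $\overline{Z}\to X$, so Corollary~\ref{cor:proper int t=0} and Lemma~\ref{lem:basic Milnor 1} cannot be invoked (their hypotheses belong to $(DF)$ and the $(SF)_*$ setting, not to $(SF)$), and restricting to $\dim X=1$ changes the statement being proved. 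As written, the proposal therefore does not prove the lemma; the repair is to argue as the paper does, with (closed) points of $Z\cap(X\times F)$ itself and the properness of $\overline{Z}\to X$, so that the troublesome points at $y_i=1$ never enter.
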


\begin{proof}

Suppose that $Z \cap (X \times F) \not = \emptyset$ for a codimension $c \geq 1$ face $F \subset \square^n$. For each closed point $x \in Z \cap (X \times F)$, consider the composite 
$$
 \{x \} \hookrightarrow \overline{Z} \to X.
 $$

Since $\overline{Z} \to X$ is proper, this composite is also proper. So its image is a closed subset of $X$. Hence the closed point $x$ is mapped to the unique closed point $p$ of $X$, i.e. $x \in Z \cap (\{ p \} \times F)$. In particular, $Z \cap (\{ p \} \times F) \not = \emptyset$. However, the intersection of $Z$ with $\{ p \} \times F$ is proper by the condition $(SF)$ in (2). Thus together with the condition (1), the intersection $Z \cap ( \{ p \} \times F)$ is nonempty and of codimension $\geq n$ in $\{ p \} \times F$. But, the space $\{ p \} \times F$ is of dimension $n-c< n$, so it has no nonempty subset of codimension $\geq n$, which is a contradiction. Thus $Z \cap (X \times F) = \emptyset$.
\end{proof}

\begin{remk}
In Lemma \ref{lem:proper int face}, we did not consider the case $n=0$ because there is no proper face for $\square^0 = \Spec (k)$. If we want to regard the empty set $\emptyset$ as a proper face of $\square^0$, then the conclusion of the lemma still holds trivially. So we may include this case in Lemma \ref{lem:proper int face} as well, if one wishes to insist.
\qed
\end{remk}

\begin{remk}
In Lemma \ref{lem:proper int face *}, we will prove an analogue of Lemma \ref{lem:proper int face} for the condition $(SF)_*$ to be defined in \S \ref{sec:SF* condition}.
\qed
\end{remk}

\begin{lem}\label{lem:codim>n}
Let $X$ be an integral local $k$-scheme of finite dimension with the unique closed point $p$. Let $n \geq 0$ be an integer. Let $Z \subset X \times \square^n$ be an integral closed subscheme, and let $\overline{Z}$ be its closure in $X \times \overline{\square}^n$.

Suppose that $Z$ satisfies $(SF)$. Then the codimension $c$ of $Z$ in $X \times \square^n$ is $\leq n$. 
\end{lem}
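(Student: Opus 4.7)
The plan is to apply the condition $(SF)$ with the trivial face $F = \square^n$ (corresponding to $s=0$, the empty system of equations) and then argue by a dimension count. Since $X$ is integral local of dimension $d := \dim X$ with $p$ its unique closed point, the codimension of $\{p\} \times \square^n$ in $X \times \square^n$ is exactly $d$. The proper-intersection requirement of $(SF)$ then forces every component $W$ of $Z \cap (\{p\} \times \square^n)$ to satisfy $\codim(W, X \times \square^n) \geq c + d$, and therefore $\dim W \leq (d+n)-(c+d) = n-c$.

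Thus it suffices to exhibit a non-empty component of this intersection; for then $\dim W \geq 0$ yields $c \leq n$ at once. For the non-emptiness I would observe that $\overline{Z} \to X$ is proper, being the composite of the closed immersion $\overline{Z} \hookrightarrow X \times \overline{\square}^n$ with the proper projection $X \times \overline{\square}^n \to X$ (proper because $\overline{\square}^n$ is projective over $k$). Since $\overline{Z}$ is non-empty and $X$ is local, the image of $\overline{Z}$ in $X$ is a non-empty closed subset and hence contains $p$, yielding a point of $\overline{Z}$ lying over $p$. The remaining technical step is to upgrade this to a point of $\overline{Z}$ that actually lies in $\{p\} \times \square^n$ rather than in the boundary $\{p\} \times (\overline{\square}^n \setminus \square^n) = \bigcup_{i=1}^n \{p\} \times \{y_i = 1\}$.

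This last step is the main obstacle: $(SF)$ alone does not preclude the possibility that $\overline{Z} \cap (\{p\} \times \overline{\square}^n)$ sits entirely inside $\bigcup_i \{y_i = 1\}$. In the setting of this paper the step is handled using the dominance properties carried by the cycles of interest (the $(DO)/(DF)$ conditions), equivalently via the flatness of the projection $X \times \square^n \to X$ together with the standard height formula: the height of the defining prime $\mathfrak{p}$ of $Z$ in the coordinate ring of $X \times \square^n$ equals the height of its pullback to the generic fiber $\square^n_K$ (with $K = \mathrm{Frac}\,\mathcal{O}_X$), which is at most $n$. Either route closes the argument and gives $c \leq n$.
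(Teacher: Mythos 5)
Your core strategy (apply $(SF)$ to $F = \square^n$ and do a dimension count) is the same as the paper's, and your diagnosis of the gap is correct: nonemptiness of $Z \cap (\{p\} \times \square^n)$ does not follow from properness of $\overline{Z} \to X$, since that only produces a point of $\overline{Z}$ over $p$, which may lie in the boundary $\{p\} \times \bigcup_i \{y_i = 1\}$. The paper's own proof glosses over exactly this: it picks a closed point $x$ of $Z$ and asserts that the composite $\{x\} \hookrightarrow \overline{Z} \to X$ is proper, concluding $x$ lies over $p$. But a closed point of $Z$ need not be closed in $\overline{Z}$ (as $Z$ is merely open there), so $\{x\} \hookrightarrow \overline{Z}$ need not be a closed immersion. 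Concretely, for $X = \Spec k[[t]]$ and $Z = \{y_1 = 1+t\} \subset X \times \square^1$, $Z$ is a single point, trivially a closed point of itself, whose image in $X$ is the generic point rather than $p$; so the paper's assertion fails there. You and the paper thus hit the same genuine obstacle, but you have named it and the paper has not.

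Your two suggested remedies, however, do not close the gap as written: the lemma's hypothesis is $(SF)$ only, neither $(DO)$ nor $(DF)$ being assumed, and the flatness-plus-height-formula route already presupposes that $Z$ dominates $X$. The actual repair, valid when $\dim X \leq 1$ (the only setting in which the lemma is used), is to observe that $(SF)$ applied to $F = \square^n$ already rules out $Z \subset \{p\} \times \square^n$, for otherwise $Z \cap (\{p\} \times \square^n) = Z$ and properness would force $c \geq c + \dim X$, absurd; when $\dim X = 1$ this means the defining prime $\mathfrak{p}_Z$ of $Z$ contracts to $(0)$ in $\mathcal{O}_X$, and the dimension formula for the flat map $\mathcal{O}_X \to \mathcal{O}_X[u_1, \ldots, u_n]$ then gives $c = \mathrm{ht}(\mathfrak{p}_Z) = $ (height of the generic-fiber prime) $ \leq n$. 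In higher dimension the lemma as stated actually fails: with $X = \Spec k[x,z]_{(x,z)}$, $n = 1$, and $X \times \square^1 = \Spec \mathcal{O}_X[u]$, the closed point $Z$ defined by the maximal ideal $(x, zu-1)$ has $\mathrm{ht} = 2 > n$ yet has empty special fiber, so it satisfies $(SF)$ vacuously. This does not propagate further, since all later uses of the lemma take $\dim X \leq 1$.
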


\begin{proof}
It uses a similar reasoning. Suppose $c \geq n+1$. Let $x \in Z$ be a closed point. Via the proper composite morphism
$$
 \{ x \} \hookrightarrow \overline{Z} \to X,
 $$
its image in $X$ is the closed point $p$ of $X$. Hence $x \in Z \cap ( \{ p \} \times \square^n)$.

Since $Z$ satisfies $(SF)$, the intersection $Z \cap ( \{ p \} \times \square^n)$ is proper, so that it has the codimension $c  \geq n+1$ in the space $\{ p  \} \times \square^n$ of dimension $n$, which is impossible. Hence $c \leq n$.
\end{proof}

\subsection{Extended conditions $(GP)_*$ and $(SF)_*$}\label{sec:SF* condition}

The following conditions that take into account some information ``at infinity" (which is actually $1\in \overline{\square} \setminus \square$), called the conditions $(GP)_*$ and $(SF)_*$, are certain requirements similar to the conditions $(GP)$ and $(SF)$, but with respect to the Zariski closures of the cycles and the extended faces:

 \begin{defn}\label{defn:GP*}
 Let $X$ be an integral $k$-scheme of finite dimension. Let $n \geq 0$ be an integer. Let $Z \subset X \times \square^n$ be an integral closed subscheme and let $\overline{Z}$ be its Zariski closure in $X \times \overline{\square}^n$. 
 
 We say that $Z$ satisfies the condition $(GP)_*$ if for each face $F \subset \square^n$, the intersection $\overline{Z} \cap (X \times \overline{F})$ is proper on $X \times \overline{\square}^n$.
  \qed
 \end{defn}

\begin{defn}\label{defn:SF* condition}
Let $X$ be an integral $k$-scheme of finite dimension. Let $n \geq 0$ be an integer. Let $Z \subset X \times \square^n$ be an integral closed subscheme and let $\overline{Z}$ be its Zariski closure in $X \times \overline{\square}^n$. Let $p \in X$ be a given fixed closed point.

\begin{enumerate}
\item We say that $Z$ satisfies $(SF)_*$ with respect to $p$, if for each face $F \subset \square^n$, the intersection $\overline{Z} \cap ( \{ p \} \times \overline{F})$ is proper on $X \times \overline{\square}^n$. 

In particular, if $Z$ satisfies $(SF)_*$ and $\overline{Z} \cap (X \times \overline{F}) \not = \emptyset$, then for each component $\overline{W} \subset \overline{Z} \cap (X \times \overline{F})$, the intersection $\overline{W} \cap ( \{ p \} \times \overline{F})$ is proper on $X \times \overline{\square}^n$.

\item If $Z$ is a cycle on $X \times \square^n$, we say that $Z$ satisfies $(SF)_*$ if each component of $Z$ satisfies $(SF)_*$.

\item If $X$ is local, then we drop the reference to the point $p$.
\qed

\end{enumerate}
\end{defn}

\begin{remk}\label{remk:SF* SF}
Apparently, if $Z$ satisfies $(GP)_*$ (resp. $(SF)_*$), then it satisfies $(GP)$ (resp. $(SF)$).
\qed
\end{remk}

\begin{remk}\label{remk:0 SF*}
When $\dim \ X = 0$, write $X= \Spec (k)$. Here, the condition $(GP)$ and $(SF)$ are equivalent, while $(GP)_*$ and $(SF)_*$ are equivalent by definition. Let's inspect the condition $(SF)_*$ for an integral closed cycle $Z \subset \square^n$. This means that the intersection $\overline{Z} \cap \overline{F}$
is proper on $\overline{\square}^n$ for each face $F \subset \square^n$. 

\medskip

Suppose $Z$ is a closed point in $\square^n$. Then we have $Z= \overline{Z}$ so that $\overline{Z} \cap \overline{F} = Z \cap F$. Hence the above condition $(SF)_*$ for $Z$ is equivalent to that $Z \cap F$ is proper, i.e. the conditions $(GP)$, $(SF)$, $(GP)_*$ and $(SF)_*$ are all equivalent.

\medskip

This time suppose that $\dim \ Z  = 1$ and it satisfies the condition $(GP)$. 
When $F$ is a codimension $\geq 2$ face in $\square^n$, then the intersection $Z \cap F$ is empty by $(GP)$, but it is possible to have $\overline{Z} \cap \overline{F}\not = \emptyset$, in which case the intersection may not be proper. In particular, the conditions $(GP)_*$ and $(SF)_*$ do not hold automatically even if $\dim \ X = 0$ in general. 

This situation is a bit different from those of the conditions $(DO)$, $(DF)$, and $(SF)$, all of which hold automatically when $\dim \ X = 0$. See  Remarks \ref{remk:0 DO} and \ref{remk:0 SF}.
\qed
\end{remk}

\begin{remk}\label{remk:SF* n=0 silly}
In case $n=0$, we have $\square^0 = \overline{\square}^0 = \Spec (k)$ and $Z = \overline{Z}$. This is similar to Remark \ref{remk:SF n=0 silly}. We have $\overline{Z} \cap (\{ p \} \times \overline{\square}^0) = \overline{Z} \cap \{ p \}$. This is either empty (when $p \not \in {Z}$) or exactly $\{ p \}$ (when $p \in {Z}$), whose codimension in $X$ is $d= \dim \ X$. 

So, as in Remark \ref{remk:SF n=0 silly}, when we let $c\geq 0$ be the codimension of $Z$ in $X$, the intersection is proper on $X$ if and only if either $p \not \in Z$, or $c=0$, in which case $Z=X$. 

If $X$ is local (which we will later assume), then $Z$ necessarily contains $p$. So, for $n=0$ and $X$ local, the case $Z=X$ is the only possibility to have the property $(SF)_*$
\qed
\end{remk}

We have the following analogue of Lemma \ref{lem:proper int face} for the condition $(SF)_*$:

\begin{lem}\label{lem:proper int face *}
Let $X$ be an integral local $k$-scheme of finite dimension with the unique closed point $p$. Let $n \geq 1$ be an integer. Let $Z \subset X \times \square^n$ be an integral closed subscheme, and let $\overline{Z}$ be its closure in $X \times \overline{\square}^n$.

Suppose that
\begin{enumerate}
\item $Z \subset X \times \square^n$ is of codimension $n$, and
\item $Z$ satisfies $(SF)_*$.
\end{enumerate}
Then for each proper face $F \subsetneq \square^n$, we have
$$
\overline{Z} \cap (X \times \overline{F}) = \emptyset.
$$
\end{lem}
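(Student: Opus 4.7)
The plan is to mirror the argument used in \lemref{lem:proper int face} almost verbatim, replacing $(SF)$ by $(SF)_*$, faces $F$ by extended faces $\overline{F}$, and $Z$ by its closure $\overline{Z}$. The key geometric input is that $\overline{\square}^n$ is proper over $k$, so the structure morphism $\overline{Z} \hookrightarrow X \times \overline{\square}^n \to X$ is proper; everything then follows from a codimension count on the special fiber.

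Concretely, I would argue by contradiction. Suppose that $\overline{Z} \cap (X \times \overline{F}) \neq \emptyset$ for some proper face $F \subsetneq \square^n$ of codimension $c \geq 1$. Pick a closed point $x$ in this intersection. Since $\overline{Z} \to X$ is proper, the image of the closed subset $\{x\}$ is closed in $X$, hence (as $x$ is a closed point of the scheme of finite type over the residue field of its image) must equal the unique closed point $p \in X$. Therefore $x \in \overline{Z} \cap (\{p\} \times \overline{F})$, and in particular this intersection is nonempty.

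Now I invoke $(SF)_*$: by Definition~\ref{defn:SF* condition} the intersection $\overline{Z} \cap (\{p\} \times \overline{F})$ is proper on $X \times \overline{\square}^n$. Using (1), we have $\codim(\overline{Z}, X \times \overline{\square}^n) = n$, and clearly $\codim(\{p\} \times \overline{F}, X \times \overline{\square}^n) = \dim X + c$. Properness of the intersection forces
\[
\codim\bigl(\overline{Z} \cap (\{p\} \times \overline{F}),\ X \times \overline{\square}^n\bigr) \geq n + \dim X + c,
\]
i.e. its codimension inside $\{p\} \times \overline{F}$ is at least $n$. But $\dim(\{p\} \times \overline{F}) = n - c < n$, so no nonempty closed subset can have codimension $\geq n$ in it, which is the desired contradiction. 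Hence $\overline{Z} \cap (X \times \overline{F}) = \emptyset$.

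I do not anticipate a real obstacle: the only nontrivial ingredient beyond the proof of \lemref{lem:proper int face} is being careful that the projective closures behave well, but since $\overline{\square}^n$ is proper and $\overline{Z}$ is defined as the Zariski closure, the map $\overline{Z} \to X$ is automatically proper. The codimension count is then identical to the affine case, and nothing new is needed.
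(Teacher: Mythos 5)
Your proof is correct and takes essentially the same approach as the paper. The paper's proof just observes that $\dim Z = \dim \overline{Z}$ (so condition (1) transfers to $\codim(\overline{Z}, X \times \overline{\square}^n) = n$) and then defers to the proof of Lemma~\ref{lem:proper int face} with $Z$, $F$, $(SF)$ replaced by $\overline{Z}$, $\overline{F}$, $(SF)_*$, which is precisely the argument you have written out.
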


\begin{proof}
One notes that $\dim \ Z = \dim \ \overline{Z}$ so that the codimension of $\overline{Z}$ in $X \times \overline{\square}^n$ is also $n$. The rest of the proof is identical to that of Lemma \ref{lem:proper int face}, except that we replace $Z$ and $F$ by $\overline{Z}$ and $\overline{F}$ respectively, and use $(SF)_*$ instead of $(SF)$. We shrink details.
\end{proof}

We also have the following analogue of Lemma \ref{lem:codim>n} for the condition $(SF)_*$:

\begin{lem}\label{lem:codim>*}
Let $X$ be an integral local $k$-scheme of finite dimension with the unique closed point $p$. Let $n \geq 0$ be an integer. Let $Z \subset X \times \square^n$ be an integral closed subscheme, and let $\overline{Z}$ be its closure in $X \times \overline{\square}^n$.

Suppose $Z$ satisfies $(SF)_*$. Then the codimension $c$ of $Z$ in $X \times \square^n$ is $\leq n$.
\end{lem}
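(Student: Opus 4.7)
The plan is to mimic the argument of Lemma \ref{lem:codim>n} essentially verbatim, replacing $Z$ by $\overline{Z}$, the face $\square^n$ by the extended face $\overline{\square}^n$, and invoking $(SF)_*$ in place of $(SF)$. The two key ingredients are (i) that $\overline{Z} \to X$ is proper, because $\overline{\square}^n \to \Spec(k)$ is proper; and (ii) that taking the Zariski closure does not change the codimension, i.e.\ $\codim_{X\times\square^n}(Z) = \codim_{X\times\overline{\square}^n}(\overline{Z}) = c$.

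First I would argue by contradiction: suppose $c \geq n+1$. Pick any closed point $x \in Z \subset \overline{Z}$. Via the proper composite
\[
\{x\} \hookrightarrow \overline{Z} \to X,
\]
the image of $x$ is a closed point of $X$, which must be the unique closed point $p$. Consequently $x \in \overline{Z} \cap (\{p\} \times \overline{\square}^n)$, so this intersection is nonempty.

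Next, I would apply the condition $(SF)_*$ to the face $F = \square^n$ (the trivial codimension-zero face), whose extended face is $\overline{F} = \overline{\square}^n$. By $(SF)_*$, the intersection $\overline{Z} \cap (\{p\} \times \overline{\square}^n)$ is proper on $X \times \overline{\square}^n$. Since $\{p\} \times \overline{\square}^n$ has codimension $\dim X$ in $X \times \overline{\square}^n$, each component of this nonempty intersection must have codimension $c + \dim X$ in $X \times \overline{\square}^n$, hence dimension
\[
(\dim X + n) - (c + \dim X) = n - c \leq -1,
\]
which is impossible. This contradicts the nonemptiness established in the previous step, so $c \leq n$.

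There is no real obstacle here; the only subtlety to check is that the codimension of $\overline{Z}$ in $X\times\overline{\square}^n$ is the same as the codimension of $Z$ in $X\times\square^n$, which is immediate from $\dim \overline{Z} = \dim Z$ and the fact that $X\times\overline{\square}^n$ and $X\times\square^n$ have the same dimension. So the proof is really just a cosmetic adaptation of Lemma \ref{lem:codim>n}, and I would state it with a one-line reference to that proof, spelling out only the substitutions $Z\leadsto\overline{Z}$, $\square^n\leadsto\overline{\square}^n$, and $(SF)\leadsto(SF)_*$.
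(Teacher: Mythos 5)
Your argument is correct, but it does more work than the paper's own proof. The paper simply observes (via \remref{remk:SF* SF}) that $(SF)_*$ implies $(SF)$, and then cites \lemref{lem:codim>n} directly — a one-line proof. You instead rerun the argument of \lemref{lem:codim>n} from scratch in the ``bar'' setting, swapping $Z \leadsto \overline{Z}$, $\square^n \leadsto \overline{\square}^n$, and $(SF) \leadsto (SF)_*$. This is a valid proof (your dimension count is correct and the observation $\codim_{X\times\square^n}Z = \codim_{X\times\overline{\square}^n}\overline{Z}$ is the right bookkeeping), but it is strictly more effort: the paper has already recorded that $(SF)_*$ is a strengthening of $(SF)$, so the cleanest move is to invoke that implication and reuse the earlier lemma verbatim rather than re-deriving it. Your version would be worth keeping if the implication $(SF)_* \Rightarrow (SF)$ were not already available, but here it is, so the citation route is preferable.
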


\begin{proof}
The condition $(SF)_*$ for $Z$ implies the condition $(SF)$ for $Z$. Thus by Lemma \ref{lem:codim>n} we deduce the lemma.
\end{proof}

As said in Remark \ref{remk:SF* SF}, the condition $(SF)_*$ implies the condition $(SF)$. If $\dim \ X \leq 1$, we note that $(SF)_*$ implies $(DO)$ as well:

\begin{lem}\label{lem:SF DF}
Let $X$ be an integral local $k$-scheme of dimension $ \leq 1$ with the unique closed point $p $. Let $n \geq 0$ be an integer. Let $Z \subset X \times \square^n$ be a nonempty integral closed subscheme and let $\overline{Z}$ be the Zariski closure in $X \times \overline{\square}^n$. 

\begin{enumerate}
\item If $Z$ satisfies $(SF)_*$, then $Z$ satisfies $(DO)$.
\item In addition, if $X$ is regular, then $(SF)_*$ for $Z$ implies $(DF)$ for $Z$.
\end{enumerate}
\end{lem}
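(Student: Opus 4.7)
The plan is to isolate the case $\dim X = 1$ (the case $\dim X = 0$ being handled instantly by Remark \ref{remk:0 DO}), to prove $(DO)$ by a dimension count pitting an upper bound coming from $(SF)_*$ against a lower bound coming from Krull's height theorem, and then to deduce $(DF)$ from $(DO)$ using Lemma \ref{lem:basic Milnor 1}(1).

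For part (1), I would argue by contradiction. Fix a face $F \subset \square^n$ of codimension $s$ with $Z \cap (X \times F) \neq \emptyset$, and let $W$ be a component. Assuming $W \to X$ is not dominant, the image of the integral scheme $W$ must lie in $\{p\}$, so $W \subset \{p\} \times F$ and hence $\overline{W} \subset \{p\} \times \overline{F}$ inside $X \times \overline{\square}^n$. Writing $c$ for the codimension of $Z$ (equivalently of $\overline{Z}$), the $(SF)_*$ hypothesis at the face $F$ asserts that $\overline{Z} \cap (\{p\} \times \overline{F})$ is proper in $X \times \overline{\square}^n$, so each of its components has codimension $c + 1 + s$; since $\overline{W}$ sits inside this intersection, I obtain $\dim \overline{W} \leq n - c - s$.

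The competing lower bound comes from Krull's height theorem applied to $Z$: the subscheme $X \times F$ is cut out inside $X \times \square^n$ by the $s$ equations $y_{i_j} = \epsilon_j$, so every component of their vanishing locus inside $Z$ has codimension $\leq s$ in $Z$. Hence $\dim W \geq \dim Z - s = n + 1 - c - s$. Since $\dim W = \dim \overline{W}$, these two bounds contradict, proving $(DO)$. Part (2) is then immediate: under the additional regularity of $X$, the dominance of $\overline{W} \to X$ coming from (1), combined with Lemma \ref{lem:basic Milnor 1}(1), upgrades to flatness.

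The main delicate point is the bookkeeping matching the two dimension estimates. The extra ``$+1$'' in the $(SF)_*$ codimension, reflecting that $\{p\}$ has codimension $1$ in $X$, is precisely the shortfall needed to rule out the non-dominant scenario; had $X$ been of dimension $0$, this ``$+1$'' would vanish and the argument would break, consistent with the fact that $(SF)_*$ and $(DO)$ are both vacuous in that case.
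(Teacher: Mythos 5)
Your proof is correct and follows essentially the same strategy as the paper: isolate $\dim X = 1$, argue by contradiction that a non-dominant component $W$ must lie in $\{p\} \times F$ (since $X$ has only one non-generic point), and then derive a contradiction with $(SF)_*$. Part (2) is identical. The only real difference is how the contradiction is cashed out: the paper invokes the ``in particular'' clause of Definition~\ref{defn:SF* condition} applied to $\overline{W}$ itself, observing that $\overline{W} \cap (\{p\} \times \overline{F}) = \overline{W}$ cannot be a proper intersection since $\{p\} \times \overline{F}$ has positive codimension. You instead apply the primary $(SF)_*$ condition to $\overline{Z}$ to get an upper bound $\dim \overline{W} \le n - c - s$, and separately obtain the competing lower bound $\dim W \ge (n+1-c) - s$ from Krull's height theorem applied to $Z \cap (X \times F)$. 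Your route has a modest advantage: it relies only on the headline clause of $(SF)_*$ and does not need the derived ``in particular'' statement (whose deduction from the headline clause is a touch delicate, since $\overline{W}$ generically has larger codimension than $\overline{Z}$). Both proofs are sound; yours makes the dimension bookkeeping explicit where the paper's is more implicit.

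One small point of rigor worth making explicit in a final write-up: you pass from ``$W \to X$ not dominant'' to ``image lies in $\{p\}$.'' This is correct because $X$, being an integral local scheme of dimension $1$, has exactly two points (the generic point $\eta$ and $p$), so the only proper irreducible closed subset of $X$ is $\{p\}$. Similarly, when you write $\overline{W} \subset \{p\} \times \overline{F}$, you are using that $\{p\} \times \overline{F}$ is already closed in $X \times \overline{\square}^n$, so the closure of $W \subset \{p\} \times F$ stays inside it — worth a word.
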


\begin{proof}
If $\dim \ X = 0$, both (1) and (2) hold trivially (see Remark \ref{remk:0 DO}). So, we suppose that $\dim \ X = 1$. 

\medskip

(1) When $n=0$, the condition $(SF)_*$ holds only when $Z = \overline{Z} = X$ (see Remark \ref{remk:SF* n=0 silly}). So the map $\overline{Z} \to X$ is the identity map, and $(DO)$ holds trivially.

\medskip

Suppose $n \geq 1$. We prove a statement that is slightly stronger than $(DO)$:  for each face $F \subset \square^n$ such that $\overline{Z} \cap (X \times \overline{F}) \not = \emptyset$,  and each component $\overline{W} \subset \overline{Z} \cap (X \times \overline{F})$, the proper morphism $\overline{W} \to X$ is dominant. 

If $\overline{W} \to X$ is not dominant, then the image of the proper morphism is a proper closed subset of $X$, so it is the unique closed point $p\in X$. Hence $\overline{W} \subset \{ p \} \times \overline{F}$. This contradicts the condition $(SF)_*$ that the intersection $\overline{W} \cap ( \{ p \} \times \overline{F})$ is proper on $X \times \overline{\square}^n$. Thus $\overline{W} \to X$ must be dominant. Hence $W \to X$ is also dominant. Hence $Z$ satisfies $(DO)$.

\medskip

(2) When $X$ is regular of dimension $1$, by Lemma \ref{lem:basic Milnor 1}-(1), a dominant morphism $\overline{W} \to X$ is flat. Thus $(SF)_*$ for $Z$ implies $(DF)$ for $Z$.
\end{proof}

\begin{cor}\label{cor:SF* DO SF}

Let $X$ be an integral local $k$-scheme of dimension $\leq 1$ with the unique closed point $p$. Let $n \geq 0$ be an integer. Let $Z \subset X \times \square^n$ be an integral closed subscheme.

Suppose $Z$ satisfies $(SF)_*$. Then $Z$ satisfies both $(DO)$ and $(SF)$.
\end{cor}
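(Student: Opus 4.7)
The plan is to observe that this corollary is essentially a bookkeeping consequence of two already established facts in the excerpt, so the proof amounts to invoking them in succession rather than producing any new argument.

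First I would note that $(SF)_* \Rightarrow (SF)$ holds in complete generality by Remark \ref{remk:SF* SF}, which is immediate from the definitions: any face $F \subset \square^n$ determines an extended face $\overline{F} \subset \overline{\square}^n$ with $F = \overline{F} \cap \square^n$ and $Z = \overline{Z} \cap (X \times \square^n)$, so properness of $\overline{Z} \cap (\{p\} \times \overline{F})$ in $X \times \overline{\square}^n$ restricts to properness of $Z \cap (\{p\} \times F)$ in $X \times \square^n$. This handles the $(SF)$ conclusion under no dimension hypothesis on $X$.

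Next I would invoke Lemma \ref{lem:SF DF}-(1), whose hypothesis is exactly $\dim X \leq 1$ together with $(SF)_*$, to obtain $(DO)$. Since both conclusions are now established, the corollary follows.

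The main obstacle is essentially trivial here: the content of the corollary lies entirely in the two cited results, and the dimension hypothesis $\dim X \leq 1$ is only needed for the $(DO)$ half (where the argument in Lemma \ref{lem:SF DF} uses that a proper non-dominant morphism from $\overline{W}$ to a one-dimensional local $X$ must factor through the closed point $p$, contradicting $(SF)_*$). The $(SF)$ half requires no restriction on $\dim X$, so this corollary could if desired be stated as two separate assertions with different hypotheses, but combining them into one statement under the common hypothesis $\dim X \leq 1$ suffices for the downstream applications.
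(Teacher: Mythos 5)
Your proof is correct and follows exactly the paper's own argument: the paper's proof of this corollary is the one-liner ``Certainly $(SF)_* \Rightarrow (SF)$, while $(SF)_* \Rightarrow (DO)$ by Lemma \ref{lem:SF DF},'' which is precisely the two invocations you make. Your additional observation that the dimension hypothesis is only needed for the $(DO)$ half is accurate and a nice clarifying remark, though not part of the paper's proof.
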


\begin{proof}
Certainly $(SF)_* \Rightarrow (SF)$, while $(SF)_* \Rightarrow (DO)$ by Lemma \ref{lem:SF DF}.
\end{proof}

One may ask whether the converse of Corollary \ref{cor:SF* DO SF} holds. This is false in general even when $\dim \ X = 0$ by Remark \ref{remk:0 SF*}. There, we saw that for the $0$-cycles $(SF)_*$ does hold, but for a higher dimensional cycles, the properties $(DO)$ and $(SF)$ hold automatically, while $(SF)_*$ may not always hold. Here is a partial converse of Lemma \ref{lem:SF DF} and Corollary \ref{cor:SF* DO SF}:

\begin{lem} \label{lem:SF* DF}
Let $X$ is an integral local $k$-scheme of dimension $\leq 1$, with the unique closed point $p $. Let $n \geq 1$ be an integer. Let $Z \subset X \times \square^n$ be an integral closed subscheme, and let $\overline{Z} \subset X \times \overline{\square}^n$ be its Zariski closure.

If $Z$ satisfies $(DO)$, then $Z$ satisfies part of the $(SF)_*$ when the face $F= \square^n$.
\end{lem}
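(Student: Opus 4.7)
The plan is to unpack what \emph{part of $(SF)_*$ when $F = \square^n$} means and then separate the two cases $\dim X = 0$ and $\dim X = 1$, as Lemma \ref{lem:SF DF} also does. Unpacking first: since $F = \square^n$ gives $\overline{F} = \overline{\square}^n$, the relevant assertion is that $\overline{Z} \cap (\{p\} \times \overline{\square}^n)$ is proper on $X \times \overline{\square}^n$. Writing $c$ for the codimension of $Z$ in $X \times \square^n$, which equals the codimension of $\overline{Z}$ in $X \times \overline{\square}^n$, properness amounts to the numerical statement that every component of $\overline{Z} \cap (\{p\} \times \overline{\square}^n)$ has codimension $\geq c + \dim X$ in $X \times \overline{\square}^n$.

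If $\dim X = 0$, then $X = \{p\}$ and $\overline{Z} \cap (\{p\} \times \overline{\square}^n) = \overline{Z}$, so the codimension is exactly $c$, and the properness is automatic. This case does not even need $(DO)$.

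The substantive case is $\dim X = 1$. I would first apply $(DO)$ with the trivial face $F = \square^n$: then $Z \cap (X \times F) = Z$ has the single component $Z$, and $(DO)$ says $Z \to X$ is dominant, hence $\overline{Z} \to X$ is dominant as well. Since $\overline{\square}^n$ is proper over $k$, Lemma \ref{lem:basic Milnor 1}-(2) gives that $\overline{Z} \to X$ is proper and surjective. Now observe that $\overline{Z} \cap (\{p\} \times \overline{\square}^n)$ is precisely the scheme-theoretic fiber of $\overline{Z} \to X$ over $p$. Because $\overline{Z}$ is integral and $\{p\} \subsetneq X$ is a proper closed subset (as $\dim X = 1 > 0$), the preimage of $\{p\}$ in $\overline{Z}$ is a proper closed subscheme of the integral scheme $\overline{Z}$, so each of its irreducible components has dimension strictly less than $\dim \overline{Z}$, i.e.\ at most $\dim \overline{Z} - 1$. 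Translating back: each such component has codimension at least $c + 1 = c + \dim X$ in $X \times \overline{\square}^n$, which is exactly the required properness.

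The main (mild) obstacle I anticipate is the bookkeeping in the second case: one has to confirm that ``proper intersection on $X \times \overline{\square}^n$'' really reduces to the fiber dimension count for $\overline{Z} \to X$, and that one is entitled to drop the dimension of the fiber over $p$ by one. The first point is just the definition of proper intersection together with $\dim(\{p\} \times \overline{\square}^n) = n$ and $\dim(X \times \overline{\square}^n) = n + 1$; the second uses nothing deeper than the fact that a proper closed subset of an integral noetherian scheme has strictly smaller dimension. No flatness, regularity, or henselian hypothesis on $X$ is needed for this lemma, and the result is intrinsically weaker than the full $(SF)_*$ because $(DO)$ provides no control over proper faces $F \subsetneq \square^n$.
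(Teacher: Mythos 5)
Your proof is correct and essentially matches the paper's argument: both split on $\dim X$, dispose of $\dim X = 0$ trivially, and in the case $\dim X = 1$ reduce the claim to the observation that $(DO)$ forbids $\overline{Z}$ from lying inside $\{p\} \times \overline{\square}^n$. The paper phrases this slightly more compactly by noting that $\{p\} \times \overline{\square}^n$ is an integral effective divisor, so proper intersection with the integral $\overline{Z}$ is equivalent to non-containment; you instead carry out the dimension count explicitly (fiber over $p$ is a proper closed subset of the integral $\overline{Z}$, hence of strictly smaller dimension), which is the same fact unwound.
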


\begin{proof}
If $\dim \ X =0$, then $X= \{p \}$, so the condition of $(SF)_*$ for $F= \square^n$ holds trivially. So, suppose that $\dim \ X = 1$.

If $F= \square^n$, then $\overline{Z} \cap (X \times \overline{\square}^n ) = \overline{Z} \not = \emptyset$. Here, we want to show that the intersection $\overline{Z} \cap  ( \{ p \} \times \overline{\square}^n)$ is proper on $X \times \overline{\square}^n$.

Since $\{ p \} \times \overline{\square}^n$ is an integral effective divisor of $X \times \overline{\square}^n$, it is enough to check that $\overline{Z} \not \subset \{ p \} \times \overline{\square}^n$.

Toward contradiction, suppose that $\overline{Z} \subset \{ p \} \times \overline{\square}^n$. Then the image of the projection $\overline{Z} \to X$ is concentrated at the closed point $\{ p \}$. This contradicts the assumption $(DO)$ that $Z \to X$ is dominant. 
\end{proof}

\subsection{The ${\rm d}$-cycles over $X$}\label{sec:dsf}

Using the discussions in \S \ref{sec:DF condition}, \S \ref{sec:SF condition}, and \S \ref{sec:SF* condition}, 
we define the first main objects of the article:

\begin{defn}\label{defn:dsf}
Let $X$ be an integral local $k$-scheme of dimension $ \leq 1$ and let $n,q \geq 0$ be integers.

\begin{enumerate}
\item A cycle $Z \in z^q (X, n)$ is called a \emph{dominant cycle} or a ${\rm d}$\emph{-cycle} if each component of $Z$ satisfies the conditions $(GP)_*$ and $(SF)_*$. 

By Corollary \ref{cor:SF* DO SF}, such $Z$ satisfies both $(DO)$ and $(SF)$. In particular each component of $Z$ is dominant over $X$,  thus the name a ``dominant" cycle.

\item Let $z^q _{{\rm d}} (X, n) \subset z^q (X, n)$ be the subgroup generated by the ${\rm d}$-cycles in $z^q (X, n)$.

When $n=0$, in the light of the condition $(DO)$, we note that $z^q_{{\rm d}} (X, 0)=0$ if $q > 0$, while $z^0 _{{\rm d}} (X, 0)= \mathbb{Z} \cdot [ X]$, the free abelian group of rank $1$ generated by the symbol $X$.

\item We see that the boundary maps $\partial_{i} ^{\epsilon} : z^q (X, n+1) \to z^q (X, n)$ for $1 \leq i \leq n+1$ and $\epsilon \in \{ 0, \infty \}$ induce the corresponding boundary maps
$$
\partial_i ^{\epsilon}: z^q _{{\rm d}} (X, n+1) \to z^q _{{\rm d}} (X, n)
$$
so that $z^q_{{\rm d}} (X, \bullet)$ forms a subcomplex of $z^q (X, \bullet)$ with respect to the boundary operator $\partial:= \sum_{i=1}^{n+1} (-1)^i (\partial_i ^{\infty} - \partial_i ^0)$. We call it the (higher) Chow complex of ${\rm d}$-cycles, and we define the (higher) Chow group $\CH^q_{{\rm d}} (X, n)$ of ${\rm d}$-cycles to be the $n$-th homology of $z^q_{{\rm d}} (X, \bullet)$.
\qed
\end{enumerate}
\end{defn}

\begin{remk}
Recall from Remark \ref{remk:0 SF*} that even when $X= \Spec (k)$, the conditions $(GP)_*$ and $(SF)_*$ may not always hold, except for $0$-cycles. Hence $z_{{\rm d}} ^n (\Spec (k), n) = z^n (\Spec (k), n)$, but $z_{{\rm d}} ^q (\Spec (k), n) \subset z ^q (\Spec (k), n)$ in general. 

When $\dim \ X = 1$, the inclusion $z_{{\rm d}} ^q (X, n) \subset z^q (X, n)$ is always proper, as the group $z^q (X, n)$ does have cycles that belong to the special fiber, while the group $z_{{\rm d}} ^q (X, n) $ does not.
\qed
\end{remk}

From Lemma \ref{lem:codim>*}, we deduce:

\begin{cor}
Let $X$ be an integral local $k$-scheme of dimension $ \leq 1$. Let $n\geq 0$ be an integer. Then for $q >n$ $z^q_{{\rm d}} (X, n) = 0$. In particular, $\CH^q _{{\rm d}} (X, n) = 0$.
\end{cor}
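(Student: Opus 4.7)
The plan is to apply Lemma \ref{lem:codim>*} directly to the generators of $z^q_{{\rm d}}(X,n)$, and then deduce the vanishing of the associated Chow group formally.

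More precisely, by Definition \ref{defn:dsf}, the group $z^q_{{\rm d}}(X,n)$ is generated (modulo degenerate cycles) by integral closed subschemes $Z \subset X \times \square^n$ of codimension $q$ such that $Z$ satisfies both $(GP)_*$ and $(SF)_*$. I would take such a generator $Z$ and examine its codimension. Since $Z$ satisfies $(SF)_*$ and $X$ is an integral local $k$-scheme of dimension $\leq 1$, Lemma \ref{lem:codim>*} forces the codimension of $Z$ in $X \times \square^n$ to satisfy $q \leq n$. When $q > n$, this is impossible, so there are no nonzero integral closed subschemes of codimension $q$ satisfying the defining conditions of a ${\rm d}$-cycle. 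Hence $z^q_{{\rm d}}(X,n) = 0$.

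Finally, because $\CH^q_{{\rm d}}(X,n)$ is defined as the $n$-th homology of the complex $z^q_{{\rm d}}(X,\bullet)$, and the degree-$n$ term of that complex is itself zero, its homology vanishes trivially: $\CH^q_{{\rm d}}(X,n) = 0$. There is no genuine obstacle in this argument; the entire content is hidden in Lemma \ref{lem:codim>*}, which has already been established via the properness of $\overline{Z} \to X$ combined with the proper intersection condition $(SF)_*$ at the closed point.
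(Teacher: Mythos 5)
Your proof is correct and follows exactly the path the paper indicates: the paper simply states "From Lemma \ref{lem:codim>*}, we deduce" before the corollary, and your argument is the straightforward deduction — apply Lemma \ref{lem:codim>*} (which uses $(SF)_*$ together with properness of $\overline{Z}\to X$) to bound the codimension of any integral ${\rm d}$-cycle by $n$, contradicting $q>n$. The final step, that the $n$-th homology of a complex whose degree-$n$ term is zero is zero, is standard and correctly stated.
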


One aspect immediately coming from the property $(SF)_*$ is:

\begin{lem}\label{lem:specialization}
Let $X$ be an integral local $k$-scheme of dimension $1$ with the unique closed point $p$. Let $q \geq 0$ be an integer. 

The specialization at $p$ defines the morphism of complexes
$$
{\rm ev}_{p} : z^q _{ {\rm d}} (X, \bullet) \to z^q ( p, \bullet).
$$
In particular, we have the homomorphism $\CH_{{\rm d}} ^q (X, n) \to \CH^q (p, n)$.
\end{lem}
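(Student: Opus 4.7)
The plan is to define ${\rm ev}_p$ on each component of a d-cycle via the refined cycle-theoretic pullback along the closed immersion $\iota_p \colon \{p\} \times \square^n \hookrightarrow X \times \square^n$, and then verify that the output lies in $z^q(p,n)$ and that this assignment commutes with the face operators $\partial_i^\epsilon$. For each integral component $W$ of a d-cycle $Z \in z^q_{{\rm d}}(X,n)$, the property $(SF)_*$ for $W$ implies $(SF)$ by Remark \ref{remk:SF* SF}, so applied with $F = \square^n$ it gives that $W \cap (\{p\} \times \square^n)$ is proper in $X \times \square^n$: each component has codimension exactly $q+1$ in $X \times \square^n$, hence codimension $q$ in $\{p\} \times \square^n \simeq \square^n_{k(p)}$. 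When $X$ is regular --- the case of interest throughout the paper --- the point $p$ is a Cartier divisor on $X$, $\iota_p$ is a regular closed immersion, and Lemma \ref{lem:SF DF}(2) provides flatness of each $\overline{W} \to X$, so the scheme-theoretic fiber $W \times_X \{p\}$ defines a well-defined codimension-$q$ cycle ${\rm ev}_p(W)$ on $\square^n_{k(p)}$ with unambiguous multiplicities.

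Next, I would verify that ${\rm ev}_p(Z) \in z^q(p, n)$ by checking general position against all faces of $\square^n_{k(p)}$. For an arbitrary face $F \subset \square^n$, the support of ${\rm ev}_p(Z) \cap (\{p\} \times F)$ is contained in $Z \cap (\{p\} \times F)$, which by $(SF)$ has codimension at least $q + 1 + \codim_{\square^n}(F)$ in $X \times \square^n$, equivalently codimension at least $q + \codim_{\square^n}(F)$ in $\{p\} \times \square^n$ --- precisely the $(GP)$ condition. Pullback is compatible with the coordinate projections, so degenerate d-cycles map to degenerate cycles; the edge cases $n=0$ (by Definition \ref{defn:dsf}(2)) and $q > n$ (by Lemma \ref{lem:codim>*}) are immediate. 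This yields the required homomorphism $z^q_{{\rm d}}(X, n) \to z^q(p, n)$.

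Finally, the chain-map identity $\partial_i^\epsilon \circ {\rm ev}_p = {\rm ev}_p \circ \partial_i^\epsilon$ follows from the associativity and commutativity of refined intersection products: both compositions compute the pullback of $Z$ to the codimension-$2$ subscheme $\{p\} \times F_i^\epsilon \subset X \times \square^n$, performed in the two possible orders. The $(SF)$ condition applied with $F = F_i^\epsilon$ ensures that $Z \cap (\{p\} \times F_i^\epsilon)$ is proper in $X \times \square^n$, which is exactly what validates the commutation of the two iterated pullbacks. The main bookkeeping that I expect to require care is showing that the two orders of pullback yield the same multiplicities; however, for the regular $X$ of dimension $1$ intended here, both $\iota_p$ and the face inclusions $\iota_i^\epsilon$ are regular closed immersions given by a single equation, so the standard commutativity of Fulton--MacPherson intersection with Cartier divisors closes the argument.
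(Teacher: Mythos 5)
Your proof is correct and rests on exactly the same observation the paper uses: each $\mathrm{d}$-cycle satisfies $(SF)_*$, hence $(SF)$ (Remark~\ref{remk:SF* SF}), and $(SF)$ is precisely the proper-intersection condition that makes the pullback to $\{p\}\times\square^\bullet$ well-defined, land in $z^q(p,\bullet)$, and commute with the face maps. The paper leaves all of this implicit in a one-line proof, whereas you unpack the codimension bookkeeping for $(GP)$ and the commutation of iterated pullbacks; the only cosmetic mismatch is that you restrict to regular $X$ when discussing multiplicities, whereas the lemma as stated requires only that $X$ be integral local of dimension $1$ (the specialization is still well-defined in general via Tor/Serre multiplicities once $(SF)$ holds, though all later applications do indeed assume $X$ regular).
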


\begin{proof}
$(SF)_*$ implies $(SF)$. (See Remark \ref{remk:SF* SF}).
\end{proof}

\begin{remk}
In a future project, to give a right definition of the ${\rm d}$-cycles for $\dim \ X > 1$, the author believes that one needs to include the condition $(DO)$ or $(DF)$ as part of the axioms. It was not necessary in this article for $\dim \ X = 1$ by Lemma \ref{lem:SF DF}.
\qed
\end{remk}

\subsection{The ${\rm v}$-cycles over $X$}\label{sec:vanishing}

In \S \ref{sec:dsf}, when $\dim \ X \leq 1$, we defined the subcomplex $z^q_{{\rm d}} (X, \bullet)$ of $z^q (X, \bullet)$ consisting of the ${\rm d}$-cycles over $X$. 

In \S \ref{sec:vanishing}, we define a smaller subcomplex with an additional requirement when $\dim \ X = 1$; there is no analogue of it when $\dim \ X = 0$:

\begin{defn}\label{defn:vanishing cycle}
Let $X$ be an integral local $k$-scheme of dimension $1$ with the unique closed point $p$. Let $n, q \geq 1$ be integers.

\begin{enumerate}
\item We say that an integral cycle $Z \in z^q (X, n)$ is a \emph{pre-vanishing cycle}, if the special fiber over $p$ is empty, i.e.  $Z \cap ( \{ p \} \times \square^n) = \emptyset$. 

\item An integral cycle $Z$ is called a \emph{strict vanishing cycle}, or simply a ${\rm v}$-\emph{cycle}, if it is a pre-vanishing cycle which is also a ${\rm d}$-cycle (Definition \ref{defn:dsf}).

\item A cycle $Z \in z^q _{{\rm d}} (X, n)$ is called a \emph{strict vanishing cycle} or a ${\rm v}$-\emph{cycle}, if each component is a ${\rm v}$-\emph{cycle}. %For simplicity, we will call the strict vanishing cycles by the name, \emph{the} ${\rm v}$\emph{-cycles}. 
We let $z^q _{\rm v} (X, n) \subset z^q _{{\rm d}} (X, n)$ be the subgroup of the ${\rm v}$-cycles.
\qed
\end{enumerate}
\end{defn}

\begin{remk}
If $n=0$ or $q=0$, we do not have any nonempty pre-vanishing cycle by definition. So, it is reasonable to assume that both $n, q \geq 1$ to study pre-vanishing cycles.
\qed
\end{remk}

Here is one equivalent formulation for pre-vanishing cycles, which resembles one of the properties in the theory of additive higher Chow cycles (see, e.g. \cite{KP moving}), especially a consequence of the modulus condition:

\begin{lem}\label{lem:y_i=1}
Let $X$ be an integral local $k$-scheme of dimension $1$ with the unique closed point $p$. Let $n, q  \geq 1$ be integers. Let $Z \in z^q (X, n)$ be an integral cycle and let $\overline{Z} \subset X \times \overline{\square}^n$ be its Zariski closure.

Then $Z$ is a pre-vanishing cycle if and only if we have
\begin{equation}\label{eqn:y_i=1 1}
\overline{Z} \cap (\{ p \} \times \overline{\square}^n ) \subset \bigcup _{i=1} ^n (  \{ p \} \times \{ y_i = 1 \}) =  \{ p \} \times \bigcup_{i=1} ^n \{ y_i = 1 \} ,
\end{equation}
where $\{ y_i = 1 \} \subset \overline{\square}^n$ denotes the divisor defined by the equation $y_i = 1$.
\end{lem}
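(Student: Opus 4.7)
The plan is to exploit the tautological observation that the divisors $\{y_i = 1\}$ inside $\overline{\square}^n$ are exactly the complement of $\square^n$ in $\overline{\square}^n$, and combine it with the fact that taking Zariski closure and then restricting back to $X \times \square^n$ recovers $Z$ itself.

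First I would unpack both sides of the equivalence against the ambient decomposition
\[
\overline{\square}^n = \square^n \sqcup \bigcup_{i=1}^n \{y_i = 1\},
\]
which follows directly from $\square = \overline{\square} \setminus \{1\}$ applied coordinate-wise. This gives $\overline{\square}^n \setminus \bigcup_{i=1}^n \{y_i = 1\} = \square^n$, so the containment in \eqref{eqn:y_i=1 1} is equivalent to the disjointness
\[
\overline{Z} \cap \bigl(\{p\} \times \square^n\bigr) = \emptyset.
\]

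Next I would verify the straightforward identity $\overline{Z} \cap (X \times \square^n) = Z$, which holds because $Z$ is already closed in $X \times \square^n$ and $\overline{Z}$ is defined as its Zariski closure in the larger space $X \times \overline{\square}^n$; intersecting back with the open subset $X \times \square^n$ therefore returns $Z$. Intersecting further with the closed subscheme $\{p\} \times \square^n \subset X \times \square^n$ yields
\[
\overline{Z} \cap \bigl(\{p\} \times \square^n\bigr) = Z \cap \bigl(\{p\} \times \square^n\bigr),
\]
and the right-hand side is empty precisely when $Z$ is a pre-vanishing cycle in the sense of Definition \ref{defn:vanishing cycle}. Chaining these equivalences completes the proof.

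There is no serious obstacle here; the statement is essentially a reformulation. The only point that warrants any care is confirming that the Zariski closure interacts correctly with the open immersion $X \times \square^n \hookrightarrow X \times \overline{\square}^n$, so that the ``new" points of $\overline{Z}$ beyond $Z$ lie entirely in the boundary divisor $X \times \bigcup_{i=1}^n \{y_i=1\}$; this is what lets us transfer the emptiness of $Z \cap (\{p\} \times \square^n)$ into the geometric statement of \eqref{eqn:y_i=1 1}.
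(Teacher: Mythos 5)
Your proof is correct and takes essentially the same approach as the paper: both reduce the statement to the decomposition $\overline{\square}^n \setminus \square^n = \bigcup_{i=1}^n \{y_i=1\}$ and the equivalence between pre-vanishing and the closure missing $\{p\} \times \square^n$. You simply make explicit the intermediate identification $\overline{Z} \cap (\{p\} \times \square^n) = Z \cap (\{p\} \times \square^n)$, which the paper treats as immediate.
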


\begin{proof}
We note that $Z \cap ( \{ p \} \times \square ^n) = \emptyset$ $\Leftrightarrow$ $\overline{Z} \cap ( \{ p \} \times \overline{\square}^n) \subset \{p \} \times ( \overline{\square}^n \setminus \square ^n)$. The lemma now follows because $ \overline{\square}^n \setminus \square^n=  \bigcup _{i=1}^n \{ y_i = 1 \}$.
\end{proof}

\begin{defn}\label{defn:type i}
Let $Z \in z^q _{{\rm v}} (X, n)$ be an integral ${\rm v}$-cycle.

For $1 \leq i \leq n$, we say that $y_i$ is a vanishing coordinate for $Z$ if there exists a point $x \in \overline{Z} \cap (\{ p \} \times \overline{\square}^n)$ such that $x \in \{ p \} \times \{ y_i = 1 \}$. There could be more than one vanishing coordinate.
\qed
\end{defn}

\begin{remk}\label{remk:type i}
By definition an integral ${\rm v}$-cycle is a ${\rm d}$-cycle which is a pre-vanishing cycle. In particular, the special fiber of $Z \to X$ is empty, while the special fiber of $\overline{Z} \to X$ is nonempty, because it is a projective dominant (thus surjective) morphism by Lemma \ref{lem:SF DF}. Hence for an integral ${\rm v}$-cycle, there exists at least one vanishing coordinate for $Z$.

This aspect distinguishes ${\rm v}$-cycles from pre-vanishing cycles for which there may be no vanishing coordinate.
\qed
\end{remk}

The property of being pre-vanishing is stronger than $(SF)$:

\begin{lem}\label{lem:empty face}
Let $X$ be an integral local $k$-scheme of dimension $1$ with the unique closed point $p$. Let $n, q \geq 1$ be integers. Let $Z \in z^q (X, n)$ be an integral pre-vanishing cycle.

Then for each face $F \subset \square^n$, we have
$$ Z \cap (\{ p \} \times F) = \emptyset.$$
In particular, it satisfies $(SF)$, and each component of $Z \cap (X \times F)$ is a pre-vanishing cycle on $X \times F \simeq X \times \square^d$, where $d= \dim \ F$.
\end{lem}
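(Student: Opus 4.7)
The plan is to reduce everything to the defining property of a pre-vanishing cycle by a chain of trivial inclusions; there is essentially no geometric obstacle here, as each face sits inside $\square^n$, and the operations of intersecting with $\{p\} \times -$ and restricting to a component respect inclusions.

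First, I would observe that for any face $F \subset \square^n$, we have $\{p\} \times F \subset \{p\} \times \square^n$ as closed subschemes of $X \times \square^n$. Since $Z$ is pre-vanishing by hypothesis, Definition \ref{defn:vanishing cycle}(1) gives $Z \cap (\{p\} \times \square^n) = \emptyset$. Intersecting both sides with the smaller subscheme $\{p\} \times F$ yields
\[
Z \cap (\{p\} \times F) \subset Z \cap (\{p\} \times \square^n) = \emptyset,
\]
so $Z \cap (\{p\} \times F) = \emptyset$, which is the main assertion.

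Next, for $(SF)$ in the sense of Definition \ref{defn:SF condition}: the empty scheme has any codimension we wish, so in particular the intersection $Z \cap (\{p\} \times F)$ is trivially proper on $X \times \square^n$ for every face $F$. Thus $Z$ satisfies the condition $(SF)$.

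Finally, let $W$ be any component of $Z \cap (X \times F)$. Identifying $F \simeq \square^d$ with $d = \dim F$, we obtain $X \times F \simeq X \times \square^d$, and we must check that $W$ is pre-vanishing as a cycle on $X \times \square^d$. But $W \subset Z$, so
\[
W \cap (\{p\} \times F) \subset Z \cap (\{p\} \times F) = \emptyset
\]
by the first step. Under the identification $F \simeq \square^d$, the left-hand side becomes $W \cap (\{p\} \times \square^d)$, which is thus empty. By Definition \ref{defn:vanishing cycle}(1), $W$ is pre-vanishing on $X \times \square^d$, completing the proof.
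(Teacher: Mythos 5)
Your argument is correct and is essentially identical to the paper's (one-line) proof, which is exactly the inclusion $Z \cap (\{p\} \times F) \subset Z \cap (\{p\} \times \square^n) = \emptyset$. You merely spell out the remaining "in particular" claims, which the paper leaves implicit as immediate consequences.
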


\begin{proof}
$Z \cap ( \{ p \} \times F) \subset Z \cap ( \{ p \} \times \square^n) = \emptyset$.
\end{proof}

\begin{cor}
Let $X$ be an integral local $k$-scheme of dimension $1$ with the unique closed point $p$. Let $n, q \geq 1$ be integers. 

Let $W \in z^q _{{\rm v}} (X, n+1)$ be an integral ${\rm v}$-cycle. Then for each $1 \leq i \leq n+1$ and $\epsilon \in \{ 0, \infty\}$, we have $\partial _i ^{\epsilon} W \in z^q _{{\rm v}} (X, n)$.

In particular, $z^q _{{\rm v}} (X, \bullet)$ forms a subcomplex of $z^q _{{\rm d}} (X, \bullet)$.
\end{cor}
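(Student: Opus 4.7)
The plan is to verify that each component of the face cycle $\partial_i^\epsilon W$ is itself a ${\rm v}$-cycle, which by Definition \ref{defn:vanishing cycle} requires two checks: it must be a ${\rm d}$-cycle, and it must be pre-vanishing. Both checks will reduce immediately to results already available in the excerpt, so no step presents a genuine obstacle.

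For the first check, note that an integral ${\rm v}$-cycle is in particular a ${\rm d}$-cycle, so $W \in z^q_{{\rm d}}(X, n+1)$. By Definition \ref{defn:dsf}(3), we already know that the face maps restrict to $\partial_i^\epsilon : z^q_{{\rm d}}(X, n+1) \to z^q_{{\rm d}}(X, n)$; in other words, every component of $\partial_i^\epsilon W$ automatically satisfies $(GP)_*$ and $(SF)_*$. Hence the only remaining task is to verify the pre-vanishing condition for each such component.

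For the second check, we invoke \lemref{lem:empty face}. Let $F = F_i^\epsilon \subset \square^{n+1}$ be the codimension-one face cut out by $y_i = \epsilon$. The cycle $\partial_i^\epsilon W$ is, by definition, the cycle associated to the proper intersection $W \cap (X \times F)$, transported to $X \times \square^n$ via the canonical identification $F \simeq \square^n$. Since $W$ is pre-vanishing, \lemref{lem:empty face} tells us precisely that each component of $W \cap (X \times F)$ is again pre-vanishing on $X \times F$, i.e.\ its special fiber over $p$ is empty. Under the identification $F \simeq \square^n$ this exactly says that each component of $\partial_i^\epsilon W$ is a pre-vanishing cycle on $X \times \square^n$.

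Combining the two checks gives $\partial_i^\epsilon W \in z^q_{{\rm v}}(X, n)$, proving the first assertion. The ``in particular'' claim then follows at once: the alternating sum $\partial = \sum_{i=1}^{n+1} (-1)^i (\partial_i^\infty - \partial_i^0)$ restricts to a well-defined differential on the graded subgroup $z^q_{{\rm v}}(X, \bullet) \subset z^q_{{\rm d}}(X, \bullet)$, and the identity $\partial \circ \partial = 0$ is inherited from the ambient higher Chow complex by Definition \ref{defn:dsf}(3). The hardest step, if any, is just the bookkeeping check that the scheme-theoretic pullback along $\iota_i^\epsilon : \square^n \hookrightarrow \square^{n+1}$ indeed identifies components of $W \cap (X \times F)$ with components of $\partial_i^\epsilon W$, but this is already tacit in the construction of the higher Chow differential.
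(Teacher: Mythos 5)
Your proof is correct and follows essentially the same route as the paper's (very terse) argument: the paper simply notes that $(SF)_*$ and pre-vanishing are both preserved by taking faces, which is exactly what you establish via Definition \ref{defn:dsf}(3) and Lemma \ref{lem:empty face}, respectively. Your version just spells out what the paper leaves implicit.
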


\begin{proof}
The property $(SF)_*$ and being pre-vanishing are both respected by taking faces by definition, so the corollary holds. 
%Since $z_{{\rm d}} ^q (X, \bullet)$ is a subcomplex of $z^q (X, \bullet)$, the corollary now follows from Lemma \ref{lem:empty face}.
\end{proof}

\begin{defn}
Let $X$ be an integral local $k$-scheme of dimension $1$ with the unique closed point $p$. 

The $n$-th homology of the complex $z^q _{{\rm v}} (X, \bullet)$ of the ${\rm v}$-cycles over $X$ is denoted by $\CH^q _{{\rm v}}(X, n)= {\rm H}_n (z^q _{{\rm v}} (X, \bullet))$, and called the (higher) Chow group of the strict vanishing cycles, or ${\rm v}$-cycles.
\qed
\end{defn}

\begin{remk} 
The natural inclusion $z^q _{{\rm v}} (X, \bullet) \hookrightarrow z^q_{{\rm d}} (X, \bullet)$
induces the homomorphism of groups
\begin{equation}\label{eqn:v to d}
\CH^q_{{\rm v}} (X, n) \to \CH^q _{{\rm d}} (X, n)
\end{equation}
for each $n \geq 0$. This is not in general injective, but in certain special cases, it is. See Corollary \ref{cor:ses final}.
\qed
\end{remk}

We introduce the following related notion:

\begin{defn}
Let $X$ be an integral local $k$-scheme of dimension $1$ with the unique closed point $p$.
A cycle class, or a cycle that represents a cycle class in $\CH_{{\rm d}} ^q (X, n)$ is called a \emph{vanishing cycle} if it belongs to the kernel of the specialization map of Lemma \ref{lem:specialization}
$$
\CH_{{\rm d}} ^q (X, n) \to \CH^q (p, n).
$$

When $q=n$, we will see that this is equivalent to saying that it belongs to the image of \eqref{eqn:v to d}. See Corollary \ref{cor:ses final}. There exists an example of a vanishing cycle that is not itself a strict vanishing cycle. See Example \ref{exm:non-strict van}.
\qed
\end{defn}

\begin{exm}\label{exm:good eg}
Here are important examples of ${\rm v}$-cycles. Let $X= \Spec (k[[t]])$ with the unique closed point $p$ given by $t=0$. Let $m \geq 1$ and $n \geq 2$ be integers. Let $a_1 \in k[[t]]\setminus \{ 0 \}$ and $b_2, \cdots, b_n \in k[[t]]^{\times}$. Let $\bar{a}, \bar{b}_2, \cdots, \bar{b}_n$ be their images in $k$ modulo $t$.
Consider the codimension $n$ integral cycle
$$
\Gamma :=  \left( 1 + t^m a_1, b_2, \cdots, b_n \right) \subset X \times \square ^n
$$
given by the equations $\{ y_1 = 1+ t^m a_1, y_2= b_2, \cdots, y_n = b_n\}$. One can check that this is a cycle satisfying $(GP)_*$ and $(SF)_*$, so it is a ${\rm d}$-cycle. Since it is proven more generally later in Lemma \ref{lem:graph adm 0}, here we just accept this fact and move on.

One notes that $\Gamma$ is a pre-vanishing cycle i.e. $\Gamma \cap ( \{ p \} \times \square^n) = \emptyset$, because modulo $t=0$, it becomes $ \left( 1, \bar{b}_2, \cdots, \bar{b}_n \right)  \not \in \{ p \} \times \square ^n = \square^n$, as $1 \not \in \square = \mathbb{P}^1 \setminus \{ 1 \}$. 

\medskip

Being a pre-vanishing ${\rm d}$-cycle, it is a ${\rm v}$-cycle, i.e. $\Gamma \in z_{{\rm v}} ^n (X, n)$. 
\qed
\end{exm}

 Here is an example of a pre-vanishing cycle in $z^3 (X, 3)$ that is \emph{not} a ${\rm v}$-cycle. For the rest of the article, such cycles are not needed, and this example is just for an illustration. The author would like to acknowledge that an example of this sort originates from discussions with Sinan \"Unver around the years 2017 - 2018 while working on Park-\"Unver \cite{PU Milnor}:

\begin{exm}\label{exm:bad eg}
For $X= \Spec (k[[t]])$ with the unique closed point $p\in X$ given by $t=0$, consider the codimension $3$ integral cycle
$$
\Gamma:=  (1 + t, t, 2+t) \subset  X \times \square ^3
$$
given by the equations $\{ y_1 = 1+t , \ y_2 = t, \ y_3 = 2 + t \}$. Let $\overline{\Gamma}$ be its closure in $X \times \overline{\square}^3$.

This is a pre-vanishing cycle because after reduction mod $t$, it reduces to the $k$-rational point $(1,0,2) \in \{ p \} \times \overline{\square}^3 = \overline{\square}^3$, which is not in $\square^3$, because $1 \not \in \square =\mathbb{P}^1 \setminus \{ 1 \}$, thus $\Gamma \cap (\{ p \} \times \square^3) = \emptyset$.

However, for the codimension $1$ face $F \subset \square^3$ given by $\{ y_2 = 0 \}$, we have $\overline{\Gamma} \cap ( \{ p \} \times \overline{F}) = \{ (1, 0, 2 ) \}$, which is a closed point in $\{ p \} \times \overline{\square}^3= \overline{\square}^3$. Its codimension in $X \times \overline{\square}^3$ is $4$, which is smaller than the sum $3 + 2= 5$ of the codimensions of $\overline{\Gamma}$ and $\{ p \} \times \overline{F}$ in $X \times \overline{\square}^3$. Hence the intersection $\overline{\Gamma} \cap ( \{ p \} \times \overline{F}) $ is not proper on $X \times \overline{\square}^3$, and the condition $(SF)_*$ is violated.

Hence $\Gamma$ is a pre-vanishing cycle, but not a ${\rm v}$-cycle. 
\qed
\end{exm}

\begin{exm}\label{exm:non-strict van}
There exists a cycle that represents a vanishing cycle in $\CH^n_{{\rm d}} (X, n)$, which is \emph{not} itself a strict vanishing cycle. Here is an example for $n=2$.

Let $X= \Spec (k[[t]])$. Recall that in $K_2 ^M (k[[t]])$, we have the relation
$$
 \left\{ a, b \right\} = \left\{ a + b, - \frac{b}{a} \right\}
 $$
deduced from the Steinberg relations.

Motivated by this relation, consider the closed subscheme $Z \subset \square_X ^2$ defined by
$$
\left\{ y_1 = 1 + c - \alpha t, \ \  y_2 = - \frac{c}{ 1 - \alpha t }\right\},
 $$
where $c \in k[[t]]^{\times}, c \not = -1$, $\alpha \in k^{\times}$. One can check directly that $Z \in z^2_{{\rm d}} (X, 2)$ (more generally, see Lemma \ref{lem:graph adm 0}), thus it represents a cycle class in $\CH_{{\rm d}} ^2 (X, 2)$. Modulo $t=0$, it reduces to the closed subscheme $Z'$ of $\square_k^2$ given by
$$
 \left\{ y_1= 1 + c, \ \ y_2 = -c \right\},
 $$
which is not empty. Thus $Z$ is not a strict vanishing cycle. 

However, under the isomorphism $\CH^2 (k, 2) \simeq K^M_2 (k)$ of Theorem \ref{thm:NST}, the class of $Z'$ is zero in $\CH^2 (k, 2)$ by the vanishing of the Steinberg relation $\{ 1+c, -c\} = 0$ in $K^M_2 (k)$. Thus $Z$ is a vanishing cycle in $\CH_{{\rm d}} ^2 (X, 2)$, being in the kernel of ${\rm ev}_{t=0}: \CH_{{\rm d}} ^2 (X, 2) \to \CH^2 (k, 2)$. 

\medskip

Alternatively, if we assume Theorem \ref{thm:gr_n iso summary} to be proven later, we can also deduce the equivalence
$$
W = \{ y_1 = 1 - \alpha t , y_2 = c \} \equiv Z \ \ \ \mbox{ in } \CH_{{\rm d}} ^2 (X, 2),
$$
where $W$ is apparently a strict vanishing cycle as $1- \alpha t \equiv 1 \mod t$.
\qed
\end{exm}

From Lemma \ref{lem:codim>*}, we deduce:

\begin{cor}
Let $X$ be an integral local $k$-scheme of dimension $1$ with the unique closed point $p$. Then $z^q _{{\rm v}} (X, n) = 0$ for $q > n$. In particular, $\CH^q _{{\rm v}} (X, n) = 0$ for $q > n$ as well.
\end{cor}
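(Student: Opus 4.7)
The plan is to reduce immediately to Lemma \ref{lem:codim>*}. By Definition \ref{defn:vanishing cycle}, an integral component of any element of $z^q_{{\rm v}}(X, n)$ is in particular an integral ${\rm d}$-cycle, and therefore by Definition \ref{defn:dsf} it satisfies the condition $(SF)_*$. I would then apply Lemma \ref{lem:codim>*} directly to any such integral component $Z \subset X \times \square^n$ to conclude that its codimension in $X \times \square^n$ is at most $n$.

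On the other hand, by membership in $z^q_{{\rm v}}(X, n) \subset z^q(X, n)$, every integral generator $Z$ is by definition of codimension exactly $q$ in $X \times \square^n$. Combining these two constraints, once $q > n$ there can be no nonzero integral generator, so $z^q_{{\rm v}}(X, n) = 0$. The vanishing of $\CH^q_{{\rm v}}(X, n)$ then follows at once, as this group is the $n$-th homology of the complex $z^q_{{\rm v}}(X, \bullet)$ and is in particular a subquotient of the trivial group $z^q_{{\rm v}}(X, n) = 0$.

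There is effectively no obstacle here, since the heavy lifting -- bounding the codimension at the closed point $p$ by using the properness of $\overline{Z} \to X$ together with the condition $(SF)_*$ applied to the full face $F = \square^n$ -- has already been performed in Lemma \ref{lem:codim>*}. The corollary is a one-line bookkeeping consequence, and the only care needed is to note that the hypothesis $(SF)_*$ required for that lemma is automatic for ${\rm v}$-cycles by their definition as (pre-vanishing) ${\rm d}$-cycles.
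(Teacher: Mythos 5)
Your proof is correct and follows essentially the same route as the paper, which simply states that the corollary is deduced from Lemma~\ref{lem:codim>*} (applied via the fact that integral ${\rm v}$-cycles are ${\rm d}$-cycles, hence satisfy $(SF)_*$). Your bookkeeping of the codimension bound versus the definition of $z^q$, and the passage to homology, is exactly the intended argument.
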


\begin{remk}
The cycle in Example \ref{exm:bad eg} represents one of the main villains, namely pre-vanishing but not strict vanishing cycles, that the author wanted to avoid for a long time.

 In the paper \cite{PU Milnor} with S. \"Unver, we were able to avoid such cycles by imposing a rather strong condition, namely that the morphism $Z \to X$ is finite surjective. However it came at the cost of being a bit too restrictive in that the inductive definition of the off-Milnor range cycles given there appeared somewhat unnatural to the author. 
 
 In this article, our cycles in the Milnor range satisfy the property that the closure $\overline{Z} \to X$ is finite (see Corollary \ref{cor:proper int t=0}), while we do not suppose that $Z \to X$ itself is finite.
 
 On the other hand, in the paper \cite{Park MZ}, again as an attempt to avoid cycles like $\Gamma$ in Example \ref{exm:bad eg}, we used a formal scheme and a restricted formal series model, which indeed did the job. However in late 2023, Junnosuke Koizumi of U. of Tokyo (now at RIKEN, Japan) informed the author that this approach also comes with a cost elsewhere that there seems to be a minor mistake in the graph morphism, and to repair it one can take the admissible blow-ups of the ambient formal schemes. The author hopes the corrigendum for this article \cite{Park MZ} may become available soon in a near future. 

A virtue of the present article, especially compared to the previous articles \cite{PU Milnor} and \cite{Park MZ}, is that we work with the new conditions $(GP)_*$ and $(SF)_*$, 
which offer consistent definitions for both the Milnor and the off-Milnor ranges, compatible with the face operations, while it also eliminates certain undesirable cycles, like the one in Example \ref{exm:bad eg}..
\qed
\end{remk}

\subsection{The ${\rm v}$-cycles of higher vanishing order}\label{sec:higher vanishing}

We generalize the Definition \ref{defn:vanishing cycle} as follows. 

\begin{defn}\label{defn:higher v-cycle}
Let $X= \Spec (A)$ be an integral local $k$-scheme of dimension $1$ with the unique closed point $p$. Let $I \subset A$ be the maximal ideal of $A$. Let $n, q, r \geq 1$ be integers. Let $Z \in z^q _{{\rm d}} (X, n)$ be an integral cycle.

\begin{enumerate}
\item We say that $Z$ is a strict vanishing cycle of \emph{order \emph{$ \geq r$}}, if 
$$ 
Z \times_{\Spec (A)} \Spec (A/ I^r) = \emptyset.
$$
%In general, we say it is a strict vanishing cycle of order $\geq r$, if each component satisfies the above condition.

\item We say that $Z$ is a strict vanishing cycle of \emph{order $r$}, if it is of order $\geq r$, but not of order $\geq r+1$. 
\item Let $z^q_{{\rm v}, \geq r} (X, n) \subset z^q_{{\rm d}} ( X, n)$ be the subgroup generated by strict vanishing cycles of order $\geq r$. By convention, we define $z^q_{{\rm v}, \geq 0} (X, n) := z^q_{{\rm d}} (X, n)$. Since fibre products are associative, one deduces immediately that if $Z \in z_{{\rm v}, \geq r} ^q (X, n)$, then $\partial_i ^{\epsilon} Z \in z_{{\rm v}, \geq r} ^q (X, n-1)$ for all $1 \leq i \leq n$ and $\epsilon \in \{ 0, \infty \}$. In particular, one deduces that we have a subcomplex $z_{{\rm v}, \geq r}^{q} (X, \bullet)$, and in fact, we have the obvious decreasing filtration
$$
 \cdots \subset z^q _{{\rm v}, \geq 2} (X, \bullet) \subset z^q_{{\rm v}, \geq 1} (X, \bullet) \subset z^q _{ {\rm v}, \geq 0} (X, \bullet) = z^q_{{\rm d}} (X, \bullet)
 $$
on the complex $z^q_{{\rm d}} (X, \bullet)$.
 \qed
\end{enumerate}
\end{defn}

\begin{exm}
By definition $Z$ is strict vanishing cycle in the sense of Definition \ref{defn:vanishing cycle} if and only if it is a strict vanishing cycle of order $\geq 1$.
\qed
\end{exm}

\begin{exm}
Let $X= \Spec (k[[t]])$. Consider the cycle $Z \subset X \times \square^3$ given by 
$$\left\{ y_1 = 1 - \frac{t^r}{a}, y_2 = b_1, y_3 = b_2 \right\},$$
where $a, b_1, b_2 \in k[[t]]^{\times}$. This $Z$ is a strict vanishing cycle of order $r$.
\qed
\end{exm}

\subsection{The mod $I^{m+1}$-equivalence}\label{sec:mod t^{m+1}}

We now suppose that $X= \Spec (A)$ is an integral \emph{henselian} local $k$-scheme of dimension $1$ with the unique closed point $p$. Let $I \subset A$ be the maximal ideal. For each integer $m \geq 1$, let $X_{m+1}$ be the closed subscheme defined by the ideal $I^{m+1}$. 

Under the above assumptions and the notations, we define two notions of equivalence relations on cycles, and in the Milnor range we prove that they coincide. 

First, we define the following equivalence relation on the ${\rm d}$-cycles $z_{{\rm d}} ^q (X, n)$ in the Milnor range, originating from an analogous notion in \cite{PU Milnor}, with some subtle differences: % \cite[Definition 2.3.1]{PU Milnor}:

\begin{defn}[{cf. \cite[Definition 2.3.1]{PU Milnor}}]\label{defn:mod t^{m+1}}
Let $m, n \geq 1$ be integers. % and let $I=(t)$ and $X_{m+1}$ be as the above.

Let $Z_1, Z_2 \in z^q _{{\rm d}} (X, n)$ be two integral ${\rm d}$-cycles.
\begin{enumerate}
\item We say that $Z_1$ and $Z_2$ are \emph{naively mod $I^{m+1}$-equivalent} and write $Z_1 \sim_{I^{m+1}} Z_2$ if the Zariski closures $\overline{Z}_1, \overline{Z}_2$ in $X \times \overline{\square}^n$ satisfy the \emph{equality} of the closed subschemes of $X_{m+1} \times \overline{\square}^n$
$$
\overline{Z}_1 \times_X X_{m+1} = \overline{Z}_2 \times_X X_{m+1}.
$$
We emphasize that this notion is defined using the closures. 

In case $I= (t)$ is a principal ideal (e.g. when $X$ is regular), we may say ``mod $t^{m+1}$-equivalent" synonymously.
\item Let $\mathcal{N}^q (m+1) \subset z^q_{{\rm d}} (X, n)$ be the subgroup generated by the differences $Z_1 - Z_2$ over all mod $I^{m+1}$-equivalent pairs $(Z_1, Z_2)$ of integral cycles $Z_i \in z_{{\rm d}} ^n (X, n)$.
\qed
\end{enumerate}
\end{defn}

While the above definition is easy to give and convenient for computations, it poses a couple of technical inconveniences. For instance, it is difficult to show that this equivalence relation is preserved under flat pull-backs or finite push-forwards.

This can be overcome by considering the following definition which will be shown to be equivalent to the above in the Milnor range in Lemma \ref{lem:comparison mod}, which requires the additional assumption that $A$ is henselian. This notion is analogous to the one from \cite{Park MZ}, with some subtle differences as well.

\begin{defn}[{cf. \cite[Definition 2.4.1]{Park MZ}}]\label{defn:mod t^{m+1} 2}
Let $\mathcal{A}$ be a coherent $\mathcal{O}_{\overline{\square}^n_X}$-algebra. 
\begin{enumerate}
\item We say that $\mathcal{A}$ is $(q,n)$-\emph{admissible over $X$} if the associated cycle $[\mathcal{A}|_{\square^n_X}]$ is in $z^q_{{\rm d}} (X, n)$.
\item A pair $(\mathcal{A}_1, \mathcal{A}_2)$ of $(q,n)$-admissible coherent $\mathcal{O}_{\overline{\square}^n_X}$-algebras is said to be \emph{mod $I^{m+1}$-equivalent} if there is an isomorphism
$$\mathcal{A}_1 \otimes_{\mathcal{O}_{\overline{\square}^n_{X}}} \mathcal{O}_{\overline{\square}^n_{X_{m+1}}}  \simeq \mathcal{A}_2 \otimes_{\mathcal{O}_{\overline{\square}^n_{X}}} \mathcal{O}_{\overline{\square}^n_{X_{m+1}}}$$
of $\mathcal{O}_{\overline{\square}^n_{X_{m+1}}}$-algebras.
\item Let $\mathcal{M}^q (m+1, n) \subset z^q_{{\rm d}} (X, n)$ be the subgroup generated by the cycles of the form 
$[\mathcal{A}_1|_{\square_X^n}] - [\mathcal{A}_2|_{\square_X^n}] $ over all pairs $(\mathcal{A}_1, \mathcal{A}_2)$ as in (2).
\qed
\end{enumerate}
\end{defn}

Here, when $q=n$, we have two subgroups $\mathcal{N}^n (m+1)$ and $\mathcal{M}^n (m+1, n)$ of $z^n_{{\rm d}} (X, n)$ that define equivalence relations on the group $z^n_{{\rm d}} (X, n)$. We show in Lemma \ref{lem:comparison mod} that they coincide. This requires our additional assumption ``henselian" made from \S \ref{sec:mod t^{m+1}}.

Recall the following from \cite[Proposition (18.5.9), (18.5.10), p.130]{EGA4-4}, or \cite[Lemma 04GG]{stacks}): 

\begin{lem}\label{lem:henselian}
Let $R$ be a henselian local ring and let $R \hookrightarrow B$ be a finite extension of rings.

Then $B$ is a finite direct product of henselian local rings. Furthermore, there is a bijection between the factors and the maximal ideals of $B$. In particular, if $B$ is an integral domain in addition to the above assumptions, then $B$ is a henselian local domain with a unique maximal ideal.
\end{lem}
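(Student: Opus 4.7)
The plan is to invoke the standard characterization of henselian local rings: a local ring $R$ is henselian if and only if every finite $R$-algebra decomposes as a finite direct product of local rings. This is one of several equivalent formulations of the Hensel condition (see \cite[Lemma 04GG]{stacks}), so most of the work is in unpacking this into the statement we want.

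First I would verify that $B$ is semilocal by reducing modulo the maximal ideal $\mathfrak{m} \subset R$. Since $R \to B$ is finite, $B \otimes_R k$ is a finite-dimensional algebra over the residue field $k = R/\mathfrak{m}$, hence Artinian, hence a finite product $\prod_{i=1}^{r} \bar{B}_i$ of Artinian local $k$-algebras via the usual decomposition into orthogonal primitive idempotents $\bar{e}_i \in B/\mathfrak{m}B$. By going-up for the integral morphism $R \to B$, the maximal ideals of $B$ are in bijection with those of $B/\mathfrak{m}B$, which are in bijection with the factors $\bar{B}_i$.

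Next, I would lift the idempotents. The content of $R$ being henselian (in one of its equivalent formulations) is precisely that a complete set of orthogonal idempotents in $B/\mathfrak{m}B$ lifts to such a set $e_1, \ldots, e_r \in B$ with $\sum e_i = 1$. Setting $B_i := e_i B$ yields a decomposition $B = \prod_{i=1}^{r} B_i$, and each $B_i$ is local because its reduction modulo $\mathfrak{m}$ is the local ring $\bar{B}_i$ (a ring with local reduction mod a nilpotent-like ideal, combined with being finite and hence integral over local $R$, is local).

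Finally I would argue that each factor $B_i$ is itself henselian. This is the step I expect to be the main subtlety: one needs the fact that a finite local extension of a henselian local ring is henselian. This can be deduced from the decomposition property we just used: given any finite $B_i$-algebra $C$, $C$ is also finite over $R$, hence (by what we just proved, applied to $R \to C$) decomposes as a product of local rings; since this decomposition refines the trivial decomposition of $C$ as a $B_i$-algebra, one concludes that $B_i$ itself has the property characterizing henselian-ness. The integral domain case is then immediate: a nonzero integral domain has no nontrivial idempotents, so $r = 1$, and $B$ is henselian local.
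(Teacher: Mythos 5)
The paper does not actually prove this lemma; it simply recalls it from EGA IV Proposition (18.5.9), (18.5.10) and Stacks Project Lemma 04GG, so there is no proof in the paper to compare against. Your argument is correct and is the standard one underlying those references: decompose the Artinian ring $B \otimes_R (R/\mathfrak{m})$ into a product of local rings, use going-up for the integral morphism $R \to B$ to identify the factors with the maximal ideals of $B$, lift the resulting complete system of orthogonal idempotents via the Hensel property to decompose $B$ itself, and deduce henselian-ness of each factor $B_i$ by observing that any finite $B_i$-algebra is finite over $R$ by transitivity and hence decomposes into a product of local rings. One place to tighten the phrasing in your step on locality of each $B_i$: the clean statement is that $B_i$ is finite over $R$, so every maximal ideal of $B_i$ contracts to $\mathfrak{m}$, hence the maximal ideals of $B_i$ correspond bijectively to those of $B_i/\mathfrak{m}B_i$, of which there is exactly one by construction; thus $B_i$ is local.
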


\begin{lem}[{cf. \cite[Lemma 4.3.1]{Park MZ}}]\label{lem:comparison mod}
We have the equality
$$
\mathcal{M}^n (m+1, n) = \mathcal{N}^n (m+1)
$$
of the subgroups of $z^n_{{\rm d}} (X, n).$ In particular, on the group $z_{{\rm d}} ^n (X, n)$ of ${\rm d}$-cycles in the Milnor range, the naive mod $I^{m+1}$-equivalence coincides with the mod $I^{m+1}$-equivalence.
\end{lem}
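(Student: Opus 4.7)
I plan to show the two inclusions separately. The easy direction is $\mathcal{N}^n(m+1) \subset \mathcal{M}^n(m+1, n)$. Take an integral pair $Z_1, Z_2 \in z^n_{{\rm d}}(X, n)$ with $\overline{Z}_1 \times_X X_{m+1} = \overline{Z}_2 \times_X X_{m+1}$. By Corollary \ref{cor:proper int t=0}, each closure $\overline{Z}_i \hookrightarrow \overline{\square}^n_X$ is a finite $X$-scheme, so the pushed-forward sheaf $\mathcal{A}_i := \iota_{i,*}\mathcal{O}_{\overline{Z}_i}$ is a coherent $\mathcal{O}_{\overline{\square}^n_X}$-algebra. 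Integrality of $Z_i$ gives $[\mathcal{A}_i|_{\square^n_X}] = [Z_i]$, hence $(n,n)$-admissibility, and the scheme-theoretic equality modulo $I^{m+1}$ translates into an $\mathcal{O}_{\overline{\square}^n_{X_{m+1}}}$-algebra isomorphism $\mathcal{A}_1/I^{m+1} \simeq \mathcal{A}_2/I^{m+1}$. So $[Z_1]-[Z_2] \in \mathcal{M}^n(m+1, n)$.

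For $\mathcal{M}^n(m+1, n) \subset \mathcal{N}^n(m+1)$, take $(n,n)$-admissible coherent algebras $\mathcal{A}_1, \mathcal{A}_2$ with an $\mathcal{O}_{\overline{\square}^n_{X_{m+1}}}$-algebra isomorphism $\varphi: \mathcal{A}_1/I^{m+1} \xrightarrow{\sim} \mathcal{A}_2/I^{m+1}$. The pushforward to $X$ gives finite $A$-algebras $B_i = \Gamma(\overline{\square}^n_X, \mathcal{A}_i)$, and by Lemma \ref{lem:henselian} each decomposes as a product of henselian local factors $B_i = \prod_\alpha B_{i,\alpha}$. These idempotent decompositions pull back to $\mathcal{A}_i = \prod_\alpha \mathcal{A}_{i,\alpha}$. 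Because $A$ is henselian and $I^{m+1}$ lies in the Jacobson radical, idempotents in $B_i/I^{m+1}B_i$ lift uniquely to $B_i$, so the iso $\varphi$ matches local factors pairwise and reduces the argument to the case of connected henselian-local support.

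In this local case, $[\mathcal{A}_i|_{\square^n_X}] = \sum_k n_{i,k}[V_{i,k}]$ where $V_{i,k}$ runs over integral ${\rm d}$-cycles with closures $\overline{V}_{i,k}$ finite henselian-local over $X$ (again by Lemma \ref{lem:henselian}, applied componentwise). Each reduction $\overline{V}_{i,k} \times_X X_{m+1}$ is then a connected closed subscheme of $\overline{\square}^n_{X_{m+1}}$. Applying a second round of idempotent matching to $\varphi$ restricted to each local factor (using the connected-components count on both sides), one obtains a bijection $\sigma$ with $\overline{V}_{1,k} \times_X X_{m+1} = \overline{V}_{2,\sigma(k)} \times_X X_{m+1}$ as closed subschemes of $\overline{\square}^n_{X_{m+1}}$. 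Equality of the lengths of the stalks of $\mathcal{A}_1/I^{m+1}$ and $\mathcal{A}_2/I^{m+1}$ at the generic points of these matched components forces $n_{1,k} = n_{2,\sigma(k)}$, since these stalk lengths agree with the multiplicities $n_{i,k}$ (the generic point sits over the generic point of $X$, which is not affected by the flat base change to $X_{m+1}$ up to faithful extraction). Expanding
\[
[\mathcal{A}_1|_{\square^n_X}] - [\mathcal{A}_2|_{\square^n_X}] = \sum_k n_{1,k}\bigl([V_{1,k}] - [V_{2,\sigma(k)}]\bigr)
\]
as a sum of naively mod $I^{m+1}$-equivalent integral pairs shows membership in $\mathcal{N}^n(m+1)$.

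The main obstacle is the multiplicity-preserving matching in the third paragraph: the abstract algebra isomorphism $\varphi$ must be unpacked into a scheme-level pairing of components together with equality of generic multiplicities. The delicate point is that neither equality of cycles on $\overline{\square}^n_{X_{m+1}}$ alone nor equality of supports alone suffices — only the $\mathcal{O}_{\overline{\square}^n_{X_{m+1}}}$-linear structure of $\varphi$, combined with the henselian property of $A$ (so that components cannot split under reduction and idempotents lift cleanly), pins down both the subscheme matching and the weight matching required to write the difference inside $\mathcal{N}^n(m+1)$.
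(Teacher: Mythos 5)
The skeleton of your argument—easy direction by pushing forward structure sheaves, hard direction by decomposing via the henselian structure of $B_j = \Gamma(\overline{\square}^n_X, \mathcal{A}_j)$ and matching local factors through unique idempotent lifting—is the same one the paper uses. Your easy direction and the first idempotent-matching step are fine. But the last paragraph, which you yourself flag as the main obstacle, does not actually resolve it and contains two real gaps.

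First, the ``second round of idempotent matching'' inside a single local factor is not available: a local ring has no nontrivial idempotents, so once you have reduced to a henselian local $B_{j\alpha}$, you cannot decompose it further by lifting idempotents. The paper avoids this by observing that after the first decomposition each $\Spec(B_{j\alpha})$ maps to a single integral image $\overline{Z}_{j\alpha}\subset X\times\mathbb{A}^n$ (with coordinate ring $C_{j\alpha}$ the image of $A[y_1,\dots,y_n]\to B_{j\alpha}$), and the cycle associated to $\mathcal{A}_{j\alpha}$ is $d_{j\alpha}[Z_{j\alpha}]$ for a single multiplicity $d_{j\alpha}=\deg(\Spec B_{j\alpha}\to\overline{Z}_{j\alpha})$; no further component matching is needed.

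Second, and more seriously, your extraction of multiplicities is incorrect as stated. You argue that the stalk length of $\mathcal{A}_j/I^{m+1}$ at the generic point of a matched component recovers $n_{j,k}$ ``since the generic point sits over the generic point of $X$, which is not affected by the flat base change to $X_{m+1}$.'' But the generic point of $X$ is precisely what disappears under base change to $X_{m+1}=\Spec(A/I^{m+1})$: the mod-$I^{m+1}$ data live entirely over the closed fiber, and the generic points of the cycles are not visible there. The paper's argument is genuinely different on this point: it computes the generic degree by a special-fiber formula, $d_{j\alpha}=[B_{j\alpha}/IB_{j\alpha}:C_{j\alpha}/IC_{j\alpha}]$ (a ratio of $k$-lengths, valid because $B_{j\alpha}$ and $C_{j\alpha}$ are finite flat over the DVR $A$, so their $A$-ranks coincide with their mod-$I$ lengths). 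This quantity is determined by the isomorphism $\varphi$ modulo $I^{m+1}$ (indeed already modulo $I$), which is how one gets $d_{1\alpha}=d_{2\alpha}$. You need to replace your generic-point heuristic with an argument of this kind that reads off the generic degree from the closed fiber.
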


\begin{proof}
While the idea of the proof is essentially identical to the one in \emph{loc.cit.}, since our objects are not identical to the ones there, we give details for self-containedness. 

Each generator $[Z_1] - [Z_2] \in \mathcal{N}^n (m+1)$ is given by two integral cycles $Z_1, Z_2 \in z_{{\rm d}} ^n (X, n)$ such that
$$
\overline{Z}_1 \times_X X_{m+1} = \overline{Z}_2 \times_X X_{m+1}
$$ 
as closed subschemes in $\overline{\square}^n_{X_{m+1}}$. This is equivalent to say that we have an isomorphism of $\mathcal{O}_{{ \overline{\square}^n_{X_{m+1}}}}$-algebras
$$
\mathcal{O}_{\overline{Z}_1} \otimes _{\mathcal{O}_{\overline{\square}_{X} ^n}} \mathcal{O}_{ \overline{\square}^n_{X_{m+1}}} \simeq \mathcal{O}_{\overline{Z}_1} \otimes _{\mathcal{O}_{\overline{\square}_{X} ^n}} \mathcal{O}_{ \overline{\square}^n_{X_{m+1}}}.
$$
Hence we have
$$
[Z_1] - [Z_2] = [ \mathcal{O}_{\overline{Z}_1}|_{ \square_X ^n}] -  [ \mathcal{O}_{\overline{Z}_2}|_{ \square_X ^n}]  \in \mathcal{M} ^n (m+1, n),
$$
so we deduce that $\mathcal{M}^n (m+1, n) \supset \mathcal{N}^n (m+1).$

\medskip

We now prove the opposite inclusion $\mathcal{M}^n (m+1, n) \subset \mathcal{N}^n (m+1).$ Let $(\mathcal{A}_1, \mathcal{A}_2)$ be a pair of mod $I^{m+1}$-equivalent $(n,n)$-admissible coherent $\mathcal{O}_{\overline{\square}_X^n}$-algebras. Recall that for each integral cycle $Z \in z_{{\rm d}} ^n (X, n)$, we know the morphism $\overline{Z} \to X$ is finite (Corollary \ref{cor:proper int t=0}), thus $\overline{Z}$ is affine. Hence each $\mathcal{A}_j$ is determined by the ring $B_j$ of the global sections. Furthermore, the mod $I^{m+1}$-equivalence of $\mathcal{A}_1$ and $\mathcal{A}_2$ implies the existence of an isomorphism of the Artin $k$-algebras
\begin{equation}\label{eqn:artin 1}
B_1 / I^{m+1} B_1 \simeq B_2 / I^{m+1} B_2.
\end{equation}

Since $B_j$ is finite over the henselian ring $A$, by Lemma \ref{lem:henselian}, we can write
\begin{equation}\label{eqn:henselian 0}
B_j = \prod_{i=1} ^{r_j} B_{ji}
\end{equation}
for some henselian local domains $B_{ij}$, corresponding to the maximal ideals of $B_j$. 

By the structure theorem of Artin rings (see Atiyah-Macdonald \cite[Theorem 8.7, p.90]{AM} or D. Eisenbud \cite[Corollary 2.16, p.76]{Eisenbud}), each $B_j / I^{m+1} B_j$ decomposes uniquely into a product of Artin local rings, and the isomorphism \eqref{eqn:artin 1} gives a correspondence between the Artin local factors of $B_1/ I^{m+1} B_1$ and $B_2/ I^{m+1} B_2$. 

The Artin local factors in the second step and the henselian local factors in \eqref{eqn:henselian 0} correspond naturally. 

Hence, the numbers of factors for $B_1$ and $B_2$ are equal, i.e. $r_1 = r_2 =:r$.

\medskip

Since each irreducible component in $[ \Spec (B_j)]$ has the empty intersection with the extended faces $X \times \overline{F}$ of $X \times \overline{\square}^n$ (Lemma \ref{lem:proper int face *}), we have the proper morphisms $\Spec (B_j) \to X \times \mathbb{A}^n$. We let
$$
f_{ji} : \Spec ( B_{ji}) \to X \times \mathbb{A}^n
$$
be its restriction to $\Spec (B_{ji})$. We let $\overline{Z}_{ji} := f_{ji} (\Spec (B_{ji})) \subset X \times \mathbb{A}^n$. It is closed in $X \times \overline{\square}^n$. Let $\mathcal{A}_{ji}$ be the $\mathcal{O}_{\overline{\square}^n_X}$-algebra corresponding to $B_{ji}$, so that $A_{ji} \simeq f_{ji*} \mathcal{O}_{\Spec (B_{ji})}$ and 
$$
[ \mathcal{A}_{ji} |_{\square_X ^n} ] = f_{ji*} [ \Spec (B_{ji})|_{\square_X^n}] = d_{ji} [ Z_{ji}]
$$
where $d_{ji} := \deg (f_{ji})$ and $Z_{ji}:= \overline{Z}_{ji} |_{\square_X ^n}$.

Let $C_{ji}$ be the coordinate ring of $\overline{Z}_{ji}$. This is the image of the natural homomorphism 
$$
f_{ji} ^{\sharp}: A[ y_1, \cdots, y_n] \to B_{ji},$$
and since we have the commutative diagram with the vertical isomorphism coming from \eqref{eqn:artin 1}
$$
\xymatrix{ (A/ I^{m+1}) [ y_1, \cdots, y_n ] \ar[rr]^{ f_{1i} ^{\sharp}} \ar[drr]^{ f_{2i} ^{\sharp}} & & B_{1i}/ I^{m+1} B_{1i} \ar[d] ^{\simeq} \\
 & & B_{2i} / I^{m+1} B_{2i},}
 $$
 taking the images of the horizontal maps, we deduce the induced isomorphism
 $$
 C_{1i}/ I^{m+1} C_{1i} \simeq C_{2i}/ I^{m+1} C_{2i}.
 $$
 Hence $[ Z_{1i} ] - [Z_{2i}] \in \mathcal{N}^n (m+1)$. Since $d_{ji} = [ B_{ji}/ I : C_{ji}/ I]$ and we have the induced isomorphisms $B_{1i}/ IB_{1i} \simeq B_{2i} / I B_{2i}$ and $C_{1i}/ I C_{1i} \simeq C_{2i} / I C_{2i}$, we deduce $d_{1i} = d_{2i}$, call it $d_i$. Thus 
 $$
 [\mathcal{A}_1 |_{\square_X ^n}] - [ \mathcal{A}_2 |_{\square_X ^n}] = \sum_{i=1} ^r ( [ \mathcal{A}_{1i}|_{\square_X^n} ] - [ \mathcal{A}_{2i}|_{\square_X^n}]) = \sum_{i=1} ^r d_i ( [ Z_{1i} ] - [ Z_{2i}]) \in \mathcal{N}^n (m+1),
 $$
 which shows $\mathcal{M}^n (m+1, n) \subset \mathcal{N}^n (m+1)$, and completes the proof.
\end{proof}

The group $\mathcal{M}^q (m+1, n)$ works well away from the Milnor range as well:
\begin{lem}\label{lem:mod t face}
For $1 \leq i \leq n$ and $\epsilon \in \{ 0, \infty \}$, we have
$$ 
\partial_i ^{\epsilon} \mathcal{M}^q (m+1, n) \subset \mathcal{M}^q (m+1, n-1).
$$
In particular, $\mathcal{M}^q (m+1, \bullet)$ is a subcomplex of $z^q_{{\rm d}} (X, \bullet)$.
\end{lem}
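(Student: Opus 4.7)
The plan is to verify the inclusion on generators. A typical generator of $\mathcal{M}^q(m+1, n)$ has the form $[\mathcal{A}_1|_{\square_X^n}] - [\mathcal{A}_2|_{\square_X^n}]$, where $(\mathcal{A}_1, \mathcal{A}_2)$ is a pair of $(q,n)$-admissible coherent $\mathcal{O}_{\overline{\square}_X^n}$-algebras together with an isomorphism $\mathcal{A}_1 \otimes_{\mathcal{O}_{\overline{\square}_X^n}} \mathcal{O}_{\overline{\square}_{X_{m+1}}^n} \simeq \mathcal{A}_2 \otimes_{\mathcal{O}_{\overline{\square}_X^n}} \mathcal{O}_{\overline{\square}_{X_{m+1}}^n}$. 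The natural candidate for the face is the pullback along the closed immersion $\iota_i^\epsilon\colon \overline{\square}_X^{n-1} \hookrightarrow \overline{\square}_X^n$ of the codimension one extended face $\overline{F}_i^\epsilon$: set $\mathcal{B}_j := \iota_i^{\epsilon*}\mathcal{A}_j = \mathcal{A}_j \otimes_{\mathcal{O}_{\overline{\square}_X^n}} \mathcal{O}_{\overline{\square}_X^{n-1}}$ for $j=1,2$.

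The first task is to check that each $\mathcal{B}_j$ is $(q,n-1)$-admissible over $X$, i.e.\ that $[\mathcal{B}_j|_{\square_X^{n-1}}] \in z^q_{\rm d}(X, n-1)$. Since $\mathcal{A}_j$ is $(q,n)$-admissible, the condition $(GP)_*$ applied to each component of $[\mathcal{A}_j|_{\square_X^n}]$ ensures that the closure intersects $X \times \overline{F}_i^\epsilon$ properly. This properness forces the higher Tor-contributions to vanish, so that the cycle of $\mathcal{B}_j|_{\square_X^{n-1}}$ coincides with the refined intersection, which by construction is the face cycle $\partial_i^\epsilon[\mathcal{A}_j|_{\square_X^n}]$. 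Because $z^q_{\rm d}(X, \bullet)$ is already known (Definition \ref{defn:dsf}) to be a subcomplex of $z^q(X, \bullet)$, this face cycle lies in $z^q_{\rm d}(X, n-1)$, yielding the desired admissibility.

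Next, the mod $I^{m+1}$-equivalence transfers: tensoring the given isomorphism of $\mathcal{O}_{\overline{\square}_{X_{m+1}}^n}$-algebras with $\mathcal{O}_{\overline{\square}_X^{n-1}}$ over $\mathcal{O}_{\overline{\square}_X^n}$ and reorganizing the resulting iterated tensor products yields a canonical isomorphism
$$\mathcal{B}_1 \otimes_{\mathcal{O}_{\overline{\square}_X^{n-1}}} \mathcal{O}_{\overline{\square}_{X_{m+1}}^{n-1}} \simeq \mathcal{B}_2 \otimes_{\mathcal{O}_{\overline{\square}_X^{n-1}}} \mathcal{O}_{\overline{\square}_{X_{m+1}}^{n-1}}$$
of $\mathcal{O}_{\overline{\square}_{X_{m+1}}^{n-1}}$-algebras, because base change commutes with the face restriction. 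Hence $(\mathcal{B}_1, \mathcal{B}_2)$ is a mod $I^{m+1}$-equivalent pair of $(q,n-1)$-admissible algebras, and
$$\partial_i^\epsilon\bigl([\mathcal{A}_1|_{\square_X^n}] - [\mathcal{A}_2|_{\square_X^n}]\bigr) = [\mathcal{B}_1|_{\square_X^{n-1}}] - [\mathcal{B}_2|_{\square_X^{n-1}}] \in \mathcal{M}^q(m+1, n-1).$$

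The main obstacle — and what makes the argument work cleanly — is the identification in the second paragraph of the sheaf-theoretic pullback with the cycle-theoretic face operator. Without the extended proper intersection property $(GP)_*$ built into Definition \ref{defn:dsf}, the associated cycle of $\mathcal{B}_j$ could pick up spurious higher Tor contributions or fail to be equidimensional of the right codimension, and the equality $[\mathcal{B}_j|_{\square_X^{n-1}}] = \partial_i^\epsilon[\mathcal{A}_j|_{\square_X^n}]$ would break down. Once this compatibility is in place, the rest is formal from the associativity of tensor product.
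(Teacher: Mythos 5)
Your proof is correct and follows essentially the same route as the paper's: pull back the admissible algebras along the face closed immersion $\iota_i^\epsilon$, invoke the compatibility $\partial_i^\epsilon[\mathcal{A}_j|_{\square_X^n}] = [(\iota_i^{\epsilon*}\mathcal{A}_j)|_{\square_X^{n-1}}]$, and observe that the mod $I^{m+1}$-equivalence isomorphism survives restriction by commuting tensor products. The extra commentary you supply on Tor-vanishing and admissibility is at a comparable level of detail to the paper, which itself asserts the key compatibility without further argument by referring to \cite[Lemma 2.4.4]{Park MZ}.
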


\begin{proof}
The argument is essentially equal to \cite[Lemma 2.4.4]{Park MZ}. We follow its outline with minor adjustments for our situation.

For $1 \leq i \leq n$ and $\epsilon \in \{ 0, \infty \}$, let $\iota_i ^{\epsilon} : \overline{\square}_X ^{n-1} \hookrightarrow \overline{\square}_X ^n$ be the closed immersion given by $\{ y_i = \epsilon \}$. 

When $\mathcal{A}$ is a $(q,n)$-admissible coherent $\mathcal{O}_{\overline{\square}_X ^n}$-algebra, the sheaf pull-back $(\iota_i ^{\epsilon})^* \mathcal{A}$ is a $(q, n-1)$-admissible coherent $\mathcal{O}_{\overline{\square}_X ^{n-1}}$-algebra, and it satisfies $\partial _i ^{\epsilon} [ \mathcal{A} |_{\square_X ^n}] = [ ((\iota_i ^{\epsilon})^* \mathcal{A} ) |_{\square_X ^{n-1}}]$. 

If $(\mathcal{A}_1, \mathcal{A}_2)$ is a pair of such sheaves that are mod $I^{m+1}$-equivalent, then we have an isomorphism of $\mathcal{O}_{\overline{\square}_{X_{m+1}} ^n}$-algebras
$$
\mathcal{A}_1 \otimes_{\mathcal{O}_{\overline{\square}_X ^n}} \mathcal{O}_{\overline{\square}_{X_{m+1}} ^n} \simeq  \mathcal{A}_2 \otimes_{\mathcal{O}_{\overline{\square}_X ^n}} \mathcal{O}_{\overline{\square}_{X_{m+1}} ^n}.
$$
Pulling back via $\iota_i ^{\epsilon}$, we deduce an isomorphism of $\mathcal{O}_{\overline{\square}_{X_{m+1}} ^{n-1}}$-algebras
$$
(\iota_i ^{\epsilon})^* \mathcal{A}_1 \otimes_{\mathcal{O}_{\overline{\square}_X ^{n-1} }} \mathcal{O}_{\overline{\square}_{X_{m+1}} ^{n-1}} \simeq  
(\iota_i ^{\epsilon})^* \mathcal{A}_2 \otimes_{\mathcal{O}_{\overline{\square}_X ^{n-1} }} \mathcal{O}_{\overline{\square}_{X_{m+1}} ^{n-1}}.
$$
Hence $((\iota_i ^{\epsilon})^* \mathcal{A}_1 , (\iota_i ^{\epsilon})^* \mathcal{A}_2)$ is a mod $I^{m+1}$-equivalent pair and we have
$$
\partial_i ^{\epsilon} ( [ \mathcal{A}_1 |_{\square_X^n}] - [ \mathcal{A}_2 |_{\square_X^n}] ) =  [  ((\iota_i ^{\epsilon})^* \mathcal{A}_1) |_{\square_X^{n-1}}] - [ ((\iota_i ^{\epsilon})^* \mathcal{A}_2)|_{\square_X^{n-1}}] \in \mathcal{M}^q (m+1, n-1).
$$
This proves the lemma.
\end{proof}

\begin{defn}
Let $X$ be an integral henselian local $k$-scheme of dimension $1$. Let $m, n \geq 1$ be integers. Define
$$ 
z^q_{{\rm d}} (X/ (m+1), n):= z^q_{{\rm d}} (X, n) / \mathcal{M}^q (m+1, n),
$$
and the homology of the complex is by definition
$$
\CH^q _{{\rm d}} (X/ (m+1), n) :={\rm H}_n ( z^q_{{\rm d}} (X/ (m+1), \bullet)).
$$

In case $q=n$, we may replace the group $\mathcal{M}^n (m+1, n)$ by $\mathcal{N} ^n (m+1)$ by Lemma \ref{lem:comparison mod}, so that we have
\begin{equation}\label{eqn:CHd N}
\CH^n_{{\rm d}} (X/ (m+1), n)= \frac{ z^n _{{\rm d}} (X, n)}{ \partial z^n_{{\rm d}} (X, n+1) + \mathcal{N}^n (m+1)},
\end{equation}
which we could also have been taken as a definition if one wishes to ignore the groups $\mathcal{M}^q (m+1, n)$ and just stick to the Milnor range when $q=n$ for simplicity. However, we mention that with \eqref{eqn:CHd N} as the definition, it is harder to prove the functoriality properties as in Lemmas \ref{lem:fpb} and \ref{lem:fpf}. 
\qed
\end{defn}

\begin{remk}
We do not claim that the above offers a right definition of the motivic cohomology of $X_{m+1}$ in all ranges of $q$. We will see in Theorem \ref{thm:local main 1} that when $q=n$ this group is indeed isomorphic to the motivic cohomology of Elmanto-Morrow \cite{EM} (cf. \cite{Park general}).

 For $q \not = n$, a follow-up work in progress will clarify more about some extra tasks required to describe the motivic cohomology of $X_{m+1}$ in terms of cycles. 
\qed
\end{remk}

We can induce the mod $I^{m+1}$-equivalence on the $({\rm v}, \geq r)$-cycles as well:

\begin{defn}[{cf. Definition \ref{defn:mod t^{m+1}}}]\label{defn:mod t^{m+1} v}
Let $\mathcal{N}_{{\rm v}, \geq r} ^n (m+1) \subset z_{{\rm v}, \geq r} ^n (X, n)$ be the subgroup generated by $Z_1 - Z_2$ over all pairs $(Z_1, Z_2)$ of naively mod $I^{m+1}$-equivalent integral cycles $Z_1, Z_2 \in z_{{\rm v}, \geq r} ^n (X, n)$.
\qed
\end{defn}

We have the following analogue of Definition \ref{defn:mod t^{m+1} 2} for Definition \ref{defn:mod t^{m+1} v}:

\begin{defn}\label{defn:M_v}
Let $X$ be an integral henselian local $k$-scheme of dimension $1$. Let $m, n \geq 1$ be integers. Define $ \mathcal{M}^q_{{\rm v}, \geq r} (m+1, n)$ to be the subgroup of $z_{{\rm v}, \geq r} ^q (X, n)$ generated by the cycles of the form $[ \mathcal{A}_1 |_{\square_X^n}] - [ \mathcal{A}_2 |_{\square_X^n}] \in \mathcal{M}^q _{{\rm d}} (m+1, n)$ for mod $I^{m+1}$-equivalent pairs of $\mathcal{O}_{\overline{\square}_X ^n}$-algebras $(\mathcal{A}_1, \mathcal{A}_2)$ such that each component belongs to $z^q_{{\rm v}, \geq r} (X, n)$. 

In particular, $\mathcal{M}^q _{{\rm v}, \geq r} (m+1, n) \subset \mathcal{M}^q _{{\rm d}} (m+1, n) \cap z_{{\rm v}, \geq r} ^q (X, n)$. This may not be an equality in general.
\qed
\end{defn}

We have the following analogue of Lemma \ref{lem:comparison mod} whose proof is essentially identical:
\begin{lem}\label{lem:comparison mod v}
We have the equality
$$ \mathcal{M}_{{\rm v}, \geq r} (m+1, n) = \mathcal{N}_{{\rm v}, \geq r} ^n (m+1)$$
in the group $z_{{\rm v}, \geq r}^n (X, n)$. 
\end{lem}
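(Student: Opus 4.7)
The plan is to follow the structure of the proof of Lemma \ref{lem:comparison mod} essentially verbatim, while carefully tracking that the property of being a strict vanishing cycle of order $\geq r$ is preserved at each step where one passes from a cycle/coherent algebra to its components/local factors. The two inclusions will be treated separately.

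For the inclusion $\mathcal{M}_{{\rm v}, \geq r}^n (m+1, n) \supset \mathcal{N}_{{\rm v}, \geq r}^n (m+1)$, I would take a generator $[Z_1] - [Z_2]$ with $Z_1, Z_2 \in z_{{\rm v}, \geq r}^n (X, n)$ integral and naively mod $I^{m+1}$-equivalent, and set $\mathcal{A}_j := \mathcal{O}_{\overline{Z}_j}$. Since $\overline{Z}_1 \times_X X_{m+1} = \overline{Z}_2 \times_X X_{m+1}$ as closed subschemes, we get the required isomorphism $\mathcal{A}_1 \otimes \mathcal{O}_{\overline{\square}^n_{X_{m+1}}} \simeq \mathcal{A}_2 \otimes \mathcal{O}_{\overline{\square}^n_{X_{m+1}}}$ of $\mathcal{O}_{\overline{\square}^n_{X_{m+1}}}$-algebras. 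Each component of $[\mathcal{A}_j|_{\square_X^n}] = [Z_j]$ is $Z_j$ itself, which lies in $z_{{\rm v}, \geq r}^n(X, n)$ by hypothesis, so by definition $[Z_1] - [Z_2] \in \mathcal{M}_{{\rm v}, \geq r}^n (m+1, n)$.

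For the opposite inclusion $\mathcal{M}_{{\rm v}, \geq r}^n (m+1, n) \subset \mathcal{N}_{{\rm v}, \geq r}^n (m+1)$, I would copy the second half of the proof of Lemma \ref{lem:comparison mod}. Given a pair $(\mathcal{A}_1, \mathcal{A}_2)$ as in Definition \ref{defn:M_v}, the finiteness of $\overline{Z}_j \to X$ from Corollary \ref{cor:proper int t=0} lets us identify each $\mathcal{A}_j$ with the ring of global sections $B_j$, and because $A$ is henselian, Lemma \ref{lem:henselian} gives a decomposition $B_j = \prod_{i=1}^{r_j} B_{ji}$ into henselian local domains. The structure theorem of Artin rings applied to the Artin algebras $B_j / I^{m+1} B_j$, combined with the assumed isomorphism $B_1/I^{m+1}B_1 \simeq B_2/I^{m+1}B_2$, matches up the factors so that $r_1 = r_2 = r$ and yields induced isomorphisms $C_{1i}/I^{m+1}C_{1i} \simeq C_{2i}/I^{m+1}C_{2i}$ at the level of the coordinate rings $C_{ji}$ of the images $\overline{Z}_{ji}$, together with equalities of the degrees $d_{1i} = d_{2i}$. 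This produces the decomposition $[\mathcal{A}_1|_{\square_X^n}] - [\mathcal{A}_2|_{\square_X^n}] = \sum_i d_i ( [Z_{1i}] - [Z_{2i}])$ with each pair $(Z_{1i}, Z_{2i})$ naively mod $I^{m+1}$-equivalent.

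The only new point, and the one requiring care, is verifying that each integral cycle $Z_{ji}$ appearing in this decomposition actually lies in $z_{{\rm v}, \geq r}^n (X, n)$, so that the resulting class lands in $\mathcal{N}_{{\rm v}, \geq r}^n (m+1)$ and not merely in $\mathcal{N}^n(m+1)$. This is immediate from the hypothesis: by Definition \ref{defn:M_v}, each component of the cycle $[\mathcal{A}_j|_{\square_X^n}]$ is assumed to be in $z_{{\rm v}, \geq r}^n(X, n)$, and the $Z_{ji}$ are exactly the components of $[\mathcal{A}_j|_{\square_X^n}]$ coming from the henselian local factors $B_{ji}$ of $B_j$. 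Since the property of being a ${\rm v}$-cycle of order $\geq r$ is defined componentwise (Definitions \ref{defn:vanishing cycle}, \ref{defn:higher v-cycle}) and via the vanishing of the fibre product $Z \times_{\Spec(A)} \Spec(A/I^r) = \emptyset$, which passes to each irreducible component, we conclude $Z_{ji} \in z_{{\rm v}, \geq r}^n(X, n)$ for each $i, j$. The main (minor) obstacle is simply bookkeeping to ensure the $({\rm v}, \geq r)$-property survives the decomposition, but this is built into the definitions and needs no further argument.
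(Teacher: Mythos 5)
The paper's own proof of this lemma is a one-liner deferring to Lemma~\ref{lem:comparison mod}, stating the argument is ``essentially identical.'' Your proposal fills in exactly that argument and, importantly, tracks the one extra point that needs verifying: that the integral cycles $Z_{ji}$ produced by the decomposition into henselian local factors indeed lie in $z^n_{{\rm v},\geq r}(X,n)$, which follows directly from the clause ``such that each component belongs to $z^q_{{\rm v},\geq r}(X,n)$'' in Definition~\ref{defn:M_v}. Your proof is correct and takes the same approach the paper intends; the added bookkeeping is exactly the kind of thing the author compressed into ``essentially identical.''
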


\begin{lem}\label{lem:M_v}
$\mathcal{M}^q _{{\rm v}, \geq r} (m+1, \bullet)$ forms a subcomplex of $z^q _{{\rm v}, \geq r} (X, \bullet)$.
\end{lem}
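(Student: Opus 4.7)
\textbf{Proof plan for Lemma \ref{lem:M_v}.}

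The plan is to show that for every boundary face map $\partial_i^\epsilon$ (with $1 \le i \le n$ and $\epsilon \in \{0, \infty\}$), one has
$$\partial_i^\epsilon \mathcal{M}^q_{{\rm v}, \geq r}(m+1, n) \subset \mathcal{M}^q_{{\rm v}, \geq r}(m+1, n-1).$$
Since $z^q_{{\rm v}, \geq r}(X, \bullet)$ is already known to be a subcomplex of $z^q_{{\rm d}}(X, \bullet)$ (Definition \ref{defn:higher v-cycle}(3)), the conclusion will then follow immediately, giving the desired subcomplex property. The argument is essentially a refinement of Lemma \ref{lem:mod t face}, tracking the extra condition of being strictly vanishing of order $\geq r$.

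First, I take a generator $\alpha = [\mathcal{A}_1|_{\square^n_X}] - [\mathcal{A}_2|_{\square^n_X}]$ of $\mathcal{M}^q_{{\rm v}, \geq r}(m+1, n)$, coming from a mod $I^{m+1}$-equivalent pair $(\mathcal{A}_1, \mathcal{A}_2)$ of $(q,n)$-admissible coherent $\mathcal{O}_{\overline{\square}_X^n}$-algebras such that each component of $[\mathcal{A}_j|_{\square^n_X}]$ lies in $z^q_{{\rm v}, \geq r}(X, n)$. Repeating the argument of Lemma \ref{lem:mod t face} verbatim with $\iota_i^\epsilon: \overline{\square}_X^{n-1} \hookrightarrow \overline{\square}_X^n$, I obtain that $((\iota_i^\epsilon)^*\mathcal{A}_1, (\iota_i^\epsilon)^*\mathcal{A}_2)$ is again a mod $I^{m+1}$-equivalent pair of $(q, n-1)$-admissible coherent sheaves, and
$$\partial_i^\epsilon \alpha = [((\iota_i^\epsilon)^*\mathcal{A}_1)|_{\square^{n-1}_X}] - [((\iota_i^\epsilon)^*\mathcal{A}_2)|_{\square^{n-1}_X}].$$

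Second, I verify that each component of $[((\iota_i^\epsilon)^*\mathcal{A}_j)|_{\square^{n-1}_X}]$ belongs to $z^q_{{\rm v}, \geq r}(X, n-1)$. The components of this cycle are precisely the components of $\partial_i^\epsilon W$ as $W$ ranges over the components of $[\mathcal{A}_j|_{\square^n_X}]$, each of which lies in $z^q_{{\rm v}, \geq r}(X, n)$ by hypothesis. Hence the desired conclusion reduces to the following: if $W \in z^q_{{\rm v}, \geq r}(X, n)$ is integral, then each component of $\partial_i^\epsilon W$ lies in $z^q_{{\rm v}, \geq r}(X, n-1)$. This is the content of the remark in Definition \ref{defn:higher v-cycle}(3); concretely, since $W \times_{\Spec A} \Spec(A/I^r) = \emptyset$, and the fiber product commutes with the closed immersion defining the face, any component $W'$ of $W \cap (X \times F_i^\epsilon)$ satisfies $W' \times_{\Spec A} \Spec(A/I^r) \subset W \times_{\Spec A} \Spec(A/I^r) = \emptyset$, so $W'$ is vanishing of order $\geq r$.

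Combining these two observations, the difference $\partial_i^\epsilon \alpha$ is the $z^q_{{\rm d}}$-cycle associated to a mod $I^{m+1}$-equivalent pair of $(q, n-1)$-admissible coherent sheaves, each of whose components lies in $z^q_{{\rm v}, \geq r}(X, n-1)$. By Definition \ref{defn:M_v}, this means $\partial_i^\epsilon \alpha \in \mathcal{M}^q_{{\rm v}, \geq r}(m+1, n-1)$. Summing the face contributions with signs gives $\partial \mathcal{M}^q_{{\rm v}, \geq r}(m+1, n) \subset \mathcal{M}^q_{{\rm v}, \geq r}(m+1, n-1)$. Since $z^q_{{\rm v}, \geq r}(X, \bullet)$ is already a subcomplex, this completes the proof. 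The argument contains no genuine obstacle; the only subtle point is making sure that both the admissibility/mod $I^{m+1}$-equivalence (handled by Lemma \ref{lem:mod t face}) and the strict vanishing of order $\geq r$ (handled by the fiber product compatibility above) are simultaneously preserved under face maps.
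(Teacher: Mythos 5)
Your proof is correct and follows essentially the same route as the paper: apply the face-map argument of Lemma \ref{lem:mod t face} to show that pullback preserves mod $I^{m+1}$-equivalence of admissible pairs, and then observe that the faces of cycles in $z^q_{{\rm v},\geq r}(X,n)$ remain in $z^q_{{\rm v},\geq r}(X,n-1)$ because $z^q_{{\rm v},\geq r}(X,\bullet)$ is already known to be a subcomplex. Your component-level verification via the compatibility of fiber products with the closed immersion is a slightly more explicit spelling-out of the same point, but there is no substantive difference.
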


\begin{proof}
It is essentially a repetition of Lemma \ref{lem:mod t face}: suppose that $[ \mathcal{A}_1|_{\square_X^{n+1}}] - [ \mathcal{A}_2 |_{\square_X^{n+1}}] \in \mathcal{M}_{{\rm v}, \geq r} ^q (m+1, n+1)$ is a generator. When $\iota_i ^{\epsilon}: \overline{ \square}_X ^n \hookrightarrow \overline{\square}_X ^{n+1}$ is the closed immersion given by $y_i = \epsilon$, we have
$$
\partial_i ^{\epsilon} \left( [ \mathcal{A}_1|_{\square_X^{n+1}}] - [ \mathcal{A}_2 |_{\square_X^{n+1}}]  \right) = [ ( (\iota_i ^{\epsilon})^* \mathcal{A}_1) |_{\square_X^n}] - [  ((\iota_i ^{\epsilon})^* \mathcal{A}_2) |_{\square_X^n}] \in z_{{\rm v}, \geq r} ^q (X, n)
$$
because $\partial_i ^{\epsilon} ( z_{{\rm v}, \geq r} ^q (X, n+1)) \subset z_{{\rm v}, \geq r} ^q (X, n)$ already. Hence $\partial_i ^{\epsilon} \mathcal{M}_{{\rm v}, \geq r} ^q (m+1, n+1) \subset \mathcal{M}_{{\rm v}, \geq r} ^q (m+1., n)$. Thus, $\mathcal{M}_{{\rm v}, \geq r} ^q (m+1, \bullet)$ is a complex with the boundary map $\partial$, thus a subcomplex of $z_{{\rm v}, \geq r} ^q (X, \bullet)$. 
\end{proof}

\begin{defn}\label{defn:M_v r=1}
Define the quotient complex $z^q_{{\rm v}, \geq r} (X/ (m+1), \bullet)$ by
$$ 
z^q _{{\rm v}, \geq r} (X/ (m+1), n) := z^q _{{\rm v}, \geq r} (X, n) / \mathcal{M}^q_{{\rm v}, \geq r} (m+1, n).
$$

The homology of the complex is by definition
$$
\CH^q _{{\rm v}, \geq r} (X/ (m+1), n):= {\rm H}_n (z^q _{{\rm v}, \geq r} (X/ (m+1), \bullet)).$$

As before, in case $q=n$, we can describe $\CH^n _{{\rm v}, \geq r} (X/ (m+1), n)$ also as
$$
\CH_{{\rm v}, \geq r} ^n (X/ (m+1), n) = \frac{ z_{{\rm v}, \geq r}^n (X, n) }{ \partial z_{{\rm v}, \geq r} ^n (X, n+1) + \mathcal{N}_{{\rm v}, \geq r} ^n (m+1)}
$$
similar to \eqref{eqn:CHd N}, using the group $\mathcal{N}_{{\rm v}, \geq r} ^n (m+1)$ of Definition \ref{defn:mod t^{m+1} v} by Lemma \ref{lem:comparison mod v}. When $r=1$, we drop ``$\geq r$" and simply write ${\rm v}$.
\qed
\end{defn}

\begin{lem}\label{lem:reduction mod m}
For each integer $m \geq 1$, there exist natural homomorphisms
$$
 s_m ^*: \CH^q _{{\rm d}} (X, n) \to \CH^q _{{\rm d}} (X/ (m+1), n),
 $$
 $$
 s_m: \CH_{{\rm v}, \geq r} ^q (X, n) \to \CH_{{\rm v}, \geq r} ^q (X/ (m+1), n), \ \ \mbox{ for } 1 \leq r \leq m+1.
 $$

More generally, for each pair $m'  \geq m \geq 1$, where $m' = \infty$ is also allowed, we have natural homomorphisms
$$
s_m ^{m' *}:  \CH^q _{{\rm d}} (X/ (m'+1), n) \to \CH^q _{{\rm d}} (X/ (m+1), n), 
$$
$$
s_m ^{m' *}: \CH_{{\rm v}, \geq r} ^q (X/ (m'+1), n) \to \CH_{{\rm v}, \geq r}^q (X/ (m+1), n), \ \ \mbox{ for } 1 \leq r \leq m+1,
$$
where if $m'=\infty$, we regard $ \CH^q _{? } (X/ (m'+1), n)  =  \CH^q _{?} (X, n)$ for $?=\{{\rm d}\}, \{{\rm v}\}$, or $\{{\rm v}, \geq r\}$.
\end{lem}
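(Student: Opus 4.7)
The plan is to observe that both statements reduce to showing a containment of the subgroups defining the equivalence relations, namely $\mathcal{M}^q(m'+1, n) \subset \mathcal{M}^q(m+1, n)$ (and the analogous inclusion for the strict vanishing variants), whenever $m' \geq m$; once this is established, the desired homomorphisms arise as the homology of the evident quotient maps of complexes. The case $m' = \infty$ is then just the initial quotient by the $m$-th equivalence subgroup.

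First I would verify the key inclusion on the algebra-theoretic model of Definition \ref{defn:mod t^{m+1} 2}. If $(\mathcal{A}_1, \mathcal{A}_2)$ is a mod $I^{m'+1}$-equivalent pair of $(q,n)$-admissible coherent $\mathcal{O}_{\overline{\square}_X^n}$-algebras, then since $I^{m'+1} \subset I^{m+1}$, the closed immersion $\overline{\square}_{X_{m+1}}^n \hookrightarrow \overline{\square}_{X_{m'+1}}^n$ induces a surjection of structure sheaves $\mathcal{O}_{\overline{\square}_{X_{m'+1}}^n} \twoheadrightarrow \mathcal{O}_{\overline{\square}_{X_{m+1}}^n}$. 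Applying $-\otimes_{\mathcal{O}_{\overline{\square}_{X_{m'+1}}^n}} \mathcal{O}_{\overline{\square}_{X_{m+1}}^n}$ to the isomorphism of $\mathcal{O}_{\overline{\square}_{X_{m'+1}}^n}$-algebras defining mod $I^{m'+1}$-equivalence, one obtains an isomorphism of $\mathcal{O}_{\overline{\square}_{X_{m+1}}^n}$-algebras, so $(\mathcal{A}_1, \mathcal{A}_2)$ is also mod $I^{m+1}$-equivalent. Hence every generator $[\mathcal{A}_1|_{\square_X^n}] - [\mathcal{A}_2|_{\square_X^n}]$ of $\mathcal{M}^q(m'+1, n)$ lies in $\mathcal{M}^q(m+1, n)$. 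Exactly the same argument, applied to pairs whose components lie in $z^q_{{\rm v}, \geq r}(X, n)$, gives $\mathcal{M}^q_{{\rm v}, \geq r}(m'+1, n) \subset \mathcal{M}^q_{{\rm v}, \geq r}(m+1, n)$.

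With these inclusions in hand, which are compatible with the boundary operator $\partial$ by Lemmas \ref{lem:mod t face} and \ref{lem:M_v}, the identity maps on $z^q_{{\rm d}}(X, \bullet)$ and $z^q_{{\rm v}, \geq r}(X, \bullet)$ descend to surjective morphisms of complexes
\[
z^q_{{\rm d}}(X/(m'+1), \bullet) \twoheadrightarrow z^q_{{\rm d}}(X/(m+1), \bullet), \qquad z^q_{{\rm v}, \geq r}(X/(m'+1), \bullet) \twoheadrightarrow z^q_{{\rm v}, \geq r}(X/(m+1), \bullet),
\]
which by functoriality of homology induce the claimed maps $s_m^{m'*}$. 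Taking $m' = \infty$ yields $s_m^*$ and $s_m$. The only mild subtlety is to interpret $m'=\infty$ consistently, i.e.\ to set $\mathcal{M}^q(\infty+1, n) := 0$ and $\mathcal{M}^q_{{\rm v}, \geq r}(\infty+1, n) := 0$, so that $\CH^q_?(X/(\infty+1), n) = \CH^q_?(X, n)$ by convention; with this, no step presents any real difficulty, the argument being essentially formal once Lemmas \ref{lem:mod t face} and \ref{lem:M_v} are available.
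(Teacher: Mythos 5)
Your proof is correct and follows the same route as the paper's: the paper's proof simply asserts "by construction" that the natural morphism of complexes exists, which is exactly the inclusion $\mathcal{M}^q(m'+1,n) \subset \mathcal{M}^q(m+1,n)$ you verify via base change through the surjection $\mathcal{O}_{\overline{\square}^n_{X_{m'+1}}} \twoheadrightarrow \mathcal{O}_{\overline{\square}^n_{X_{m+1}}}$. You have spelled out what the paper leaves implicit, including the appeal to Lemmas \ref{lem:mod t face} and \ref{lem:M_v} for $\partial$-compatibility, but the argument is essentially the same.
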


\begin{proof}
By construction, we have the natural homomorphism of complexes
$$
z^q _{{\rm d}} (X/ (m'+1), \bullet) \to z^q_{{\rm d}} ( X/ (m+1), \bullet).
$$
This induces the desired homomorphisms. The other cases are similar.
\end{proof}

\subsection{Some pull-backs and push-forwards}

We discuss some cases where we can apply pull-backs and push-forwards of cycles needed for the purpose of this article: 

\begin{lem}\label{lem:fpb}
Let $X, Y$ be integral local $k$-schemes of dimension $\leq 1$ with the closed points $p_1, p_2$, respectively. Let $f: X \to Y$ be a flat surjective morphism of $k$-schemes. 
\begin{enumerate}
\item We have the induced flat pull-back morphism
$$
f^*: z_{{\rm d}} ^q (Y, \bullet) \to z_{{\rm d}} ^q (X, \bullet).
$$
\item Suppose $\dim \ X = \dim \ Y = 1$. Then the restriction of the above $f^*$ on the subcomplex $z_{{\rm v}} ^q (Y, \bullet)$ induces
$$
f^*: z_{{\rm v}} ^q (Y, \bullet) \to z_{{\rm v}} ^q (X, \bullet).
$$
\item Suppose $Y= \Spec (A)$ is an integral local $k$-scheme of dimension $1$ with the residue field $k=A/I$, and and $f: X= Y_{k'} \to Y$ is the base change map for a finite extension $k \hookrightarrow k'$ of fields. Under this we have:
$$
f^*: z_{{\rm v}, \geq r} ^q (Y, \bullet) \to z_{{\rm v}, \geq r}^q (X, \bullet).
$$
\item In addition to the assumptions of (3), suppose that $Y$ is henselian. Then we have: 
$$
f^*: z_{{\rm d}} ^q (Y/ (m+1), \bullet) \to z_{{\rm d}} ^q (X/ (m+1), \bullet),
$$
$$
f^*: z_{{\rm v}, \geq r} ^q (Y/ (m+1), \bullet) \to z_{{\rm v}, \geq r} ^q (X/ (m+1), \bullet).
$$
\end{enumerate}
\end{lem}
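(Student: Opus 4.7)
The plan is to build on S. Bloch's classical flat pull-back $f^* : z^q (Y, \bullet) \to z^q (X, \bullet)$ and verify that the additional geometric conditions defining our subcomplexes descend. Writing $\tilde f := f \times \id_{\overline{\square}^n} : X \times \overline{\square}^n \to Y \times \overline{\square}^n$, which is again flat and surjective, one uses that $\tilde f ^* \overline Z = \overline{\tilde f ^* Z}$ by flatness, so closures commute with pull-back, and that a flat surjective morphism between local schemes is a local morphism with $f(p_1) = p_2$.

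For (1), I would first check that the flat pull-back of an integral ${\rm d}$-cycle satisfies $(GP)_*$: this reduces to the properness of $\overline Z \cap (Y \times \overline F)$ being preserved under flat pull-back, which is standard dimension-counting on fibers of $\tilde f$. For $(SF)_*$, I would use that under the dimension hypothesis $\dim X = \dim Y \leq 1$, the fiber $f^{-1}(p_2)$ is zero-dimensional, so $\{p_1\} \times \overline F \subset \tilde f ^{-1}(\{p_2\} \times \overline F)$ has the same codimension as $\{p_2\} \times \overline F$ has in $Y \times \overline{\square}^n$, and the pull-back of the proper intersection $\overline Z \cap (\{p_2\} \times \overline F)$ retains proper codimension when further restricted to $\{p_1\} \times \overline F$. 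For (2), preservation of the pre-vanishing condition is immediate: since $f(p_1) = p_2$,
\[
f^*Z \cap (\{p_1\} \times \square^n) \subseteq \tilde f ^{-1}\bigl(Z \cap (\{p_2\} \times \square^n)\bigr) = \tilde f ^{-1}(\emptyset) = \emptyset.
\]

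For (3), writing $Y = \Spec (A)$ with maximal ideal $I$ and $X = \Spec (A \otimes_k k')$ (decomposing into its finitely many local factors by Lemma \ref{lem:henselian} if necessary, and applying the argument on each), the maximal ideal of each local factor is generated by $I$, so $X \times_Y \Spec (A/I^r) = \Spec (A/I^r \otimes_k k')$; thus the vanishing of $Z \times_Y \Spec (A/I^r)$ forces the vanishing of $f^*Z$ restricted to this closed subscheme, which is precisely the order $\geq r$ vanishing for $f^*Z$. For (4), by Definition \ref{defn:mod t^{m+1} 2}, a mod $I^{m+1}$-equivalent pair $(\mathcal A_1, \mathcal A_2)$ of coherent $\mathcal O_{\overline{\square}_Y^n}$-algebras gives an isomorphism after tensoring with $\mathcal O_{\overline{\square}^n_{Y_{m+1}}}$, and since $\tilde f$ is flat, tensor product commutes with pull-back, so $(\tilde f ^* \mathcal A_1, \tilde f ^* \mathcal A_2)$ is mod $I^{m+1}$-equivalent on $\overline{\square}_X^n$. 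This gives $f^* \mathcal M^q (m+1, n) \subseteq \mathcal M^q (m+1, n)$ and $f^* \mathcal M^q_{{\rm v}, \geq r} (m+1, n) \subseteq \mathcal M^q_{{\rm v}, \geq r} (m+1, n)$, so $f^*$ descends to the quotient complexes.

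The main obstacle I expect is the careful verification of $(SF)_*$ in (1): although flat pull-back preserves the codimension of intersections with pre-images of subschemes of $Y$, the point $p_1$ is generally a proper closed subset of $\tilde f ^{-1}(\{p_2\})$, so one needs that the further restriction to $\{p_1\} \times \overline F$ does not drop codimension unexpectedly. The hypothesis $\dim X = \dim Y \leq 1$ is precisely what makes the fiber $f^{-1}(p_2)$ zero-dimensional, so that no additional codimension is lost; this is the step where the low-dimensional assumption plays an essential role, and it is exactly the reason why a naive extension of this lemma to higher-dimensional bases would require a genuinely different argument.
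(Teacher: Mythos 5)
Your proposal follows the same overall structure as the paper's proof: reduce to Bloch's flat pull-back on $z^q(-, \bullet)$, then verify that $(GP)_*$ and $(SF)_*$, the pre-vanishing condition, the order-$\geq r$ condition, and the mod $I^{m+1}$-equivalence (via the sheaf-theoretic definition of $\mathcal{M}^q(m+1,n)$) are each preserved under $\tilde f^{-1}$. Parts (2), (3) and (4) of your argument match the paper essentially step for step; in particular your treatment of (4) --- using that tensor products commute with flat pull-back to transport the $\mathcal{O}$-algebra isomorphism across $\tilde f$ --- is exactly what the paper does.

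There is one small slip in your argument for (1). You write that ``under the dimension hypothesis $\dim X = \dim Y \leq 1$'' the fiber $f^{-1}(p_2)$ is zero-dimensional, and your $(SF)_*$ argument explicitly uses the equality $\dim X = \dim Y$, so that $\{p_1\} = f^{-1}(p_2)$ and $\{p_1\} \times \overline{F}$ has the same codimension in $X \times \overline{\square}^n$ as $\{p_2\} \times \overline{F}$ does in $Y \times \overline{\square}^n$. However, part (1) only assumes $\dim X, \dim Y \leq 1$, not equality; the equality $\dim X = \dim Y = 1$ is a hypothesis of part (2) onward. Since $f$ is surjective, the only extra case is $\dim X = 1$, $\dim Y = 0$, i.e.\ $Y = \Spec(k)$, where $f^{-1}(p_2) = X$ is one-dimensional. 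Your dimension-counting still goes through in that case --- the codimension of $\{p_1\} \times \overline{F}$ in $X \times \overline{\square}^n$ is one more than that of $\overline{F}$ in $\overline{\square}^n_k$, exactly matching the extra codimension gained by restricting $\overline{Z} \times_k X$ to its fiber over $p_1$ --- but the reasoning is different from ``no additional codimension is lost,'' and you should not present the equality $\dim X = \dim Y$ as a hypothesis of (1). (The paper's own argument is terse on this point too, but it does not invoke the equality.) Once this is separated out, your proof is correct.
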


\begin{proof}
(1) We already have the natural flat pull-backs on higher Chow complexes 
\begin{equation}\label{eqn:flat pb 0}
f^*: z^q (Y, \bullet) \to z^q (X, \bullet).
\end{equation}
Hence we just need to check that $f^*$ maps $z^q_{{\rm d}} (Y, n)$ into $z^q _{{\rm d}} (X, n)$ for each $n \geq 0$. For this, we check that the conditions $(GP)_*$ and  $(SF)_*$ are preserved under $f^*$.

Let $Z \in z^q_{{\rm d}} (Y, n)$ be an integral cycle. So for each face $F \subset \square^n$, the intersections
$$
\overline{Z} \cap (Y \times \overline{F}) \ \ \ \mbox{ and } \ \ \ \overline{Z} \cap (\{ p_2\}  \times \overline{F})
$$
are proper on $Y \times \overline{\square}^n$. Note that $\overline{ f^{-1} (Z)} = f^{-1} (\overline{Z})$. By taking the inverse images via the flat morphism $f$, we deduce that the intersections
$$
\overline{f^{-1} (Z)} \cap (X \times \overline{F}) \ \ \ \mbox{ and } \ \ \ \overline{f^{-1} (Z)} \cap (\{ p_1 \} \times \overline{F})
$$
are proper on $X \times \overline{\square}^n$, as desired. Thus $f^* (Z)=[ f^{-1} (Z)]$ satisfies $(GP)_*$ and $(SF)_*$ so that  $f^* (Z) \in z_{{\rm d}} ^q (X, n)$. This proves (1).

\medskip

(2) We need to check that $f^* ( z_{{\rm v}} ^q (Y, n)) \subset z^q_{{\rm v}} (X, n)$. Let $Z \in z_{{\rm v}} ^q (Y, n)$ be an integral cycle. This means that the special fiber over $p_2 \in Y$ is
$$
Z_{p_2} = Z \times _Y \{ p_2 \} = \emptyset.
$$
Then taking $f^{-1}$,
$$
f^{-1} (Z) \times_X \{ p_1 \} = f^{-1} (Z) \times_{f^{-1} (Y)} f^{-1} (p_2) =  f^{-1} (Z_{p_2}) = f^{-1} (\emptyset) = \emptyset.
$$

In particular, the associated cycle $f^* (Z)= [f^{-1} (Z)]$ is a pre-vanishing cycle. Hence together with (1), we have $f^* (Z) \in z_{{\rm v}} ^q (X, n)$ as desired.

\medskip

(3) It is enough to show that $f^* (z^q _{{\rm v}, \geq r} (Y, n)) \subset z^q _{{\rm v}, \geq r} (X, n)$. Here $X= \Spec (A_{k'})$. 

Let $Z \in z^q _{{\rm v}, \geq r} (Y, n)$ be an integral cycle so that
$$
{Z} \times_Y Y_r = {Z} \times_{\Spec (A)} \Spec (A/ I^r) = \emptyset.
$$
Applying $f^{-1}$ to the above,
$$
f^{-1} ({Z}) \times _{\Spec (A_{k'})} \Spec (A_{k'}/ (I A_{k'} )^r) = f^{-1} ({Z}) \times_X X_r = \emptyset.$$
Combined with (1), this implies that $f^* (Z) = [ f^{-1}(Z)] \in z_{{\rm v}, \geq r} ^q (X, n)$.

\medskip

(4) We show that the pull-back respects the mod $I^{m+1}$-equivalence. For $Y$, write $\mathcal{M}_{{\rm d}} ^q (Y, m+1, n)$ for $\mathcal{M}_{{\rm d}} ^q (m+1, n)$, and similarly for $X$.

Let $(\mathcal{A}_1, \mathcal{A}_2)$ be a pair of mod $I^{m+1}$-equivalent $(q,n)$-admissible $\mathcal{O}_{\overline{\square}_Y^n}$-algebras, i.e. there is an isomorphism of $\mathcal{O}_{\overline{\square}_{Y_{m+1}} ^n}$-algebras
$$
\mathcal{A}_1 \otimes_{\mathcal{O}_{\overline{\square}_Y^n}} \mathcal{O}_{\overline{\square}_{Y_{m+1}} ^n} \simeq \mathcal{A}_2 \otimes_{\mathcal{O}_{\overline{\square}_Y^n}} \mathcal{O}_{\overline{\square}_{Y_{m+1}} ^n}.$$
Applying $f^*$ to the above, we obtain the isomorphism of $\mathcal{O}_{\overline{\square}_{X_{m+1}}^n}$-algebras
$$
f^* (\mathcal{A}_1) \otimes_{\mathcal{O}_{\overline{\square}_X^n}} \mathcal{O}_{\overline{\square}_{X_{m+1}} ^n} \simeq f^* (\mathcal{A}_2) \otimes_{\mathcal{O}_{\overline{\square}_X^n}} \mathcal{O}_{\overline{\square}_{X_{m+1}} ^n}
$$
so that $(f^* (\mathcal{A}_1) , f^* (\mathcal{A}_2))$ is a pair of mod $(I A_{k'})^{m+1}$-equivalent $(q,n)$-admissible $\mathcal{O}_{\overline{\square}_X^n}$-algebras. Hence we deduce $f^* ( \mathcal{M}_{{\rm d}} ^q (Y, m+1, n)) \subset \mathcal{M}_{{\rm d}} ^q (X, m+1, n)$.

For $\mathcal{M}_{{\rm v}, \geq r} ^q ( m+1, n)$, a similar argument proves the assertion. 
\end{proof}

For push-forwards as well, we consider some needed cases only. 

\begin{lem}\label{lem:fpf}

Let $X, Y$ be integral local $k$-schemes of dimension $\leq 1$, and let $p_1, p_2$ be their closed points, respectively. Let $f: X \to Y$ be a finite surjective morphism of $k$-schemes.
\begin{enumerate}
\item We have the induced finite push-forward morphism
$$ 
f_* : z_{{\rm d}} ^q (X, \bullet) \to z_{{\rm d}} ^q (Y, \bullet).
$$

\item Suppose $\dim \ X = \dim \ Y = 1$. If we restrict the above $f_*$ to the subcomplex $z_{{\rm v}} ^q (X, \bullet)$, then it induces
$$
f_*: z_{{\rm v}} ^q (X, \bullet) \to z_{{\rm v}} ^q (Y, \bullet).
$$

\item Suppose that $Y= \Spec (A)$ is an integral local $k$-scheme of dimension $1$ with the residue field $k= A/I$. Suppose that $f: X= Y_{k'} \to Y$ is the base change for a finite extension $k\hookrightarrow k'$ of fields. Then for each $r \geq 1$, we have
$$
f_*: z^q _{{\rm v}, \geq r} (X, \bullet) \to z^q _{{\rm v}, \geq r} (Y, \bullet).
$$

\item In addition to the assumptions of (3), suppose that $Y$ is henselian. Then we have : 
$$
f_*: z^q _{{\rm d}}  (X/ (m+1), \bullet) \to z^q _{{\rm d}} (Y/ (m+1), \bullet),
$$
$$
f_* : z^q _{{\rm v}, \geq r} (X/ (m+1), \bullet) \to z^q _{{\rm v}, \geq r} (Y/ (m+1), \bullet).
$$
\end{enumerate}

\end{lem}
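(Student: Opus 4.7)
The plan is to mirror the proof of Lemma~\ref{lem:fpb}, adapting each step from pull-back to push-forward. Since $f: X \to Y$ is finite (hence proper) between integral schemes of the same Krull dimension, Bloch's proper push-forward gives a morphism of complexes $f_*: z^q(X, \bullet) \to z^q(Y, \bullet)$ with no degree shift, and for an integral cycle $Z$, the closure commutes with push-forward in the sense that $\overline{f_*(Z)} = (f \times \id)(\overline{Z})$ (since $f \times \id$ is closed and the boundary $\overline{Z} \setminus Z$ sits inside $X \times (\overline{\square}^n \setminus \square^n)$). For each of (1)--(4) it then suffices to check that the conditions defining the subcomplex in question are preserved under $f \times \id: X \times \overline{\square}^n \to Y \times \overline{\square}^n$.

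For (1), the essential set-theoretic identities are $(f \times \id)^{-1}(Y \times \overline{F}) = X \times \overline{F}$ and $(f \times \id)^{-1}(\{p_2\} \times \overline{F}) = \{p_1\} \times \overline{F}$; the second uses that $f^{-1}(p_2) = \{p_1\}$ set-theoretically since $f$ is finite surjective between integral local schemes. Hence for an integral $Z \in z^q_{{\rm d}}(X, n)$, the intersections of $\overline{f_*(Z)}$ with $Y \times \overline{F}$ (resp.\ $\{p_2\} \times \overline{F}$) are images under the finite morphism $f \times \id$ of the corresponding intersections of $\overline{Z}$ with $X \times \overline{F}$ (resp.\ $\{p_1\} \times \overline{F}$). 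Since a finite morphism preserves dimension, proper intersections map to proper intersections, so $f_*(Z)$ satisfies $(GP)_*$ and $(SF)_*$.

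Parts (2) and (3) follow at once from the same preimage identities. For (2): $f_*(Z) \cap (\{p_2\} \times \square^n) \subset (f \times \id)(Z \cap (\{p_1\} \times \square^n)) = \emptyset$. For (3): since $X = Y_{k'}$ is a base change, associativity of fiber products gives $X_r = Y_r \times_{\Spec k} \Spec k'$, so $(f \times \id)^{-1}(Y_r \times \square^n) = X_r \times \square^n$, and $Z \cap (X_r \times \square^n) = \emptyset$ forces $f_*(Z) \cap (Y_r \times \square^n) = \emptyset$.

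For (4), I would work with the coherent-algebra formulation of mod $I^{m+1}$-equivalence (Definition~\ref{defn:mod t^{m+1} 2}). For a $(q,n)$-admissible $\mathcal{O}_{\overline{\square}_X^n}$-algebra $\mathcal{A}$, its push-forward $(f \times \id)_* \mathcal{A}$ is coherent (finite morphisms are affine and preserve coherence) and remains $(q,n)$-admissible on $Y$ by (1) applied to the associated cycle. Given a mod $I^{m+1}$-equivalent pair $(\mathcal{A}_1, \mathcal{A}_2)$ on $X$, the required isomorphism on the $Y_{m+1}$-side follows by applying $(f_{m+1} \times \id)_*$ to the given isomorphism on $X_{m+1}$, combined with the base-change identity
\begin{equation*}
i^*(f \times \id)_* \mathcal{A} \simeq (f_{m+1} \times \id)_*\bigl(\mathcal{A} \otimes_{\mathcal{O}_{\overline{\square}_X^n}} \mathcal{O}_{\overline{\square}_{X_{m+1}}^n}\bigr),
\end{equation*}
where $i: Y_{m+1} \hookrightarrow Y$ is the closed immersion and $f_{m+1}: X_{m+1} \to Y_{m+1}$ is the base change of $f$. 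Locally with $Y = \Spec B$, $X = \Spec C$, and $M$ a $C$-module, both sides equal $M/I^{m+1}M$ viewed as a $B/I^{m+1}$-module, so the identity holds. The $({\rm v}, \geq r)$-variant then follows by combining this with (3).

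The main subtlety I anticipate is in (4): although the displayed base-change identity is elementary locally, it is not an instance of flat base change (since the closed immersion $i$ is not flat in general), so some care is needed to justify it; it works in our setting because $f$ is finite, so $(f \times \id)_*$ commutes with reduction modulo $I^{m+1}$ at the level of modules. The rest of the argument is a routine dualization of the pull-back case carried out in Lemma~\ref{lem:fpb}.
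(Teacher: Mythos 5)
Your proposal is correct and follows essentially the same strategy as the paper's proof: reduce (1) to checking that $(GP)_*$ and $(SF)_*$ survive under the finite (hence closed, dimension-preserving) map $f\times\id$ using the preimage identities and $f^{-1}(p_2)=\{p_1\}$, deduce (2)--(3) from the corresponding Cartesian/preimage observations, and prove (4) via the coherent-algebra formulation of mod $I^{m+1}$-equivalence. One small improvement over the paper's write-up: in (4) you explicitly state and justify the non-flat base-change compatibility $i^*(f\times\id)_*\mathcal{A}\simeq(f_{m+1}\times\id)_*(\mathcal{A}\otimes\mathcal{O}_{\overline{\square}^n_{X_{m+1}}})$ (valid because $f\times\id$ is affine, so push-forward is just restriction of scalars and commutes with quotienting by $I^{m+1}$ on modules), whereas the paper simply applies $f_*$ to the isomorphism without comment; your version makes the step airtight.
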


\begin{proof}
(1) We have the finite push-forward for the higher Chow complexes
\begin{equation}\label{eqn:pushforward 0}
f_*: z^q (X, \bullet) \to z^q (Y, \bullet).
\end{equation}

Thus it is enough to check that $f_* (z^q _{{\rm d}} (X, n)) \subset z^q _{{\rm d}} (Y, n)$ for each $n \geq 0$. We only need to check that conditions $(GP)_*$ and $(SF)*$ are respected under $f_*$.

\medskip

Let $Z \in z_{{\rm d}} ^q (X, n)$ be an integral cycle. Let $F \subset \square^n$ be a face. Since $Z$ satisfies the conditions $(GP)_*$ and $(SF)_*$, the intersections
$$
\overline{Z} \cap (X \times \overline{F}) \ \ \ \mbox{ and } \ \ \ \overline{Z} \cap (\{ p_1 \} \times \overline{F})
$$
are proper on $X \times \overline{\square}^n$. Since $f$ is finite, we have $\overline{f (Z)} = f (\overline{Z})$. Applying the morphism $f$ to the above, we have
$$
\tuborg \dim \ \overline{ f(Z)} \cap (Y \times \overline{F}) \leq \dim \ Z \cap ( X \times \overline{F}) , \\
\dim \ \overline{f (Z)}  \cap ( \{ p_2 \} \times \overline{F}) \leq \ \dim \ \overline{Z} \cap ( \{ p_1 \} \times \overline{F}),\sluttuborg
$$
while $\dim \ Y \times \overline{\square}^n = \dim \ X \times \overline{\square}^n$. Hence we deduce that the intersections $\overline{f(Z)} \cap (Y \times \overline{F})$ and $\overline{f(Z)} \cap ( \{ p_2 \} \times \overline{F})$ are proper on $Y \times \overline{\square}^n$, proving the conditions $(GP)_*$ and $(SF)_*$ for $f(Z)$, respectively. Hence $f_* (Z)$, which is an integer multiple of $f(Z)$, is in $z_{{\rm d}} ^q (Y, n)$. This proves (1).

\medskip

(2) In case $Z \in z_{{\rm v}} ^n (X, n)$ is an integral ${\rm v}$-cycle, consider the Cartesian diagram
\begin{equation}\label{eqn:fpf cartesian}
\xymatrix{ 
\{ p_1 \} \ar[r] ^{\iota_1} \ar[d] ^{f_s}  & X \ar[d] ^f  \\
\{ p_2 \} \ar[r] ^{\iota_2} & Y, }
\end{equation}
where $f_s$ is the restriction of $f$. This is Cartesian because $f^{-1} (p_2) = p_1$. We are given that $\iota_1 ^* (Z) = Z \cap ( \{ p_1 \} \times \square^n) = \emptyset$. From the diagram \eqref{eqn:fpf cartesian}, we deduce that $\iota_2 ^* (f_* (Z)) = f_{s*} \iota_1 ^* (Z) = f_{s*} (\emptyset) = \emptyset$. Hence $f_* (z_{{\rm v}} ^q (X, n) ) \subset z_{{\rm v}} ^q (Y, n)$. This proves (2).

\medskip

(3) Consider the Cartesian diagram that generalizes \eqref{eqn:fpf cartesian}:
\begin{equation}\label{eqn:fpf cartesian r}
\xymatrix{
X_r \times {\square}^n \ar[d] ^{f_r} \ar[r] ^{ \iota_{1, r}} & X \times {\square}^n \ar[d] ^f\\
Y_r \times {\square}^n \ar[r] ^{\iota_{2, r}} & Y \times {\square}^n.}
\end{equation}

Here, if $Z \in z^q _{{\rm v}, \geq r} (X, n)$ is an integral cycle, then it means $\iota_{1, r} ^* (Z) = \emptyset$. Hence from the diagram \eqref{eqn:fpf cartesian r}, we deduce
$$
 \iota_{2,r} ^* (f_* (Z)) = f_{r*} \iota_{1, r} ^* (Z) = f_{r*} (\emptyset) = \emptyset.
 $$
Hence $f_*  (z^q _{{\rm v}, \geq r} (X, n)) \subset z^q _{{\rm v}, \geq r} (Y, n).$ This proves (3).

\medskip

(4) We need to show that the mod $I^{m+1}$-equivalence is preserved under $f_*$. For $X$, we write $\mathcal{M}_{{\rm d}} ^q (X, m+1, n)$ for $\mathcal{M}_{{\rm d}} ^q (m+1, n)$, and similarly for $Y$.

Let $(\mathcal{A}_1, \mathcal{A}_2)$ be a pair of mod $(I A_{k'})^{m+1}$-equivalent $(q,n)$-admissible $\mathcal{O}_{\overline{\square}_X^n}$-algebras, i.e. there is an isomorphism of $\mathcal{O}_{\overline{\square}_{X_{m+1}} ^n}$-algebras
$$
\mathcal{A}_1 \otimes_{\mathcal{O}_{\overline{\square}_X^n}} \mathcal{O}_{\overline{\square}_{X_{m+1}} ^n} \simeq \mathcal{A}_2 \otimes_{\mathcal{O}_{\overline{\square}_X^n}} \mathcal{O}_{\overline{\square}_{X_{m+1}} ^n}.$$
Applying $f_*$ to the above, we obtain the isomorphism of $\mathcal{O}_{\overline{\square}_{Y_{m+1}}^n}$-algebras
$$
f_* (\mathcal{A}_1) \otimes_{\mathcal{O}_{\overline{\square}_Y^n}} \mathcal{O}_{\overline{\square}_{Y_{m+1}} ^n} \simeq f_* (\mathcal{A}_2) \otimes_{\mathcal{O}_{\overline{\square}_Y^n}} \mathcal{O}_{\overline{\square}_{Y_{m+1}} ^n}
$$
so that $(f_* (\mathcal{A}_1) , f_* (\mathcal{A}_2))$ is a pair of mod $I^{m+1}$-equivalent $(q,n)$-admissible $\mathcal{O}_{\overline{\square}_Y^n}$-algebras. Hence we have $f_*( \mathcal{M}_{{\rm d}} ^q (X, m+1, n)) \subset \mathcal{M}_{{\rm d}} ^q (Y, m+1, n)$.

For $\mathcal{M}_{{\rm v}, \geq r} ^q ( m+1, n)$, the identical argument proves the assertion. 
\end{proof}

\section{The graph maps and the case $n=1$}\label{sec:the graph map}

Let $X= \Spec (A)$ be an integral henselian local $k$-scheme of dimension $1$ with the unique closed point $p$. Suppose its residue field is $k= A/ I$, for the maximal ideal $I \subset A$. Let $\mathbb{F}= {\rm Frac}(A)$ be the fraction field. For $r \geq 1$, we let $X_r:= \Spec (A/I^r)$. 

In \S \ref{sec:4 graph}, we define the graph maps from the (resp. relative) Milnor $K$-groups to the Chow groups of ${\rm d}$-cycles (resp. $({\rm v}, \geq r)$-cycles). In \S \ref{sec:regulator n=1}, under the additional assumption that $A$ is regular, we prove the main theorem for $n=1$. %In \S \ref{sec:ring Witt}, we discuss the ring structure on $\CH_{{\rm v}} ^1 (X/ (m+1), 1)$.

\subsection{The graph map}\label{sec:4 graph}

Let $n \geq 1$ be an integer. Define the set map
\begin{equation}\label{eqn:gr mult n set}
gr: (A ^{\times} )^{ n} \to z^n (X \times \square^n)
\end{equation}
given by sending $(a_1, \cdots, a_n)$ for $a_i \in A^{\times}$ to the integral closed subscheme $ \Gamma_{(a_1, \cdots, a_n)}$ of $X \times \square^n$ defined by the equations $\{ y_1= a_1, \cdots, a_n = a_n \}$. We first check:

\begin{lem}\label{lem:graph adm 0}
Let $a_i \in A^{\times}$ for $1 \leq i \leq n$. Then:
\begin{enumerate}
\item $\Gamma _{(a_1, \cdots, a_n)} \in z^n_{{\rm d}} (X, n)$.
\item In addition, if $a_{i_0} \equiv 1 \mod I^r$ for some $1 \leq i_0\leq n$ and an integer $r \geq 1$, then we have $\Gamma_{(a_1, \cdots, a_n)} \in z^n_{{\rm v}, \geq r} (X, n)$.
\end{enumerate}
\end{lem}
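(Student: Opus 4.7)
The plan is to verify the defining conditions of Definitions \ref{defn:dsf} and \ref{defn:higher v-cycle} directly. First, because each $a_i \in A^{\times}$, the tuple $(a_1, \ldots, a_n)$ defines a morphism $X \to \overline{\square}^n$ that factors through $(\mathbb{A}^1 \setminus \{0\})^n \subset \overline{\square}^n$, and its graph is precisely the Zariski closure $\overline{\Gamma}$ of $\Gamma := \Gamma_{(a_1, \ldots, a_n)}$ in $X \times \overline{\square}^n$. The first projection $\overline{\Gamma} \to X$ is an isomorphism, so $\overline{\Gamma}$ is integral of codimension $n$ in $X \times \overline{\square}^n$, and $\Gamma = \overline{\Gamma} \cap (X \times \square^n)$ is an open piece that is still of codimension $n$ in $X \times \square^n$ (the only locus removed is contained in the special fiber over $p$, which occurs when some $a_i$ reduces to $1$).

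For part (1), I will check the conditions $(GP)_*$ and $(SF)_*$ face-by-face. Any codimension $c \geq 1$ face $F \subset \square^n$ is cut out by equations $y_{i_j} = \epsilon_j$ with $\epsilon_j \in \{0, \infty\}$, and the extended face $\overline{F} \subset \overline{\square}^n$ is cut out by the same equations. Intersecting $\overline{\Gamma}$ with $X \times \overline{F}$ forces $a_{i_j} = \epsilon_j$ on $X$, which is impossible because $a_{i_j} \in A^{\times}$ (so it neither vanishes nor has a pole). Hence $\overline{\Gamma} \cap (X \times \overline{F}) = \emptyset$, which is vacuously proper; the same argument yields $\overline{\Gamma} \cap (\{p\} \times \overline{F}) = \emptyset$. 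In the remaining case $F = \square^n$, $\overline{F} = \overline{\square}^n$, the intersection with $X \times \overline{\square}^n$ is all of $\overline{\Gamma}$, of the correct codimension $n$, while the intersection with $\{p\} \times \overline{\square}^n$ is the single closed point $(p, \bar a_1, \ldots, \bar a_n)$, of codimension $n+1 = \codim_{X \times \overline{\square}^n} \overline{\Gamma} + \codim_{X \times \overline{\square}^n}(\{p\} \times \overline{\square}^n)$, hence proper. This yields (1).

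For part (2), I will show directly that $\Gamma \times_X X_r = \emptyset$ under the assumption $a_{i_0} \equiv 1 \pmod{I^r}$. The fiber product $\Gamma \times_X X_r$ is cut out in $X_r \times \square^n$ by the reductions of $y_i = a_i$ modulo $I^r$; the $i_0$-th equation becomes $y_{i_0} = 1$, whose solution locus is empty in $\square = \mathbb{P}^1 \setminus \{1\}$. Hence $\Gamma$ is a strict vanishing cycle of order $\geq r$, and together with (1) this puts $\Gamma$ in $z^n_{{\rm v}, \geq r}(X, n)$.

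The argument is essentially a careful unpacking of definitions; the only point requiring attention is to keep the distinction between $\square^n$ and $\overline{\square}^n$ clear when $a_{i_0} \equiv 1 \pmod I$, so that $\overline{\Gamma}$ may meet $\{p\} \times \{y_{i_0} = 1\} \subset \overline{\square}^n$ but $\Gamma$ itself avoids the special fiber over $p$ entirely. No genuine obstacle is expected beyond this bookkeeping.
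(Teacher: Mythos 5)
Your proposal is correct and follows essentially the same route as the paper: reduce to the observation that $\overline{\Gamma}$ is the graph of a morphism $X \to (\mathbb{A}^1 \setminus \{0\})^n$, note that all proper extended faces of $\overline{\Gamma}$ are empty because each $a_i \in A^\times$ is neither $0$ nor $\infty$, verify $(GP)_*$ and $(SF)_*$ by dimension counting on the remaining face $F = \square^n$, and for part (2) observe that $\Gamma \times_X X_r$ is cut out by $y_{i_0} = 1$ in $\square^n$, hence empty. One small point in your favor: your careful distinction between $\Gamma$ and $\overline{\Gamma}$ when some $a_i \equiv 1 \bmod I$ is actually more accurate than the paper's parenthetical "(In fact $\Gamma = \overline{\Gamma}$)," which as stated is not literally true in that boundary case, although the discrepancy has no bearing on the argument.
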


\begin{proof}
(1) Let $\Gamma:= \Gamma_{(a_1, \cdots, a_n)}$. Its closure in $X \times \overline{\square}^n$ is denoted by $\overline{\Gamma}$, and given by the same equations. This is a codimension $n$ integral closed subscheme of $X \times \overline{\square}^n$. (In fact $\Gamma= \overline{\Gamma}$.)

\medskip

For each face $F \subset \square^n$, let's first compute $\overline{\Gamma} \cap (X \times \overline{\square}^n)$. 

When $F= \square^n$, we have $\overline{\Gamma} \cap (X \times \overline{\square}^n) =\overline{\Gamma}$.

When $F \subset \square^n$ is a codimension $1$ face given by $\{ y_i = \epsilon\}$ for some $1 \leq i \leq n$ and $\epsilon \in \{ 0, \infty \}$, since $a_i \not = \epsilon$, we have $\overline{\Gamma} \cap (X \times \overline{F}) = \emptyset$. In particular, for all proper face $F \subset \square^n$, we also have $\overline{\Gamma} \cap (X \times \overline{F}) = \emptyset$ as well, because each proper face is contained in a codimension $1$ face.

\medskip

The above computations show that the intersection $\overline{\Gamma} \cap (X \times \overline{F})$ is proper on $X \times \overline{ \square}^n$, so that the condition $(GP)_*$ holds for $\Gamma$.

\medskip

Using the above extended face computations, we also deduce that when $F= \square^n$ we have $\overline{\Gamma}' :=\overline{\Gamma} \cap ( \{ p \} \times \overline{\square}^n)= (\bar{a}_1, \cdots, \bar{a}_n) \in \square_k ^n$, where $\bar{a}_i \in k$ is the image of $a_i$. It has the codimension $(n+1)$ in $X \times \overline{\square}^n$, so the intersection is proper on $X \times \overline{\square}^n$. For all proper faces $F \subset \square^n$, we have $ \overline{\Gamma} \cap ( \{ p \} \times \overline{F} ) \subset \overline{\Gamma} \cap (X \times \overline{F}) =\emptyset$. Hence for all faces $F \subset \square^n$, the intersection $\overline{\Gamma} \cap ( \{ p \} \times \overline{\square}^n)$ is proper on $X \times \overline{\square}^n$, proving that $\Gamma$ satisfies the condition $(SF)_*$. Hence we checked that $\Gamma \in z_{{\rm d}} ^n (X, n)$.

\medskip

(2) If we have $a_{i_0} \equiv 1 \mod I^r$ for some $ 1 \leq i_0\leq n$, then by definition $\Gamma$ is a cycle such that $\Gamma \times _X X_r = \emptyset$. Hence $\Gamma \in z^n_{{\rm v}, \geq r} (X, n)$.
\end{proof}

\begin{prop}\label{prop:gr mult n infty}
For integers $n, r \geq 1$, the set map \eqref{eqn:gr mult n set} induces the group homomorphisms
\begin{equation}\label{eqn:gr mult n gp}
gr_{{\rm d}} :K_n ^M (A) \to \CH^n_{{\rm d}} (X, n),
\end{equation}
and
\begin{equation}\label{eqn:gr mult n gp v}
gr_{{\rm v}, \geq r} :  K_n ^M (A, I^r) \to \CH^n_{{\rm v}, \geq r} (X, n).
\end{equation}
In case $r=1$, we write $gr_{{\rm v}} = gr_{{\rm v}, \geq 1}$.
\end{prop}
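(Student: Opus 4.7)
The plan is to adapt the classical Nesterenko--Suslin--Totaro argument \cite{NS,Totaro} to our refined cycle complexes, the main new work being the verification of the extended conditions $(GP)_*$ and $(SF)_*$ (and, in the relative case, of the vanishing of order $\geq r$) for the auxiliary cycles realising the Milnor relations. By Lemma~\ref{lem:graph adm 0}(1), the set map $gr$ already lands in $z^n_{{\rm d}}(X, n)$, so the construction of $gr_{{\rm d}}$ reduces to showing that, modulo boundaries in $z^n_{{\rm d}}(X, n+1)$, the following two relations are satisfied: multilinearity in each slot, $\Gamma_{(\ldots, aa', \ldots)} \equiv \Gamma_{(\ldots, a, \ldots)} + \Gamma_{(\ldots, a', \ldots)}$, and the Steinberg relation $\Gamma_{(\ldots, a, 1-a, \ldots)} \equiv 0$ whenever $a, 1-a \in A^\times$.

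For each such relation, I would freeze the uninvolved slots $y_j = a_j \in A^\times$ and cut the two remaining free coordinates of $\square^{n+1}_A$ by the explicit rational curves (a line for multiplicativity, a conic for Steinberg) familiar from the field case. The resulting integral cycle $H$ lies a priori only in $z^n(X, n+1)$; the crucial new verification is that $H \in z^n_{{\rm d}}(X, n+1)$. Because each frozen $a_j$ is a unit in $A$ whose reduction is a unit in $k$, the frozen coordinates contribute nothing to intersections with extended faces $\{y_j \in \{0, \infty, 1\}\}$, and it suffices to inspect the two free coordinates. The closure $\overline{H}$ meets each codimension-one extended face in a finite set of closed points which can be read off from the defining equation, and flatness of $\overline{H} \to X$ (Lemma~\ref{lem:basic Milnor 1}(1)) reduces the special-fiber property $(SF)_*$ to the same finite point count over the closed point $p$. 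Hence $H \in z^n_{{\rm d}}(X, n+1)$, and $\partial H$ realises the desired Milnor relation in $\CH^n_{{\rm d}}(X, n)$, establishing \eqref{eqn:gr mult n gp}.

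For \eqref{eqn:gr mult n gp v}, I would apply Lemma~\ref{lem:KS} to present $K_n^M(A, I^r)$ by symbols $\{a_1, \ldots, a_n\}$ with some $a_{i_0} \equiv 1 \mod I^r$; by Lemma~\ref{lem:graph adm 0}(2) these generators automatically land in $z^n_{{\rm v}, \geq r}(X, n)$. For each Milnor relation among such generators, the auxiliary cycle $H$ constructed above can be arranged to retain a congruence of the form $a_{i_0} \equiv 1 \mod I^r$ (or an equivalent factorised form) among its defining equations, ensuring $H \times_X X_r = \emptyset$ and $H \in z^n_{{\rm v}, \geq r}(X, n+1)$. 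Passing to homology yields the desired homomorphism $gr_{{\rm v}, \geq r}$.

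The main anticipated obstacle is precisely this extended-face verification for the auxiliary curves, since the classical Nesterenko--Suslin cycles were designed with respect to the faces $\{y_i \in \{0, \infty\}\}$, and the additional face $\{y_i = 1\} \subset \overline{\square} \setminus \square$ was not part of their setup. The analogous difficulty was handled in \cite{PU Milnor} by imposing finiteness of $Z \to X$ and in \cite{Park MZ} via formal schemes, at some technical cost; here the direct inspection of the explicit rational curves — carried out concretely in \S\ref{sec:regulator n=1} below as a sanity check for $n = 1$ — should go through, since the auxiliary curves have degree at most two. A secondary subtlety in the relative case is that one must verify that the same $H$ can be chosen to realise the anti-symmetry and skew-symmetry identities while keeping the vanishing order $\geq r$; this is handled by the symmetry of the Nesterenko--Suslin cycles in their two free coordinates.
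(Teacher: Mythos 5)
Your proposal follows essentially the same route as the paper: explicit Nesterenko--Suslin--Totaro curves (the bilinear graph for multiplicativity, a rational curve for the Steinberg relation) frozen in the uninvolved unit coordinates, verification of $(GP)_*$ and $(SF)_*$ by inspecting the extended faces, and Lemma~\ref{lem:KS} together with Lemma~\ref{lem:graph adm 0}(2) to feed the relative case, observing that each auxiliary cycle factorizes modulo $I^r$ so as to stay a strict vanishing cycle of order $\geq r$. One small imprecision: the intersection of $\overline{H}$ with a codimension-one extended face $X \times \overline{F}$ is a subscheme finite over $X$, hence one-dimensional rather than ``a finite set of closed points''; it is the further intersection with $\{p\} \times \overline{F}$ that is zero-dimensional, which is exactly what the $(SF)_*$ dimension count needs.
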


\begin{proof}
 The basic idea is similar to the one given in \cite[Lemma 2.1]{EVMS}, though we need to do some extra works. We give the arguments with some needed changes.

\medskip

First consider the case $n=1$. For each $f \in A^{\times}$, let $\Gamma_f$ be the closed subscheme given by $\{y_1 = f\}$ in $ \square^1_X$.

Let $f_1, f_2 \in A^{\times}$. We show that there exists a cycle ${W}_{f_1, f_2} \in z^1 _{{\rm d}} (X, 2)$ such that
$$
\partial {W}_{f_1, f_2} = - {\Gamma}_{f_1} - {\Gamma}_{f_2} + {\Gamma}_{f_1f_2}.
$$

\medskip

Consider the closed subscheme ${W}_{f_1, f_2} \subset \square_X ^2$ defined by the polynomial in $y_1, y_2$
\begin{equation}\label{eqn:graph 1 *}
 (f_1 y_1 - f_1 f_2 ) - (y_1 - f_1 f_2) y_2 \in A [ y_1, y_2].
 \end{equation}
One may also describe $W_{f_1, f_2}$ as the closure of the graph of the rational map $\square_X ^1 \dashrightarrow \square_X ^{2}
$, $x \mapsto \left ( x, \frac{ f_1 x - f_1 f_2}{ x - f_1 f_2}\right)$ as done in, for example, Totaro \cite[p.182]{Totaro}. The equation \eqref{eqn:graph 1 *} defines the closure $\overline{W}_{f_1, f_2} \subset X \times \overline{\square}^2$ as well.

\medskip

To check that $W_{f_1, f_2} \in z_{{\rm d}} ^1 (X, 2)$, let's inspect the extended proper faces $\overline{W}_{f_1, f_2} \cap (X \times \overline{F})$ of $\overline{W}_{f_1, f_2}$. When $F \subset \square^2$ is of codimension $1$:
\begin{enumerate}
\item [(a)] For $F= \{ y_1 = 0 \}$: the equation \eqref{eqn:graph 1 *} becomes $ - f_1 f_2 + f_1 f_2 y_2 = 0$ so that we deduce $y_2 = 1$. Hence 
$$
\overline{W}_{f_1, f_2} \cap (X \times \overline{F}) = \{ y_1 = 0, y_2 = 1 \} \subset X \times \overline{\square}^2.
$$
Identifying $F \simeq \square^1_X$, we deduce $\partial_1 ^0 W_{f_1, f_2} =\{ y= 1 \} = \emptyset$ because $1 \not \in \square = \mathbb{P}^1 \setminus \{ 1 \}$.

\item [(b)] For $F= \{ y_1 = \infty \}$: the equation \eqref{eqn:graph 1 *} gives $f_1 - y_2 = 0$. Hence
$$
\overline{W}_{f_1, f_2} \cap (X \times \overline{F})  = \{ y_1 = \infty, y_2 = f_1 \} \subset X \times \overline{\square}^2.
$$ 
Here, $\partial _1 ^{\infty} W_{f_1, f_2} = \{ y = f_1 \} = \Gamma_{f_1}$.
\item [(c)] For $F= \{ y_2 = 0 \}$: the equation \eqref{eqn:graph 1 *} gives $ f_1 y_1 - f_1 f_2 = 0$, which in turn gives $y_1 = f_2$. Hence
$$
\overline{W}_{f_1, f_2} \cap (X \times \overline{F})  = \{ y_1 = f_2, y_2 = 0 \} \subset X \times \overline{\square}^2.
$$
Here, $\partial_2 ^0 W_{f_1, f_2} = \{ y = f_2 \} = \Gamma_{f_2}$.
\item [(d)] For $F= \{ y_2 = \infty \}$: the equation \eqref{eqn:graph 1 *} gives $y_1 - f_1 f_2 = 0$. Hence
$$
\overline{W}_{f_1, f_2} \cap (X \times \overline{F})  =  \{ y_1 = f_1 f_2, y_2 = \infty \} \subset X \times \overline{\square}^2.
$$
Here, $\partial_2 ^{\infty} W_{f_1, f_2} = \{ y= f_1 f_2 \} = \Gamma_{f_1f_2}$.
\end{enumerate}

\medskip

Since all of the nonempty codimension $1$ extended faces of $W_{f_1, f_2}$ are graph cycles, the codimension $2$ extended faces of $W_{f_1, f_2}$ are the codimension $1$ faces of the graph cycles. Hence by the arguments in the proof of Lemma \ref{lem:graph adm 0}-(1), we have $\overline{W}_{f_1, f_2} \cap (X \times \overline{F}) = \emptyset$ for any faces $F \subset \square^n$ of the codimension $\geq 2$. 

\medskip

To check the condition $(GP)_*$ for $W_{f_1, f_2}$, we can use the above extended face computations to deduce the face computations, and by direct inspections, one notes that all of them are in the right codimensions. Hence $W_{f_1, f_2}$ satisfies the condition $(GP)_*$.

\medskip

To check the condition $(SF)_*$, we can again use the above extended face computations to compute $\overline{W}_{f_1, f_2} \cap (\{ p \} \times \overline{F})$. For proper faces of codimension $\geq 2$, we have $\overline{W}_{f_1, f_2} \cap ( \{ p \} \times \overline{F}) \subset \overline{W}_{f_1, f_2} \cap (X \times \overline{F}) = \emptyset$, so the intersection $\overline{W}_{f_1, f_2} \cap ( \{ p \} \times \overline{F})= \emptyset$ is automatically proper. For the codimension $1$ faces $F$, the intersections $\overline{W}_{f_1, f_2} \cap (\{ p \} \times \overline{F})$ are the cycles in (a) $\sim$ (d) modulo $I$, and they give $0$-cycles on $\overline{\square} ^2$. So, their codimensions in $X \times \overline{\square}^2 $ are $3$, and the intersections $\overline{W}_{f_1,f_2} \cap ( \{ p \} \times \overline{F})$ are indeed proper on $X \times \overline{\square}^3$. When $F= \overline{\square}^2$, the intersection $\overline{W}_{f_1, f_2} \cap ( \{ p \} \times \overline{\square}^2)$ is given by a single equation \eqref{eqn:graph 1 *} mod $I$ in $k[ y_1, y_2]$, so its codimension in $\overline{\square}^2$ is $1$. Hence its codimension in $X \times \overline{\square}^2$ is $2$ so that the intersection is proper. We checked that $W_{f_1, f_2}$ satisfies the condition $(SF)_*$, and $W_{f_1, f_2} \in z_{{\rm d}} ^1 (X, 2)$.

\medskip

The calculations of the codimension $1$ faces of ${W}_{f_1, f_2}$ also imply
\begin{equation}\label{eqn:graph 1 **}
\partial {W}_{f_1, f_2} = - {\Gamma}_{f_1} - {\Gamma}_{f_2} + {\Gamma}_{f_1 f_2},
\end{equation}
so that $\Gamma_{f_1f_2} \equiv \Gamma_{f_1} + \Gamma_{f_2}$ in $\CH_{{\rm d}} ^1 (X, 1)$. This implies that $gr_{{\rm d}}$ of \eqref{eqn:gr mult n gp} is a group homomorphism.

\medskip

Now suppose $n \geq 2$. Let $a \in A^{\times}$ be an element such that $1-a \in A^{\times}$. In case $n >2$, then we also let $a_i \in A^{\times}$ for $3 \leq i \leq n$. In what follows, if $n=2$, simply ignore all these elements with $a_i$ for $i >2$. The difference from the case $n=1$ is that we need to check that the image of the Steinberg relation $\{ a, 1-a \} = 0$ vanishes in the cycle class group.

Consider the closed subscheme $W_1 \subset X \times \square^{n+1}$ defined to be the closure in $X \times \square^{n+1}$ of the graph of the rational map 
$$
\square_X ^1 \dashrightarrow \square_X ^{n+1}
$$
\begin{equation}\label{eqn:Steinberg W1}
x \mapsto \left( x, 1-x, \frac{ a-x}{1-x}, a_3, \cdots, a_n \right).
\end{equation}

\medskip

Let's first check that $W_1 \in z^n _{{\rm d}} (X, n+1)$. Let $\overline{W}_1$ be the closure of $W_1$ in $X \times \overline{\square}^{n+1}$. As before, we compute the extended faces of it. When $F \subset \square^{n+1}$ is a codimension $1$ face, we note:
\begin{enumerate}
\item [(a)] For $F=\{ y_1 = 0 \}$: \eqref{eqn:Steinberg W1} gives
$$
\overline{W}_1 \cap  (X \times \overline{F}) = \{ y_1 = 0, y_2 = 1, y_3 = a, y_{i+1} = a_i \ \ \mbox{ for } 3 \leq i \leq n \}.
$$
It has codimension $n+1$ in $X \times \overline{\square}^{n+1}$, while identifying $F = \square_X ^n$, we deduce that $\partial_1 ^0 W_1 = 0$ because $y_2 = 1$ gives no point in $\square^n$.
\item [(b)] For $F=\{ y_1 = \infty \}$: similarly we have
$$
\overline{W}_1 \cap  (X \times \overline{F})  = \{ y_1 = \infty, y_2 = \infty, y_3 = 1, y_{i+1} = a_i \ \ \mbox{ for } 3 \leq i \leq n \}.
$$
It has codimension $n+1$ in $X \times \overline{\square}^{n+1}$, while identifying $F= \square_X ^n$, we deduce that $\partial_1 ^{\infty} W_1 = 0$ because $y_3 = 1$ gives no point in $\square^n$.
\item [(c)] For $F=\{ y_2 = 0 \}$: similarly we have
$$
\overline{W}_1 \cap  (X \times \overline{F})= \{ y_1 = 1, y_2 = 0, y_3 = \infty, y_{i+1} = a_i \ \ \mbox{ for } 3 \leq i \leq n \}.
$$
It has codimension $n+1$ in $X \times \overline{\square}^{n+1}$, and we have $\partial_2 ^0 W_1 =0 $ because $y_1 =1$ gives no point in $\square^n$.
\item [(d)] For $F= \{ y_2 = \infty \}$: similarly we have
$$
\overline{W}_1 \cap (X \times \overline{F}) = \{ y_1 = \infty, y_2 = \infty, y_3 = 1, y_{i+1} = a_i \ \ \mbox{ for } 3 \leq i \leq n \}.
$$
It has codimension $n+1$ in $X \times \overline{\square}^{n+1}$, and we have $\partial_2 ^{\infty} W_1 = 0$ because $y_3 =1$ gives no point in $\square^n$.
\item [(e)] For $F= \{ y_3 = 0 \}$: similarly we have
$$
\overline{W}_1 \cap (X \times \overline{F})  = \{ y_1 = a, y_1 = 1-a, y_3 = 0, y_{i+1} = a_i \ \ \mbox{ for } 3 \leq i \leq n \}.
$$
It has codimension $n+1$ in $X \times \overline{\square}^{n+1}$, and we have $\partial_3 ^0 W_1 = \Gamma_{(a, 1-a, a_3, \cdots, a_n ) }$.
\item [(f)] For $F= \{ y_3 = \infty \}$: similarly we have
$$
\overline{W}_1 \cap (X \times \overline{F})  = \{ y_1 = 1, y_2 = 0, y_3 = \infty, y_{i+1} = a_i \ \ \mbox{ for } 3 \leq i \leq n\}.
$$
It has codimension $n+1$ in $X \times \overline{\square}^{n+1}$, and we have $\partial_3 ^{\infty} W_1 = 0$ because $y_1 = 1$ gives no point in $\square^n$.
\item [(g)] For $F= \{ y_i = \epsilon \}$, where $ 4 \leq i \leq n+1$ and $\epsilon \in \{ 0, \infty \}$: here we have
$$
\overline{W}_1 \cap (X \times \overline{F}) = \emptyset
$$
because $a_i$ for $i \geq 3$ is neither $0$ nor $\infty$.
\end{enumerate}

\medskip

For the extended faces $\overline{F}$ of codimension $\geq 2$, we can simply intersect the above extended codimension $1$ faces and we find by inspection that $\overline{W}_1 \cap (X \times \overline{F}) = \emptyset$. Thus $W_1$ satisfies the condition $(GP)_*$. 

\medskip

To check the condition $(SF)_*$, we need to inspect $\overline{W}_1 \cap ( \{ p \} \times \overline{F})$ for all faces $F \subset \square^{n+1}$. 

When $F= \square^{n+1}$, one notes that by definition $\overline{W}_1 \not \subset \{ p \}\times \overline{\square}^{n+1}$ so that the intersection $\overline{W}_1 \cap ( \{ p \} \times \overline{F})$ is proper on $X \times \overline{\square}^{n+1}$. 

When $F \subset \square^{n+1}$ is a codimension $1$ face, one checks from the above that all of $\overline{W}_1 \cap ( \{ p \} \times \overline{F})$ are of codimension $\geq n+1$ in $X \times \overline{\square}^{n+1}$ because none of $a_i$ is divisible by $t$, thus the intersection is proper. 

When the face $F \subset \square^{n+1}$ is of codimension $\geq 2$, by inspection we note that $\overline{W}_1 \cap ( \{ p \} \times \overline{F}) = \emptyset$, thus the intersection is proper. Hence $W_1$ satisfies the condition $(SF)_*$, and we checked that $W_1 \in z_{{\rm d}} ^n (X, n+1)$. 

\medskip

From the above codimension $1$ face computations, we deduce that 
$$
\partial W_1 = \Gamma_{(a, 1-a, a_3, \cdots, a_n)}.
$$
 In particular, for the Milnor symbol $x=\{ a, 1-a, a_3, \cdots, a_n \} \in K_n ^M (A)$, we have $gr_{{\rm d}} ( x) = 0$ in $\CH^n _{{\rm d}} (X, n)$.

\medskip

The multi-linearity is essentially identical to the proof given for $n=1$ in the above; namely for $f_1, f_2 \in A^{\times}$ and $a_i \in A^{\times}$ for $2 \leq i \leq n$, by considering $W_2 := W_{f_1, f_2} \times \Gamma_{(a_2, \cdots, a_n )}$ for the cycle $W_{f_1, f_2}$ there, we have
$$
\Gamma_{(f_1f_2, a_2, \cdots, a_n)} \equiv \Gamma_{(f_1, a_2, \cdots, a_n)} + \Gamma_{(f_2, a_2, \cdots, a_n)} \ \ \ \mbox{ in } \CH^n_{{\rm d}} (X, n).
$$

\medskip

Now, applying various appropriate permutations of the coordinates $y_1, \cdots, y_{n+1}$ of $W_1$, we deduce that $gr_{{\rm d}}$ induces \eqref{eqn:gr mult n gp} as desired. We shrink details.

\bigskip

To obtain \eqref{eqn:gr mult n gp v}, it is enough to restrict \eqref{eqn:gr mult n gp} to the subgroup $K_n ^M (A, I^r)$ and observe that Lemma \ref{lem:KS} gives generators of the relative group, which apparently belongs to $z_{{\rm v}, \geq r} ^n (X, n)$ by definition.
%Note that in case $f_1, f_2 \in \mathbb{W}(k),$ then we also have $W_{f_1f_2} \in z_{{\rm v}} ^n (X, n+1)$ as well. 
\end{proof}

We also have the induced map mod $I^{m+1}$ for each $m \geq 1$:

\begin{prop}\label{prop:gr mult n lv m}
For integers $m, n  \geq 1$, the group homomorphism \eqref{eqn:gr mult n gp} induces the group homomorphisms
\begin{equation}\label{eqn:gr mult n gp lv m}
gr_{{\rm d}} : K_n ^M (A/I^{m+1}) \to \CH^n _{{\rm d}} (X / (m+1), n).
\end{equation}
In addition, for $1 \leq r \leq m+1$, the map \eqref{eqn:gr mult n gp v} induces
$$
 gr_{{\rm v}, \geq r}: K_n ^M (A/ I^{m+1}, I^r) \to \CH_{{\rm v}, \geq r} ^n (X/ (m+1), n).
 $$
\end{prop}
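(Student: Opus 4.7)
The plan is to factor the composition $K_n^M(A) \xrightarrow{gr_{{\rm d}}} \CH^n_{{\rm d}}(X, n) \to \CH^n_{{\rm d}}(X/(m+1), n)$ (with the second arrow from \lemref{lem:reduction mod m}) through $K_n^M(A/I^{m+1})$. Since $A$ is local, the reduction $A^\times \twoheadrightarrow (A/I^{m+1})^\times$ is surjective and Steinberg relations lift, so $K_n^M(A) \twoheadrightarrow K_n^M(A/I^{m+1})$ is surjective with kernel $K_n^M(A, I^{m+1})$. The task thus reduces to showing that $gr_{{\rm d}}$ sends this kernel into $\mathcal{M}^n(m+1, n) + \partial z^n_{{\rm d}}(X, n+1)$.

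By \lemref{lem:KS} and the anti-symmetry of Milnor symbols, it suffices to treat generators of the form $\{a_1, a_2, \ldots, a_n\}$ with $a_i \in A^\times$ and $a_1 \equiv 1 \mod I^{m+1}$. I claim the corresponding graph cycle $\Gamma_{(a_1, \ldots, a_n)}$ already lies in $\mathcal{M}^n(m+1, n)$, with an explicit witness: take the pair of coherent $\mathcal{O}_{\overline{\square}^n_X}$-algebras $\mathcal{A}_1 := \mathcal{O}_{\overline{\Gamma}_{(a_1, \ldots, a_n)}}$, which is $(n,n)$-admissible by \lemref{lem:graph adm 0}(1), and $\mathcal{A}_2 := \mathcal{O}_Y$ where $Y = \{y_1 = 1,\, y_2 = a_2, \ldots, y_n = a_n\} \subset X \times \overline{\square}^n$. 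Since $\{y_1 = 1\}$ lies entirely at infinity, $\mathcal{A}_2|_{\square^n_X}$ has empty support and defines the zero cycle, so $\mathcal{A}_2$ is also $(n,n)$-admissible. Because $a_1 \equiv 1 \mod I^{m+1}$, the sheaves $\mathcal{A}_1$ and $\mathcal{A}_2$ agree after base change to $\mathcal{O}_{\overline{\square}^n_{X_{m+1}}}$, so the pair is mod $I^{m+1}$-equivalent, and
\[ [\mathcal{A}_1|_{\square^n_X}] - [\mathcal{A}_2|_{\square^n_X}] = [\Gamma_{(a_1, \ldots, a_n)}] \in \mathcal{M}^n(m+1, n). \]

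For the relative statement, the same scheme of argument applies: the kernel of $K_n^M(A, I^r) \twoheadrightarrow K_n^M(A/I^{m+1}, I^r)$ is again $K_n^M(A, I^{m+1})$ (since $I^{m+1} \subset I^r$), with surjectivity coming from lifting the generators of \lemref{lem:KS} via a lift of the distinguished factor in $1 + I^r \subset A^\times$. The very same witness pair $(\mathcal{A}_1, \mathcal{A}_2)$ then exhibits $[\Gamma_{(a_1, \ldots, a_n)}] \in \mathcal{M}^n_{{\rm v}, \geq r}(m+1, n)$: indeed $\Gamma_{(a_1, \ldots, a_n)} \in z^n_{{\rm v}, \geq r}(X, n)$ by \lemref{lem:graph adm 0}(2), since $a_1 \equiv 1 \mod I^{m+1} \subset I^r$, while the zero cycle trivially belongs to the same group.

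The main hurdle I anticipate is the book-keeping around the admissibility of $\mathcal{A}_2$: one must be willing to allow coherent sheaves whose support escapes entirely to infinity as valid admissible algebras (so that the associated cycle in $\square^n_X$ is zero), and carry out the mod $I^{m+1}$-equivalence check on the compactification $X \times \overline{\square}^n$ rather than on the open part. This is precisely why the algebra-level formulation $\mathcal{M}^n(m+1, n)$ of \defref{defn:mod t^{m+1} 2} is essential here; the naive cycle-level $\mathcal{N}^n(m+1)$ only compares nonzero integral cycles and cannot absorb such a witness directly, even though it agrees with $\mathcal{M}^n(m+1, n)$ by \lemref{lem:comparison mod} once the identification is made.
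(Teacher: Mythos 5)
Your proof is correct and takes essentially the same route as the paper: reduce (via surjectivity of $K_n^M(A)\to K_n^M(A/I^{m+1})$ and Lemma~\ref{lem:KS}) to showing that $gr_{{\rm d}}$ kills the graph cycle of a symbol $\{1+\tau, a_2,\ldots,a_n\}$ with $\tau\in I^{m+1}$, and then observe that this cycle dies under the mod $I^{m+1}$-equivalence. The paper dispatches this last step with a single phrase (``equivalent to the empty scheme by definition''); your explicit witness pair $(\mathcal{O}_{\overline{\Gamma}}, \mathcal{O}_Y)$ with $Y$ supported along $\{y_1=1\}$, together with your remark that this naturally lives in $\mathcal{M}^n(m+1,n)$ rather than $\mathcal{N}^n(m+1)$, is a faithful and slightly more careful unpacking of what the paper leaves implicit.
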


\begin{proof}
Since $A$ is a local ring, the natural homomorphism $A^{\times} \to ( A/I^{m+1} )^{\times}$ is surjective (see, e.g. Hartshorne-Polini \cite[Lemma 5.2]{HP}). Thus we have the natural surjective group homomorphism $K_n ^M (A) \to K_n ^M (A/I^{m+1})$. For \eqref{eqn:gr mult n gp lv m}, we want to show that $\ker ( K_n ^M (A) \to K_n ^M (A/I^{m+1}))$ is mapped to $0$ in $\CH^n_{{\rm d}} (X/(m+1), n)$ under $gr_{{\rm d}}$.

Since the above kernel is generated by the elements of the form $\{ 1 + \tau , a_2, \cdots, a_n \}$, where $\tau \in I^{m+1}$, $a_i \in A^{\times}$ for $2 \leq i \leq n$ by Lemma \ref{lem:KS}, it is enough to prove that 
$$
 \Gamma_{(1 + \tau, a_2, \cdots, a_n)} \equiv 0 \ \ \mbox{ in } \CH^n_{{\rm d}} (X/(m+1), n).
 $$
But, this is immediate because this is equivalent to the empty scheme under the mod $I^{m+1}$-equivalence by definition. The second map follows similarly. We shrink details.  This proves the proposition.
\end{proof}

\begin{cor}\label{cor:graph final}
Let $m, n, r \geq 1$ be integers such that $1 \leq r \leq m+1$. 
When $X= \Spec (A)$ is an integral henselian local $k$-scheme of dimension $1$ with the residue field $k=A/I$, there exist the graph homomorphisms
\begin{equation}\label{eqn:graph final 1}
gr_{{\rm d}}: \widehat{K}_n ^M (A) \to \CH_{{\rm d}} ^n (X, n),
\end{equation}
$$
gr_{{\rm v}, \geq r}: \widehat{K} ^M _n (A, I^r) \to \CH_{{\rm v}, \geq r} ^n (X, n),
$$
and
\begin{equation}\label{eqn:graph final 3}
gr_{{\rm d}}: \widehat{K}_n ^M (A/I^{m+1}) \to \CH_{{\rm d}} ^n (X/ (m+1), n),
\end{equation}
$$
gr_{{\rm v}, \geq r}: \widehat{K}_n ^M (A/ I^{m+1}, I^r) \to \CH_{{\rm v}, \geq r} ^n (X/ (m+1), n).
$$
\end{cor}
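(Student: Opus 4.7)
The plan is to upgrade the graph maps already constructed from the classical Milnor $K$-groups in Propositions \ref{prop:gr mult n infty} and \ref{prop:gr mult n lv m} to the improved Milnor $K$-groups via the universal property of $\widehat{K}_n^M$ recalled in \S\ref{sec:Milnor Kerz}. I would first dispose of the easy case: when the residue field $k$ has cardinality larger than the Kerz number $M_n$ (in particular whenever $k$ is infinite), Theorem \ref{thm:Khat_univ}(2)(c) gives $K_n^M(A) \xrightarrow{\sim} \widehat{K}_n^M(A)$, and likewise for $A/I^{m+1}$ (which has the same residue field $k$). In this case all four factorizations are immediate from the already-constructed maps.

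For the general case, I would apply the universal property to the functor $F(B) := \CH_{\rm d}^n(\Spec B, n)$ defined on the category $\mathcal{C}$ of henselian local $k$-algebras of Krull dimension $\leq 1$. By Lemma \ref{lem:henselian}, $\mathcal{C}$ is closed under finite \'etale extensions, and over a henselian local $k$-algebra such extensions are precisely the base-change maps $A \to A_{k'}$ with $k'/k$ finite separable. Lemma \ref{lem:fpf}(3) then endows $F$ with the required finite push-forwards along such maps, giving the \'etale transfer structure, and Proposition \ref{prop:gr mult n infty} provides the natural transformation $K_n^M(-) \Rightarrow F(-)$. The universal property thus yields the factored map \eqref{eqn:graph final 1}. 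For \eqref{eqn:graph final 3}, the same argument applies to the functor $B \mapsto \CH_{\rm d}^n(\Spec(B)/(m+1), n)$ using Lemma \ref{lem:fpf}(4) and Proposition \ref{prop:gr mult n lv m}.

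The relative cases follow by restriction. By Lemma \ref{lem:KS hat}, the relative groups $\widehat{K}_n^M(A, I^r)$ and $\widehat{K}_n^M(A/I^{m+1}, I^r)$ are generated by symbols $\{a_1, \ldots, a_n\}$ with $a_1 \equiv 1 \mod I^r$, and by Lemma \ref{lem:graph adm 0}(2) each such graph cycle lies in the $({\rm v}, \geq r)$-cycle subgroup. Hence the restriction of the absolute $gr_{\rm d}$ factors automatically through $\CH_{{\rm v}, \geq r}^n(X, n)$ (respectively, through $\CH_{{\rm v}, \geq r}^n(X/(m+1), n)$), yielding the desired $gr_{{\rm v}, \geq r}$.

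The main obstacle is the clean invocation of the universal property, which implicitly asks that the \'etale transfers on the target Chow groups be compatible under the graph map with the classical Bass-Tate-Kato norm on $K_n^M$. Checked on Milnor symbol generators via Bass-Tate's explicit presentation of the norm for a simple finite separable extension, this compatibility reduces to a fibre-wise application of the classical Nesterenko-Suslin-Totaro diagram (Theorem \ref{thm:NST}) composed with the functoriality of finite push-forwards of cycles; the delicate book-keeping with extended faces and the $(SF)_*$ condition is precisely what Lemma \ref{lem:fpf} has already absorbed, so once the functor $F$ is properly set up, the universal property fires and the four homomorphisms drop out.
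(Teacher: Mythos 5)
Your proposal follows essentially the same route as the paper: the push-forward structure on the cycle class groups (Lemma \ref{lem:fpf}) supplies the finite \'etale transfers on the target functor, after which Kerz's universal property factors the graph maps of Propositions \ref{prop:gr mult n infty} and \ref{prop:gr mult n lv m} through $\widehat{K}_n^M$. The only slip is citing Lemma \ref{lem:fpf}(3) for the absolute ${\rm d}$-cycle push-forward, which is covered by Lemma \ref{lem:fpf}(1); your preliminary dispatch of the $|k|>M_n$ case and your closing caution about transfer compatibility are both sound but not needed for the argument.
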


\begin{proof}
By Lemma \ref{lem:fpf}-(4), the target cycle class groups have the push-forward maps (trace maps) for finite extensions of fields. Hence from the graph maps on $K_n ^n (-)$ we deduce the induced maps from $\widehat{K}_n ^M (-)$ by the universal property of $\widehat{K}_n ^M (-)$ by M. Kerz \cite{Kerz finite}.
\end{proof}

\subsection{The special case $n=1$}\label{sec:regulator n=1}

In \S \ref{sec:regulator n=1}, we suppose that $X= \Spec (A)$ is an integral \emph{regular} henselian local $k$-scheme of dimension $1$ with the residue field $k=A/I$, and the function field $\mathbb{F}$. In this case, $A$ is a DVR, and furthermore $A/ I^{m+1} \simeq k[[t]]/(t^{m+1}) = k_{m+1}$.

\medskip

 Under this assumption, the goal of \S \ref{sec:regulator n=1} is to prove that when $n=1$ the graph homomorphisms in Corollary \ref{cor:graph final} are actually isomorphisms of groups. For this purpose, we construct the ``regulator" homomorphisms $\phi_{{\rm d}}: z^1 _{{\rm d}} (X, 1) \to \widehat{K}^M_1 (A)$ and $\phi_{{\rm v}, \geq r}: z^1 _{{\rm v}, \geq r} (X, 1) \to (1+I^r)^{\times}$, and we show that they induce the desired inverses of $gr_{{\rm d}}$ and $gr_{{\rm v}, \geq r}$, respectively.
%in Definition \ref{defn:cycle Witt}, that will induce the inverse of $gr$. 

\medskip

Let $Z \in z^1 _{{\rm d}} (X, 1)$ be an integral cycle. Since $\overline{Z} \cap \{ y_1 = \infty \} = \emptyset$ by Lemma \ref{lem:proper int face *}, we may regard $\overline{Z}$ as a closed subscheme in $X \times \mathbb{A}^1$. So, there is a prime ideal $P \subset A[ y_1]$ of height $1$ such that $\overline{Z}= \Spec (A [ y_1] / P)$. We will stick to this assumption in what follows in \S \ref{sec:regulator n=1}.

\begin{lem}\label{lem:norm Witt}
Let $Z \in z^1 _{{\rm d}} (X, 1)$ be an integral cycle. %Let $P \subset k[[t]][y_1]$ be a prime ideal of height $1$ such that ${Z}= \Spec (k[[t]][y_1] / P)$. 
Then:
\begin{enumerate}
\item There exists a unique monic polynomial $f \in A [y_1]$ in $y_1$ such that $P= (f)$. %Furthermore, this $f$ is unique.

When $d \geq 1$ is the degree of $f$ in $y_1$, write
\begin{equation}\label{eqn:f_1 finite}
f = y_1 ^d  - a_{d-1} y_1 ^{d-1} + \cdots + (-1)^{d-1} a_1 y_1 + (-1)^d a_0,
\end{equation}
for some $a_0, \cdots, a_{d-1} \in A$.
\item The image $\bar{y}_1$ in $A [y_1]/(f)$ of $y_1$, is a unit in the ring, and it is integral over $A$.
\item Consider the finite extension $\mathbb{F} (\bar{y}_1)$ of $\mathbb{F}$ and let 
$$
{\rm Nm}:= {\rm N}_{ \mathbb{F} (\bar{y}_1)/ \mathbb{F}}:  \mathbb{F}( \bar{y}_1) ^{\times} \to  \mathbb{F}^{\times}
$$
be the norm map associated to the field extension.

Then ${\rm Nm} (\bar{y}_1) = a_0 \in A^{\times}$.
\item 
Suppose in addition that $Z$ is in $z^1 _{{\rm v}, \geq r} (X, 1)$. 

Then ${\rm Nm} (\bar{y}_1) = a_0$ actually belongs to the subgroup $(1+I^r)^{\times} \subset A^{\times}$.
\end{enumerate}
\end{lem}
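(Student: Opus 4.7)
The plan is as follows. For part (1), invoke Corollary~\ref{cor:proper int t=0} to conclude that $\overline{Z} \to X$ is finite, so $B := A[y_1]/P$ is a finite $A$-module and $\bar y_1$ is integral over $A$. Since $A$ is a DVR, it is a UFD, hence $A[y_1]$ is a UFD by Gauss, and the height-one prime $P$ is principal; write $P = (g)$ with $g$ irreducible. Any monic polynomial in $P$ (which exists by integrality of $\bar y_1$) must be a multiple of $g$, forcing the leading coefficient of $g$ to be a unit in $A$; rescaling yields a monic generator $f$ with $P = (f)$. Uniqueness follows since $A[y_1]^{\times} = A^{\times}$ (a degree argument in a domain): two monic generators differ by a unit $u \in A$, which must equal $1$ after comparing leading coefficients.

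For (2), integrality of $\bar y_1$ is immediate from $f(\bar y_1) = 0$. To see that $\bar y_1$ is a unit, apply Lemma~\ref{lem:proper int face *} to the proper face $\{y_1 = 0\} \subset \square^1$: the intersection $\overline{Z} \cap (X \times \{y_1 = 0\})$ is empty, which scheme-theoretically means $(f, y_1) = A[y_1]$. Setting $y_1 = 0$ yields $A/(f(0)) = 0$, so $f(0) = (-1)^d a_0$ is a unit of $A$, i.e.\ $a_0 \in A^{\times}$. The relation $f(\bar y_1) = 0$ then rearranges as
\[
\bar y_1 \cdot \bigl( \bar y_1^{d-1} - a_{d-1} \bar y_1^{d-2} + \cdots + (-1)^{d-1} a_1 \bigr) = (-1)^{d-1} a_0,
\]
exhibiting an explicit inverse. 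For (3), since $P = (f)$ is prime, $f$ is irreducible in $A[y_1]$, and being monic it is primitive, so Gauss's lemma upgrades this to irreducibility in $\F[y_1]$. Thus $f$ is the minimal polynomial of $\bar y_1$ over $\F$ with $[\F(\bar y_1) : \F] = d$, and ${\rm Nm}(\bar y_1)$ equals $(-1)^d$ times the constant term of $f$, namely $(-1)^d \cdot (-1)^d a_0 = a_0$.

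For (4), the strategy is to translate the hypothesis $Z \in z^1_{{\rm v}, \geq r}(X,1)$ into the polynomial congruence $f \equiv (y_1 - 1)^d \pmod{I^r A[y_1]}$, and then read off $a_0 \in (1 + I^r)^{\times}$ by comparing constant terms: the constant term of $(y_1-1)^d$ is $(-1)^d$, while that of $f$ is $(-1)^d a_0$, so $a_0 \equiv 1 \pmod{I^r}$. Concretely, the order-$\geq r$ hypothesis forces the scheme-theoretic fiber $\overline{Z} \times_X X_r$ in $\overline{\square}^1_{X_r}$ to be supported along the excluded boundary $X_r \times \{y_1 = 1\}$, and because $\bar f \in k_r[y_1]$ is monic of degree $d$, the only such polynomial is $(y_1 - 1)^d$. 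The main obstacle will be the careful unpacking of ``strict vanishing of order $\geq r$'' as the scheme-theoretic statement inside $\overline{\square}^1_{X_r}$ rather than inside $\square^1_{X_r}$, so that the equality $\bar f = (y_1-1)^d$ actually holds in $k_r[y_1]$; once established, the conclusion follows by the direct coefficient comparison above.
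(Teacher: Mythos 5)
Parts (1)--(3) are correct and essentially match the paper's argument. The minor variation in (1) --- deriving the unit leading coefficient of the generator from the finiteness of $\overline{Z}\to X$ and the resulting integrality of $\bar y_1$, rather than directly from the empty extended face $\overline{Z}\cap\{y_1=\infty\}=\emptyset$ via Lemma~\ref{lem:proper int face *} --- is fine; the two ingredients go together, and the paper in fact uses the extended-face vanishing already in the paragraph before the lemma to realize $\overline{Z}$ inside $X\times\mathbb{A}^1$.

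For (4), however, there is a genuine gap at the step ``because $\bar f\in k_r[y_1]$ is monic of degree $d$, the only such polynomial is $(y_1-1)^d$.'' Supportedness of $V(\bar f)$ on $\{y_1=1\}$ constrains only the image of $\bar f$ in $k[y_1]$, not $\bar f$ itself in $k_r[y_1]$: the nilpotent parts of the coefficients are unconstrained. Already in degree $1$ with $r=2$, the polynomial $\bar f = y_1-(1+t)\in k_2[y_1]$ has $V(\bar f)$ supported at $y_1=1$ (its reduction mod $t$ is $y_1-1$), yet $\bar f\neq y_1-1$ and its constant term is $-(1+t)$, not $-1$. You correctly identify the scheme-versus-set distinction as ``the main obstacle,'' but the unpacking you anticipate cannot close the gap: the underlying set of $\overline{Z}\times_X X_r$ is the same for every $r$ (the topological space of $X_r$ is a single point), so the hypothesis ``$Z\times_X X_r=\emptyset$'' as written in Definition~\ref{defn:higher v-cycle} can only constrain $\bar f$ modulo $I$, never modulo $I^r$. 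What would actually yield (4) is a genuinely scheme-theoretic input, e.g.\ $\bar y_1\equiv 1$ in $\mathcal{O}_{\overline{Z}}/I^r\mathcal{O}_{\overline{Z}}$; then the matrix of multiplication by $\bar y_1-1$ on the finite free $A$-module $A[y_1]/(f)$ has all entries in $I^r$, so $a_0={\rm Nm}(\bar y_1)=\det({\rm id}+(\text{that matrix}))\in 1+I^r$. (The paper's own proof takes yet another route --- invoking Hensel's lemma to factor $f$ into linear polynomials $(y_1-\alpha_i)$ with $\alpha_i\equiv 1\bmod I^r$ --- which is likewise delicate: the residue factorization $(y_1-1)^d$ is not a coprime factorization, and over an imperfect $k$, $f$ need not split into linear factors over $A$ at all, e.g.\ $f=y_1^2-(1+t)$ in characteristic $2$.)
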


\begin{proof}
(1) Since $A$ is a regular local ring, it is a UFD. Thus so is $A[y_1]$. Since $P \subset A[y_1]$ is a height $1$ prime ideal of a UFD, there exists some $f \in A[y_1]$ such that $P = (f)$ (see e.g. Matsumura \cite[Theorem 20.1, p.161]{Matsumura}). By Lemma \ref{lem:proper int face *}, the intersection of $\overline{Z}$ with the extended face $\{ y_1 = \infty\}$ is empty. Thus we deduce that the leading coefficient of $f$ in $y_1$ is a unit in $A$. After scaling by this invertible leading coefficient, we may assume $f$ is a monic polynomial in $A[y_1]$ in $y_1$. The uniqueness of $f$ is apparent, as $f$ is now monic in $y_1$. This proves (1). 

\medskip

(2) Since $\overline{Z}$ does not intersect with the extended face $\{ y_1 = 0\}$ by Lemma \ref{lem:proper int face *}, we have $a_0 \in A^{\times}$. On the other hand, \eqref{eqn:f_1 finite} implies that modulo $P= (f)$, 
$$
\bar{y}_1 \left( \frac{  \bar{y}_1^{d-1} - \cdots + (-1)^{d-1} a_1)}{ (-1)^{d-1} a_0 ^{-1}} \right)= 1
$$
 in $A[ y_1]/(f)$. That $\bar{y}_1$ is integral over $A$ is apparent by (1). This proves (2).

\medskip

(3) By the construction of the norm map for finite extension of fields, we have ${\rm Nm} (\bar{y}_1) = a_0$, which is in $A^{\times}$ by construction.

\medskip

(4) Now we suppose that ${Z}$ is an integral strict vanishing cycle of order $ r \geq 1$. This means the equation $(f \mod I^r) = 0$ in $(A/I^r) [y_1]$ has $\{y_1=1\}$ as the only solution. Thus by Hensel's lemma, $f$ factors into the product of linear polynomials $(y_1 - \alpha_i)$ for $\alpha _i \in A$, where $\alpha_i \equiv 1 \mod I^r$. In particular, $a_0 \equiv \prod \alpha_i \equiv 1 \mod I^r$ so that $a_0 \in (1 + I^r)^{\times}$, proving (4).
\end{proof}

Using Lemma \ref{lem:norm Witt} we define the following:

\begin{defn}\label{defn:norm cycle}
For the integral cycle $Z$ as in Lemma \ref{lem:norm Witt}, we say the graph cycle $\Gamma_{a_0} \subset X \times \square^n$ given by the equation $\{ y_1 = a_0\}$, where $a_0 := {\rm Nm} (\bar{y}_1)$, is the \emph{norm cycle} of $Z$. Since the minimal polynomial $f$ that defines $\overline{Z}$ is uniquely determined by $Z$, so is the norm cycle.
\qed
\end{defn}

\begin{defn}\label{defn:cycle Witt}
Define the homomorphism of abelian groups 
$$
\phi_{{\rm d}}: z^1_{{\rm d}} (X, 1) \to A^{\times}
$$
$$
{Z} \mapsto {\rm Nm} ( \bar{y}_1),
$$
by sending each integral cycle ${Z}$ to the norm of $\bar{y}_1$, where $\bar{y}_1$ is the image of $y_1$ in the coordinate ring $A[y_1]/P$ of ${Z}$. Here, by Lemma \ref{lem:norm Witt}-(3), it belongs to $A^{\times}$. 

Furthermore, its restriction to the subgroup $z^1_{{\rm v}, \geq r}(X, 1) \subset z^1 _{{\rm d}} (X, 1)$ induces
$$
\phi_{{\rm v}, \geq r}: z^1 _{{\rm v}, \geq r} (X, 1) \to (1+I^r)^{\times}
$$
by Lemma \ref{lem:norm Witt}-(4). If $r=1$, we write $\phi_{{\rm v}} = \phi_{{\rm v}, \geq 1}$.
\qed
\end{defn}

\begin{lem}\label{lem:gr surj lv 1}
Let ${Z} \in z^1 _{{\rm d}} (X, 1)$ be an integral cycle and let $\bar{y}_1$ be as in Lemma \ref{lem:norm Witt}-(2).
\begin{enumerate}

\item Then there exists a cycle $W \in z^1 _{{\rm d}} (X, 2) $ such that 
$$
{\Gamma}_{{\rm Nm} (\bar{y}_1)} - Z = \partial W,
$$
where $\Gamma_{a}$ for $a \in A^{\times}$ is the graph cycle associated to $\{y= a\}$.
In particular, the graph homomorphism $gr_{{\rm d}} : K^M _1 (A) \to \CH^1 _{{\rm d}} (X, 1)$ is surjective.

\item If in addition $Z \in z^1 _{{\rm v}, \geq r} (X, 1)$, then the above $W$ can be chosen in $z^1 _{{\rm v}, \geq r} (X, 2)$. In particular, the graph homomorphism $gr_{{\rm v}, \geq r}: (1+I^r)^{\times} \to \CH^1 _{{\rm v}, \geq r} (X, 1)$ is also surjective.
\end{enumerate}
\end{lem}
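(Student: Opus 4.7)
The plan is to construct the cycle $W$ explicitly as an integral hypersurface $V(F) \subset X \times \square^2$ for a carefully chosen polynomial $F(y_1, y_2) \in A[y_1, y_2]$, and then verify the required properties by direct face computations modeled on the construction of the cycle $W_{f_1, f_2}$ in the proof of Proposition~\ref{prop:gr mult n infty}. For Part~(1), consider
$$F(y_1, y_2) := f(y_1) + (-1)^d (y_1 - a_0)\, y_2,$$
which is linear in $y_2$. Since $f$ is irreducible of degree $d$ (as $\Spec A[y_1]/(f)$ is integral) and the trivial case $d = 1$ already gives $Z = \Gamma_{a_0}$, one may assume $d \geq 2$; then $(y_1 - a_0)$ and $f(y_1)$ are coprime in $A[y_1]$, so $F$ is irreducible and $W := V(F)$ is an integral codimension-one cycle.

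The first step would be to check that $W$ lies in $z^1_{{\rm d}}(X, 2)$ by computing the intersections with all extended faces of $\overline{\square}^2$. The crucial observation motivating the sign choice $(-1)^d$ is that, in both parities, $F(0, y_2)$ equals a unit multiple of $(1 - y_2)$, so that $F|_{y_1 = 0}$ vanishes precisely at $y_2 = 1$, which lies \emph{outside} $\square$, yielding $\partial_1^0 W = 0$; similarly $\partial_1^\infty W = 0$ because the leading coefficient of $F$ in $y_1$ is $1$, and the remaining extended face intersections have the expected codimension because $a_0$ reduces to a unit in $k$ and because $Z$ itself satisfies $(SF)_*$. The boundary formula then gives
$$\partial W = -(\partial_1^\infty W - \partial_1^0 W) + (\partial_2^\infty W - \partial_2^0 W) = \Gamma_{a_0} - Z,$$
since $\partial_2^0 W = V(f(y_1)) = Z$ and $\partial_2^\infty W = V(y_1 - a_0) = \Gamma_{a_0}$ after dehomogenizing at $y_2 = \infty$. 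Surjectivity of $gr_{{\rm d}}$ follows immediately, as every integral ${\rm d}$-cycle is then rationally equivalent to a graph cycle.

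For Part~(2), the $W$ above need not itself be a ${\rm v}$-cycle of order $\geq r$, so a different construction is required. However, Lemma~\ref{lem:norm Witt}(4) together with Hensel's lemma (applicable since $A$ is henselian) gives a factorization $f(y_1) = \prod_{i=1}^d (y_1 - \alpha_i)$ with $\alpha_i \in 1 + I^r$, so $Z = \sum_i \Gamma_{\alpha_i}$ in $z^1_{{\rm v}, \geq r}(X, 1)$ and $a_0 = \prod_i \alpha_i \in 1 + I^r$. The multiplicativity cycle $W_{f_1, f_2}$ of Proposition~\ref{prop:gr mult n infty} is cut out by $(f_1 y_1 - f_1 f_2) - (y_1 - f_1 f_2) y_2$, which reduces modulo $I^r$ to $(y_1 - 1)(1 - y_2)$ whenever $f_1, f_2 \in 1 + I^r$; since both $\{y_1 = 1\}$ and $\{y_2 = 1\}$ lie outside $\square^2$, this forces $W_{f_1, f_2} \in z^1_{{\rm v}, \geq r}(X, 2)$. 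A telescoping sum
$$W := \sum_{k=1}^{d-1} W_{\alpha_1 \cdots \alpha_k,\, \alpha_{k+1}} \in z^1_{{\rm v}, \geq r}(X, 2)$$
(where all partial products $\alpha_1 \cdots \alpha_k$ remain in $1 + I^r$) then yields $\partial W = \Gamma_{a_0} - \sum_i \Gamma_{\alpha_i} = \Gamma_{a_0} - Z$, and surjectivity of $gr_{{\rm v}, \geq r}$ is immediate.

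The main obstacle is the verification of the extended intersection conditions $(GP)_*$ and $(SF)_*$ for $W$ in Part~(1): it is precisely the sign choice $(-1)^d$ in $F$ that forces the otherwise spurious intersection at $y_1 = 0$ to collapse to the excluded point $y_2 = 1$ rather than to $y_2 = -1$, which would otherwise contribute an unwanted $\Gamma_{-1}$ summand to the boundary. Once this parity-sensitive observation is in place, the remaining intersection-theoretic checks and the boundary computation are essentially mechanical.
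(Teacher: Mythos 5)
Your proposal deviates from the paper's construction in both parts, and both deviations introduce genuine errors.

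\textbf{Part (1): the simplified polynomial fails $(GP)_*$.} The paper's cycle is cut out by $W_f = f(y_1) - (y_1-1)^{d-1}(y_1 - a_0)\,y_2$, in which the coefficient of $y_2$ has $y_1$-degree $d = \deg_{y_1} f$. You dropped the factor $(y_1-1)^{d-1}$, leaving $F = f(y_1) + (-1)^d(y_1 - a_0)\,y_2$ whose $y_2$-coefficient has $y_1$-degree $1$. The sign $(-1)^d$ does arrange $F(0, y_2) = (-1)^d a_0(1 - y_2)$, but the degree mismatch is fatal at the other end. On $V(F)$, solving for $y_2$ gives $y_2 = (-1)^{d+1} f(y_1)/(y_1 - a_0) \sim (-1)^{d+1} y_1^{d-1}$ as $y_1 \to \infty$, so for $d \geq 2$ the closure $\overline{W}$ contains $X \times \{(\infty,\infty)\}$ — a component of dimension $1 = \dim X$. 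The codimension-$2$ extended face $\overline{F} = \{y_1 = \infty, y_2 = \infty\}$ then meets $\overline{W}$ (which has codimension $1$) in a codimension-$2$ set, not the required codimension $3$, so $(GP)_*$ fails and $V(F) \notin z^1_{\rm d}(X, 2)$. The paper's factor $(y_1-1)^{d-1}$ is exactly what makes the leading $y_1^d$-terms cancel at $y_1 = \infty$ and forces $y_2 = 1 \notin \overline{F}^{\epsilon}$ there; indeed, bihomogenizing $W_f$ and evaluating at $(\infty, \infty)$ gives $-1 \neq 0$, so $(\infty,\infty) \notin \overline{W}_f$. Your assertion that "$\partial_1^\infty W = 0$ because the leading coefficient of $F$ in $y_1$ is $1$" only checks the affine chart $(u_1, y_2)$ and misses the chart $(u_1, u_2)$.

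\textbf{Part (2): the Hensel factorization does not exist.} You write $f = \prod_{i=1}^d (y_1 - \alpha_i)$ with $\alpha_i \in 1 + I^r$ and then telescope via the multiplicativity cycles $W_{f_1, f_2}$. But $Z$ is an \emph{integral} cycle, so $A[y_1]/(f)$ is a domain and $f$ is irreducible in $A[y_1]$; for $d \geq 2$ it cannot split into linear factors. Hensel's lemma would only apply if $\bar f$ had a factorization into coprime pieces, whereas here $\bar f = (y_1 - 1)^d$ has a single repeated root (for example $f = (y_1-1)^2 - t$ in $k[[t]][y_1]$ is irreducible with $\bar f = (y_1-1)^2$). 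The paper avoids this entirely: it simply observes that the \emph{same} cycle $W_f$ from Part~(1), reduced mod $I^r$, becomes $(y_1-1)^d(1-y_2)$, whose zero set in $\square^2_{X_r}$ is empty, so $W_f \in z^1_{{\rm v},\geq r}(X, 2)$ already. No telescoping or factorization is needed.
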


\begin{proof}
(1) The argument is motivated by the construction in B. Totaro \cite[p.185]{Totaro}, while we need to do several more tasks.

For $f (y_1) \in A[ y_1]$ that defines $\overline{Z}$ as in \eqref{eqn:f_1 finite} of Lemma \ref{lem:norm Witt}, consider the closed subscheme of $\square_X^2$ defined by the polynomial in $A[y_1, y_1]$
\begin{equation}\label{eqn:gr surj lv 1 0} 
{W}_{f}: \ \ \ f (y_1) - (y_1 -1)^{d-1} (y_1 - a_0) y_2.
\end{equation}
Let $\overline{W}_f \subset \overline{\square}_X ^2$ be the closure. 

\medskip

So as to check that $W_f$ satisfies the conditions $(GP)_*$ and $(SF)_*$, let's first inspect the codimension $1$ extended faces $\overline{W}_f \cap (X \times \overline{F})$ of $W_f$ for the faces $F \subset \square^2$. When $F \subset  \square^2$ is a codimension $1$ face:
\begin{enumerate}
\item [(a)] For $F= \{ y_1 = 0 \}$: from \eqref{eqn:gr surj lv 1 0}, we deduce $(-1)^d a_0 - (-1)^{d-1} (-a_0) y_2 = 0$, which implies that $y_2 = 1$. 
Hence
$$
\overline{W}_f \cap (X \times \overline{F})= \{ y_1 = 0, y_2 = 1 \} \subset X \times \overline{\square}^2.
$$
Identifying $F\simeq \square_X ^1$, we deduce $\partial_1 ^0 W_f = \{ y = 1 \} = 0$ because $1 \not \in \square^1 = \mathbb{P}^1 \setminus \{ 1 \}$.
\item [(b)] For $F= \{ y_1 = \infty \}$: from \eqref{eqn:gr surj lv 1 0}, we deduce $1-y_2 = 0$. Hence
$$
\overline{W}_f \cap (X \times \overline{F}) = \{ y_1 = \infty, y_2 = 1 \} \subset X \times \overline{\square}^2.
$$
In particular $\partial_1 ^{\infty} W_f = \{ y = 1 \} =0$.
\item [(c)] For $F= \{ y_2 = 0 \}$: from \eqref{eqn:gr surj lv 1 0}, we deduce $f(y_1) = 0$, i.e. 
$$
\overline{W}_f \cap ( X \times \overline{F}) = \overline{Z} \times \{ y_2 = 0 \} \subset X \times \overline{\square}^2.
$$
We deduce that $\partial_2 ^0 W_f = Z$.
\item [(d)] For $F= \{ y_2 = \infty \}$: from \eqref{eqn:gr surj lv 1 0}, we deduce that $(y_1 -1)^{d-1} (y_1 - a_0) = 0$.  Hence
$$
\overline{W}_f \cap (X \times \overline{F}) = (d-1) \{ y_1 = 1, y_2 = \infty\} + \{ y_1 = a_0, y_2 = \infty \}.
$$
We deduce that $\partial _2 ^{\infty} W_f = (d-1) \{ y_1 = 1\} + \{ y_1 = a_0 \} = \Gamma_{a_0}= \Gamma_{{\rm Nm} (\bar{y}_1)}$.
\end{enumerate}

The codimension $ 2$ extended faces of $W_f$ are the codimension $1$ extended faces of the above codimension $1$ extended faces of $\overline{W}_f$. By inspection, we see that they are all empty. Here, we used Lemma \ref{lem:proper int face *} for some cases since $Z$ and $\Gamma_{a_0}$ satisfy the condition $(SF)_*$; it is given for $Z$, while for the graph cycle $\Gamma_{a_0}$ it is proven in Lemma \ref{lem:graph adm 0}.

\medskip

The above extended face computations in particular imply that $W_f$ satisfies the condition $(GP)_*$.

\medskip

To check the condition $(SF)_*$ for $W_f$, we compute $\overline{W}_f \cap ( \{ p \} \times \overline{F})$ for all faces $F \subset \square^2$ based on the above extended face computations. We saw in the above that the codimension $\geq 2$ extended faces of $W_f$ are empty. Hence, it is enough to check $(SF)_*$ for $\overline{F}= \overline{\square}^2$ and the codimension $1$ extended faces.

For the codimension $1$ extended faces, we see that $\overline{W}_f \cap ( \{ p \} \times \overline{F})$ are the cycles in (a) $\sim$ (d) mod $I$, and they are either empty or $0$-cycles in $\overline{\square}_k ^2$, thus of codimension $\geq 3$ in $X \times \overline{\square}^2$. Hence the intersection is proper. For $\overline{F}= \overline{\square}^2$, the intersection $\overline{W}_f \cap ( \{ p \} \times \overline{\square}^2)$ is given by a single equation \eqref{eqn:gr surj lv 1 0} mod $I$ in $k[y_1, y_2]$, so its codimension in $\overline{\square}_k ^2$ is $1$, thus its codimension in $X \times \overline{\square}^2$ is $2$. Thus the intersection is proper on $X \times \overline{\square}^2$, and we checked that $W_f$ satisfies $(SF)_*$, and $W_f \in z_{{\rm d}} ^1 (X, 2)$.

\medskip

The above codimension $1$ face computations also imply that 
$$
\partial {W}_{f} = {\Gamma}_{{\rm Nm} (\bar{y}_1)} - {Z}
$$
so that ${Z} \equiv {\Gamma}_{{\rm Nm} (\bar{y}_1)}$ in $\CH^1 _{{\rm d}} (X, 1)$. Since ${\Gamma}_{{\rm Nm} (\bar{y}_1)} \in {\rm Im} (gr_{{\rm d}})$ by definition, we deduce that ${Z} \in {\rm Im} (gr_{{\rm d}})$ as well. This proves that $gr_{{\rm d}}$ is surjective.

\medskip

(2) When $Z\in z^1 _{{\rm v}, \geq r} (X, 1)$, we show that the cycle $W_f$ defined in \eqref{eqn:gr surj lv 1 0} is in fact a vanishing cycle of order $\geq r$. By (1), we know that $W_f \in z_{{\rm d}} ^1 (X, 2)$ already, so we just check its vanishing order.

Indeed, that $Z \in z_{{\rm v}, \geq r}^1 (X, 1)$ implies that $a_0 \equiv 1 \mod I^r$ by Lemma \ref{lem:norm Witt}-(4). Thus the equation \eqref{eqn:gr surj lv 1 0} mod $I^r$ is equivalent to
\begin{equation}\label{eqn:lv 1 *}
(y_1 - 1)^d - (y_1 -1)^d y_2 = (y_1 -1)^d (1- y_2) = 0 \mod I^r.
\end{equation}
Apparently $y_1 -1$ and $1- y_2$ are not zero-divisors of $(A/ I^r) [ y_1, y_2]$, so we have $\{y_1 = 1\}$ and $\{y_2 = 1\}$ as the only solutions of \eqref{eqn:lv 1 *}, and they both give the empty scheme in $\square^2$ because $1 \not \in \square = \mathbb{P}^1 \setminus \{ 1 \}$. Thus ${W}_{f}$ is a vanishing cycle of order $\geq r$. This implies that $gr_{{\rm v}, \geq r}$ is surjective.
\end{proof}

The following lemma is at the heart of the proof for $n=1$:

\begin{lem}\label{lem:phi partial 0 lv 1}
$\phi _{{\rm d}} (\partial z^1 _{{\rm d}} (X, 2)) = 0.$ In particular, $\phi_{{\rm v}, \geq r} ( \partial z^1 _{{\rm v}, \geq r} (X, 2)) = 0$ as well.
\end{lem}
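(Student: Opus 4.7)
The plan is to reduce the vanishing to a direct algebraic identity verified on each integral generator of $z^1_{{\rm d}}(X, 2)$. Let $W \in z^1_{{\rm d}}(X, 2)$ be integral. Since $A$ is a regular local ring of dimension $1$, the ambient $X \times \overline{\square}^2 \simeq X \times \mathbb{P}^1 \times \mathbb{P}^1$ is regular, so the closure $\overline{W}$ is a Cartier divisor cut out by a primitive bihomogeneous polynomial $G$ of bidegree $(d_1, d_2)$. In the affine chart set $g(y_1,y_2) = G(y_1, 1; y_2, 1) = \sum_{i,j} c_{ij}\, y_1^i y_2^j$.

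The key geometric input comes from $(SF)_*$ applied at each of the four corner faces $\{y_1=\epsilon_1,\, y_2=\epsilon_2\}$ with $\epsilon_i \in \{0, \infty\}$. For each such corner, $\{p\} \times \{y_1=\epsilon_1, y_2=\epsilon_2\}$ has codimension $3$ in the $3$-dimensional scheme $X \times \overline{\square}^2$, so $(SF)_*$ demands that its intersection with $\overline{W}$ be empty; this translates precisely to the assertion that the four ``corner coefficients'' $c_{0,0},\, c_{d_1,0},\, c_{0,d_2},\, c_{d_1,d_2}$ all lie in $A^\times$. Consequently each face cycle $\partial_i^\epsilon W$ is the zero locus of a one-variable polynomial $f$ (a suitable specialization of $g$ or leading coefficient in the other variable) whose constant term and leading coefficient both lie in $A^\times$, so by Lemma~\ref{lem:norm Witt},
\[
\phi_{{\rm d}}([V(f)]) \;=\; (-1)^{\deg f}\, f(0)/(\text{leading coefficient of } f).
\]
Any extraneous factor of $(y_i - 1)$ in $f$ defines an empty cycle on $\square^1 = \mathbb{P}^1 \setminus \{1\}$, but it contributes the trivial quantity $(-1)\cdot(-1)/1 = 1$ to the above formula, so it can be safely ignored.

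Using $\partial = \sum_{i=1}^2 (-1)^i (\partial_i^\infty - \partial_i^0)$, the multiplicative assembly of the four face contributions yields
\[
\phi_{{\rm d}}(\partial W) \;=\; \frac{\phi_{{\rm d}}(\partial_1^0 W) \cdot \phi_{{\rm d}}(\partial_2^\infty W)}{\phi_{{\rm d}}(\partial_1^\infty W) \cdot \phi_{{\rm d}}(\partial_2^0 W)} \;=\; \frac{\bigl[(-1)^{d_2} c_{0,0}/c_{0,d_2}\bigr] \cdot \bigl[(-1)^{d_1} c_{0,d_2}/c_{d_1,d_2}\bigr]}{\bigl[(-1)^{d_2} c_{d_1,0}/c_{d_1,d_2}\bigr] \cdot \bigl[(-1)^{d_1} c_{0,0}/c_{d_1,0}\bigr]},
\]
and both numerator and denominator collapse to $(-1)^{d_1+d_2} c_{0,0}/c_{d_1,d_2}$; hence $\phi_{{\rm d}}(\partial W) = 1$ in $A^\times$. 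The statement for $\phi_{{\rm v}, \geq r}$ then follows at once, since $\phi_{{\rm v}, \geq r}$ is by definition the restriction of $\phi_{{\rm d}}$ to the subcomplex $z^1_{{\rm v}, \geq r}(X, \bullet) \subset z^1_{{\rm d}}(X, \bullet)$, and $\partial$ restricts accordingly.

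The main expected obstacle is the careful bookkeeping that extracts the unit corner coefficients from $(SF)_*$ and handles the degenerate edge cases (e.g., $d_1 = 0$ or $d_2 = 0$, where $W$ is a cylinder and $\partial W$ vanishes term-by-term); both are resolvable by direct inspection in the spirit of the face computations already carried out in Lemma~\ref{lem:graph adm 0} and Lemma~\ref{lem:gr surj lv 1}.
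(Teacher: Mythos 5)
Your proof is correct, but it takes a genuinely different route from the paper's. The paper's argument normalizes the closure $\overline{W}$, passes to the generic fibre $Y_\eta$ (an integral regular projective curve over $\mathbb{F}$), and applies Suslin's Weil reciprocity to the symbol $\{y_1, y_2\} \in K_2^M(\mathbb{F}(Y_\eta))$, after carefully setting up (via Claims 1 and 2) a correspondence between the Milnor boundary contributions at closed points of $Y_\eta$ and the face-cycle contributions to $\phi_{\rm d}(\partial W)$. Your argument instead represents $\overline{W}$ by a primitive bihomogeneous polynomial $G$ of bidegree $(d_1, d_2)$ on $X \times \mathbb{P}^1 \times \mathbb{P}^1$ (legitimate, since $A$ is a UFD and $\Pic(X) = 0$), reads off from $(SF)_*$ at the four codimension-two extended faces that the corner coefficients $c_{0,0}, c_{d_1,0}, c_{0,d_2}, c_{d_1,d_2}$ are all in $A^\times$, expresses each $\phi_{\rm d}(\partial_i^\epsilon W)$ as $(-1)^{\deg}\cdot(\text{constant term})/(\text{leading coefficient})$ of the corresponding one-variable specialization, and checks the telescoping identity directly. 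The extraneous $(y_i - 1)$-factors and the degenerate cases $d_1 = 0$ or $d_2 = 0$ (where $W$ is a pullback through a coordinate projection, hence already zero in $z^1_{\rm d}(X,2)$) are indeed safely dispatched as you say. The trade-off: the paper's reciprocity argument is more technical here, but it is exactly the template that works for arbitrary $n$ (and the paper redeploys it essentially verbatim in \S\ref{sec:5 regulator}); your computation is pleasantly elementary, avoiding normalization and reciprocity entirely, but is genuinely specific to $n = 1$ and does not generalize.
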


\begin{proof}
Let $W \in z^1 _{{\rm d}} (X, 2)$ be an integral cycle. We prove that $\phi_{{\rm d}} (\partial W) = 0$.

The closure $\overline{W} \subset \overline{\square}_X ^2$ of $W$ gives a projective flat surjective morphism $\overline{W} \to X$ of relative dimension $1$. This is flat because $X$ is regular of dimension $1$ (Lemma \ref{lem:basic Milnor 1}-(1)). Hence the base change $\overline{W}_{\eta} \to \eta$ gives an integral projective curve over the field $\mathbb{F}$, where $\eta \in \Spec (A)$ is the generic point. Let  $\mathbb{K}:= \mathbb{F}( \overline{W}_{\eta})$, the rational function field of the curve $\overline{W}_{\eta}$ over $\mathbb{F}$.

Consider the normalization of $\overline{W}$ composed with the closed immersion
\begin{equation}\label{eqn:Suslin 0}
\nu : Y:= \overline{W} ^N \to \overline{W} \hookrightarrow  \overline{\square}_{X} ^2.
\end{equation}

The composite $Y \to X$ gives a normal fibered surface in the sense of Q. Liu \cite[Ch.8 Definition 3.1, p.347]{Liu}. Taking the generic fibers of \eqref{eqn:Suslin 0} over $\eta$, we obtain 
$$
\nu_{\eta} : Y_{\eta}   \to \overline{W} _{\eta} \hookrightarrow \overline{\square}_{\mathbb{F}}^2,
$$
and $Y_{\eta}$ is an integral \emph{regular} projective curve over $\mathbb{F}$, whose function field is also $\mathbb{K} = \mathbb{F} ( Y_{\eta} )$. We denote $\nu_{\eta}$ also simply by $\nu$ when no confusion arises.

\medskip

The coordinates $y_1, y_2 \in \overline{\square}_{\mathbb{F}}^2$ define rational functions on $\overline{\square}_{\mathbb{F}} ^2$. Consider the pull-backs $\nu^* (y_i) \in \mathbb{K}$ on the curve $ Y_{\eta}$ for $i=1,2$. For simplicity, we will still write $y_1, y_2$ for these pull-backs on $Y_{\eta}$. They define the symbol $\{ y_1, y_2 \} \in K_2 ^M (\mathbb{K})$. 

We observe that we have the one-to-one correspondences: 
\begin{enumerate}
\item [(i)] the closed points $\mathfrak{p}$ of the curve $Y_{\eta}$ over $\mathbb{F}$
\item [(ii)] the discrete valuations $\partial_{\mathfrak{p}}$ on $\mathbb{K}$ over $\mathbb{F}$
\item [(iii)] the integral horizontal divisors $\overline{ \{ \mathfrak{p} \}}$ on the normal surface $Y = \overline{W}^N$
\item [(iv)] the integral horizontal divisors on the surface $\overline{W}$
\end{enumerate}
Here the correspondences between (i) and (ii) and between (iii) and (iv) are classical, while the correspondence between (i) and (iii) comes from Q. Liu \cite[Ch.8 Proposition 3.4, Definition 3.5, p.349]{Liu}, for instance.

Recall that (e.g. from J. Milnor \cite[Lemma 2.1]{Milnor IM}) that the discrete valuation $\partial_{\mathfrak{p}}$ associated to a given closed point $\mathfrak{p} \in Y_{\eta}$ defines the boundary homomorphism, denoted by the same notation
\begin{equation}\label{eqn:boundary Milnor K}
\partial_{\mathfrak{p}} : K_2 ^M (\mathbb{K}) \to \kappa (\mathfrak{p})^{\times},
\end{equation}
where $\kappa (\mathfrak{p})$ is the residue field of $\mathfrak{p}$, thus a finite extension of the field $\mathbb{F}$. When $\pi_{\mathfrak{p}}$ is a uniformizing parameter of the discrete valuation $\partial_{\mathfrak{p}}$ and $\partial_{\mathfrak{p}} (u) = 0$ for $u \in \mathbb{K}^{\times}$, then $\partial_{\mathfrak{p}} ( \{ \pi_{\mathfrak{p}} ^n, u \}) = (\bar{u})^n$ where $\bar{u}$ is the image of $u$ in $\kappa (\mathfrak{p})^{\times}$, and this requirement uniquely determines \eqref{eqn:boundary Milnor K}.

For the symbol $\{ y_1, y_2 \} \in K^M_2 ( \mathbb{K})$, all but finitely many closed points $\mathfrak{p}$ of $Y_{\eta}$ satisfy $\partial_{\mathfrak{p}} \{ y_1, y_2 \} = 1$, the identity of the group $\kappa (\mathfrak{p})^{\times}$. Consider the set
$$ 
\mathcal{F}_W:= \{ \mathfrak{p} \in Y_{\eta} \ | \ \partial_{\mathfrak{p}} \{ y_1, y_2 \} \not = 1.\}.
$$

Here, by the definition of \eqref{eqn:boundary Milnor K}, we observe immediately that:
\medskip

\textbf{Claim 1:} \emph{Under the correspondence between {\rm (i)} and {\rm (iv)}, the members of the set $\mathcal{F}_W$ correspond to the integral horizontal divisors appearing in the non-empty faces $\partial_i ^{\epsilon} W$ over $i \in \{ 1, 2 \}$ and $\epsilon \in \{ 0, \infty \}$.}

\medskip

We let $Z(\mathfrak{p})$ be the irreducible component of a nonempty face for $\mathfrak{p} \in \mathcal{F}_W$ under the correspondence in Claim 1. Since $W$ intersects properly with all faces, including the codimension $2$ faces, the irreducible cycle $Z(\mathfrak{p})$ belongs to a unique codimension $1$ face of $W$. Hence, for each $\mathfrak{p} \in \mathcal{F}_W$, there exists a unique pair $(i, \epsilon)$ where $i \in \{1, 2 \}$ and $\epsilon \in \{ 0, \infty \}$, that remembers the face to which the component $Z (\mathfrak{p})$ belongs. Under this, we let $n_{\mathfrak{p}}:= \partial_{\mathfrak{p}} ( y_i) \in \mathbb{N}$. In this case, for $i' \in \{ 1, 2 \} \setminus \{ i \}$, we must have $\partial_{\mathfrak{p}} (y_{i'}) = 0$.

\medskip

Recall from Definition \ref{defn:cycle Witt} that we had $\phi_{{\rm d}} (Z (\mathfrak{p})) \in A^{\times}$. Here, if we let $f \in A [y]$ be the monic minimal polynomial in $y$ of $\overline{Z(\mathfrak{p})}$ of the form \eqref{eqn:f_1 finite}, then by definition
$$ 
\phi_{{\rm d}} (Z (\mathfrak{p})) = {\rm Nm} (\bar{y}),
$$
for the norm map ${\rm Nm} : \mathbb{F} (\bar{y})^{\times} = \kappa (\mathfrak{p})^{\times} \to \mathbb{F}^{\times}$. Since both the residue field $\kappa (\mathfrak{p})$ and the norm map ${\rm Nm}$ change as $\mathfrak{p}$ runs over the set $\mathcal{F}_W$, we denote the norm map by ${\rm Nm}_{\mathfrak{p}}$.

\medskip

\textbf{Claim 2:}  \emph{%For the boundary map $\partial_p: K_2 ^M (\mathbb{K}) \to \kappa (p)^{\times}$ and the norm ${\rm Nm}_p: \kappa (p)^{\times} \to k ((t))^{\times}$, we have the equality
For each $\mathfrak{p} \in \mathcal{F}_W$, we have the equality
\begin{equation}\label{eqn:Claim2-0}
{\rm Nm}_{\mathfrak{p}} ( \partial_{\mathfrak{p}} \{ y_1, y_2 \}) = \left( \phi_{{\rm d}} ( Z (\mathfrak{p})) \right) ^{n _{\mathfrak{p}}} \ \ \mbox{ in } A^{\times}.
\end{equation}}

For the proof, for a fixed $\mathfrak{p} \in \mathcal{F}_W$, without loss of generality we may assume $i=1$ and $\epsilon = 0$, as the other cases are all similar. Here, $y_2$ is now the coordinate function for $Z(\mathfrak{p})$ so that by the definition of the boundary map, we have
\begin{equation}\label{eqn:Claim2-1}
 \partial_{\mathfrak{p}} \{y_1, y_2 \}= \bar{y}_2 ^{ n_{\mathfrak{p}}},
\end{equation}
where $\bar{y}_2$ is the image of $y_2$ in the residue field $\kappa (\mathfrak{p})$. Now
\begin{equation}\label{eqn:Claim2-2}
{\rm Nm}_{\mathfrak{p}} ( \bar{y}_2 ^{n_{\mathfrak{p}}}) = \left( {\rm Nm}_{\mathfrak{p}} (\bar{y}_2) \right) ^{ n_{\mathfrak{p}}} = \left( \phi_{{\rm d}} ( Z (\mathfrak{p})) \right) ^{n _{\mathfrak{p}}} \ \ \mbox{ in } \mathbb{F}^{\times},
\end{equation}
so that \eqref{eqn:Claim2-1} and \eqref{eqn:Claim2-2} together imply the equality \eqref{eqn:Claim2-0}, \emph{a priori} in $\mathbb{F}^{\times}$. Since $\phi_{{\rm d}} (Z(\mathfrak{p})) \in A^{\times}$ in general (recall Lemma \ref{lem:norm Witt} and Definition \ref{defn:cycle Witt}), the equality shows that the value in fact belongs to $A^{\times} \subset \mathbb{F}^{\times}$.

\medskip

Returning back to the proof of the lemma, recall again that if $\mathfrak{p} \in Y_{\eta}$ does not belong to $\mathcal{F}_W$, then $\partial_{\mathfrak{p}} \{ y_1, y_2 \} = 1$ by definition. Hence we see that 
$$
\phi_{{\rm d}} (\partial {W} ) = \phi_{{\rm d}} ( - \partial_1 ^{0} {W} + \partial_1 ^{\infty} {W} + \partial_2 ^0 {W} - \partial_2 ^{\infty} {W}) 
$$
$$ 
= \prod_{ \mathfrak{p} \in \mathcal{F}_W} ( \phi_{{\rm d}} ( Z (\mathfrak{p})))^{n_{\mathfrak{p}}} =^{\dagger}  \prod _{ \mathfrak{p} \in \mathcal{F}_W} {\rm Nm}_{\mathfrak{p}}  (\partial_{\mathfrak{p}} \{ y_1, y_2 \}) = \prod_{\mathfrak{p} \in Y_{\eta}} {\rm Nm}_{\mathfrak{p}} (\partial_{\mathfrak{p}} \{ y_1, y_2 \})=1,
$$
where $= ^{\dagger}$ holds by Claim 2, and the last quantity in $A^{\times}$ is equal to $1$ by the Suslin-Weil reciprocity \cite{Suslin}. Here, this reciprocity theorem applies because the cardinality $|\mathbb{F}| = \infty$. This finishes the proof.
\end{proof}

\begin{cor}
The maps $\phi_{{\rm d}}$ and $\phi_{{\rm v}, \geq r}$ induce group homomorphisms
$$
\phi_{{\rm d}}: \CH^1 _{{\rm d}} (X, 1) \to K^M_1 (A),
$$
$$
\phi_{{\rm v}, \geq r}: \CH^1 _{{\rm v}, \geq r} ( X, 1) \to (1+I^r)^{\times} \ \ \mbox{ for } r \geq 1.
$$
\end{cor}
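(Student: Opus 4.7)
The plan is to combine the constructions and lemmas already established in \S\ref{sec:regulator n=1} and deduce the descent to the quotients. First, recall the canonical identification $K^M_1(A) = A^\times$, where the Milnor symbol $\{a\}$ corresponds to $a \in A^\times$ and the group law on $K^M_1$ matches multiplication in $A^\times$. Definition \ref{defn:cycle Witt} sets up $\phi_{{\rm d}}$ on integral cycles, and one extends it to all of $z^1_{{\rm d}}(X,1)$ by declaring $\phi_{{\rm d}}(\sum n_i Z_i) := \prod \phi_{{\rm d}}(Z_i)^{n_i} \in A^\times$, i.e.\ as a homomorphism from the additive group of cycles into the multiplicative group $A^\times$. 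Note that no degeneracy issue arises: a degenerate cycle in $z^1(X,1)$ would be of the form $Z \times \square^1$ for a codimension-one cycle $Z$ on $X$, but such a cycle is never dominant over $X$, hence does not lie in $z^1_{{\rm d}}(X,1)$.

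Second, I would invoke Lemma \ref{lem:phi partial 0 lv 1}, which asserts precisely that $\phi_{{\rm d}}(\partial z^1_{{\rm d}}(X,2)) = \{1\} \subset A^\times$. Since $\CH^1_{{\rm d}}(X,1)$ is by definition the quotient $z^1_{{\rm d}}(X,1)/\partial z^1_{{\rm d}}(X,2)$, this vanishing allows $\phi_{{\rm d}}$ to factor through the quotient, yielding the desired homomorphism
$$
\phi_{{\rm d}}: \CH^1_{{\rm d}}(X,1) \longrightarrow A^\times = K^M_1(A).
$$

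Third, for the relative variant $\phi_{{\rm v}, \geq r}$, the image already lies inside $(1+I^r)^\times$ by Lemma \ref{lem:norm Witt}(4), so it is a homomorphism $z^1_{{\rm v}, \geq r}(X,1) \to (1+I^r)^\times$. Since $z^1_{{\rm v}, \geq r}(X,\bullet)$ is a subcomplex of $z^1_{{\rm d}}(X,\bullet)$ (by the boundary-stability noted in Definition \ref{defn:higher v-cycle}(3)), the boundary subgroup $\partial z^1_{{\rm v}, \geq r}(X,2)$ sits inside $\partial z^1_{{\rm d}}(X,2)$, and Lemma \ref{lem:phi partial 0 lv 1} forces this to be mapped to $1$. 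Hence $\phi_{{\rm v}, \geq r}$ descends to give the homomorphism
$$
\phi_{{\rm v}, \geq r}: \CH^1_{{\rm v}, \geq r}(X,1) \longrightarrow (1+I^r)^\times.
$$

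I do not anticipate a genuine obstacle here: the corollary is a bookkeeping consequence that packages Lemma \ref{lem:norm Witt}, Definition \ref{defn:cycle Witt}, and Lemma \ref{lem:phi partial 0 lv 1} into the factorization statement. The substantive content—particularly the reciprocity-style vanishing on boundaries—was already absorbed into Lemma \ref{lem:phi partial 0 lv 1}.
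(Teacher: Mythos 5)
Your argument is correct and takes essentially the same route as the paper, which simply cites Lemma~\ref{lem:phi partial 0 lv 1} for the vanishing of $\phi_{{\rm d}}$ and $\phi_{{\rm v}, \geq r}$ on boundaries. The extra bookkeeping you include (identifying $K^M_1(A)$ with $A^\times$, ruling out degenerate cycles in $z^1_{{\rm d}}(X,1)$, and noting that the subcomplex inclusion lets the ${\rm d}$-case vanishing restrict to the ${\rm v},\geq r$-case) is all sound and merely spells out what the paper leaves implicit.
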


\begin{proof}
This follows immediately from Lemma \ref{lem:phi partial 0 lv 1}.
\end{proof}

\begin{lem}\label{lem:gr inj lv 1}
The composites of the group homomorphisms
$$
\phi_{{\rm d}} \circ gr_{{\rm d}}: K^M_1 (A) \to \CH^1 _{{\rm d}}(X, 1) \to K^M_1 (A),
$$
$$
\phi_{{\rm v}, \geq r} \circ gr_{{\rm v}, \geq r}: (1+I^r)^{\times} \to \CH^1 _{{\rm v}, \geq r} (X, 1) \to (1+I^r)^{\times}, \ \ \mbox{ for } r \geq 1,
$$
 are the identity maps of $K_1 ^M (A)$ and $(1+I^r)^{\times}$, respectively.

In particular, the graph homomorphisms $gr_{{\rm d}}$ and $gr_{{\rm v}, \geq r}$ are injective.
\end{lem}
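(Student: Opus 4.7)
The plan is to evaluate the two composites on generators by a direct calculation, relying on the fact that the hard well-definedness question for $\phi_{\rm d}$ and $\phi_{{\rm v},\geq r}$ was already settled by Lemma \ref{lem:phi partial 0 lv 1}. Since by Theorem \ref{thm:Khat_univ}-(2)(a) we have $K_1^M(A) = A^\times$, it suffices to compute $\phi_{\rm d}(gr_{\rm d}(a))$ for an arbitrary $a \in A^\times$, and similarly to compute $\phi_{{\rm v},\geq r}(gr_{{\rm v},\geq r}(a))$ for an arbitrary $a \in (1+I^r)^\times$.

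First I would unwind the definition of $gr_{\rm d}(a)$, which is the graph cycle $\Gamma_a \subset X \times \square^1$ cut out by the equation $\{y_1 = a\}$; by Lemma \ref{lem:graph adm 0}-(1) this does lie in $z^1_{\rm d}(X,1)$. The unique monic polynomial in $A[y_1]$ defining the closure $\overline{\Gamma}_a$ is then $f(y_1) = y_1 - a$, so in the notation of Lemma \ref{lem:norm Witt} we have $d=1$ and $a_0 = a$. The coordinate ring $A[y_1]/(y_1 - a) \simeq A$ identifies $\bar{y}_1$ with $a \in A \subset \mathbb{F}$, so the finite extension $\mathbb{F}(\bar{y}_1)/\mathbb{F}$ is trivial and the norm map ${\rm Nm} = {\rm N}_{\mathbb{F}(\bar{y}_1)/\mathbb{F}}$ is the identity. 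By Definition \ref{defn:cycle Witt}, this yields $\phi_{\rm d}(\Gamma_a) = {\rm Nm}(\bar{y}_1) = a_0 = a$, so that $\phi_{\rm d} \circ gr_{\rm d} = \operatorname{id}_{K_1^M(A)}$.

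For the relative statement, I would restrict the identical calculation to a generator $a \in (1+I^r)^\times$. Lemma \ref{lem:graph adm 0}-(2) guarantees that $\Gamma_a \in z^1_{{\rm v},\geq r}(X,1)$, and the same computation of the minimal polynomial and the trivial norm gives $\phi_{{\rm v},\geq r}(\Gamma_a) = a$, proving $\phi_{{\rm v},\geq r} \circ gr_{{\rm v},\geq r} = \operatorname{id}_{(1+I^r)^\times}$.

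The injectivity statements are then an immediate formal consequence: a right inverse from the set-theoretic viewpoint exhibits the relevant homomorphism as split injective. There is essentially no technical obstacle in this lemma, as the substantive content -- that $\phi_{\rm d}$ descends to the Chow group -- was already established via the Suslin--Weil reciprocity argument in Lemma \ref{lem:phi partial 0 lv 1}; the only point that warrants a moment of care is the identification $K_1^M(A) = A^\times$ (so that $A^\times$ really does generate), which is cited from \cite{Kerz finite} as Theorem \ref{thm:Khat_univ}-(2)(a).
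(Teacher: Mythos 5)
Your proof is correct and follows essentially the same route as the paper: compute $gr_{\rm d}(a)$ as the degree-one graph cycle $\{y_1=a\}$, note the minimal polynomial is $y_1-a$ so the norm is taken over the trivial extension and returns $a$, and conclude. One small slip: the identification $K_1^M(A)=A^\times$ is purely definitional (the Steinberg ideal lives in degrees $\geq 2$) and is not the content of Theorem~\ref{thm:Khat_univ}-(2)(a), which instead asserts $K_1^M(A)=\widehat{K}_1^M(A)$; this does not affect the validity of the argument.
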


\begin{proof}
It is enough to prove the statement for $gr_{{\rm d}}$ only. For each $f \in K^M_1 (A)$, the cycle $Z_f \in z^1 _{{\rm d}} (X, 1)$ is given by the irreducible monic polynomial $y_1 - f \in A [ y_1]$ of degree $1$ in $y_1$. Here ${\rm Nm} (\bar{y}_1) = f$ so that $\phi_{{\rm d}} ( Z_f) = f$ by definition. Thus $\phi_{{\rm d}} ( gr_{{\rm d}} (f)) = f$, as desired.
\end{proof}

We now deduce part of the main theorems for $n=1$:

\begin{thm}\label{thm:main 0}
Let $k$ be a field. 
Let $X= \Spec (A)$ be an integral regular henselian local $k$-scheme of dimension $1$ with the residue field $k= A/I$. Then
$$
 gr_{{\rm d}}: K^M_1 (A) \to \CH^1 _{{\rm d}} (X, 1),
 $$
$$
 gr_{{\rm v}, \geq r}: (1+I^r)^{\times} \to \CH^1 _{{\rm v}, \geq r} (X, 1)
 $$
are isomorphisms of abelian groups.
\end{thm}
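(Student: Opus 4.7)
The plan is to derive Theorem \ref{thm:main 0} by assembling the pieces already in place, with essentially no new computation. Recall that $K_1^M(A) = A^\times$, so both graph maps are homomorphisms of explicit abelian groups by Proposition \ref{prop:gr mult n infty}. For injectivity, invoke Lemma \ref{lem:gr inj lv 1}, which establishes that $\phi_{{\rm d}} \circ gr_{{\rm d}} = \id_{K_1^M(A)}$ and $\phi_{{\rm v}, \geq r} \circ gr_{{\rm v}, \geq r} = \id_{(1+I^r)^\times}$; any homomorphism admitting a left inverse has trivial kernel, so $gr_{{\rm d}}$ and $gr_{{\rm v}, \geq r}$ are both injective. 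For surjectivity, invoke Lemma \ref{lem:gr surj lv 1}, which produces, for each integral cycle $Z \in z^1_{{\rm d}}(X, 1)$ (resp. $z^1_{{\rm v}, \geq r}(X, 1)$), an explicit $1$-chain $W_f$ witnessing $\Gamma_{\phi(Z)} - Z = \partial W_f$ in the appropriate complex, so the class of $Z$ lies in the image of $gr$. Bijectivity of the homomorphism $gr$ follows, giving the asserted isomorphism.

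A minor bookkeeping observation: once bijectivity is in hand, applying $\phi$ to the relation $[Z] = gr(\phi(Z))$ coming from surjectivity and combining with $\phi \circ gr = \id$ shows automatically that $\phi$ is the two-sided inverse of $gr$. Thus the regulator map and the graph map are mutually inverse isomorphisms in both the absolute and relative settings.

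The substantive content sits in the preceding lemmas, not in this final assembly step. The main obstacles overcome earlier were: first, Lemma \ref{lem:phi partial 0 lv 1}, where the well-definedness $\phi_{{\rm d}}(\partial W) = 0$ was reduced, via normalization of the flat projective surface $\overline{W} \to X$, to the Suslin-Weil reciprocity law on the regular projective curve $Y_\eta$ over the fraction field $\mathbb{F}$; and second, Lemma \ref{lem:gr surj lv 1}, where the explicit polynomial deformation in equation \eqref{eqn:gr surj lv 1 0} had to be verified to satisfy $(GP)_*$, $(SF)_*$, and, in the relative case, the strict vanishing order $\geq r$ condition. Given these inputs, Theorem \ref{thm:main 0} reduces to a one-line formal synthesis of the two complementary lemmas.
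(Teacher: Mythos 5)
Your proposal is correct and matches the paper's own proof exactly: both cite Lemma~\ref{lem:gr surj lv 1} for surjectivity and Lemma~\ref{lem:gr inj lv 1} (via $\phi \circ gr = \id$) for injectivity, and the remarks on the two-sided inverse and the earlier lemmas are accurate but not needed for the theorem itself.
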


\begin{proof}
The surjectivity and the injectivity follow from Lemmas \ref{lem:gr surj lv 1} and \ref{lem:gr inj lv 1}, respectively.
\end{proof}

\begin{remk}
When $A= k[[t]]$, for $gr_{{\rm v}}= gr_{{\rm v}, \geq 1}$, we note that $(1+ I)^{\times}=\mathbb{W}(k)$ and it is a commutative ring, the ring of the big Witt vectors, where the summation operation of the ring is induced from the product of the formal power series, while the product operation of the ring is a somewhat exotic. One may wonder whether this isomorphism $gr_{{\rm v}}$ of abelian groups in Theorem \ref{thm:main 0} can be promoted to an isomorphism of \emph{rings}. This can be done, but we need to describe / define an appropriate ring structure on $\CH^1 _{{\rm v}} (X, 1)$ first. This is discussed in \S \ref{sec:ring Witt}, although strictly speaking it is not necessary for the purpose of this article.
\qed
\end{remk}

We can improve the isomorphisms of Theorem \ref{thm:main 0} for mod $I^{m+1}$:

\begin{thm}\label{thm:main 0 mod m}

Let $k$ be a field. Let $X= \Spec (A)$ be an integral regular henselian local $k$-scheme of dimension $1$ with the residue field $k= A/I$. 
Let $ m, r \geq 1$ be integers such that $1 \leq r \leq m+1$. Then the induced homomorphisms
$$
gr_{{\rm d}}: \widehat{K}_1 ^M (A / I ^{m+1}) \to \CH_{{\rm d}} ^1 (X/ (m+1), 1)
$$
$$
gr_{{\rm v}, \geq r} : ( 1 + I^r) ^{\times} / (1+ I^{m+1}) ^{\times} \to \CH_{{\rm v}, \geq r} ^1 (X/ (m+1), 1)
$$
are isomorphisms, where we recall $A/ I^{m+1} \simeq k_{m+1}$.
\end{thm}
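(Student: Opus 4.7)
The plan is to deduce both isomorphisms from Theorem \ref{thm:main 0} by showing that the graph map $gr_{{\rm d}}$ and its inverse $\phi_{{\rm d}}$ (resp.\ $gr_{{\rm v}, \geq r}$ and $\phi_{{\rm v}, \geq r}$) descend cleanly to the mod $I^{m+1}$ quotients. Recall that $\widehat{K}_1^M = K_1^M$ by Theorem \ref{thm:Khat_univ}(2)(a), so $\widehat{K}_1^M(A/I^{m+1}) = (A/I^{m+1})^{\times} = A^{\times}/(1+I^{m+1})^{\times}$; and by Lemmas \ref{lem:comparison mod} and \ref{lem:comparison mod v} we may use $\mathcal{N}$ in place of $\mathcal{M}$, since $n=1$ lies in the Milnor range.

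The first step is to show $\phi_{{\rm d}}$ descends. Let $(Z_1, Z_2)$ be a pair of naively mod $I^{m+1}$-equivalent integral cycles in $z^1_{{\rm d}}(X,1)$, defined by monic polynomials $f_j \in A[y_1]$ of degrees $d_j$ (Lemma \ref{lem:norm Witt}(1)). By Corollary \ref{cor:proper int t=0} each $\overline{Z}_j \to X$ is finite of degree $d_j$, and mod $I^{m+1}$-equivalence of the closed subschemes forces equality on the closed fiber; hence $d_1 = d_2 = d$. Two monic polynomials of the same degree in $(A/I^{m+1})[y_1]$ generating the same ideal must coincide, so $f_1 \equiv f_2 \pmod{I^{m+1}}$ as polynomials. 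In particular the constant terms agree modulo $I^{m+1}$, whence $\phi_{{\rm d}}(Z_1)\phi_{{\rm d}}(Z_2)^{-1} \in (1+I^{m+1})^{\times}$; this shows that $\phi_{{\rm d}}$ maps $\mathcal{N}^1(m+1)$ into $(1+I^{m+1})^{\times}$ and induces $\bar{\phi}_{{\rm d}}: \CH_{{\rm d}}^1(X/(m+1),1) \to (A/I^{m+1})^{\times}$.

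Next, for the descent of $gr_{{\rm d}}$, given $u \in (1+I^{m+1})^{\times}$ with $u \ne 1$ I would choose $d \in A^{\times}$ with $d \ne 1$ and $ud \ne 1$ (possible, since $A$ contains infinitely many units such as $\{1+t^j\}_{j \geq 1}$, while $ud = 1$ fixes at most one $d$). The auxiliary cycle $W_{u,d} \in z^1_{{\rm d}}(X,2)$ from the proof of Proposition \ref{prop:gr mult n infty} gives $[\Gamma_u] = [\Gamma_{ud}] - [\Gamma_d]$ in $\CH_{{\rm d}}^1(X,1)$. Since $ud \equiv d \pmod{I^{m+1}}$, the pair $(\Gamma_{ud}, \Gamma_d)$ is mod $I^{m+1}$-equivalent, so the difference lies in $\mathcal{N}^1(m+1)$ and $[\Gamma_u]$ vanishes in $\CH_{{\rm d}}^1(X/(m+1),1)$. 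This yields the descent of $gr_{{\rm d}}$, and the descended maps are inverse to each other on generators, exactly as in Lemma \ref{lem:gr inj lv 1}.

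The $({\rm v}, \geq r)$ case runs in parallel: the same polynomial-congruence argument gives the descent of $\phi_{{\rm v}, \geq r}$, and for the descent of $gr_{{\rm v}, \geq r}$ we instead pick $d \in (1+I^r)^{\times} \setminus \{1, u^{-1}\}$ (nonempty since $1+I^r$ is infinite). Then $ud, d \in (1+I^r)^{\times}$ places $\Gamma_{ud}$ and $\Gamma_d$ in $z^1_{{\rm v}, \geq r}(X,1)$, and a direct check shows $W_{u,d} \in z^1_{{\rm v}, \geq r}(X,2)$: its defining polynomial reduces modulo $I^r$ to $(y_1-1)(1-y_2)$, whose zero locus meets $\square^2$ only along the extended faces $\{y_i = 1\}$, giving empty intersection with $\square^2_X$ over $X_r$. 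No serious obstacle arises; the only substantive point is the degree-matching step needed for $\phi_{{\rm d}}$ to descend, after which the argument reduces to a clean comparison of kernels under the absolute isomorphism of Theorem \ref{thm:main 0}.
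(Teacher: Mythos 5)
Your proof is correct and in fact more thorough than the paper's own argument for this theorem. The paper's proof is very terse: it only checks that a graph cycle of the form $\{y_1 = 1+t^{m+1}f\}$ dies in the mod-$I^{m+1}$ cycle class group, which establishes the descent of $gr_{{\rm d}}$ (and hence surjectivity), but the harder direction — that nothing outside $(1+I^{m+1})^{\times}$ is killed, i.e.\ that $\phi_{{\rm d}}$ descends to the quotient — is left implicit. You supply exactly this via the degree-matching and polynomial-congruence argument: two naively mod-$I^{m+1}$-equivalent integral cycles are cut out by monic polynomials whose reductions in $(A/I^{m+1})[y_1]$ generate the same ideal and therefore coincide (the degrees match by faithful flatness, and two monic polynomials of equal degree generating the same ideal are equal by the division algorithm); the constant terms then agree mod $I^{m+1}$. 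This is essentially the $n=1$ specialization of what the paper establishes for general $n$ only in \S 5 (Lemma \ref{lem:4.4.4} and Proposition \ref{prop:strong graph}). Your route to the descent of $gr_{{\rm d}}$ — writing $[\Gamma_u] = [\Gamma_{ud}] - [\Gamma_d]$ modulo boundaries and noting $\Gamma_{ud}\sim_{I^{m+1}}\Gamma_d$ — is also the careful way to do it: the naive statement ``$\Gamma_u \sim_{I^{m+1}} \emptyset$'' is not literally available, since $\mathcal{N}^1(m+1)$ is generated by differences of pairs of \emph{integral} cycles and the closure $\overline{\Gamma}_u$ restricted to $X_{m+1}\times\overline{\square}$ is $\{y_1=1\}$, which is nonempty in $\overline{\square}$.

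One tiny notational slip: the defining polynomial of $W_{u,d}$ reduces modulo $I^r$ first to $(y_1 - d)(1-y_2)$, and only after invoking $d\equiv 1 \bmod I^r$ do you obtain $(y_1 - 1)(1-y_2)$; since you chose $d \in (1+I^r)^{\times}$ this does not affect the conclusion. Everything else checks out.
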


\begin{proof}
It is enough to show that the graph cycle of the form
$$
Z= \{ y_1 = 1 + t^{m+1}f \},
$$
where $t \in I$ is a uniformizer of the DVR $A$ and $f \in A$, vanishes in the cycle class groups mod $I^{m+1}$. Indeed,
$$
{Z} \times_X X_{m+1} \equiv \{ y_1 = 1 \} = \emptyset
$$
in $X_{m+1} \times {\square}^n$. Thus we have the result.
\end{proof}

\subsection{The ring structure}\label{sec:ring Witt}
In \S \ref{sec:ring Witt}, we treat the special case when $X= \Spec (k[[t]])$. For $ m \geq 0$, let $X_{m+1} = \Spec (k_{m+1})$. We give the description of the concrete ring structure on $\CH^1 _{{\rm v}} (X, 1)$. This is not needed for the rest of the article and the reader who wishes not to read the details may skip \S \ref{sec:ring Witt} entirely.

We briefly recall the definitions on the big Witt vectors $\mathbb{W}_m (R)$. One may use K. R\"ulling \cite{R} as a practical reference. 

\subsubsection{The ring of big Witt vectors}
When $R$ is a commutative ring with unity, let $\mathbb{W}(R):= R^{\mathbb{N}}$ as a set. Define the ghost map
$
gh: \mathbb{W}(R) \to R^{\mathbb{N}},
$ by sending
$
gh ( a_n) = (w_n)$, where $w_n = \sum_{d | n } d a_d ^{ \frac{n}{d}}.$

The target $R^{\mathbb{N}}$ of $gh$ is a commutative ring with unity, with the coordinate-wisely defined ring structure. Then there exists a unique functorial ring structure on $\mathbb{W} (R)$ such that the ghost map $gh$ becomes a functorial homomorphism for all commutative rings $R$ with unity.

An alternative description of the ring structure on $\mathbb{W}(R)$ is to identify $\mathbb{W} (R)$ with $ (1 + t R[[t]])^{\times}$, where the sum of the underlying abelian group of $\mathbb{W} (R)$ corresponds to the product of the formal power series, and the product structure on $\mathbb{W}(R)$ is given by some nontrivial expressions on the level of the power series. See Remark \ref{remk:star op} for its idea. We denote the product structure by $\star$.

This description depends on the following important property of the elements of $(1+ t R[[t]])^{\times}$ (see e.g. S. Bloch \cite[I-\S 1-1, p.192]{Bloch crys}):

\begin{prop}\label{prop:Witt elements}
Each element $x \in (1+ t R[[t]])^{\times}$ can be uniquely expressed as an infinite product of the form
\begin{equation}\label{eqn:Witt infinite}
 x = \prod_{i=1} ^{\infty} (1 - \alpha_i t^i),
 \end{equation}
where $\alpha_i \in R$.
\end{prop}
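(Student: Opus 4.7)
The plan is a direct inductive construction in the $(t)$-adic topology on $R[[t]]$. Given $x = 1 + c_1 t + c_2 t^2 + \cdots \in (1 + tR[[t]])^\times$, I will choose the coefficients $\alpha_i \in R$ one at a time so that, after $N$ factors have been fixed, the discrepancy between $x$ and the partial product lies in $t^{N+1} R[[t]]$. Each factor $1 - \alpha t^i$ belongs to $1 + tR[[t]] \subset R[[t]]^\times$, so inverses and quotients of such expressions pose no difficulty.

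For the inductive step, suppose $\alpha_1, \dots, \alpha_N$ have been chosen with
$$x \equiv \prod_{i=1}^N (1 - \alpha_i t^i) \pmod{t^{N+1}}.$$
Form $r_N := x \cdot \prod_{i=1}^N (1 - \alpha_i t^i)^{-1}$; by the inductive hypothesis $r_N \in 1 + t^{N+1} R[[t]]$, so it expands uniquely as $r_N = 1 + e_{N+1} t^{N+1} + (\text{higher order})$ for a unique $e_{N+1} \in R$. Setting $\alpha_{N+1} := -e_{N+1}$ gives $r_N \cdot (1 - \alpha_{N+1} t^{N+1})^{-1} \in 1 + t^{N+2} R[[t]]$, which closes the induction; the base case $N=0$ is trivial.

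It remains to verify that the formal infinite product $\prod_{i=1}^\infty (1 - \alpha_i t^i)$ is well-defined and equals $x$, and to verify uniqueness. For the first point, the partial products $P_N := \prod_{i=1}^N (1 - \alpha_i t^i)$ satisfy $P_{N+1}/P_N - 1 \in t^{N+1} R[[t]]$, hence $(P_N)$ is Cauchy in the $(t)$-adic topology; by $(t)$-adic completeness of $R[[t]]$ it has a limit $P_\infty \in 1 + tR[[t]]$. The inductive construction gives $x \equiv P_N \pmod{t^{N+1}}$ for every $N$, whence $x = P_\infty$. For uniqueness, if $\prod_i (1 - \alpha_i t^i) = \prod_i (1 - \beta_i t^i)$, then comparing the coefficient of $t$ yields $\alpha_1 = \beta_1$; inductively, assuming $\alpha_i = \beta_i$ for $i < N$, divide both sides by $\prod_{i<N}(1 - \alpha_i t^i)$ and read off the coefficient of $t^N$ to force $\alpha_N = \beta_N$. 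There is no substantive obstacle here; this is a standard first-principles fact in the theory of the big Witt ring, and the only point deserving explicit care is the Cauchy/convergence argument for the infinite product, which I have isolated above.
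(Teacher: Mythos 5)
Your proof is correct and complete. Note that the paper itself does not give a proof of this proposition: it simply cites S.~Bloch \cite[I-\S 1-1, p.192]{Bloch crys}, with the cosmetic remark that it uses $(1-\alpha_i t^i)$ in place of Bloch's $(1-\alpha_i t^i)^{-1}$. Your argument --- inductively choosing $\alpha_{N+1}$ to kill the $t^{N+1}$-coefficient of $x\cdot\prod_{i\le N}(1-\alpha_i t^i)^{-1}$, invoking $(t)$-adic completeness of $R[[t]]$ for convergence of the partial products, and reading off uniqueness by inductive coefficient comparison --- is the standard first-principles proof, and each step checks out: the inverses exist since each factor lies in $1+tR[[t]]\subset R[[t]]^\times$, the discrepancy $x\equiv P_N\pmod{t^{N+1}}$ is preserved in the limit, and the uniqueness step is sound because after dividing by $\prod_{i<N}(1-\alpha_i t^i)$ only the $i=N$ factor contributes to the coefficient of $t^N$.
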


A minor difference between Proposition \ref{prop:Witt elements} and \emph{loc.cit.} is that we don't put the power $(-1)$, namely, we use $(1- \alpha_i t^i)$ instead of $(1-\alpha_i t^i)^{-1}$ of \emph{ibid.}

\begin{remk}\label{remk:star op} 
Under the Proposition \ref{prop:Witt elements}, the product structure $\star$ on $\mathbb{W} (R)$ which is commutative, associative, and distributive over the summation is determined uniquely by requiring
\begin{equation}\label{eqn:star detail}
(1- a t^m) \star (1- b t^n) = (1- a^{ \frac{n}{r} } b ^{ \frac{m}{r}} t^{\frac{mn}{r}} )^r,
\end{equation}
for all $m, n \geq 1$ and $a, b \in R$, where $r= {\rm gcd} (m,n)$. See S. Bloch \cite[Proposition (I.1), p.192]{Bloch crys}. 
\qed
\end{remk}

In general, finding the sequence of elements $\alpha_1, \alpha_2, \cdots$ in \eqref{eqn:Witt infinite} can be done by elementary but tedious calculations. Furthermore, even if $x$ has only finitely many terms, a polynomial in $t$, under this factorization in \eqref{eqn:Witt infinite} we could possibly have infinitely many non-vanishing entries $\alpha_i$.

The additive group $\mathbb{W} (R)$ has a decreasing filtration of subgroups 
$$
 \mathbb{W} (R) \supset U^1 \supset U^2 \supset U^3 \supset \cdots,
 $$
where in terms of the identification $\mathbb{W}(R)= (1 + tR[[t]])^{\times}$, we have $U^m= (1 + t^{m+1} R[[t]])^{\times}$ with $U^0 = \mathbb{W}(R)$. The quotient is written as
\begin{equation}\label{eqn:U m}
\mathbb{W}_m (R) := \mathbb{W} (R) / U^m.
\end{equation}

\begin{lem}
The subgroup $U^m \subset \mathbb{W} (R)$ is an ideal. In particular, there exists a unique ring structure on $\mathbb{W} _m (R)$ such that the canonical surjection $\mathbb{W} (R) \to \mathbb{W}_m (R)$ is a ring homomorphism.
\end{lem}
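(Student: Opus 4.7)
The plan is to work via the identification $\mathbb{W}(R) \simeq (1 + tR[[t]])^{\times}$ recalled above Proposition \ref{prop:Witt elements}, under which the subgroup $U^m$ corresponds to $(1 + t^{m+1}R[[t]])^{\times}$. Since $t^{m+1}R[[t]]$ is an ideal of $R[[t]]$, the subset $U^m$ is immediately closed under both the additive operation (which is the multiplication of formal power series) and additive inverses. It therefore remains only to verify that $U^m$ is stable under $\star$-multiplication by arbitrary elements of $\mathbb{W}(R)$.

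The first step is to characterize $U^m$ in terms of the canonical factorization \eqref{eqn:Witt infinite}: an element $x = \prod_{i \geq 1}(1 - \alpha_i t^i)$ lies in $U^m$ if and only if $\alpha_1 = \cdots = \alpha_m = 0$. This is a simple induction: expanding $\prod_{i \geq 1}(1 - \alpha_i t^i)$, the coefficient of $t^k$ for $1 \leq k \leq m$ equals $-\alpha_k$ plus a polynomial expression in $\alpha_1, \ldots, \alpha_{k-1}$ that vanishes whenever those variables are zero, so the requirement $x \equiv 1 \pmod{t^{m+1}}$ forces $\alpha_k = 0$ successively for $k = 1, \ldots, m$.

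The second step invokes the distributivity of $\star$ over the additive operation together with the formula \eqref{eqn:star detail}. For $x = \prod_{i > m}(1 - \alpha_i t^i) \in U^m$ and $y = \prod_{j \geq 1}(1 - \beta_j t^j) \in \mathbb{W}(R)$, distributing and applying \eqref{eqn:star detail} yields
\begin{equation*}
x \star y \;=\; \prod_{\substack{i > m \\ j \geq 1}} \bigl(1 - \alpha_i^{j/r_{ij}} \beta_j^{i/r_{ij}}\, t^{\operatorname{lcm}(i,j)}\bigr)^{r_{ij}}, \qquad r_{ij} := \gcd(i,j),
\end{equation*}
where the outer product is taken in the additive sense (formal power series multiplication). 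Since $\operatorname{lcm}(i,j) \geq i \geq m+1$, each factor of this product lies in $U^m$; because $U^m$ is an additive subgroup, the whole product does as well. Commutativity of $\star$, recalled in Remark \ref{remk:star op}, then upgrades this to the two-sided ideal property. The existence and uniqueness of a ring structure on $\mathbb{W}_m(R) = \mathbb{W}(R)/U^m$ making the canonical surjection a ring homomorphism is then the standard quotient construction for two-sided ideals.

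The main potential obstacle is the rigorous passage from distributivity on finite sums to the infinite expressions involved in \eqref{eqn:Witt infinite}. This is controlled by the $t$-adic topology: for any fixed integer $k$, only finitely many of the terms $(1 - \alpha_i t^i) \star (1 - \beta_j t^j)$ contribute a nonzero coefficient of degree $\leq k$, since each such term vanishes to order $\operatorname{lcm}(i,j)$ which tends to infinity in $(i,j)$. Hence the convergence and the legality of rearrangements are automatic, and the computation above is rigorous.
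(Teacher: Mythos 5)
Your proposal is correct and follows essentially the same route as the paper's own (sketchy) proof: reduce via distributivity of $\star$ and the canonical factorization of Proposition \ref{prop:Witt elements} to products of Teichm\"uller-type elements $(1-\alpha_i t^i)\star(1-\beta_j t^j)$, then observe from \eqref{eqn:star detail} that the resulting term vanishes to order $\operatorname{lcm}(i,j) \geq i \geq m+1$. You supply two details the paper omits in its sketch — namely, the verification that membership in $U^m$ is equivalent to the vanishing of the first $m$ Witt coordinates $\alpha_1, \ldots, \alpha_m$, and the $t$-adic convergence argument justifying the infinitary use of distributivity — and both are correctly carried out.
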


\begin{proof}
We sketch the proof. 
In the light of Proposition \ref{prop:Witt elements} and the distributive law, it is enough to prove that for any elements of the form $1 - a t^n$ with $a \in R$, and $u \in U^m$, we have $(1- at^n) \star u \in U^m$.

Here, the element $u$ can also be written as $u = \prod_{i=m+1} ^{\infty} (1 - \alpha_i t^i), \ \ \ \mbox{ for some } \alpha_i \in R,$ so that it is enough to prove that $(1- a t^n) \star (1 - \alpha_i t^i) \in U^m$ for $i \geq m+1$. This is apparent because we have $(1- a t^n) \star (1 - \alpha_i t^i)  \in U^m,$ by \eqref{eqn:star detail} in Remark \ref{remk:star op}.
\end{proof}

\medskip

We saw that we have the group isomorphism
$$ 
gr_{{\rm v}}: \mathbb{W} (k) = (1 + t k[[t]])^{\times} \to \CH^1 _{{\rm v}} (X, 1).
$$
Since $\mathbb{W} (k)$ is a ring, we can induce the ring structure on $\CH^1 _{{\rm v}} (X, 1)$ via the isomorphism. Here, we want to have a closer look at the product operation.

\subsubsection{Modulus filtration and modulus topology}

\begin{defn}\label{defn:mod m}
For $m \geq 0$, let
$$
M^m := z^1_{{\rm v}, \geq m+1} ( X, 1) = \{ Z \in z^1 _{{\rm v}} (X, 1) \ | \ Z \mbox{ : integral, } Z \times_{X} X_{m+1} = \emptyset \}.
$$
We have a decreasing filtration
$$
\cdots \subset M^2 \subset M^1 \subset M^0 = z_{{\rm v}} ^1 (X, 1).
$$
\qed
\end{defn}

\begin{defn}
Let $G_{{\rm v}} ^1 (X, 1) \subset z^1_{{\rm v}} (X, 1)$ be the subset of the integral graph cycles, i.e the integral subschemes of the form $\{y_1 = a \}$ for some $a \in \mathbb{W} (k)$. Let $\Gamma_{{\rm v}} ^1 (X, 1) \subset z^1_{{\rm v}} (X, 1)$ be the subgroup generated by $G_{{\rm v}} ^1 (X, 1)$. 

From the filtration $M^{\bullet}$ on $z^1_{{\rm v}} (X, 1)$, we have the induced filtrations
$$ 
N^{\bullet} G^1_{{\rm v}} (X, 1) := M^{\bullet} \cap G^1 _{{\rm v}} (X, 1),
$$
$$
N ^{\bullet}\Gamma_{{\rm v}} ^1 (X, 1):= M^{\bullet} \cap \Gamma_{{\rm v}} ^1 (X, 1).
$$
The former is a set filtration, while the latter is a group filtration. %When it is apparent from the context, we may write $N^{\bullet}$ without specifying on which set we consider the filtration. 

Note that $N^m \Gamma_{{\rm v}} ^1 (X, 1)$ is the subgroup of $z^1 _{{\rm v}} (X, 1)$ generated by the integral cycles of the form $\{ y_1 = a \}$, where $a \in U^m$ in terms of the subgroup $U^m$ in \eqref{eqn:U m}.
%The group $N^m$ is the subgroup of $z^1_v (X, 1)$ generated by the integral cycles of the form $\{ y_1 = a\}$, where $a \in U^m$. 
\qed
\end{defn}

We have a few basic results:

\begin{lem}\label{lem:star M N}
Let $Z \in z^1_{{\rm v}} (X, 1)$ be an integral cycle and let ${\rm Nm} (Z)$ be its norm cycle (see Definition \ref{defn:norm cycle}). 

For an integer $m \geq 1$, if $Z \in M^m$, then ${\rm Nm} (Z) \in N^m$.
\end{lem}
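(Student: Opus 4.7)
The plan is that this lemma should follow directly by unpacking the definitions and invoking Lemma \ref{lem:norm Witt}-(4), which has already done all the essential work.

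First I would unpack what it means for $Z$ to lie in $M^m$. By Definition \ref{defn:mod m}, this is exactly $z^1_{{\rm v}, \geq m+1}(X,1)$, i.e., the fibre product $Z \times_X X_{m+1}$ is empty, so $Z$ is a strict vanishing cycle of order $\geq m+1$. Since $X = \Spec(k[[t]])$ with maximal ideal $I = (t)$, the condition $(1+I^{m+1})^\times$ coincides with $U^m = (1 + t^{m+1}k[[t]])^\times$.

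Next, writing the norm cycle as ${\rm Nm}(Z) = \Gamma_{a_0}$ where $a_0 = {\rm Nm}(\bar{y}_1)$ for the monic defining polynomial $f$ of $\overline{Z}$ (Definition \ref{defn:norm cycle}), Lemma \ref{lem:norm Witt}-(4) applied with $r = m+1$ yields immediately that $a_0 \in (1+I^{m+1})^\times = U^m$. By the very definition of $N^m\Gamma_{{\rm v}}^1(X,1)$ as the subgroup generated by graph cycles $\{y_1 = a\}$ with $a \in U^m$, this gives ${\rm Nm}(Z) = \Gamma_{a_0} \in N^m$, which finishes the argument.

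There is effectively no obstacle here: the entire content is already packaged inside Lemma \ref{lem:norm Witt}-(4), whose proof used Hensel's lemma to factor $f \mod I^r$ into linear factors $(y_1-\alpha_i)$ with $\alpha_i \equiv 1 \pmod{I^r}$, so that $a_0 \equiv \prod \alpha_i \equiv 1 \pmod{I^r}$. Hence the proof of Lemma \ref{lem:star M N} reduces to a single sentence identifying $M^m$ with $z^1_{{\rm v}, \geq m+1}(X,1)$ and citing that lemma with $r = m+1$.
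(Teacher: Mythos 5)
Your proof is correct, and it takes a genuinely different route than the paper. You correctly unpack $M^m = z^1_{{\rm v},\geq m+1}(X,1)$ from Definition~\ref{defn:mod m}, correctly observe that $X=\Spec(k[[t]])$ satisfies the hypotheses of Lemma~\ref{lem:norm Witt} throughout \S\ref{sec:ring Witt}, and then apply Lemma~\ref{lem:norm Witt}-(4) with $r=m+1$ to conclude $a_0\in(1+I^{m+1})^{\times}=U^m$, whence $\Gamma_{a_0}\in N^m$. The paper, by contrast, gives a self-contained argument from scratch: the emptiness of $Z\times_X X_{m+1}$ is translated into a Bezout-type relation $r\,t^{m+1}+s f=1$ in $k[[t]][y_1]$, which is then evaluated at $y_1=0$ and at $t=0$ to produce directly $a_0=s'(1-r(t,0)t^{m+1})\in U^m$. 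The two arguments use different techniques --- you reuse the Hensel's-lemma factorization packaged inside Lemma~\ref{lem:norm Witt}-(4), whereas the paper's proof here uses a B\'ezout identity --- and in effect you have spotted a redundancy: the paper reproves (in \S\ref{sec:ring Witt}) a special case of a fact it already established in Lemma~\ref{lem:norm Witt}-(4). Your version is shorter and more modular; the paper's version makes \S\ref{sec:ring Witt} more self-contained and gives an explicit witness for membership in $U^m$. Either is acceptable; citing Lemma~\ref{lem:norm Witt}-(4) as you do is arguably cleaner given that the lemma is already in place.
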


\begin{proof}
Since $Z\in M^m$, we have 
$$
 Z \times _{X} X_{m+1} = \emptyset.
 $$
This means that inside the ring $k[[t]][y_1]$, the ideal generated by $t^{m+1}$ and $f$ satisfies $(t^{m+1}, f) = (1)$. In particular, there exist some $r= r( t, y_1), s = s (y, y_1) \in k[[t]] [ y_1]$ such that
\begin{equation}\label{eqn:star M N}
 r t^{m+1} + s f = 1.
 \end{equation}

Recall that we had $f (0) = (-1)^d a_0$. Thus evaluating \eqref{eqn:star M N} at $y_1=0$, we get 
\begin{equation}\label{eqn:star M N 1}
r( t, 0) t^{m+1} +s ( t, 0) (-1)^d a_0 = 1.
\end{equation}

Evaluating \eqref{eqn:star M N 1} at $t=0$, we deduce that $s (0, 0) (-1)^d a_0 = 1$. Hence putting it back to \eqref{eqn:star M N 1}, we deduce that
$$ 
s( t, 0) ^{-1} = (-1)^d s'
$$
for some $s' \in \mathbb{W}(k)$. Here from \eqref{eqn:star M N 1}, we have 
$$
 f (0) = (-1)^d a_0 = s (t, 0)^{-1} ( 1 - r (t, 0) t^{m+1}) = (-1)^d s' ( 1 - r (t,0) t^{m+1}),
 $$ 
so that
$$
 a_0 = s' (1 - r (t,0) t^{m+1}) \in ( 1 + t^{m+1} k[[t]])^{\times} = U^m.
 $$

This proves that ${\rm Nm} (Z) \in N^m$.
\end{proof}

\begin{lem}\label{lem:separable}
We have
$$
 \bigcap _{m \geq 0} N^m G^1 _{{\rm v}} (X, 1) = \emptyset.
 $$
\end{lem}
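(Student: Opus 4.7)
The plan is to unwind the definitions of $N^m$ and $G_{\rm v}^1(X,1)$, translate membership into a condition on the defining parameter $a \in \mathbb{W}(k)$ of a graph cycle $\{y_1 = a\}$, and observe that the only element of $\mathbb{W}(k)$ satisfying this condition for every $m$ is $a=1$, which does not define a cycle in $\square_X^1$ at all.

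More concretely, suppose $Z \in \bigcap_{m \geq 0} N^m G_{\rm v}^1(X,1)$. Then $Z = \{y_1 = a\}$ for some $a \in \mathbb{W}(k) = (1 + t k[[t]])^{\times}$, and for every $m \geq 0$ we have $Z \times_X X_{m+1} = \emptyset$ inside $X_{m+1} \times \square^1$. The first step is to note that $Z \times_X X_{m+1}$ is defined by the single equation $y_1 = a \bmod t^{m+1}$ in $X_{m+1} \times \square^1 = X_{m+1} \times (\mathbb{P}^1 \setminus \{1\})$, and this closed subscheme is empty if and only if $a \bmod t^{m+1} = 1$ in $k_{m+1}$, that is, $a \in 1 + t^{m+1} k[[t]] = U^m$.

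Therefore the hypothesis $Z \in \bigcap_{m \geq 0} N^m$ forces $a \in \bigcap_{m \geq 0} U^m$. Since $\bigcap_{m \geq 0} (1 + t^{m+1} k[[t]]) = \{1\}$ by the $t$-adic separatedness of $k[[t]]$, we conclude $a = 1$. However, $\{y_1 = 1\}$ is not a subscheme of $\square_X^1$ at all, because $1 \notin \square = \mathbb{P}^1 \setminus \{1\}$; equivalently, $a=1$ does not correspond to any integral graph cycle in $G_{\rm v}^1(X,1)$ by the very definition of this set. This contradiction shows that no such $Z$ exists, and so $\bigcap_{m \geq 0} N^m G_{\rm v}^1(X,1) = \emptyset$.

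There is really no serious obstacle here: the only point that requires a moment's thought is the elementary but essential observation in the first step that the mod $t^{m+1}$ reduction of a graph cycle $\{y_1 = a\}$ is empty precisely when $a \equiv 1$ modulo $t^{m+1}$, and this is where the pre-vanishing convention (i.e. removing $1$ from $\mathbb{P}^1$ to get $\square$) is used decisively. The rest is the standard $t$-adic separatedness of $k[[t]]$ together with the integrality requirement built into the definition of $G_{\rm v}^1(X,1)$.
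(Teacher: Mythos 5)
Your proof is correct and takes essentially the same approach as the paper: translate membership in $N^m G_{\rm v}^1(X,1)$ to the condition $a \in U^m = 1 + t^{m+1}k[[t]]$, observe that $\bigcap_m U^m = \{1\}$ by $t$-adic separatedness, and note that $a=1$ gives the empty scheme since $1 \notin \square$. The only minor difference is that you unpack explicitly the equivalence ``$Z \in N^m \Leftrightarrow a \equiv 1 \bmod t^{m+1}$'' (which hinges on the precise scheme-theoretic reading of the vanishing-order condition in Definition~\ref{defn:higher v-cycle}), whereas the paper treats that translation as already available from Lemma~\ref{lem:star M N}; both routes lead to the same contradiction.
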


\begin{proof}
We show that there is no integral nonempty cycle $Z \in \Gamma^1 _{{\rm v}} (X, 1)$ such that $Z \in N^m$ for all $m \geq 0$.

Indeed, suppose $Z$ is integral but nonempty in $\Gamma^1 _{{\rm v}} (X, 1)$. Let $f:= y_1 - a_0 \in k[[t]][y_1]$ be the equation as in Lemma \ref{lem:norm Witt} that defines $Z$. Here, $a_0 \in U^m$ for all $m \geq 0$ implies that in fact $a_0 = 1 $ in $\mathbb{W} (k)$. However, this shows that $Z$ is defined by $y_1 = 1$, which gives an empty scheme in $\square_X ^1$ because $\square= \mathbb{P}^1 \setminus \{ 1 \}$, a contradiction. This proves the lemma.
\end{proof}

\begin{defn}
We will call the decreasing filtrations $N^{\bullet}$ on $G_{{\rm v}} ^1 (X, 1)$ and $\Gamma^1_{{\rm v}} (X, 1)$, the \emph{modulus filtrations}.

Since we have a natural bijection between $G_{{\rm v}} ^1 (X, 1)$ and $\mathbb{W} (k)$ as a set, this filtration defines a Hausdorff $t$-adic metric topology on $G_{{\rm v}} ^1 (X, 1)$. We call it the \emph{modulus topology} on $G_{{\rm v}}^1 (X, 1)$.  
\qed
\end{defn}

We summarize this as the following:

\begin{prop}
The modulus topology on $G_{{\rm v}} ^1 (X, 1)$ is induced by the $t$-adic metric topology, and this topological space is complete, i.e. all Cauchy sequences converge.
\end{prop}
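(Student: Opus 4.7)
The plan is to transport both assertions across the natural set bijection
\[
G_{\rm v}^1(X,1) \longleftrightarrow \mathbb{W}(k) = 1 + tk[[t]], \qquad \{y_1 = a\} \longleftrightarrow a,
\]
which comes from Lemma~\ref{lem:norm Witt} and Definition~\ref{defn:cycle Witt}. The first step will be to show that under this bijection, the modulus filtration $N^\bullet$ matches the filtration $U^\bullet$: an integral graph cycle $\{y_1=a\}$ lies in $N^m = M^m \cap G_{\rm v}^1(X,1)$ precisely when the base change $\{y_1=a\} \times_X X_{m+1}$ is empty, and since that base change is cut out in $\square_{X_{m+1}}^1$ by $y_1 = \bar a$ with $\bar a \in k_{m+1}^\times$ the reduction of $a$ modulo $t^{m+1}$, emptiness is equivalent to $\bar a = 1$, i.e.\ to $a \in 1 + t^{m+1} k[[t]] = U^m$.

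Next I would observe that, since $\mathbb{W}(k)$ is a coset of the additive subgroup $tk[[t]] \subset k[[t]]$, translation by $a_0 - 1$ sends the basic neighborhoods of $1$ to basic neighborhoods of an arbitrary base-point $a_0 \in \mathbb{W}(k)$. Consequently the filtration-induced topology on $G_{\rm v}^1(X,1)$ coincides with the restriction of the $t$-adic metric topology of $k[[t]]$ to $\mathbb{W}(k)$. The Hausdorff condition is then exactly the content of Lemma~\ref{lem:separable}: the statement $\bigcap_m N^m G_{\rm v}^1(X,1) = \emptyset$ corresponds on the Witt side to $\bigcap_m U^m = \{1\}$, the standard separation for the $t$-adic topology on $\mathbb{W}(k)$.

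For completeness the strategy is to view $\mathbb{W}(k) = 1 + t k[[t]]$ as a closed subspace of the $t$-adically complete ring $k[[t]]$, since it is the fibre over $1$ of the continuous residue map $k[[t]] \to k$. A Cauchy sequence of cycles $(Z_i) \subset G_{\rm v}^1(X,1)$ corresponds to a Cauchy sequence $(a_i) \subset \mathbb{W}(k)$ in the $t$-adic metric; by completeness of $k[[t]]$ this sequence converges to some $a \in k[[t]]$, and closedness of $\mathbb{W}(k)$ forces $a \in \mathbb{W}(k)$. The graph cycle $\{y_1 = a\}$ then serves as the limit of $(Z_i)$ in the modulus topology.

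The only genuinely subtle point I expect to have to address is the boundary case in which $a_i \to 1$, because $\{y_1 = 1\}$ is the empty subscheme of $\square^1$ and so is not literally an integral cycle. This is consistent with (and indeed necessitates) the convention, already implicit in the identification $G_{\rm v}^1(X,1) \leftrightarrow \mathbb{W}(k)$, that $1 \in \mathbb{W}(k)$ corresponds to the empty graph cycle; alternatively one can restrict attention to Cauchy sequences in $G_{\rm v}^1(X,1)$ that do not converge to $1$, which is harmless for all later applications. This minor ambiguity is the only part of the argument requiring care — everything else reduces to $t$-adic completeness of $k[[t]]$ and the elementary filtration calculation of the first step.
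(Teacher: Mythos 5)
Your proposal takes essentially the same route as the paper: transport the modulus filtration across the bijection $G_{\rm v}^1(X,1) \leftrightarrow \mathbb{W}(k)$, check that $N^m$ matches $U^m$, and deduce completeness from the $t$-adic completeness of $\mathbb{W}(k)$. The paper's proof is considerably terser — it simply asserts the homeomorphism with $\mathbb{W}(k)$ and observes that all power series converge in $\mathbb{W}(k)$ by definition — but the content is the same. Your final paragraph flags a genuine, if minor, imprecision that the paper glosses over: since $\{y_1 = 1\}$ is the empty subscheme, the set of \emph{integral} graph cycles is literally in bijection with $\mathbb{W}(k)\setminus\{1\}$, and the bijection with all of $\mathbb{W}(k)$ (needed for completeness) requires the convention that the Witt vector $1$ corresponds to the empty/zero cycle. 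This is a fair and correct observation; the paper silently adopts exactly that convention.
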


\begin{proof}
The only additional remark we need to make is that $G_{{\rm v}} ^1 (X, 1)$ is homeomorphic to $\mathbb{W}(k)$ as topological spaces with the $t$-adic metric, and the latter is complete because all power series in $t$ converges in the set $\mathbb{W} (k)$ by definition.
\end{proof}

\begin{prop}
The filtrations $N^{\bullet}$ on $\Gamma_{{\rm v}} ^1 (X, 1)$ and $M^{\bullet}$ on $z^1 _{{\rm v}} (X, 1)$ induce the same filtrations on $\CH^1 _{{\rm v}}(X, 1)$ via the natural homomorphisms for $m \geq 1$
$$
\xymatrix{ M^{m}  \ar@{^{(}->}[r] &  z^1 _{{\rm v}} (X,1)  \ar@{>>}[r]   & \CH^1 _{{\rm v}} (X, 1) \\
N^{m}  \ar@{^{(}->}[r]  \ar@{^{(}->}[u] & \Gamma ^1 _{{\rm v}} (X, 1) \ar@{^{(}->}[u] \ar@{>>}[ru] &  }
$$
\end{prop}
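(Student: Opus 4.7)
The plan is to show the two inclusions of images in $\CH^1_{{\rm v}}(X,1)$. Since $N^m \subset M^m$ by construction (as $N^m\Gamma^1_{{\rm v}}(X,1) = M^m \cap \Gamma^1_{{\rm v}}(X,1)$), the inclusion of images in $\CH^1_{{\rm v}}(X,1)$ in one direction is automatic, so the only content is the reverse containment.

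For the nontrivial containment, I would start with an arbitrary integral cycle $Z \in M^m = z^1_{{\rm v}, \geq m+1}(X,1)$, and take its associated norm cycle $\Gamma_{{\rm Nm}(\bar{y}_1)} \in \Gamma^1_{{\rm v}}(X,1)$ in the sense of Definition \ref{defn:norm cycle}. The two key inputs are then:
\begin{itemize}
\item Lemma \ref{lem:gr surj lv 1}-(2), which produces a cycle $W \in z^1_{{\rm v}, \geq m+1}(X,2)$ with $\partial W = \Gamma_{{\rm Nm}(\bar{y}_1)} - Z$, so that $Z$ and $\Gamma_{{\rm Nm}(\bar{y}_1)}$ have the same class in $\CH^1_{{\rm v}}(X,1)$;
\item Lemma \ref{lem:star M N}, which asserts that $Z \in M^m$ implies $\Gamma_{{\rm Nm}(\bar{y}_1)} \in N^m \Gamma^1_{{\rm v}}(X,1)$.
\end{itemize}
Combining these, the class $[Z] \in \CH^1_{{\rm v}}(X,1)$ coincides with $[\Gamma_{{\rm Nm}(\bar{y}_1)}]$, which by definition lies in the image of $N^m$. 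Extending by $\mathbb{Z}$-linearity to arbitrary elements of $M^m$ then yields the desired containment of images.

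The step where care is needed is verifying that the bounding chain $W$ provided by Lemma \ref{lem:gr surj lv 1}-(2) genuinely lies in the complex $z^1_{{\rm v}, \geq m+1}(X,\bullet)$ and not merely in $z^1_{{\rm d}}(X,\bullet)$; otherwise the equivalence would only be valid in the larger group $\CH^1_{{\rm d}}(X,1)$. This is precisely what part (2) of Lemma \ref{lem:gr surj lv 1} guarantees via the explicit construction of $W_f$ from \eqref{eqn:gr surj lv 1 0}, so there is no additional work here; the remainder of the argument is formal.
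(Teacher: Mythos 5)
Your proposal is correct and follows essentially the same route as the paper's own proof: reduce to integral $Z \in M^m$, pass to its norm cycle $\Gamma_{{\rm Nm}(\bar y_1)}$ via Lemma \ref{lem:gr surj lv 1} (with the bounding chain $W$ landing in $z^1_{\rm v}(X,2)$, as needed for the identity to hold in $\CH^1_{\rm v}(X,1)$), and then invoke Lemma \ref{lem:star M N} to place that norm cycle in $N^m$. The only cosmetic difference is that you demand $W \in z^1_{{\rm v},\geq m+1}(X,2)$, which is stronger than the $W \in z^1_{\rm v}(X,2) = z^1_{{\rm v},\geq 1}(X,2)$ actually required, but this is harmless since Lemma \ref{lem:gr surj lv 1}-(2) supplies it.
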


\begin{proof}
The map $z^1_{{\rm v}} (X, 1) \to \CH^1 _{{\rm v}} (X, 1)$ is surjective by definition, while $\Gamma^1 _{{\rm v}} (X, 1) \to \CH^1 _{{\rm v}} (X, 1)$ is surjective by Lemma \ref{lem:gr surj lv 1}. To show that the images of $M^{m}$ and $N^{m}$ in $\CH^1 _{{\rm v}} (X, 1)$ coincide, it is enough to show that for each integral cycle $Z \in M^m$, we have
\begin{equation}\label{eqn:equiv to graph m}
Z  =  \Gamma + \partial W
\end{equation}
for some cycle $\Gamma \in N^m$ and $W \in z^1 _{{\rm v}} (X, 2)$. 

In fact, we saw in Lemma \ref{lem:norm Witt} that there exists a unique monic polynomial $ f \in k[[t]] [y_1] $ in $y_1$ that defines the ideal of $\overline{Z}$. Expressing $f$ as $f= y_1 ^d - a_{d-1} y_1 ^{d-1} + \cdots + (-1)^d a_0$, with $a_0 \in k[[t]]^{\times}$ as in \eqref{eqn:f_1 finite}, we saw $a_0 \in \mathbb{W} (k)$. Here $a_0$ is uniquely determined by $Z$, and $Z$ is equivalent $\Gamma_{a_0}$ in $\CH^1 _{{\rm v}} (X, 1)$, where $\Gamma_{a_0}$ is the graph cycle given by the equation $\{ y_1 = a_0 \}$ by Lemma \ref{lem:gr surj lv 1}. Furthermore, that $Z \in M^m$ implies that precisely that $a_0 \in U^m$, thus $\Gamma_{a_0} \in N^m$ by Lemma \ref{lem:star M N}. Hence taking $\Gamma := \Gamma_{a_0}$, we have \eqref{eqn:equiv to graph m}, as desired.
\end{proof}

\begin{defn}
For each $m \geq 0$, we let $\overline{M}^m \subset \CH^1 _{{\rm v}} (X, 1)$ be the common image of $M^m$ and $N^m$ in the Chow group of strict vanishing cycles. This $\overline{M}^m$ is a decreasing filtration, and by Lemma \ref{lem:separable}, it also satisfies the property that 
$$
\bigcap_{m \geq 0} \overline{M}^m = 0
$$
in $\CH^1 _{{\rm v}} (X, 1)$.

The topology defined by $\overline{M}^{\bullet}$ on $\CH^1 _{{\rm v}} (X, 1)$ is a Hausdorff metric topology given by the $t$-adic metric, and it is complete because the natural set map $G_{{\rm v}} ^1 (X, 1) \to \CH^1 _{{\rm v}} (X, 1)$ induces a homeomorphism.
\qed
\end{defn}

\subsubsection{The Witt product structure}

Let $ Z \in z^1 _{{\rm v}} (X, 1)$ be an integral cycle. We saw in Lemma \ref{lem:norm Witt} that for some $a_0 \in \mathbb{W}(k) = (1+ tk[[t]])^{\times}$ uniquely determined by $Z$, we have $Z\equiv \Gamma_{a_0}$ in  $\CH^1 _{{\rm v}} (X, 1)$. 

For each $a_0 \in \mathbb{W}(k)$, by Proposition \ref{prop:Witt elements}, there exist a unique sequence of elements $\alpha_i \in k$ for $i \geq 1$ such that
$$
a_0 = \prod_{i=1} ^{\infty} (1 - \alpha_i t^i).
$$

Hence for each $m \geq 1$, we have $$a_0 \equiv \prod_{i=1} ^m (1 - \alpha_i t^i) \mod U^m.$$ In terms of their corresponding graph cycles, by the additivity of $\CH^1 _{{\rm v}} (X, 1)$, we have 
$$
 \Gamma_{a_0} = \sum_{i=1} ^m \Gamma_{(1- \alpha_i t^i)} \mod \overline{M} ^m
 $$
so that taking $m \to \infty$, by the completeness of $\CH^1_{{\rm v}} (X, 1)$ in the topology, we have the equality
\begin{equation}\label{eqn:Z decomposition}
 Z \equiv \Gamma_{a_0} \equiv \sum_{i=1} ^{\infty} \Gamma_{ (1- \alpha_i t^i)}, \ \ \ \mbox{ in } \CH^1 _{{\rm v}} (X, 1),
 \end{equation}
Using this idea, we can perform the ring operation of two cycle classes in $\CH^1_{{\rm v}} (X, 1)$ represented by two integral cycles in $z^1_{{\rm v}} (X, 1)$.

\medskip

Indeed, if $Z_1, Z_2 \in z^1 _{{\rm v}} (X, 1)$ are two integral cycles, express
$$ 
Z_1 \equiv \sum_{i=1} ^{\infty} \Gamma_{(1 - \alpha_i t^i)}, \ \ \ \ Z_2 \equiv \sum_{j=1} ^{\infty} \Gamma_{ ( 1- \beta_j t^j)},
$$
for some uniquely determined sequences $\alpha_i, \beta_j \in k$. Then there exists a unique way to define the product $Z_1 \star Z_2$ in $\CH^1_{{\rm v}} (X, 1)$ that is distributive over the sum, and such that
$$
\Gamma_{(1 - \alpha_i t^i)} \star \Gamma_{ (1- \beta_j t^j)} = \Gamma_{ (1 - \alpha_i t^i) \star ( 1- \beta_j t^j)},
$$
where the second $\star$ in the above is the product structure on $\mathbb{W}(k)$. See Remark \ref{remk:star op}.

From the associativity and the commutativity of $\star$ on $\mathbb{W}(k)$, we deduce the corresponding properties for the $\star$ on $\CH^1 _{{\rm v}} (X, 1)$. On the other hand, one notes that $gr_{{\rm v}}$ maps $U^m$ to $\overline{M}^m$ for each $m \geq 0$. This shows that $gr_{{\rm v}}$ is a filtered ring homomorphism.

We summarize the discussions as:
\begin{thm}\label{thm:n=1 m}
Let $k$ be a field and let $X= \Spec (k[[t]])$. Then $(\CH^1 _{{\rm v}} (X, 1), +, \star)$ is a commutative ring and the graph homomorphism $gr_{{\rm v}}: \mathbb{W} (k) \to \CH^1 _{{\rm v}} (X, 1)$ is a ring isomorphism. Furthermore, for each integer $m \geq 1$, the induced graph homomorphism $gr_{{\rm v}}: \mathbb{W}_m (k) \to \CH^1_{{\rm v}} (X/(m+1), 1)$ is also a ring isomorphism. %, such that the collection of homomorphisms $gr_{{\rm v}, m}$ form a pro-system.
 \end{thm}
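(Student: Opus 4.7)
My plan is to combine the abelian group isomorphism already established in Theorem \ref{thm:main 0} with the construction of the $\star$-product outlined in the paragraphs preceding the statement. The essential remaining tasks are to check that $\star$ is well-defined on $\CH^1_{{\rm v}}(X,1)$, that the ring axioms transfer from $\mathbb{W}(k)$, and that the mod $(m+1)$-version follows from compatibility of the modulus filtration $\overline{M}^\bullet$ with the Witt filtration $U^\bullet$.

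First I would justify that the $\star$-operation on $\CH^1_{{\rm v}}(X,1)$ is well-defined. For any integral $Z \in z^1_{{\rm v}}(X,1)$, Lemma \ref{lem:norm Witt} associates a canonical norm $a_0 \in \mathbb{W}(k)$, and Lemma \ref{lem:gr surj lv 1} gives $Z \equiv \Gamma_{a_0}$ in $\CH^1_{{\rm v}}(X,1)$. By Proposition \ref{prop:Witt elements}, $a_0$ has a unique expansion $\prod_{i \geq 1}(1-\alpha_i t^i)$, and by Lemma \ref{lem:star M N} the partial sums $\sum_{i=1}^m \Gamma_{(1-\alpha_i t^i)}$ differ from $\Gamma_{a_0}$ by elements of $\overline{M}^m$. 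Since $\bigcap_m \overline{M}^m = 0$ (via Lemma \ref{lem:separable}) and $\CH^1_{{\rm v}}(X,1)$ is complete in the modulus topology, the expansion \eqref{eqn:Z decomposition} converges uniquely to the class of $Z$. Defining $\Gamma_{(1-\alpha t^i)} \star \Gamma_{(1-\beta t^j)} := \Gamma_{(1-\alpha t^i)\star(1-\beta t^j)}$ via \eqref{eqn:star detail}, extending by bilinearity on finite sums, and passing to the completion yields a single well-defined pairing; independence from the choice of representative cycle $Z$ follows because the expansion is determined by the class in $\CH^1_{{\rm v}}(X,1)$.

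Next, the ring axioms (commutativity, associativity, distributivity, unit $\Gamma_{1-t}$) transfer from $\mathbb{W}(k)$ tautologically: they hold on the generators $\Gamma_{(1-\alpha_i t^i)}$ by the definition via \eqref{eqn:star detail} and the corresponding identities in $\mathbb{W}(k)$, and they propagate to the completion because both sides of each axiom are continuous in each variable in the modulus topology. By the way $\star$ was defined, $gr_{{\rm v}}$ sends each $(1-\alpha t^i) \in \mathbb{W}(k)$ to $\Gamma_{(1-\alpha t^i)}$ and respects the $\star$-product on these generators; its continuity with respect to the $t$-adic and modulus topologies (it is even a homeomorphism, since it induces a bijection of filtrations) then forces $gr_{{\rm v}}$ to be a ring homomorphism on all of $\mathbb{W}(k)$. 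Combined with Theorem \ref{thm:main 0}, it is a ring isomorphism.

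Finally, for the mod $(m+1)$ statement, Theorem \ref{thm:main 0 mod m} already supplies the additive isomorphism $\mathbb{W}_m(k) = \mathbb{W}(k)/U^m \simeq \CH^1_{{\rm v}}(X/(m+1),1)$, and under $gr_{{\rm v}}$ the subgroup $U^m$ maps bijectively onto $\overline{M}^m$. Since $U^m \subset \mathbb{W}(k)$ is an ideal, its image $\overline{M}^m$ is a $\star$-ideal of $\CH^1_{{\rm v}}(X,1)$, so the ring structure descends and the induced graph map is a ring isomorphism. The principal technical obstacle I foresee is controlling the infinite expansion geometrically: namely, verifying that the product of two classes is independent of the chosen integral representatives and of the order of summation, which is precisely what is secured by the Hausdorff-completeness and the compatibility $gr_{{\rm v}}(U^m) = \overline{M}^m$ established in the preceding subsection.
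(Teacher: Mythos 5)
Your proposal is correct and follows essentially the same route as the paper: the paper states Theorem~\ref{thm:n=1 m} as a summary of the immediately preceding subsection, which constructs the $\star$-product via the unique expansion of Proposition~\ref{prop:Witt elements}, the compatibility $gr_{{\rm v}}(U^m)=\overline{M}^m$, and the completeness of $\CH^1_{{\rm v}}(X,1)$ in the modulus topology — exactly the ingredients you assemble. Your explicit treatment of well-definedness and the descent to $\mathbb{W}_m(k)$ via the ideal $U^m$ is a faithful and slightly more spelled-out version of the paper's own reasoning.
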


\section{The geometric presentations}\label{sec:local case}

In \S \ref{sec:local case}, let $X= \Spec (A)$ be an integral henselian local $k$-scheme of dimension $1$, with the maximal ideal $I \subset A$. Suppose the residue field is $k = A/ I$ and let $\mathbb{F}= {\rm Frac} (A)$, the function field. Here, we \emph{do not yet} suppose that $A$ is regular until we get to \S \ref{sec:triangular}.

\medskip

In \S \ref{sec:local case}, we prove a set of important results. After discussing the definitions and properties of compactified projections (\S \ref{sec:compact proj}) and the constructions of regulator maps (\S \ref{sec:5 regulator}) via the Gersten conjecture of M. Kerz \cite{Kerz finite}, from \S \ref{sec:triangular}, where we suppose $X$ is regular, we develop a ``reduction of degree" argument, a variant of an argument in \cite{Park MZ}, to show that every cycle class in $\CH^n_{{\rm d}} (X, n)$ has a representative given by a sum of the graph cycles. Out of these, we deduce that the groups $\CH_{{\rm d}} ^n (X, n)$ and $\CH_{{\rm v},\geq r} ^n (X, n)$ (resp. $\CH_{{\rm d}} ^n (X/(m+1), n)$ and $\CH_{{\rm v}, \geq r} ^n (X/ (m+1), n)$ compute the Milnor $K$-groups $\widehat{K}_n ^M (A)$ and $\widehat{K}_n ^M (A, I^r)$ (resp. $\widehat{K}_n ^M (A/I^{m+1})$ and $\widehat{K}_n ^M (A/I^{m+1}, I^r)$), answering the main theorems.

\subsection{Compactified projections}\label{sec:compact proj}

We discuss a convenient technical tool. % As before, let $X= \Spec (A)$ is an integral henselian local $k$-scheme of dimension $1$ with the residue field $k=A/I$. 

Let $n \geq 1$ and let $Z \in z^n _{{\rm d}} (X, n)$ be an integral cycle. Since $Z \to X$ may not be finite, for a nonempty subset $J \subset \{ 1, \cdots, n \}$, and the corresponding projection map
$$
 pr_J: \square_X ^n \to \square_X ^{|J|}, 
$$
that ignores the coordinates $y_i$ for $i \not \in J$, the projection $pr_J (Z)$ of $Z$ may not be closed in $\square_X ^{|J|}$. 

However the morphism $\overline{Z} \to X$ is finite and surjective by Corollary \ref{cor:proper int t=0}, so we can improve the situation by projecting after compactifications. Let
$$ 
\widehat{pr}_J: \overline{\square}_X ^n \to \overline{\square}_X ^{|J|}
$$
be the projection that ignores the coordinates $y_i$ for $i \not \in J$. Then for $\overline{Z}$, we have the induced finite surjective morphisms. %while 
$$
 \overline{Z} \to \widehat{pr}_J (\overline{Z}) \to X.
$$
This encourages the following more general definition:

\begin{defn}\label{defn:comp proj 0}
Let $W \subset \square_X ^n$ be an integral closed subscheme and let $\overline{W}$ be its Zariski closure in $\overline{\square}_X ^n$. Take the projection
$$
\overline{W}^{(J)}:= \widehat{pr}_J (\overline{W}) \subset  \overline{\square}_X ^{ |J|},
$$
which is an integral closed subscheme. Taking its restriction to the open subscheme $\square^{|J|}_X$, we let
$$
W^{(J)}:= \widehat{pr}_J (\overline{W}) |_{ \square^{|J|}_X}.
$$
 This is a closed subscheme of $ \square^{|J|}_X$. We call them the \emph{compactified projections of $W$} to the coordinates of $J$.
 
 In case $J = \{ 1, \cdots i \}$ for some $1 \leq i \leq n$, we simply write $\overline{W}^{(i)}$ and $W^{(i)}$. 
\qed
\end{defn}

For the cycles in $z^n_{{\rm d}} (X, n)$ and $z^n_{{\rm d}} (X, n+1)$, we show in Lemmas \ref{lem:adm compact proj} and \ref{lem:adm comp proj 2} below that the compactified projections behave well, respectively. The first lemma is needed for \S \ref{sec:local case}, while the second one is needed in \cite{Park vanishing}.

\begin{lem}\label{lem:adm compact proj}
Let $Z \in z^n _{{\rm d}} (X, n)$ be an integral cycle and let $J \subset \{ 1, \cdots, n\}$ be a nonempty subset. Then we have $ Z^{(J)} \in z^{|J|}_{{\rm d}} (X, |J|).$
\end{lem}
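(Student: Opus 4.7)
Set $s := |J|$ for brevity. The task is to show that $\overline{Z}^{(J)} := \widehat{pr}_J(\overline{Z})$ has codimension $s$ in $\overline{\square}_X^s$ and that its restriction $Z^{(J)}$ to $\square_X^s$ satisfies $(GP)_*$ and $(SF)_*$. The plan is to reduce everything to the corresponding properties of $Z$ by using that $\widehat{pr}_J$ pulls back extended faces of $\overline{\square}^s$ to extended faces of $\overline{\square}^n$ of the same codimension.

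First I would handle the dimension count. Since $Z \in z_{{\rm d}}^n(X, n)$ is integral of codimension $n$, Corollary \ref{cor:proper int t=0} gives that $\overline{Z} \to X$ is finite and faithfully flat; in particular $\overline{Z}$ is integral of dimension $\dim X$. The composite $\overline{Z} \to \widehat{pr}_J(\overline{Z}) \to X$ is thus finite surjective, so $\overline{Z}^{(J)}$ is integral, closed in $\overline{\square}_X^s$, surjects properly and finitely onto $X$, and has $\dim \overline{Z}^{(J)} = \dim X$. Hence $\overline{Z}^{(J)}$ has codimension $s$ in $\overline{\square}_X^s$, and the restriction $Z^{(J)}$ is an integral cycle of codimension $s$ in $\square_X^s$.

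Next, for any face $F \subset \square^s$ cut out by equations $\{y_i = \epsilon_i\}_{i \in J'}$ with $J' \subset J$, the preimage $F' := \widehat{pr}_J^{-1}(\overline{F})$ is the extended face of $\overline{\square}^n$ cut out by exactly the same equations, so $\mathrm{codim}(F', \overline{\square}^n) = \mathrm{codim}(\overline{F}, \overline{\square}^s)$. Using the elementary identity $\widehat{pr}_J(Y \cap \widehat{pr}_J^{-1}(V)) = V \cap \widehat{pr}_J(Y)$ applied to $Y = \overline{Z}$, I obtain
\[
\overline{Z}^{(J)} \cap (X \times \overline{F}) = \widehat{pr}_J\bigl(\overline{Z} \cap (X \times F')\bigr), \qquad
\overline{Z}^{(J)} \cap (\{p\} \times \overline{F}) = \widehat{pr}_J\bigl(\overline{Z} \cap (\{p\} \times F')\bigr).
\]
For $(GP)_*$: since $Z$ satisfies $(GP)_*$, the intersection $\overline{Z} \cap (X \times F')$ has dimension at most $\dim X + s - \mathrm{codim}(\overline{F})$, and projection under $\widehat{pr}_J$ can only decrease dimension, giving the required properness on $X \times \overline{\square}^s$. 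For $(SF)_*$: if $F = \square^s$ then $F' = \square^n$, and $(SF)_*$ for $Z$ forces $\overline{Z} \cap (\{p\} \times \overline{\square}^n)$ to have dimension $\leq 0$, hence so does its image; if $F \subsetneq \square^s$ is a proper face, then $F' \subsetneq \square^n$ is a proper extended face, and Lemma \ref{lem:proper int face *} applied to $Z$ already forces $\overline{Z} \cap (X \times F') = \emptyset$, so the projected intersection is empty as well.

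The only mild subtlety I anticipate is bookkeeping the two flavors of face conditions at once, and confirming that the inverse image of an extended face under a coordinate projection is again an extended face of the same codimension; both are formal from the definitions. With the dimension count and the projection-intersection identity in hand, verifying $(GP)_*$ and $(SF)_*$ for $\overline{Z}^{(J)}$ is immediate, which is exactly the claim that $Z^{(J)} \in z_{{\rm d}}^{s}(X, s)$.
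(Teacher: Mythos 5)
Your plan follows essentially the same route as the paper: reduce all the extended face conditions for $Z^{(J)}$ to those for $Z$ via the coordinate projection, and invoke Lemma \ref{lem:proper int face *} to show proper extended faces are empty. The paper organizes this by first proving that \emph{all} proper extended faces of $\overline{Z}^{(J)}$ vanish (from which $(GP)_*$ is immediate), and then handling the full face of $(SF)_*$ by noting $\overline{Z}^{(J)} \to X$ is surjective via $(DO)$; you instead use the projection--intersection identity $\widehat{pr}_J(Y \cap \widehat{pr}_J^{-1}(V)) = V \cap \widehat{pr}_J(Y)$ uniformly for both conditions, which is a clean and slightly more systematic packaging of the same idea.

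However, there is an arithmetic slip in your $(GP)_*$ step. You assert that $\overline{Z} \cap (X \times F')$ has dimension at most $\dim X + s - \mathrm{codim}(\overline{F})$, but $(GP)_*$ for $Z$ actually yields
\[
\dim\bigl(\overline{Z} \cap (X \times F')\bigr) \;\leq\; \dim(X \times \overline{\square}^n) - n - \mathrm{codim}(F') \;=\; \dim X - \mathrm{codim}(\overline{F}),
\]
since $\mathrm{codim}(F',\overline{\square}^n) = \mathrm{codim}(\overline{F},\overline{\square}^s)$. The extra ``$+s$'' makes the stated bound too weak by $s$: properness of $\overline{Z}^{(J)} \cap (X \times \overline{F})$ in $X \times \overline{\square}^s$ requires dimension at most $(\dim X + s) - s - \mathrm{codim}(\overline{F}) = \dim X - \mathrm{codim}(\overline{F})$, which your displayed bound would not deliver. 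With the corrected bound the argument is sound; alternatively, as the paper does, you could simply note that for every proper face the intersection is empty by Lemma \ref{lem:proper int face *} (a fact you already use in the $(SF)_*$ paragraph), which makes the $(GP)_*$ check for proper faces trivial and leaves only the codimension count for the full face. The full-face case of $(SF)_*$ via the dimension bound $\leq 0$ is correct, though note it relies on $\dim X = 1$, which is indeed in force throughout \S\ref{sec:local case}.
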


\begin{proof}
We prove the lemma for $ J = \{ 1, \cdots,  i\}$ for some $1 \leq i \leq n$, i.e. ${Z}^{(i)} \in z^i _{{\rm d}} (X, i)$. The general case follows by applying suitable permutations of the coordinates.

\medskip

We begin with:

\medskip

\textbf{Claim :} \emph{For each nonempty proper face $F \subset \square^i$, we have $\overline{Z}^{(i)} \cap (X \times \overline{F} ) = \emptyset$.}

\medskip

Note that $\overline{Z}^{(i)} \cap (X \times \overline{F})$ is the image of $\overline{Z} \cap (X \times \overline{G})$ under the projection $\widehat{pr}_J$, where $G \subset \square^{n}$ is the face defined by the same set of equations as $F$, but in the larger space $\square^n$. Since $Z$ satisfies $(SF)_*$, by Lemma \ref{lem:proper int face *}, the latter $\overline{Z} \cap (X \times \overline{G})$ is empty. Hence the former $\overline{Z}^{(i)} \cap (X \times \overline{F})$ is empty as well, proving the Claim.

\medskip

This Claim in particular implies that $(GP)_*$ holds for $Z^{(i)}$.

\medskip

To check the property $(SF)_*$ for $Z^{(i)}$, we need to check that the intersection $\overline{Z} ^{(i)} \cap ( \{ p \} \times \overline{F})$ is proper on $X \times \overline{\square}^i$ for each face $F \subset \square^i$. However $\overline{Z}^{(i)} \cap ( \{ p \} \times \overline{F}) \subset \overline{Z}^{(1)} \cap (X \times \overline{F})$, and by the Claim, this is empty for each proper face $F$ in $\square^n$, so we consider the remaining case $F= \square^n$. 

Since $Z$ satisfies $(SF)_*$, it satisfies $(DO)$ by Lemma \ref{lem:SF DF}. Hence the morphisms $\overline{Z} \to \widehat{pr}_J (\overline{Z}) \to X$ are finite and dominant, thus finite surjective, and in particular, the morphism $\overline{Z}^{(i)} \to X$ is surjective.

Since $\{ p \} \times  \overline{\square}^n$ is an integral divisor on $X \times \overline{\square}^n$, that the intersection $\overline{Z}^{(i)} \cap ( \{ p \} \times \overline{\square}^n)$ is proper on $X \times \overline{\square}^n$ is equivalent to that $\overline{Z}^{(i)}  \not \subset  \{ p \}\times \overline{\square}^n$.
 
Toward contradiction, suppose $\overline{Z}^{(i)} \subset  \{ p \}\times \overline{\square}^n$. This means the image of the map $\overline{Z} ^{(i)} \to X$ is concentrated at the closed point $\{ p \}$, but this contradicts the above observation that the morphism $\overline{Z}^{(i)} \to X$ is surjective. This proves that $Z^{(i)}$ satisfies $(SF)_*$. Hence $Z ^{(i)} \in z_{{\rm d}} ^{i} (X, i)$. 
\end{proof}

\begin{cor}\label{cor:v compact proj}
 Let $J \subset \{ 1, \cdots, n \}$ be a nonempty subset. Then:
\begin{enumerate}
\item the specialization to $p$ gives the map
$$
{\rm ev}_p : z^{|J|}_{{\rm d}} (X, |J|) \to z^{|J|} ( k, |J|).
$$
\item Let $Z \in z^n_{{\rm v}} (X, n)$ be an integral cycle and suppose that there is some $i \in J$ such that $y_i$ is a vanishing coordinate for $Z$. Then we have $Z^{(J)} \in z^{|J|}_{{\rm v}} (X, |J|)$. 
\item For $Z \in z_{{\rm v}} ^n (X, n)$, the image of $Z ^{(J)}$ under ${\rm ev}_{p}$ of $(1)$ is $0$.
\end{enumerate}
\end{cor}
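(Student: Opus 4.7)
The plan is to handle the three parts in order, with essentially all of the work concentrated in part (2). For (1), since every cycle in $z^{|J|}_{{\rm d}}(X, |J|)$ satisfies $(SF)_*$ and hence $(SF)$ by \remref{remk:SF* SF}, I would simply invoke \lemref{lem:specialization} at the codimension $q = |J|$ to obtain the specialization morphism of complexes, and read off its degree-$|J|$ component as the desired map $\mathrm{ev}_p : z^{|J|}_{{\rm d}}(X, |J|) \to z^{|J|}(k, |J|)$.

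For (2), \lemref{lem:adm compact proj} already furnishes $Z^{(J)} \in z^{|J|}_{{\rm d}}(X, |J|)$, so the only additional content is the pre-vanishing condition $Z^{(J)} \cap (\{p\} \times \square^{|J|}) = \emptyset$. The structural observation I would exploit is that integrality of $Z$ together with the henselian hypothesis on $A$ collapses the closed fibre of $\overline{Z}$ to a single point: by \corref{cor:proper int t=0} the map $\overline{Z} \to X$ is finite surjective, and since the target is henselian local, \lemref{lem:henselian} forces $\overline{Z} = \Spec B$ with $B$ a henselian local integral domain. Consequently $\overline{Z} \cap (\{p\} \times \overline{\square}^n)$ is set-theoretically supported at a single closed point $x$. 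By \lemref{lem:y_i=1} the point $x$ is precisely the common solution locus of the vanishing coordinates of $Z$, i.e.\ $x \in \{y_j = 1\}$ for every vanishing $y_j$. Applying the same henselian argument to $\overline{Z}^{(J)}$, which is again integral and finite over $X$, pins its closed fibre down to the single point $\widehat{pr}_J(x)$. The existential hypothesis that some $i \in J$ is a vanishing coordinate then gives $x \in \{y_i = 1\}$, hence $\widehat{pr}_J(x) \in \{y_i = 1\}$; since this face lies in the boundary $\overline{\square}^{|J|} \setminus \square^{|J|}$, the intersection $Z^{(J)} \cap (\{p\} \times \square^{|J|})$ is empty, which places $Z^{(J)}$ in $z^{|J|}_{{\rm v}}(X, |J|)$.

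For (3), under the same hypothesis on $J$, part (2) yields $Z^{(J)} \in z^{|J|}_{{\rm v}}(X, |J|)$; the pre-vanishing condition makes $Z^{(J)} \cap (\{p\} \times \square^{|J|})$ empty as a scheme, so the specialization $\mathrm{ev}_p(Z^{(J)})$ vanishes in $z^{|J|}(k, |J|)$. The one genuinely subtle point in this whole argument is the single-point reduction in (2): that is what converts the seemingly weak existential hypothesis ``some $i \in J$ is a vanishing coordinate'' into a uniform statement about every point of the closed fibre of $\overline{Z}^{(J)}$, and it is the only place where the henselian hypothesis on $X$ is essentially used.
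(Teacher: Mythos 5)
Your proof is correct and follows the same overall decomposition as the paper's: part (1) via \lemref{lem:specialization}, part (2) via \lemref{lem:adm compact proj} plus a check of pre-vanishing of $Z^{(J)}$, and part (3) from part (2) — with the tacit understanding that (3) inherits (2)'s hypothesis on $J$, since otherwise the compactified projection need not even be pre-vanishing.

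The genuine value you add is in part (2), where the paper's proof is elliptical to the point of having a gap: it passes a \emph{single} point of $\overline{Z}\cap(\{p\}\times\overline{\square}^n)$ lying on $\{y_i=1\}$ through $\widehat{pr}_J$ and concludes pre-vanishing of $Z^{(J)}$, but that inference fails if the closed fibre of $\overline{Z}^{(J)}$ has another point that lands inside $\square^{|J|}$. Your observation — that $\overline{Z}$, being integral and finite over the henselian local $X$, is itself $\Spec$ of a henselian local domain (Corollary~\ref{cor:proper int t=0} plus Lemma~\ref{lem:henselian}), so its closed fibre is set-theoretically one point $x$, and likewise for $\overline{Z}^{(J)}$ — is precisely what upgrades the existential ``some point has $y_i=1$'' hypothesis into the uniform statement that the entire closed fibre of $\overline{Z}^{(J)}$ sits in $\{y_i=1\}$. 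This is exactly where the henselian standing assumption of \S5 is used, and you are right that this is the one subtle step. One small imprecision in your write-up: $x$ lies in, but is not equal to, the common locus $\bigcap_{j}\{y_j=1\}$ over vanishing coordinates $y_j$; what matters, and what you use, is only the membership $x\in\{y_i=1\}$ for the chosen $i\in J$.
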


\begin{proof}
(1) follows immediately from Lemma \ref{lem:specialization}.

For (2), when $Z \in z^n_{{\rm v}} (X, n)$ and $y_i \equiv 1$ at a point in $\overline{Z} \cap (\{ p \} \times \square^n)$ (see Remark \ref{remk:type i}), then the same holds for $\overline{Z}^{ (J)} \cap (\{ p \} \times \square^{|J|})$. Hence $y_i$ is a vanishing coordinate for $Z^{ (J)}$, so by Lemma \ref{lem:adm compact proj}, we have $Z^{(J)} \in z^{|J|} _{{\rm v}} (X, |J|)$. 

From (2), we deduce (3) immediately.
\end{proof}

The following is used in \cite{Park vanishing}, but we put it in this article, as this seems to be a more appropriate location:

\begin{lem}\label{lem:adm comp proj 2}
Let $W \in z_{{\rm d}} ^n (X, n+1)$ be an integral cycle. Let $J$ be a nonempty subset of $ \{ 1, \cdots, n+1\}$ such that $\overline{W} \to \widehat{pr}_J (\overline{W})$ is quasi-finite. Then $W ^{(J)} \in z_{{\rm d}} ^{ |J|-1 } ( X, |J|)$.

In addition to the above, if $W \in z_{{\rm v}} ^n (X, n+1)$, and there is some $i \in J$ with $1 \leq i \leq n+1$ such that $y_i$ is a vanishing coordinate for $W$, then $W^{ (J)} \in z_{{\rm v}} ^{ |J|-1} (X, |J|)$. 
\end{lem}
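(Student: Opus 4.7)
The plan is to adapt the argument of Lemma 5.1.1 to one extra degree of freedom, where the essential new hypothesis is the quasi-finiteness of $\widehat{pr}_J|_{\overline{W}}$; this replaces the automatic finiteness of $\overline{Z}\to X$ used in the Milnor-range version via Corollary \ref{cor:proper int t=0}. With $\dim X=1$, the ambient $\square_X^{n+1}$ has dimension $n+2$, so $\dim \overline{W}=2$. Quasi-finiteness preserves dimension, giving $\dim\overline{W}^{(J)}=2$, i.e., codimension $|J|-1$ in $\overline{\square}_X^{|J|}$, as required for a cycle in $z_{{\rm d}}^{|J|-1}(X,|J|)$.

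To verify $(GP)_*$ for $W^{(J)}$, I would take a proper face $F\subset\square^{|J|}$ of codimension $c$ and let $G\subset\square^{n+1}$ be the face defined by the same equations (which involve only coordinates $y_i$ with $i\in J$). The set-theoretic identity
\[ \overline{W}^{(J)}\cap(X\times\overline{F}) \;=\; \widehat{pr}_J\bigl(\overline{W}\cap(X\times\overline{G})\bigr), \]
which follows from $\widehat{pr}_J^{-1}(X\times\overline{F})=X\times\overline{G}$, reduces the condition to $W$: by $(GP)_*$ for $W$ the source has dimension $\leq 2-c$, and quasi-finiteness transfers this bound to the image, matching the expected proper-intersection dimension in $\overline{\square}_X^{|J|}$. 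An entirely parallel argument with $\{p\}\times\overline{G}$ in place of $X\times\overline{G}$, together with the bound $\dim\overline{W}\cap(\{p\}\times\overline{G})\leq 1-c$ coming from $(SF)_*$ for $W$, establishes $(SF)_*$ for $W^{(J)}$. This completes the proof that $W^{(J)}\in z_{{\rm d}}^{|J|-1}(X,|J|)$.

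For the pre-vanishing claim under the additional hypothesis, the goal via Lemma \ref{lem:y_i=1} is to prove the inclusion
\[ \overline{W}^{(J)}\cap(\{p\}\times\overline{\square}^{|J|}) \;\subset\; \{p\}\times\bigcup_{i\in J}\{y_i=1\}. \]
Pre-vanishing of $W$ places $\overline{W}\cap(\{p\}\times\overline{\square}^{n+1})$ inside $\{p\}\times\bigcup_{j=1}^{n+1}\{y_j=1\}$, and a lift argument transfers the question to showing that every point of this special fiber has a vanishing coordinate with index in $J$. The main obstacle I anticipate is ruling out components of the special fiber whose only vanishing coordinates lie outside $J$, since the image under $\widehat{pr}_J$ of such a component could \emph{a priori} meet $\{p\}\times\square^{|J|}$ in a set one wishes to avoid. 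I would attack this by combining the $(SF)_*$ bound that forces $\overline{W}\cap(\{p\}\times\overline{\square}^{n+1})$ to be pure of dimension $\leq 1$, the quasi-finiteness of $\widehat{pr}_J|_{\overline{W}}$ restricted to each such component, and the hypothesized existence of some $i\in J$ with $y_i$ a vanishing coordinate, in a component-by-component analysis: quasi-finiteness prevents a positive-dimensional component from being contained in $\{y_j=1\}$ with $j\notin J$ and simultaneously having its $J$-coordinates run through an open of $\square^{|J|}$ compatibly with the global vanishing-coordinate structure of $\overline{W}$.
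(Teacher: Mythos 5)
For the first half (that $W^{(J)}\in z_{{\rm d}}^{|J|-1}(X,|J|)$), your argument is essentially the paper's: quasi-finiteness together with projectivity of $\widehat{pr}_J|_{\overline{W}}$ gives a finite surjection $\overline{W}\to\overline{W}^{(J)}$, hence the expected codimension; and $(GP)_*$, $(SF)_*$ are verified via the identity $\overline{W}^{(J)}\cap(X\times\overline{F})=\widehat{pr}_J\bigl(\overline{W}\cap(X\times\overline{G})\bigr)$ (resp.\ with $\{p\}$ in place of $X$) and dimension counting, exactly as in the paper's proof. The only cosmetic difference is that the paper reduces at the outset to $J=\{1,\dots,n\}$ by reindexing and induction, which you handle in place.

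The second half (the pre-vanishing claim) is where your write-up diverges, and where you have actually spotted something the paper glosses over. The paper disposes of this in one sentence by asserting that $W^{(J)}\cap(\{p\}\times\square^{|J|})$ equals $\widehat{pr}_J\bigl(W\cap(\{p\}\times\square^{n+1})\bigr)$, hence is empty. But the preimage of $\{p\}\times\square^{|J|}$ under the finite surjection $\widehat{pr}_J|_{\overline{W}}$ is $\overline{W}\cap\bigl(\{p\}\times\square^{|J|}\times\overline{\square}^{n+1-|J|}\bigr)$, which strictly contains $W\cap(\{p\}\times\square^{n+1})$: it also includes points of the special fiber $\overline{W}_p$ having $y_j=1$ only for indices $j\notin J$, and those project \emph{into} $\{p\}\times\square^{|J|}$. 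You correctly call this the main obstacle. However, your proposed remedy is too vague to be verified and does not obviously close the gap: a bad component of $\overline{W}_p$ sitting inside $\{y_j=1\}$ for some $j\notin J$ can perfectly well be $0$-dimensional, so quasi-finiteness of $\widehat{pr}_J|_{\overline{W}}$ gives no contradiction; and the hypothesis that \emph{some} $i\in J$ is a vanishing coordinate constrains only one point of $\overline{W}_p$, not the other components. What one actually needs is that \emph{every} irreducible component of $\overline{W}_p$ lies in $\{y_i=1\}$ for some $i\in J$; neither your sketch nor the paper's one-line argument supplies this, and the paper's proof of this half does not even invoke the vanishing-coordinate hypothesis it carries in the statement. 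You should either produce a concrete argument deriving this containment from the stated hypotheses, or note explicitly that this step is not justified as written; in either case, do not treat the paper's treatment of this half as a sound model.
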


\begin{proof}
Without loss of generality, we may assume that $J = \{ 1, \cdots, i \}$ for some $1 \leq i \leq n$. Furthermore, by induction, we may even assume that $i=n$ so that $J= \{ 1, \cdots, n\}$ and $|J|=n$. Since $\overline{W} \to \widehat{pr}_J (\overline{W}) = \overline{W}^{(J)}$ is quasi-finite, the morphism $\overline{W} \to \overline{W}^{(J)}$ is finite surjective, being projective and quasi-finite. In particular, $W ^{ (J)}$ has the codimension $n-1$ in $X \times \square^n$. 

We also note that by Lemma \ref{lem:SF DF} $\overline{W} \to X$ is a dominant projective (thus surjective) morphism, which factors through $\overline{W}^{(J)}$, so that $ \overline{W}^{(J)} \to X$ is surjective as well. 

\medskip

In order to check the conditions $(GP)_*$ and $(SF)_*$ for $W^{(J)}$, we first inspect its extended faces. 

Let $F \subset \square^n$ be a face. Note that $\overline{W} ^{(J)} \cap (X \times \overline{F})$ is the image of $\overline{W} \cap (X \times \overline{G})$ under the projection $\widehat{pr}_J$, where $G \subset \square^{n+1}$ is the face defined by the same set of equations as $F$, but in the larger space $\square^{n+1}$. Since $W$ satisfies $(GP)_*$, the intersection $\overline{W} \cap (X \times \overline{G})$ is proper on $X \times \overline{\square}^{n+1}$, while $\overline{W} \to \overline{W}^{(J)}$ is finite so that $\dim \ \overline{W} ^{(J)} \cap (X \times \overline{F}) \leq \dim \ \overline{W} \cap (X \times \overline{G})$. Since ${\rm codim}_{\overline{\square}^n} \overline{F} + 1 = {\rm codim}_{\overline{\square}^{n+1}} \overline{G}$, this implies that $\overline{W}^{(J)} \cap (X \times \overline{F})$ is proper on $X \times \overline{\square}^n$, proving $(GP)_*$ for $W^{(J)}$. 

\medskip

The argument for the condition $(SF)_*$ for $W^{(J)}$ is similar: we need to check that the intersection $\overline{W}^{ (J)} \cap ( \{ p \} \times \overline{F})$ is proper on $X \times \overline{\square}^n$ for each face $F \subset \square^n$. Here, $\overline{W} ^{(J)} \cap (\{ p \} \times \overline{F})$ is the image of $\overline{W} \cap (\{ p \} \times \overline{G})$, where $G \subset \square^{n+1}$ is defined as before by the same set of equations as those for $F$. Since $\overline{W} \to \overline{W} ^{(J)}$ is finite, we have $\dim \ \overline{W}^{(J)} \cap  ( \{ p \} \times \overline{F}) \leq \dim \ \overline{W} \cap ( \{ p \} \times \overline{G})$ so that by dimension counting again we check that the intersection $\overline{W}^{ (J)} \cap ( \{ p \} \times \overline{F})$ is indeed proper on $X \times \overline{\square}^n$, proving $(SF)_*$ for $W^{(J)}$. Thus $W^{(J)} \in z_{{\rm d}} ^{ |J|-1} (X, |J|)$.

\medskip

The second part is apparent because $W^{ (J)}  \cap ( \{ p \} \times \square^n)$ is equal to the image of $W \cap ( \{ p \} \times \square^{n+1})$, thus it is empty, showing that $W^{ (J)}$ is a pre-vanishing cycle if $W$ is a pre-vanishing cycle.
\end{proof}

\subsection{The higher regulators}\label{sec:5 regulator}
As before, $X= \Spec (A)$ is an integral henselian local $k$-scheme of dimension $1$ with the residue field $k= A/I$ and the function field $\mathbb{F}$. In \S \ref{sec:5 regulator} we will construct certain homomorphisms
$$
\phi_{{\rm d}}: z^n_{{\rm d}} (X, n) \to \widehat{K}_n ^M (A),
$$
such that the boundary $\partial z^n _{{\rm d}} (X, n+1)$ is mapped to $0$. When $X$ is in addition regular, later we will check that the induced map on $\CH^n_{{\rm d}} (X, n)$ gives the inverse of the graph map $gr_{{\rm d}}$ of Corollary \ref{cor:graph final}. We will check that it furthermore respects the mod $I^{m+1}$-equivalence relation.

\medskip

The construction of $\phi_{{\rm d}}$ builds on the Gersten conjecture for the improved Milnor $K$-theory proven by M. Kerz \cite{Kerz finite}. An analogous idea was previously used in \cite{Park MZ} in a model based on formal schemes. We will adjust it suitably to fit it into our situation.

\begin{defn}
Let $n \geq 1$ be an integer. Let $Z \in z^n _{{\rm d}} (X, n)$ be an integral cycle and let $\overline{Z}$ be the Zariski closure in $X \times \overline{\square}^n$. The morphism $\overline{Z} \to X$ is finite surjective by Corollary \ref{cor:proper int t=0}. Let $\overline{Z}_{\eta}$ be the generic fiber over the generic point $\eta \in X$, which gives a finite extension $\mathbb{F} \hookrightarrow \kappa (\overline{Z}_{\eta})$ of fields.

For the coordinate functions $y_1, \cdots, y_n \in \overline{\square}_X ^n$, denote by $\bar{y}_1, \cdots, \bar{y}_n$ their restrictions on $\overline{Z}_{\eta}$. They define the Milnor symbol $\{ \bar{y}_1, \cdots, \bar{y}_n \} \in K_n ^M ( \kappa (\overline{Z}_{\eta}))$. Hence applying the Bass-Tate--Kato norm ${\rm Nm}:= {\rm N}_{\kappa (\overline{Z}_{\eta})/ \mathbb{F}}: K_n ^M (\kappa (\overline{Z}_{\eta})) \to K_n ^M (\mathbb{F})$ (see \cite{BassTate} and \cite{Kato}), we obtain an element $\phi_{{\rm d}} (Z):={\rm Nm} ( \{ \bar{y}_1, \cdots, \bar{y}_n \}) \in K_n ^M (\mathbb{F}).$ Since $\mathbb{F}$ is a field, we have $\widehat{K}_n ^M (\mathbb{F}) = K_n ^M (\mathbb{F})$.

Extending $\mathbb{Z}$-linearly, we thus obtain a homomorphism
$$
\phi_{{\rm d}}: z^n _{{\rm d}} (X, n) \to \widehat{K}_n ^M ( \mathbb{F}).
$$
Later in Theorem \ref{thm:gr_n iso summary}, we will prove that the image of this map actually belongs to the subgroup $\widehat{K}_n ^M (A)$ of $\widehat{K}_n ^M (\mathbb{F})$. That this is indeed a subgroup follows from the Gersten conjecture for $\widehat{K}_n ^M (-)$ resolved by M. Kerz \cite{Kerz finite}.
\qed
\end{defn}

\begin{lem}\label{lem:regulator graph}
For $a_1, \cdots, a_n \in A^{\times}$, the image $\phi_{{\rm d}} (\Gamma_{(a_1, \cdots, a_n)} )$ of the graph cycle $\Gamma_{(a_1, \cdots, a_n)}$ associated to the Milnor symbol $\{ a_1, \cdots, a_n \} \in \widehat{K}_n ^M (A)$ is $\{ a_1, \cdots, a_n \}$ in $\widehat{K}^M_n (\mathbb{F})$. In particular, $\phi_{{\rm d}} (\Gamma_{(a_1, \cdots, a_n)} ) \in \widehat{K}_n ^M (A)$.
\end{lem}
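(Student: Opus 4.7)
The plan is to compute both sides directly from the definitions. First, I would identify the Zariski closure $\overline{\Gamma}_{(a_1,\dots,a_n)}$ in $X \times \overline{\square}^n$: since all $a_i \in A^{\times} \subset A$, the same system $\{y_1 = a_1,\dots, y_n = a_n\}$ defines $\overline{\Gamma}$ as a closed subscheme of $X \times \overline{\square}^n$. It is precisely the graph of the morphism $X \to \overline{\square}^n$, $x \mapsto (a_1,\dots,a_n)$. Consequently, the projection $\overline{\Gamma} \to X$ is an isomorphism.

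Next, I would pass to the generic fiber. Since $\overline{\Gamma} \to X$ is an isomorphism, the base change to the generic point gives an isomorphism $\overline{\Gamma}_{\eta} \to \eta = \Spec(\mathbb{F})$. In particular, the residue field is $\kappa(\overline{\Gamma}_{\eta}) = \mathbb{F}$, so the Bass--Tate--Kato norm ${\rm Nm} = {\rm N}_{\kappa(\overline{\Gamma}_{\eta})/\mathbb{F}}$ is the identity map on $K_n^M(\mathbb{F})$. Under the identification $\overline{\Gamma}_{\eta} \simeq \Spec(\mathbb{F})$, the restrictions $\bar{y}_i$ of the coordinate functions $y_i$ are just the constants $a_i \in \mathbb{F}^{\times}$. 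Hence
\[
\phi_{{\rm d}}(\Gamma_{(a_1,\dots,a_n)}) = {\rm Nm}(\{\bar{y}_1,\dots,\bar{y}_n\}) = \{a_1,\dots,a_n\} \in K_n^M(\mathbb{F}) = \widehat{K}_n^M(\mathbb{F}).
\]

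For the final assertion, since each $a_i$ already lies in $A^{\times}$, the symbol $\{a_1,\dots,a_n\}$ is the image of an element of $\widehat{K}_n^M(A)$ under the canonical map $\widehat{K}_n^M(A) \to \widehat{K}_n^M(\mathbb{F})$; this map is injective by the Gersten conjecture for $\widehat{K}_n^M$ of M. Kerz, recalled in Theorem~\ref{thm:Khat_univ}-(4). Thus $\phi_{{\rm d}}(\Gamma_{(a_1,\dots,a_n)})$ belongs to the subgroup $\widehat{K}_n^M(A) \subset \widehat{K}_n^M(\mathbb{F})$, as claimed.

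There is no serious obstacle here: the lemma is essentially a direct unpacking of the definition of $\phi_{{\rm d}}$ on the distinguished class of graph cycles, where the finite extension $\mathbb{F} \hookrightarrow \kappa(\overline{Z}_{\eta})$ degenerates to the identity. The only nontrivial ingredient invoked is the Gersten injectivity of $\widehat{K}_n^M$, needed only to phrase the conclusion inside the subgroup $\widehat{K}_n^M(A)$.
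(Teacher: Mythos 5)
Your proof is correct and follows essentially the same route as the paper's: the paper simply notes that $\overline{Z} \to X$ has degree $1$, so the norm is the identity and the result follows directly from the definition of $\phi_{\rm d}$. You have filled in the same computation in slightly more detail (identifying the closure as a graph, identifying $\bar{y}_i$ with $a_i$, and spelling out the role of Gersten injectivity for the ``in particular'' clause, which the paper records earlier in the definition of $\phi_{\rm d}$).
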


\begin{proof}
Let $Z= \Gamma_{(a_1, \cdots, a_n) }$. The natural map $\overline{Z} \to X$ is of degree $1$, and it induces the identity map $\mathbb{F} \to \kappa (\overline{Z}_{\eta})= \mathbb{F}$ of the residue fields of the generic fibers. Hence by definition of the norm map, we deduce the lemma. 
\end{proof}

An important property is the following reciprocity result:

\begin{prop}
%Let $n \geq 1$ be an integer and suppose $|k|\gg 0$ so that the Gersten conjecture of Kerz holds. Then 
The composite homomorphism
$$
 z^n _{{\rm d}} (X, n+1) \overset{\partial}{\to} z^n _{{\rm d}} (X, n) \overset{\phi_{{\rm d}}}{\to} \widehat{K}_n ^M (\mathbb{F})
 $$
vanishes. In particular, we have the induced homomorphism
\begin{equation}\label{eqn:phi d 0}
 \phi_{{\rm d}} : \CH_{{\rm d}} ^n (X, n) \to \widehat{K}_n ^M (\mathbb{F}).
\end{equation}
\end{prop}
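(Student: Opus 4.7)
The plan is to generalize the $n=1$ argument of Lemma \ref{lem:phi partial 0 lv 1}, replacing Suslin--Weil reciprocity by the Weil reciprocity for Milnor $K$-theory of function fields of curves due to Bass--Tate \cite{BassTate} and Kato \cite{Kato}.

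For an integral $W \in z^n_{{\rm d}}(X, n+1)$, I would take its closure $\overline{W} \subset X \times \overline{\square}^{n+1}$ and note, via Corollary \ref{cor:proper int t=0} and Lemma \ref{lem:SF DF}, that $\overline{W} \to X$ is finite and faithfully flat, so the generic fiber $\overline{W}_\eta$ is an integral projective curve over $\mathbb{F}$. Normalizing, I set $Y := \overline{W}^N$, whose generic fiber $Y_\eta$ is a smooth projective integral curve over $\mathbb{F}$ with function field $\mathbb{K}$. The pullbacks of the coordinates $y_1,\dots,y_{n+1}$ give a Milnor symbol $\{y_1,\dots,y_{n+1}\} \in K^M_{n+1}(\mathbb{K})$, and for each closed point $\mathfrak{p} \in Y_\eta$ the associated tame symbol
\[
\partial_\mathfrak{p}: K^M_{n+1}(\mathbb{K}) \to K^M_n(\kappa(\mathfrak{p}))
\]
annihilates this symbol for all but finitely many $\mathfrak{p}$, collected into a finite set $\mathcal{F}_W$.

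The core step, following Claims 1 and 2 in the proof of Lemma \ref{lem:phi partial 0 lv 1}, is to identify $\mathcal{F}_W$ with the set of integral components of the nonempty codimension-$1$ faces $\partial_i^\epsilon W$ via the standard bijection among closed points of $Y_\eta$, discrete valuations of $\mathbb{K}/\mathbb{F}$, and horizontal divisors of $Y$. For each $\mathfrak{p} \in \mathcal{F}_W$ belonging to a unique face $\partial_i^\epsilon W$, I expect that the compactified projection $\widehat{pr}_J$ with $J = \{1,\dots,n+1\}\setminus\{i\}$ (Lemma \ref{lem:adm compact proj}) realizes the corresponding component $Z(\mathfrak{p}) \in z^n_{{\rm d}}(X,n)$ with $\kappa(\overline{Z(\mathfrak{p})}_\eta) = \kappa(\mathfrak{p})$. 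A direct unwinding of the definition of the tame symbol together with the transitivity of the Bass--Tate--Kato norm should then yield the multiplicative identity
\[
N_{\kappa(\mathfrak{p})/\mathbb{F}}\bigl(\partial_\mathfrak{p}\{y_1,\dots,y_{n+1}\}\bigr) = \pm\, \phi_{{\rm d}}(Z(\mathfrak{p}))^{\,n_\mathfrak{p}} \quad \text{in } K^M_n(\mathbb{F}),
\]
where $n_\mathfrak{p} = \partial_\mathfrak{p}(y_i)$ is the local multiplicity and the sign is dictated by $(i,\epsilon)$ via the formula $\partial = \sum_{i=1}^{n+1}(-1)^i(\partial_i^\infty - \partial_i^0)$. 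Summing over $\mathfrak{p} \in \mathcal{F}_W$ then recovers $\phi_{{\rm d}}(\partial W)$, and Weil reciprocity for $Y_\eta/\mathbb{F}$, namely $\sum_{\mathfrak{p}} N_{\kappa(\mathfrak{p})/\mathbb{F}}\partial_\mathfrak{p}\{y_1,\dots,y_{n+1}\} = 0$ (applicable since $|\mathbb{F}| = \infty$), delivers the vanishing.

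The main obstacle I foresee is the bookkeeping in the bijection between $\mathcal{F}_W$ and the face components, together with the careful verification of the multiplicative identity above: matching the tame symbol with $\phi_{{\rm d}}(Z(\mathfrak{p}))^{n_\mathfrak{p}}$ requires tracking multiplicities through the finite extension $\kappa(\overline{Z(\mathfrak{p})}_\eta)/\mathbb{F}$, and the sign conventions need to be reconciled with the cubical boundary. Once this is established, the vanishing follows from Weil reciprocity; the separate assertion that $\mathrm{Im}(\phi_{{\rm d}})$ in fact lies in $\widehat{K}^M_n(A) \subset \widehat{K}^M_n(\mathbb{F})$ will be handled in a subsequent step using Kerz's Gersten conjecture \cite{Kerz finite}.
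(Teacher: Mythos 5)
Your proposal is essentially the same argument the paper gives: pass to the generic fiber $\overline{W}_\eta$ over $\eta = \Spec(\mathbb{F})$, normalize to get a regular integral projective curve $\overline{D}$ (the paper writes $\overline{D}$, not $Y_\eta$, and says \emph{regular} rather than \emph{smooth}, which matters over imperfect $\mathbb{F}$), read off the symbol $\{y_1,\dots,y_{n+1}\}\in K^M_{n+1}(\mathbb{K})$, match tame symbols at closed points with the components of $\partial_i^\epsilon W$ via the finiteness imposed by $(GP)_*$, and invoke Suslin's Weil reciprocity (applicable since $\mathbb{K}\supset\mathbb{F}$ is infinite). The paper's actual write-up follows Totaro's Section 3 with a short commutative-diagram argument in place of the Claim-1/Claim-2 bookkeeping you carry over from Lemma \ref{lem:phi partial 0 lv 1}, but the underlying mechanism is identical.

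One slip to fix: $\overline{W}\to X$ is \emph{not} finite. For $W\in z^n_{{\rm d}}(X,n+1)$ the codimension is $n$ in $\square_X^{n+1}$, which has dimension $n+2$, so $W$ and $\overline{W}$ have dimension $2$ and $\overline{W}\to X$ has relative dimension $1$. Corollary \ref{cor:proper int t=0} applies only to cycles of codimension $n$ in $\square_X^n$; here the morphism is proper, dominant (hence surjective by Lemma \ref{lem:SF DF}, via $(SF)_*\Rightarrow(DO)$), but fibred in curves. Your very next sentence already uses the correct conclusion (the generic fiber is a projective curve), which is what the argument actually needs. Likewise the proposed identification of a face component $Z(\mathfrak{p})$ with a compactified projection $\widehat{pr}_J(\overline{W})$ for $J=\{1,\dots,n+1\}\setminus\{i\}$ is not quite right -- the face is the intersection $\overline{W}\cap\{y_i=\epsilon\}$ followed by the coordinate identification, whereas $\widehat{pr}_J(\overline{W})$ is the image of all of $\overline{W}$ and generically has the same dimension as $\overline{W}$; the correct dictionary is the bijection between closed points of the generic fiber, discrete valuations of $\mathbb{K}/\mathbb{F}$, and horizontal prime divisors on $\overline{W}$, exactly as in Lemma \ref{lem:phi partial 0 lv 1}.
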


\begin{proof}
We follow the outline of \cite[Sect. 3]{Totaro} with various adjustments for our situation.

Let $W \in z^n_{{\rm d}} (X, n+1)$ be an integral cycle. Let $\overline{W}$ be its Zariski closure in $X \times \overline{\square}^{n+1}$. Let $\overline{C}:= \overline{W}_{\eta}$ and $C:= W_{\eta}$ be the fibers over the generic point $\eta \in X$. Here $\overline{C}$ is an integral projective curve over $\mathbb{F}$ and $C \subset \overline{C}$ is an open subscheme. Let $\overline{D}\to \overline{C}$ and $D \to C$ be their normalizations. In particular, $\overline{D}$ is a regular integral projective curve over $\mathbb{F}$. Let $\mathbb{K}:= \mathbb{F}(\overline{D})$ be their common function field. 

The natural morphism $\overline{D} \to \overline{\square}_{\eta}^{n+1} = \square_{\mathbb{F}} ^{n+1}$ is given by $(n+1)$ rational functions $y_1, \cdots, y_{n+1} \in \mathbb{K}$ on $\overline{D}$. By the given proper intersection condition $(GP)_*$ of $W$ with the faces, no $y_i$ is identically equal to $0$ or $\infty$, and any closed point $x \in D$ with $y_i (x) = 0$ or $\infty$ has $y_j (x) \not \in \{ 0, \infty\}$ for all $j \not = i$. 

For each closed point $x \in \overline{D}$, let $\partial_x: \widehat{K}_{n+1} ^M (\mathbb{K}) \to \widehat{K}_n ^M (\kappa (x))$
be the associated boundary map of Milnor \cite{Milnor IM}, where $\kappa (x)$ is the residue field of $x$. 

Since $\mathbb{K} \supset \mathbb{F}$, it is an infinite field. Thus Suslin's Weil reciprocity \cite{Suslin} applies, and for the Milnor symbol $\{ y_1, \cdots, y_{n+1} \} \in \widehat{K}_{n+1} ^M (\mathbb{K})$, we have the identity
$$
\sum_{x \in \overline{D}} {\rm N}_{ \kappa (x)/ \mathbb{F}} ( \partial _x  \{ y_1, \cdots, y_{n+1} \}) = 0 \ \ \mbox{ in } \widehat{K}_n ^M (\mathbb{F}),
$$
where the sum is over all closed points of $\overline{D}$ and the group operation in $\widehat{K}_n ^M (\mathbb{F})$ is written additively.

If $x \in \overline{D} \setminus D$, then one of $y_i$ has $y_i (x) = 1$ so that $\partial_x \{ y_1, \cdots, y_{n+1} \} = 0$. Thus the above sum can be rewritten in terms of the closed points over $D$ only, i.e.
\begin{equation}\label{eqn:Suslin n}
\sum_{x \in {D}} {\rm N}_{ \kappa (x)/ \mathbb{F}} ( \partial _x \{ y_1, \cdots, y_{n+1} \} )= 0 \ \ \mbox{ in } \widehat{K}_n ^M (\mathbb{F}),
\end{equation}
On the other hand, by the definition of $\partial_x$, we have the commutative diagram
$$
\xymatrix{
z^n _{{\rm d}} (X, n+1) \ar[d] \ar[rrr] ^{\partial} & & &  z^n _{{\rm d}} (X, n) \ar[d] ^{\phi_n} \\
\widehat{K}_{n+1} ^M (\mathbb{K}) \ar[rrr] ^{ \sum_x {\rm N}_{\kappa (x)/ \mathbb{F}}  ( \partial y \ ( - ) ) } & &  & \widehat{K}_n ^M (\mathbb{F}),
}
$$
where the left vertical arrow associates $W$ to the Milnor symbol $\{ y_1, \cdots, y_{n+1} \} \in \widehat{K}_{n+1} ^M (\mathbb{K})$. Hence $(\phi_{{\rm d}} \circ \partial ) (W)$ is equal to the left hand side of \eqref{eqn:Suslin n}, which is $0$ in $\widehat{K}_n ^M (\mathbb{F})$. This proves the proposition.
\end{proof}

To proceed further, we need to resolve some additional technical puzzles to be addressed in the next subsections. We will come back to our discussions in \S \ref{sec:conseq strong graph}.

\subsection{Triangular generators}\label{sec:triangular}
Now let $X= \Spec (A)$ be an integral \emph{regular} henselian local $k$-scheme of dimension $1$ with the residue field $k=A/I$ and the function field $\mathbb{F}$.

Let $Z \in z^n_{{\rm d}} (X, n)$ be an integral cycle and let $\overline{Z}$ be its Zariski closure in $X \times \overline{\square}^n$. By Corollary \ref{cor:proper int t=0}, we see that $\overline{Z} \to X$ is finite. By Lemma \ref{lem:proper int face *}, the intersections of $\overline{Z}$ with $\{y_i = \infty\}$ for $1 \leq i \leq n$, are empty. Thus in fact $\overline{Z} \subset X \times \mathbb{A}^n$.

We use this observation in what follows, which is largely motivated by \cite[Proposition 3.3.1]{Park MZ}, but not without a few necessary modifications:

\begin{prop}\label{prop:triangular}
Let $n \geq 2$ be an integer. 
Let $Z \in z^n _{{\rm d}} (X,n )$ be an integral cycle.

Then there are polynomials in $A[ y_1, \cdots, y_n]$ of the form
$$
\tuborg
P_1 (y_1) \in A  [y_1], \\
\vdots\\
P_n (y_1, \cdots, y_n) \in A[y_1, \cdots, y_n],
\sluttuborg
$$
such that
\begin{enumerate}
\item the ideal $(P_1, \cdots, P_n)$ in $A [ y_1, \cdots, y_n]$ defines $\overline{Z}$,
\item for each $1 \leq i \leq n$, the polynomial $P_i$ is monic in $y_i$. When $i=1$, the constant term of $P_1$ is a unit in $A^{\times}$, while for $2 \leq i \leq n$ and the image $\bar{P}_i (y_i)$ of $P_i$ in the ring $\left( \frac{ A [ y_1, \cdots, y_{i-1}]}{ (P_1, \cdots, P_{i-1})} \right) [ y_i]$, its constant term is a unit in $\left( \frac{ A [ y_1, \cdots, y_{i-1}]}{ (P_1, \cdots, P_{i-1})} \right)^{\times}$,
\item for $2 \leq i \leq n$, when $P_i$ is seen as a polynomial in $y_i$ with the coefficients in $A [y_1, \cdots, y_{i-1}]$, all coefficients have their $y_j$-degrees $< \deg_{y_j} P_j$ for $1 \leq j < i$.
\end{enumerate}

\end{prop}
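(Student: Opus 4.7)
The plan is to proceed by induction on $i$, using the chain of compactified projections from Definition~\ref{defn:comp proj 0}. Setting $B_i := \mathcal{O}(\overline{Z}^{(i)})$, one has by Lemma~\ref{lem:adm compact proj} and Corollary~\ref{cor:proper int t=0} a tower $A = B_0 \subset B_1 \subset \cdots \subset B_n$ of finite ring extensions with $B_i = B_{i-1}[y_i]$, and by Lemma~\ref{lem:henselian} each $B_i$ is a henselian local domain of Krull dimension one.

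For the base case $i = 1$, Lemma~\ref{lem:norm Witt} applied to $Z^{(1)} \in z^1_{{\rm d}}(X, 1)$ produces the unique monic $P_1(y_1) \in A[y_1]$ generating the ideal of $\overline{Z}^{(1)}$, with $P_1(0) \in A^\times$ because $\overline{Z}^{(1)} \cap \{y_1 = 0\} = \emptyset$ by Lemma~\ref{lem:proper int face *}. For the inductive step, assume $P_1, \ldots, P_{i-1}$ satisfying (2) and (3) have been chosen so that $B_{i-1} \cong A[y_1, \ldots, y_{i-1}]/(P_1, \ldots, P_{i-1})$. The element $y_i \in B_i$ is integral over $B_{i-1}$, so I would choose a monic $\bar{P}_i \in B_{i-1}[y_i]$ presenting $B_i$ as $B_{i-1}[y_i]/(\bar{P}_i)$, and lift it to a monic $P_i \in A[y_1, \ldots, y_i]$. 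The unit property $\bar{P}_i(0) \in B_{i-1}^\times$ then follows from the extended face computation in the proof of Lemma~\ref{lem:adm compact proj}: since $\overline{Z}^{(i)} \cap \{y_i = 0\} = \emptyset$, one has $B_{i-1}/(\bar{P}_i(0)) = 0$.

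To arrange the triangular condition (3), apply Euclidean division to $P_i$ successively by $P_{i-1}$ (reducing the $y_{i-1}$-degrees of its coefficients below $\deg_{y_{i-1}} P_{i-1}$), then by $P_{i-2}$, and so on down to $P_1$. Since $P_j \in A[y_1, \ldots, y_j]$ is monic in $y_j$, the $j$-th division reduces the $y_j$-degree without affecting degrees in $y_{j+1}, \ldots, y_{i-1}$ already made small; each step modifies $P_i$ only by an element of $(P_1, \ldots, P_{i-1})$, so the generated ideal and the image $\bar{P}_i \in B_{i-1}[y_i]$ are unchanged.

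The main obstacle is the existence of a monic presentation $B_i = B_{i-1}[y_i]/(\bar{P}_i)$ in the inductive step, since $B_{i-1}$ need not be integrally closed and the minimal polynomial of $y_i$ over ${\rm Frac}(B_{i-1})$ may fail to have coefficients in $B_{i-1}$. My plan is to argue that $B_i$ is free over $B_{i-1}$: both $B_{i-1}$ and $B_i$ are $1$-dimensional local Cohen--Macaulay domains (being finite over the regular local ring $A$), and $B_i$ is a finitely generated torsion-free $B_{i-1}$-module; flatness should follow from a depth/Auslander--Buchsbaum argument, and hence freeness over the local ring $B_{i-1}$. Then the characteristic polynomial of multiplication by $y_i$ gives the desired monic $\bar{P}_i$ of degree $\rank_{B_{i-1}} B_i$, and the natural surjection $B_{i-1}[y_i]/(\bar{P}_i) \twoheadrightarrow B_i$ between free $B_{i-1}$-modules of equal rank is forced to be an isomorphism.
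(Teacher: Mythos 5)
You correctly identify the crux of the problem --- producing a monic $\bar{P}_i \in B_{i-1}[y_i]$ with $B_i = B_{i-1}[y_i]/(\bar{P}_i)$ --- but the proposed remedy fails. Auslander--Buchsbaum applies only to modules of \emph{finite} projective dimension, and over a non-regular base this is not automatic; the ``miracle flatness'' criterion requires the base of the local homomorphism to be regular, whereas $B_{i-1}$ is merely a henselian local domain finite and free over $A$ and need not be normal, let alone regular. A finite torsion-free extension of a one-dimensional local Cohen--Macaulay domain can in fact fail to be flat: take $B_{i-1} \cong k[[w^2, w^3]]$ (the cusp) inside $B_i = k[[w]]$ (its normalization). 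Then $B_i$ has generic rank $1$ over $B_{i-1}$ but requires two module generators, so it is not free, and there is no monic $\bar{P}_i \in B_{i-1}[y_i]$ with $B_{i-1}[y_i]/(\bar{P}_i) \cong B_i$. This configuration genuinely arises from a cycle in $z^2_{\rm d}(X,2)$: over $A = k[[t]]$, the closed subscheme $\overline{Z} = \Spec k[[w]]$ of $X\times\overline{\square}^2$ given by $t \mapsto w^2$, $y_1 \mapsto 1+w^3$, $y_2 \mapsto 1+w$ avoids every extended face (so $(GP)_*$ and $(SF)_*$ hold vacuously), and its tower of compactified projections is exactly the cusp sitting inside its normalization. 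At that point the characteristic-polynomial step and the concluding ``surjection of free modules of equal rank'' both collapse.

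The paper's proof does not attempt to present $B_i$ as a free $B_{i-1}$-module. It instead reads off a triangular system $p_1, \ldots, p_n$ from the generic-fiber closed point $\overline{Z}_{\eta} \subset \square^n_{\mathbb{F}}$, clears denominators to land in $A[y_1, \ldots, y_n]$, and establishes freeness only over the \emph{PID} $A$ (a submodule of a finite free module over a PID is free). The degree and rank bookkeeping is carried out entirely over $A$, and the unit leading and constant coefficients are extracted from the empty intersections of $\overline{Z}^{(i)}$ with the extended faces $\{y_i = \epsilon\}$. Any repair of your inductive step has to relocate the freeness argument to $A$ in this fashion rather than rely on flatness over $B_{i-1}$, which can genuinely fail even for admissible cycles.
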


\begin{proof}
Since $\overline{Z} \to X$ is finite and surjective, the residue field of the generic fiber $\overline{Z}_{\eta}$ is a finite extension of $\kappa (\eta) = \mathbb{F}$, and it defines a closed point in $\square_{\mathbb{F}}^n$. Hence we obtain a triangular shaped set of polynomials
$$
\tuborg
p_1 (y_1) \in \mathbb{F} [ y_1], \\
p_2 (y_1, y_2) \in \mathbb{F} [ y_1, y_2], \\
\vdots \\
p_n (y_1, \cdots, y_n) \in \mathbb{F} [y_1, \cdots, y_n],
\sluttuborg
$$
which defines the closed point, such that each $p_i$ is monic in $y_i$ for $1 \leq i \leq n$, and when $2 \leq i \leq n$, if we regard $p_i$ as a polynomial in $y_i$ with its coefficients in $\mathbb{F} [ y_1, \cdots, y_{i-1}]$, then each coefficient has its $y_j$-degree strictly less than $\deg_{y_j} \ (p_j)$ for all $1 \leq j < i$. 

Here, $\overline{Z}$ is the Zariski closure of $\overline{Z}_{\eta}$ in $X \times \mathbb{A}^n$ as well. To obtain generators of $\overline{Z}$, for each $1 \leq i \leq n$, we clear the denominators of the coefficients in $\mathbb{F}$ of the terms of $p_i$ by the l.c.m. of them in $A $. We can do it because $A$ is a regular local ring, thus a UFD. The resulting polynomials will be called $P_1 (y_1) \in A [ y_1], \cdots, P_n (y_1, \cdots, y_n) \in A [ y_1, \cdots, y_n]$, and they satisfy the properties
\begin{enumerate}
\item [(a)] the ideal $(P_1, \cdots, P_n)$ in $A [ y_1, \cdots, y_n]$ defines $\overline{Z}$,

\item [(b)] for each $i$, the highest $y_i$-degree term of $P_i$ involves no other variables, and

\item [(c)] for $2 \leq i \leq n$, if we regard $P_i$ as a polynomial in $y_i$ with the coefficients in $A [y_1, \cdots, y_{i-1}]$, then each coefficient has its $y_j$-degree strictly less than $\deg_{y_j} \ (P_j)$ for all $1 \leq j < i$. 
\end{enumerate}

\medskip

The desired conditions (1) and (3) of the proposition follow from the above properties (a) and (c). We will show that the condition (2) can be also achieved.

\medskip

For $1 \leq i \leq n$, let $S_i:= A [y_1, \cdots, y_i]$. Let $\mathfrak{p}_n \subset S_n$ be the prime ideal that defines $\overline{Z}$ in $X \times \mathbb{A}^n$. We have the sequence of injective ring homomorphisms
$$
A \hookrightarrow S_1 \hookrightarrow \cdots \hookrightarrow S_{n-1} \hookrightarrow S_n,
$$
and let $\mathfrak{p}_i:= \mathfrak{p}_n \cap S_i$ for $1 \leq i \leq n-1$. By construction, we have $\mathfrak{p}_i= (P_1, \cdots, P_i)$, by we have again the sequence of injective ring homomorphisms
$$ 
A \hookrightarrow S_1 / \mathfrak{p}_1 \hookrightarrow \cdots \hookrightarrow S_{n-1}/ \mathfrak{p}_{n-1} \hookrightarrow S_n / \mathfrak{p}_n.
$$

Since $A \to S_n / \mathfrak{p}_n$ is finite and flat, and $A$ is a PID, all of the homomorphisms $A \to S_i / \mathfrak{p}_i$ for $1 \leq i \leq n$ are also finite and free because a submodule of a free submodule of finite rank over a PID is again free, by the fundamental theorem on finitely generated modules over a PID. 

For $1 \leq i \leq n$, the compactified projection $\overline{Z}^{(i)} \subset X \times \mathbb{A}^i$ is the integral closed subscheme given by $\Spec (S_i/ \mathfrak{p}_i)$. By construction, we have $\dim \ \overline{Z}^{(i)} = \dim \ \overline{Z}$ so that its codimension in $X \times \mathbb{A}^i$ is $i$, while $Z^{(i)} = \overline{Z}^{(i)} \cap (X \times \square^i)$.  %, the restriction onto the open subscheme $X \times \square^i$ of $X \times \overline{\square}^i$ (Definition \ref{defn:comp proj 0}).

\medskip

By Lemma \ref{lem:adm compact proj}, we have $Z^{(i)} \in z^i _{{\rm d}} (X, i)$ for each $1 \leq i \leq n$. Coming back to the remaining property (2), first suppose $i=1$. In this case, the intersection of $\overline{Z}^{(1)}$ with the faces $\{ y_1 = \epsilon \}$ for $\epsilon \in \{ 0, \infty \}$ are empty (Lemma \ref{lem:proper int face *}). Thus, the leading coefficient and the constant term of $P_1 (y_1)$ are units in $A^{\times}$. Scaling by the leading coefficient of $P_1$, we may assume it is $1$, and in the process the constant term is still a unit. This proves (2), while the scaling does not disturb the pre-established properties (1) and (3).

Now suppose $i \geq 2$. Among the defining polynomials $P_1, \cdots, P_i$ of $\overline{Z} ^{(i)}$, the only one that involves the variable $y_i$ is $P_i$, and the highest $y_i$-degree term of $P_i$ does not involve any other variable. hence, the empty intersection of $\overline{Z}^{(i)}$ with the face $\{y_i = \infty \}$ means that the leading coefficient of $P_i$ in $y_i$ is a unit in $A^{\times}$, so after scaling it, we may assume it is $1$. Note that the pre-established properties (1) and (3) stay after scaling.

On the other hand, the empty intersection of $\overline{Z}^{(i)}$ with $\{y_i = 0 \}$ means the image $\overline{P}_i$ of $P_i$ in the quotient ring has a unit constant term. This proves (2).
\end{proof}

\subsection{A reduction argument for a pair of mod equivalent cycles}\label{sec:strong graph}

In \S \ref{sec:strong graph}, we present an argument of ``reduction to graph cycles" for a pair of mod $I^{m+1}$-equivalent cycles. An argument of this sort first appeared in \cite[\S 4.4]{Park MZ}. Since our cycles are defined in a bit different set-up, we give details following the outline there, with suitable modifications.

The central statement is the following:

\begin{prop}\label{prop:strong graph}
Let $X= \Spec (A)$ be an integral regular henselian local $k$-scheme of dimension $1$ with the residue field $k= A/I$ and the function field $\mathbb{F}$.

For each pair of integral cycles $Z_1, Z_2 \in z^n _{{\rm d}} (X, n)$ such that $Z_1 \sim_{I^{m+1}} Z_2$, we have the following:

\begin{enumerate}
\item There exist
\begin{enumerate}
\item [(i)] graph cycles $Z_1', Z_2'\in z^n _{{\rm d}} (X, n)$ such that $Z_1' \sim_{I^{m+1}} Z_2'$, and
\item [(ii)] cycles $W_1, W_2 \in z^n _{{\rm d}} (X, n+1)$ such that 
$$Z_i - Z_i ' = \partial W_i,$$
for $i=1,2$.
\end{enumerate}
\item For an integer $1 \leq r \leq m$, if we had $Z_{\ell} \in z_{{\rm v}, \geq r} ^n (X, n)$ for $\ell = 1,2$ from the beginning, then we can also take $W_{\ell} \in z_{{\rm v}, \geq r} ^n (X, n+1)$ in the part $(1)$.

\end{enumerate}
\end{prop}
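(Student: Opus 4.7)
The approach is a parallel ``reduction of degree'' argument for the pair $(Z_1, Z_2)$, driven by induction on a common multi-degree invariant in $\mathbb{N}^n$ under the lexicographic order. First apply Proposition \ref{prop:triangular} to each $Z_\ell$ ($\ell=1,2$) to obtain triangular generators $P_1^{(\ell)}(y_1),\ldots,P_n^{(\ell)}(y_1,\ldots,y_n)$ of the ideal of $\overline{Z}_\ell$, each monic in $y_i$ with unit constant term in the appropriate quotient ring. The naive mod $I^{m+1}$-equivalence $\overline{Z}_1 \times_X X_{m+1} = \overline{Z}_2 \times_X X_{m+1}$ allows one to match the two systems coefficient-by-coefficient modulo $I^{m+1}$; in particular the common degree tuple $\mathbf{d}(Z):= (\deg_{y_1} P_1,\ldots,\deg_{y_n} P_n) \in \mathbb{N}^n$ is well-defined for the pair, and we induct on $\mathbf{d}$ under lex order. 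The base case $\mathbf{d}=(1,\ldots,1)$ is immediate: each $\overline{Z}_\ell$ is already a graph cycle, so take $Z_\ell'=Z_\ell$ and $W_\ell=0$.

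For the inductive step, fix the smallest $j$ with $d_j := \deg_{y_j} P_j^{(\ell)} \geq 2$, write
\[
P_j^{(\ell)} = y_j^{d_j} - a_{d_j-1}^{(\ell)} y_j^{d_j-1} + \cdots + (-1)^{d_j} a_0^{(\ell)},
\]
with $a_s^{(1)} \equiv a_s^{(2)} \pmod{I^{m+1}}$, and define $W_\ell \in z^n(X,n+1)$ as the cycle in $\square^{n+1}_X$ cut out by the polynomials $P_i^{(\ell)}$ for $i\neq j$ together with
\[
F_\ell \;:=\; P_j^{(\ell)}(y_1,\ldots,y_j) \;-\; (y_j-1)^{d_j-1}(y_j - a_0^{(\ell)})\, y_{n+1},
\]
where $y_{n+1}$ is the new cube coordinate. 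The extended-face bookkeeping is parallel to the one in Lemma \ref{lem:gr surj lv 1}: one checks $W_\ell \in z^n_{{\rm d}}(X,n+1)$ and computes $\partial W_\ell = \pm(Z_\ell - Z_\ell'')$, where $Z_\ell''$ is defined by $\{P_i^{(\ell)}\}_{i\neq j} \cup \{y_j = a_0^{(\ell)}\}$; the boundary contribution from $\{y_j=1\}$ is empty, and $\{y_j=0\}$, $\{y_j=\infty\}$ vanish by the unit-constant-term / monicness properties. Substituting $y_j = a_0^{(\ell)}$ into $P_{j+1}^{(\ell)},\ldots,P_n^{(\ell)}$ yields a triangular presentation of $\overline{Z}_\ell''$ with degree tuple $(d_1,\ldots,d_{j-1},1,d_{j+1},\ldots,d_n)$, strictly smaller than $\mathbf{d}$ in lex order. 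Since every coefficient of $F_\ell$ and of the substituted $P_i^{(\ell)}$ agrees mod $I^{m+1}$ across $\ell=1,2$, both $(W_1,W_2)$ and $(Z_1'',Z_2'')$ remain mod $I^{m+1}$-equivalent, and the inductive hypothesis applied to $(Z_1'',Z_2'')$ produces graph cycles $Z_\ell'$ and boundary witnesses $W_\ell''$, which combine with $W_\ell$ to close the induction.

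For part (2), assume $Z_\ell \in z^n_{{\rm v},\geq r}(X,n)$. The key observation is that if $j$ is chosen so that $y_j$ is a vanishing coordinate for $Z_\ell$ in the sense of Definition \ref{defn:type i}, then a Hensel-type factorization applied to $P_j^{(\ell)}$ modulo $I^r$ (mirroring the one-variable case of Lemma \ref{lem:norm Witt}-(4) after compactified projection by Lemma \ref{lem:adm compact proj}) forces $a_0^{(\ell)} \equiv 1 \pmod{I^r}$. Reducing $F_\ell$ modulo $I^r$ then collapses to $(y_j-1)^{d_j}(1-y_{n+1})$, whose zero locus in $\square^{n+1}$ is empty, so $W_\ell \in z^n_{{\rm v},\geq r}(X,n+1)$ as required, and the induction closes inside the refined category. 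The principal obstacle is the combined bookkeeping demanded by the parallel reduction: after one step the triangular data of $Z_\ell''$ depends on a choice of lift of $a_0^{(\ell)}$ from the quotient ring $A[y_1,\ldots,y_{j-1}]/(P_1^{(\ell)},\ldots,P_{j-1}^{(\ell)})$, and one must verify that these lifts can be made compatibly for $\ell=1,2$ so that mod $I^{m+1}$-equivalence survives and the lex tuple decreases strictly; likewise in (2) one must produce a compatible choice of vanishing coordinate for both $Z_1$ and $Z_2$ simultaneously. This is precisely where the lexicographic (rather than total-degree) induction and the sharpening over \cite{Park MZ} announced in \S\ref{sec:intro sketch} play the decisive role.
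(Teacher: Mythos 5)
Your overall strategy—triangular generators via Proposition \ref{prop:triangular}, a Totaro-style one-variable reduction polynomial, lex-order induction on a degree vector, and parallel treatment of the pair—is the same as the paper's. But your choice of which coordinate to reduce at each step creates a genuine gap.

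You fix $j$ to be the \emph{smallest} index with $d_j \geq 2$. The paper, in its Lemma \ref{lem:4.4.6}, fixes $i$ to be the \emph{largest} index with $d_i > 1$ (equivalently, the smallest $i$ with $d_\ell = 1$ for all $\ell > i$). This choice is not cosmetic. With the paper's choice, all later equations $P_{i+1}^{(\ell)}, \ldots, P_n^{(\ell)}$ are already linear in their respective variables, so the replacement cycle $\widetilde{Z}_\ell'$ is a genuine \emph{graph} cycle over $\Spec(\widetilde{R}_\ell) = \Spec(R_\ell^{(i)})$, and $\widetilde{R}_\ell$ is a henselian local domain. Pushing forward along the finite surjective morphism $\Spec(\widetilde{R}_\ell) \to X$ then yields a positive integer multiple of a \emph{single} integral cycle, so the degree vector of $Z_\ell'$ is well-defined and the lex induction applies cleanly. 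With your choice of $j$, the later equations $P_{j+1}^{(\ell)}, \ldots, P_n^{(\ell)}$ can still have $y_k$-degrees $\geq 2$, and after substituting $y_j = a_0^{(\ell)}$, the resulting scheme $Z_\ell''$ need not be integral: the polynomials $P_k^{(\ell)}$ for $k > j$ were irreducible over the domain $R_\ell^{(j)}$ (of degree $d_j \geq 2$ over $A$) but can factor over $A$ after specializing $y_j$ to $a_0^{(\ell)} \in A$. A reducible $Z_\ell''$ does not have a well-defined degree tuple in the sense used for the lex induction (Definition after Lemma \ref{lem:4.4.4} only covers integral cycles and integer multiples thereof), so the claim that $Z_\ell''$ has degree tuple $(d_1, \ldots, d_{j-1}, 1, d_{j+1}, \ldots, d_n)$ is not justified, and the inductive hypothesis cannot be applied to the pair $(Z_1'', Z_2'')$ as stated. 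To repair this you would need to split $Z_\ell''$ into integral components, match them across $\ell = 1,2$ via the Artin-ring decomposition as in Lemma \ref{lem:comparison mod}, and run the induction on matched pairs; none of this is addressed.

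A second, smaller issue is in part (2). In part (1) you fixed $j$ by the degree condition; in part (2) you instead say ``if $j$ is chosen so that $y_j$ is a vanishing coordinate.'' These two constraints can conflict. The paper resolves this by reindexing the variables once at the outset of part (2) so that a common vanishing coordinate sits in position $1$, and then observing that $P_\ell^{(1)}$ survives unchanged in the defining system of $\widetilde{C}_{Z_\ell}$ no matter which index $i$ is reduced, which forces the v-cycle property of $W_\ell$. Your argument only covers the case where the reduced coordinate is itself the vanishing coordinate; when it is not (which is the generic situation, since you reduce at the smallest $j$ with $d_j \geq 2$ but the vanishing coordinate could be anywhere), you should note that the surviving equation $P_k^{(\ell)}$ at the vanishing coordinate $k$ already makes $W_\ell$ a strict vanishing cycle of order $\geq r$.
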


Its proof occupies most of \S \ref{sec:strong graph}. We continue to use the notations in the proof of Proposition \ref{prop:triangular}, that we recall briefly again: for $1 \leq i \leq n$, we let $\widehat{pr}_i: X \times \overline{\square}^n \to X \times \overline{\square}^i$ be the projection $(y_1, \cdots, y_n) \to (y_1, \cdots, y_i)$. For notational convenience, for $i=0$, we let $\widehat{pr}_0: X \times \overline{\square}^n \to X$ be the canonical projection. We denote all similar projections with $\square^n$ or $\mathbb{A}^n$ instead of $\overline{\square}^n$ by the notations $pr_i$. %, to simplify the notations.

When $Z \in z^n_{{\rm d}} (X, n)$ is an integral cycle, we saw that $\overline{Z} \to X$ is finite surjective (Lemma \ref{lem:basic Milnor 1}). We have the compactified projection  $\overline{Z}^{(i)} = \widehat{pr}_i (\overline{Z})$ and it is still finite surjective over $X$. 

\begin{defn}
For a given $Z$ as the above, for each $0 \leq i < j \leq n$, and the induced finite surjective morphism $f^{j/i} : \overline{Z}^{(j)} \to \overline{Z}^{(i)}$, we define $d^{j/i} (Z):= \deg ( f^{j/i})$.
\qed
\end{defn}

We observe that $Z$ is a graph cycle if and only if $d^{n/0} (Z) = 1$ if and only if $d^{i/ (i-1)} (Z) = 1$ for all $1 \leq i \leq n$. 

\medskip

Thus Proposition \ref{prop:strong graph} says that, firstly each integral cycle in $z^n_{{\rm d}} (X, n)$ can be transformed into a graph cycle modulo boundaries, and secondly this process respects the mod $I^{m+1}$-equivalence. %We will present an algorithm that does the job in what follows.

\medskip

Let $Z_1, Z_1 \in z^n_{{\rm d}} (X, n)$ be two integral cycles such that $Z_1 \sim_{I^{m+1}} Z_2$. For each $\ell = 1,2$, by Proposition \ref{prop:triangular}, we have a system of polynomials for $\overline{Z}_{\ell}$
$$
\tuborg
P_{\ell} ^{ (1)} ( y_1) \in A [ y_1], \\
\vdots\\
P_{\ell} ^{(n)} (y_1, \cdots, y_n) \in A [ y_1, \cdots, y_n],
\sluttuborg
$$
that satisfies the properties in Proposition \ref{prop:triangular}. For the compactified projections $Z_{\ell} ^{(i)}$ for $1 \leq i \leq n$ as before, we immediately have $Z_{1} ^{(i)} \sim_{I^{m+1}} Z_{2} ^{(i)}$ for $1 \leq i \leq n$. 

\medskip 

For $\ell = 0,1$ and $0 \leq i \leq n$, let $R_{\ell} ^{(i)}$ be the coordinate rings of $\overline{Z}_{\ell} ^{(i)}$. More precisely we have
$$ 
R_{\ell} ^{(0)} = A, \ \ R_{\ell} ^{(i)} = \frac{ A [ y_1, \cdots, y_i]}{ (P_{\ell} ^{ (1)}, \cdots, P_{\ell} ^{(i)} ) } \ \ \mbox{ for } \ 1 \leq i \leq n.
$$
We let $\overline{R}_{\ell} ^{(i)}$ be the reduction mod $I^{m+1}$, namely, 
$$
\overline{R}_{\ell} ^{(i)}:= R_{\ell} ^{(i)} / (I^{m+1}) R_{\ell} ^{(i)}.
$$

\medskip

Define $\overline{P}_{\ell} ^{(1)} := P_{\ell} ^{(1)}$, and for $2 \leq i \leq n$, let $\overline{P}_{\ell} ^{(i)}$ be the image of $P_{\ell} ^{(i)}$ in $R_{\ell} ^{(i-1)}[y_i]$. Let $d_{\ell} ^i:= \deg_{y_i} \ \overline{P}_{\ell} ^{(i)}$ for $1 \leq i \leq n$. We note that (cf. \cite[Lemma 4.4.4]{Park MZ}):

\begin{lem}\label{lem:4.4.4}
Let $\ell = 1,2$ and $2 \leq i \leq n$. Let $(-1)^{ d_{\ell}^i} c_{\ell} ^{(i)} \in R_{\ell} ^{(i-1)}$ be the constant term of $\overline{P}_{\ell} ^{(i)}$. By Proposition \ref{prop:triangular}-(2), we have $c_{\ell} ^{(i)} \in (R_{\ell} ^{ (i-1)})^{\times}$. 

Then under the above, we have
\begin{enumerate}
\item $d_1 ^i = d_2 ^i.$
\item $\overline{R}_1 ^{(i-1)} = \overline{R}_2 ^{(i-1)}$.
\item In the common ring $\overline{R}^{(i-1)}$ of the above, we have $c_1 ^{(i)} \equiv c_2 ^{(i)}$.
\end{enumerate}
\end{lem}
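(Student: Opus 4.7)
The plan is to propagate the mod $I^{m+1}$-equivalence $Z_1 \sim_{I^{m+1}} Z_2$ down to all the compactified projections using the triangular monic structure of Proposition \ref{prop:triangular}, and then extract the degree and constant term as intrinsic module-theoretic invariants of the extensions $\overline{R}_\ell^{(i-1)} \subset \overline{R}_\ell^{(i)}$.

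For part (2), I would first observe that, because $P_\ell^{(j)}$ is monic in $y_j$ over $A[y_1,\ldots,y_{j-1}]$ for every $j$, the chain
\[
A \hookrightarrow R_\ell^{(1)} \hookrightarrow \cdots \hookrightarrow R_\ell^{(n)}
\]
is a tower of finite free extensions. A standard elimination/contraction argument (equivalently, reduce any element of $(P_\ell^{(1)},\ldots,P_\ell^{(n)})$ successively by the monic polynomials $P_\ell^{(n)},P_\ell^{(n-1)},\ldots$) shows that the ideal of $\overline{Z}_\ell^{(j)}$ in $A[y_1,\ldots,y_j]$ is exactly $(P_\ell^{(1)},\ldots,P_\ell^{(j)})$, so that the coordinate ring of $\overline{Z}_\ell^{(j)}$ is $R_\ell^{(j)}$. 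Monicity and freeness are both stable under tensoring with $A/I^{m+1}$, so the same description persists after the reduction: the ideal of $\overline{Z}_\ell^{(j)} \times_X X_{m+1}$ in $(A/I^{m+1})[y_1,\ldots,y_j]$ is $(\bar P_\ell^{(1)},\ldots,\bar P_\ell^{(j)})$. The hypothesis $\overline{Z}_1 \times_X X_{m+1} = \overline{Z}_2 \times_X X_{m+1}$ then forces equality of these projected ideals at every level $j$. Taking $j = i-1$ yields (2).

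For (1) and (3), the same argument at level $j = i$ gives $\overline{R}_1^{(i)} = \overline{R}_2^{(i)}$. Since $\bar P_\ell^{(i)}$ is monic in $y_i$ of degree $d_\ell^i$ over $\overline{R}_\ell^{(i-1)}$, the $\overline{R}_\ell^{(i-1)}$-module $\overline{R}_\ell^{(i)}$ is free of rank $d_\ell^i$ with basis $1,y_i,\ldots,y_i^{d_\ell^i-1}$. The rank is a two-sided invariant of the pair $(\overline{R}^{(i-1)},\overline{R}^{(i)})$, hence $d_1^i = d_2^i$, proving (1). For (3), $\bar P_\ell^{(i)}$ is the characteristic polynomial of the multiplication-by-$y_i$ endomorphism of the free $\overline{R}^{(i-1)}$-module $\overline{R}^{(i)}$, so its constant term $(-1)^{d^i} c_\ell^{(i)}$ is, up to sign, the determinant of this endomorphism. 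The endomorphism depends only on the data $(\overline{R}^{(i-1)},\overline{R}^{(i)},y_i)$, which is common to $\ell = 1,2$, whence $c_1^{(i)} \equiv c_2^{(i)}$ in the common ring $\overline{R}^{(i-1)}$.

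The only delicate point is the elimination/contraction step underlying (2), which I expect to be the main technical obstacle to write out cleanly. It is nonetheless forced by the monicity of each $P_\ell^{(j)}$ in $y_j$ and the resulting freeness of each extension $R_\ell^{(j-1)} \hookrightarrow R_\ell^{(j)}$; this freeness trivially descends modulo $I^{m+1}$, since a monic polynomial remains monic of the same degree. Once (2) is in hand, (1) and (3) are essentially tautologies expressing that rank and determinant are intrinsic module-theoretic invariants.
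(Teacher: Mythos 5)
Your proof is correct and takes essentially the same approach as the paper: propagate the mod-$I^{m+1}$ equivalence down the tower of compactified projections via the monic/free structure of the triangular generators from Proposition~\ref{prop:triangular} to obtain (2), then compare the degree and constant term of $\overline{P}_\ell^{(i)}$ at level $i$ to obtain (1) and (3). The only cosmetic departure is in (3), where you package the constant term as (a sign times) the determinant of multiplication by $y_i$ on the free $\overline{R}^{(i-1)}$-module $\overline{R}^{(i)}$, whereas the paper argues directly that the two monic polynomials $\overline{\overline{P}}_1^{(i)}$ and $\overline{\overline{P}}_2^{(i)}$ cutting out the same closed subscheme of $\Spec\overline{R}^{(i-1)}[y_i]$ must coincide.
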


\begin{proof}
The argument follows the outline of \cite[Lemma 4.4.4]{Park MZ}. We repeat the story: by the properties of Proposition \ref{prop:triangular} for $P_{\ell} ^{(i)}$, we have $d_{\ell} ^i = \deg _{y_i} \ \overline{P}_{\ell} ^{(i)} \geq 1$. Going further modulo $I^{m+1}$, we denote by the double over-lined $\overline{\overline{P}}_{\ell} ^{(i)}$ be the image of $\overline{P}_{\ell} ^{(i)}$ in $\overline{R}_{\ell} ^{(i-1)} [y_i]$. 

Since $Z_1 ^{(i-1)} \sim_{I^{m+1}} Z_2 ^{(i-1)}$, we have $\overline{R}_1 ^{(i-1)} = \overline{R}_2 ^{(i-1)}$, thus we deduce (2). We will denote the common ring by $\overline{R}^{(i-1)}$ in what follows.

\medskip

Since $\overline{P}_{\ell} ^{(i)} (y_i)$ is monic in $y_i$ in the ring $R_{\ell} ^{(i-1)}[y_i]$, its further image $\overline{\overline{P}}_{\ell} ^{(i)}  (y_i)$ in $\overline{R}^{(i-1)}[y_i]$ is also monic in $y_i$ with the same $y_i$-degrees. Hence we obtain
\begin{equation}\label{eqn:d equal 1}
\deg_{y_i} \ \overline{P}_{\ell} ^{(i)} = \deg _{y_i} \ \overline{\overline{P}}_{\ell} ^{(i)}. 
\end{equation}

Since ${Z}_1 ^{(i)} \sim_{I^{m+1}} {Z}_2 ^{(i)}$, they give the common closed subscheme of $\Spec (\overline{R}^{(i-1)} [y_i])$ given by their respective monic polynomials $\overline{\overline{P}}_{\ell} ^{(i)}  (y_i) \in \overline{R} ^{(i-1)}[y_i]$. In other words, we have
\begin{equation}\label{eqn:d equal 1'}
 \overline{\overline{P}}_1 ^{(i)} = \overline{\overline{P}}_{2} ^{(i)} \ \ \mbox{ in } \overline{R} ^{(i-1)} [y_i].
 \end{equation}
In particular, we have 
\begin{equation}\label{eqn:d equal 2}
\deg_{y_i} \  \overline{\overline{P}}_1 ^{(i)} = \deg_{y_i} \ \overline{\overline{P}}_2 ^{ (i)}.
\end{equation}

Combining \eqref{eqn:d equal 1} and \eqref{eqn:d equal 2}, we deduce (1).

\medskip

On the other hand, the equality \eqref{eqn:d equal 1'} implies the equality of the constant terms $(-1)^{ d_1 ^i} c_1 ^{(i)} \equiv (-1)^{ d_2 ^{i}} c_2 ^{(i)}$ in $\overline{R}^{(i-1)}$. Since $d_1 ^i = d_2 ^i$ by (1), we deduce the equality $c_1 ^{(i)} \equiv c_2 ^{(i)} $ in $\overline{R} ^{(i-1)}$, proving (3).
\end{proof}

\begin{defn}
Based on Lemma \ref{lem:4.4.4}-(1), we let
$$
d_i:= d_1 ^i = \deg_{y_i} \ \overline{P}_1 ^{(i)} = \deg_{y_i} \ \overline{P}_2 ^{(i)} = d_2 ^i.
$$
By definition $d_i = d^{i/ (i-1)} ({Z}_{\ell})$ for $\ell = 1,2$, and it is now independent of $\ell$. We will call the vector $(d_1, \cdots, d_n) ^t \in \mathbb{N}^n$, the common \emph{degree vector} of the pair $(Z_1, Z_2)$. 

If $Z$ is an integer multiple of an integral cycle, we define its associated $d_i$ to be the degree for the underlying integral cycle.
\qed
\end{defn}

Before we discuss a crucial procedure, Lemma \ref{lem:4.4.6} below, we consider:

\begin{defn}
Suppose $n \geq 1$. For the set $\mathbb{N}= \{ 1, 2, \cdots \}$ of the natural numbers, consider the lexicographic order on $\mathbb{N}^n$. Its members are written as 
$$
(a_1, \cdots, a_n)^t = \begin{bmatrix} a_1 \\ \vdots \\ a_n \end{bmatrix}$$
for $a_i \in \mathbb{N}$. The set $\mathbb{N}^n$ is totally ordered, and $(1, \cdots, 1)^t$ is the smallest element. For example, when $n=3$, we have
$$
 (1, 1, 1)^t < (1, 1, 2)^t < (1, 2, 1)^t < (2, 1, 1)^t < (2, 1, 3)^t< \cdots,
 $$
in case the reader wants to see a few smallest members of $\mathbb{N}^3$ in this order. %to help the reader. 
\qed
\end{defn}

The key ingredient in \S \ref{sec:strong graph} is to prove the following reduction process (cf. \cite[Lemma 4.4.6]{Park MZ}):

\begin{lem}\label{lem:4.4.6}
Let $Z_1, Z_2 \in z^n_{{\rm d}} (X, n)$ be a pair of integral cycles such that $Z_1 \sim_{I^{m+1}} Z_2$. Let
$\mathbf{d}:=(d_1, d_2, \cdots, d_n)^t \in \mathbb{N}^n$
be the common degree vector for the pair $(Z_1, Z_2)$. 

\begin{enumerate}
\item If $(1, \cdots, 1)^t < \mathbf{d}$, then there exist cycles $C_{Z_{\ell}} \in z^n _{{\rm d}} (X, n+1)$ and positive integer multiples of integral cycles $Z_1', Z_2' \in z^n_{{\rm d}} (X,n)$ for $\ell = 1,2$ such that
\begin{enumerate}
\item [(i)] $Z_{\ell} - Z_{\ell}' = \partial (C_{Z_{\ell}})$ for $\ell  = 1,2$,
\item [(ii)] $Z_1 ' \sim_{I^{m+1}} Z_2'$, and 
\item [(iii)] the degree vector $\mathbf{d}'$ for the pair $(Z_1', Z_2')$ satisfies $\mathbf{d}' < \mathbf{d}$ with respect to the lexicographic ordering on $\mathbb{N}^n$.
\end{enumerate}

\item For some $1 \leq r \leq m+1$, in case $Z_{\ell} \in z^n _{{\rm v}, \geq r} (X, n)$, then we can choose cycles $C_{Z_{\ell}} \in z^n _{{\rm v}, \geq r} (X, n+1)$ so that we have $Z_1', Z_2' \in z^n_{{\rm v}, \geq r} (X, n)$ as well.
\end{enumerate}
\end{lem}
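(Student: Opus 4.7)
The plan is to carry out a ``reduction of degree'' argument by constructing an explicit cycle in $z^n_{{\rm d}}(X,n+1)$ whose boundary replaces the $i_0$-th triangular factor with a linear one, where $i_0$ is the smallest index with $d_{i_0}\geq 2$ (such an index exists because $(1,\ldots,1)^t<\mathbf{d}$). Using the triangular generators of Proposition \ref{prop:triangular} and the constants $c_\ell^{(i_0)}\in R_\ell^{(i_0-1)}$ from Lemma \ref{lem:4.4.4}, for each $\ell=1,2$ I would let $C_{Z_\ell}\subset X\times\square^{n+1}$ be the closed subscheme defined by the modified system $(P_\ell^{(1)},\ldots,P_\ell^{(i_0-1)},Q_\ell,P_\ell^{(i_0+1)},\ldots,P_\ell^{(n)})$, where
$$
Q_\ell := P_\ell^{(i_0)}(y_1,\ldots,y_{i_0})\;-\;(y_{i_0}-1)^{d_{i_0}-1}\bigl(y_{i_0}-\tilde{c}_\ell^{(i_0)}\bigr)\,y_{n+1},
$$
and $\tilde{c}_\ell^{(i_0)}\in A[y_1,\ldots,y_{i_0-1}]$ is any lift of $c_\ell^{(i_0)}$; the choice of lift is irrelevant modulo $(P_\ell^{(1)},\ldots,P_\ell^{(i_0-1)})$. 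This is the direct higher-dimensional analogue of the cycle $W_f$ constructed in Lemma \ref{lem:gr surj lv 1}.

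Next I would verify, in order: (a) $C_{Z_\ell}\in z^n_{{\rm d}}(X,n+1)$, by an exhaustive face-by-face check of $(GP)_*$ and $(SF)_*$ on the extended faces of $X\times\overline{\square}^{n+1}$, using the triangular structure of $(P_\ell^{(j)})$ and the resulting dimension counts (analogous to cases (a)--(d) in the proof of Lemma \ref{lem:gr surj lv 1}); (b) the boundary $\partial C_{Z_\ell}=\sum_j(-1)^j(\partial_j^\infty-\partial_j^0)C_{Z_\ell}$ has vanishing contribution from the first $n$ coordinates (every proper face forces $y_{i_0}=1$ or is otherwise absorbed), while $\partial_{n+1}^0C_{Z_\ell}=Z_\ell$ and $\partial_{n+1}^\infty C_{Z_\ell}$ equals the cycle $Z_\ell'$ defined by replacing $P_\ell^{(i_0)}$ with the linear polynomial $y_{i_0}-\tilde{c}_\ell^{(i_0)}$ plus $(d_{i_0}-1)$ copies of a cycle supported on $\{y_{i_0}=1\}$, which is empty in $\square^n$. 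Then (c) $Z_\ell'$ is a positive-integer multiple of an integral cycle in $z^n_{{\rm d}}(X,n)$ with degree vector $\mathbf{d}'$ satisfying $d_j'=1$ for $j\leq i_0$, hence $\mathbf{d}'<\mathbf{d}$ lexicographically; and (d) $Z_1'\sim_{I^{m+1}}Z_2'$, because by Lemma \ref{lem:4.4.4} the rings $\overline{R}_\ell^{(i_0-1)}$ coincide and $c_1^{(i_0)}\equiv c_2^{(i_0)}$ there, while $P_1^{(j)}$ and $P_2^{(j)}$ also agree modulo $I^{m+1}$ for $j>i_0$ by applying Lemma \ref{lem:4.4.4} inductively up the tower of compactified projections.

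For part (2), which assumes $Z_\ell\in z^n_{{\rm v},\geq r}(X,n)$, I would additionally arrange the choice of $i_0$ compatibly with the vanishing structure. Since $Z_\ell$ has empty reduction in $\square^n$ modulo $I^r$, the minimal polynomial tower forces some coordinate to specialize to $1$ modulo $I^r$, which translates (via Lemma \ref{lem:norm Witt}-type considerations applied to the compactified projections $Z_\ell^{(j)}$) into the assertion that for some $j$, the associated constant $c_\ell^{(j)}$ satisfies $c_\ell^{(j)}\equiv 1\!\mod I^r$. Taking $i_0$ to be such an index (or first peeling off a graph factor that absorbs the vanishing, reducing to this case), the factor $(y_{i_0}-1)^{d_{i_0}-1}(y_{i_0}-\tilde{c}_\ell^{(i_0)})$ becomes $(y_{i_0}-1)^{d_{i_0}}$ modulo $I^r$, which forces every mod-$I^r$ point of $C_{Z_\ell}$ to lie in $\{y_{i_0}=1\}$, i.e.\ outside $\square^{n+1}$. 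Hence $C_{Z_\ell}\in z^n_{{\rm v},\geq r}(X,n+1)$ and consequently $Z_\ell'\in z^n_{{\rm v},\geq r}(X,n)$.

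The main obstacle is precisely this compatibility in part (2): the naive homotopy does \emph{not} automatically propagate the vanishing order, because $Q_\ell=0$ is strictly weaker than $P_\ell^{(i_0)}=0$, so a solution of the augmented system need not descend to a solution of the original. Overcoming this requires either a careful choice of $i_0$ synchronized with a vanishing coordinate (ensuring $\tilde{c}_\ell^{(i_0)}\equiv 1\!\mod I^r$ so that the extra factor absorbs the vanishing) or an auxiliary splitting step that first extracts a graph piece concentrated on the vanishing coordinate. The remaining work, namely the face-by-face verification of $(GP)_*$ and $(SF)_*$ for $C_{Z_\ell}$, is lengthy but routine, paralleling the extended-face computations already carried out in Lemmas \ref{lem:gr surj lv 1} and \ref{lem:adm compact proj}.
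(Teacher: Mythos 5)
There is a genuine gap in your part (1), and it lies in your choice of index $i_0$ and in your decision to work directly over $X$. You take $i_0$ to be the \emph{smallest} index with $d_{i_0}\geq 2$, whereas the paper takes $i$ to be the \emph{largest} such index (the smallest $i$ with $d_j=1$ for all $j>i$). This is not cosmetic. The triangular polynomials $P_\ell^{(j)}$ of Proposition \ref{prop:triangular} are constructed so that $\overline{P}_\ell^{(j)}$ is monic with unit constant term over the ring $R_\ell^{(j-1)}=A[y_1,\ldots,y_{j-1}]/(P_\ell^{(1)},\ldots,P_\ell^{(j-1)})$, which depends on the preceding generators. If you replace $P_\ell^{(i_0)}$ by the linear $y_{i_0}-\tilde c_\ell^{(i_0)}$ while keeping $P_\ell^{(j)}$ for $j>i_0$ (which can still have $y_j$-degree $>1$ and may involve $y_{i_0}$ in their coefficients), the quotient ring collapses from $R_\ell^{(i_0)}$ to $R_\ell^{(i_0-1)}$, and there is no reason the images of $P_\ell^{(i_0+1)},\ldots,P_\ell^{(n)}$ remain irreducible, reduced, or unit-constant-term there. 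Consequently your $Z_\ell'$ can be a \emph{sum of several distinct integral components} rather than a positive integer multiple of a single integral cycle, and then the ``common degree vector of the pair $(Z_1',Z_2')$'' is undefined; the pairing condition (ii) and the degree-decrease condition (iii) do not even parse, let alone hold. The paper avoids this by choosing $i$ so that every $P_\ell^{(j)}$ with $j>i$ is \emph{linear}; after the homotopy the replacement cycle $\widetilde Z_\ell'$ is a bona fide graph over the integral domain $\widetilde R_\ell=R_\ell^{(i)}$, hence integral, and $Z_\ell'=f_{\ell,*}^{i/0}(\widetilde Z_\ell')$ is automatically a positive multiple of an integral cycle by general properties of finite push-forward.

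A second, related difference: you build $C_{Z_\ell}$ directly over $X$, whereas the paper builds a small $1$-parameter homotopy $C_\ell\in z^1_{{\rm d}}(\Spec(R_\ell),2)$ over the intermediate ring $R_\ell=R_\ell^{(i-1)}$, forms the concatenation $\widetilde C_{Z_\ell}=Z_\ell^{(i-1)}\boxtimes C_\ell\boxtimes[c_\ell^{(i+1)}]\boxtimes\cdots\boxtimes[c_\ell^{(n)}]$ (valid because $c_\ell^{(j)}\in\widetilde R_\ell$ for $j>i$, again owing to linearity), and then sets $C_{Z_\ell}:=(-1)^i\pi_{\ell,*}(\widetilde C_{Z_\ell})$ using Lemma \ref{lem:fpf}. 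This push-forward packaging is what makes the admissibility check modular and the computation of $\partial C_{Z_\ell}$ clean. Your ``exhaustive face-by-face check'' would have to re-establish all of this from scratch in $n+1$ variables for a system that no longer has the triangular shape. Finally, you flag part (2) as the main obstacle, but with the paper's choice of $i$ and the reindexing so that $y_1$ is a common vanishing coordinate (which you also propose), the vanishing of order $\geq r$ propagates automatically through the concatenation, since $Z_\ell^{(i-1)}$ or $C_\ell$ retains the vanishing coordinate $y_1$; the real difficulty was always in part (1).
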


\begin{proof}
The argument below largely follows the outline of \cite[Lemma 4.4.6]{Park MZ} with some improvements and modifications. It may look a bit technical, but its basic insight still comes from \cite{Totaro}. We prove (1) first, and deduce (2) from it rather immediately at the end of the argument.

\medskip

Suppose that $0 \leq i \leq n$ is the \emph{smallest} integer such that for all $j > i$, we have $d_j = 1$. In case no member of $d_j$ is $1$, we let $i=n$. Since $(1, \cdots, 1)^t < \mathbf{d}$, we have at least $i \geq 1$.

\medskip

Since $i$ is fixed, to simplify the notations, we write $R_{\ell}$ for the coordinate ring $R_{\ell} ^{(i-1)}$ of $\overline{Z}_{\ell} ^{(i-1)}$, and write $\widetilde{R}_{\ell}$ for the coordinate ring $R_{\ell} ^{(i)}$ of $\overline{Z}_{\ell} ^{(i)}$. These are henselian local domains finite and flat over $A$ (Lemmas \ref{lem:henselian} and \ref{lem:basic Milnor 1}). We have the inclusion $R_{\ell} \hookrightarrow \widetilde{R}_{\ell}$ corresponding to the projection $f_{\ell} ^{ i/ (i-1)}: \overline{Z}_{\ell} ^{(i)} \to \overline{Z}_{\ell} ^{(i-1)}$. 

\medskip

By the given assumption that $d_j = 1$ for all $j > i$, the images $\overline{P}_{\ell} ^{(j)}$ of $P_{\ell} ^{(j)} (y_1, \cdots, y_n)$ in $\widetilde{R}_{\ell} [ y_{i+1}, \cdots, y_n]$ are given by
\begin{equation}\label{eqn:446}
\overline{P}_{\ell} ^{(j)} (y_{i+1}, \cdots y_j) = y_j - c_{\ell} ^{(j)},
\end{equation}
where we recall that $(-1)^{d_j} c_{\ell} ^{ (j)} = - c_{\ell} ^{(j)}$ is the constant term of $\overline{P}_{\ell} ^{(j)}$.

On the other hand, recall that $(-1)^{d_i} c_{\ell} ^{(i)}$ is the constant term of the polynomial $\overline{P}_{\ell} ^{(i)} \in R_{\ell} [ y_i]$, and we had $c_{\ell} ^{(i)} \in R_{\ell} ^{\times}$. We first introduce a scheme $C_{\ell}$ which will be used in the middle of our eventual construction of $C_{Z_{\ell}}$ that comes below.

Let $(y_i, y_i')$ be the coordinates of $\square^2$ in $\Spec (R_{\ell}) \times \square^2$, and consider the closed subscheme $C_{\ell} \subset \Spec (R_{\ell}) \times \square^2$ given by the polynomial (cf. \cite[Lemma 2]{Totaro})

$$
Q_{Z_{\ell}} (y_i, y_i'):= \overline{P}_{\ell} ^{(i)} (y_i) - (y_i -1)^{d_i -1} ( y_i - c_{\ell} ^{ (i)}) y_i ' \in R_{\ell} [ y_i, y_i'].
$$
Let $\overline{C}_{\ell}$ be its Zariski closure in $\Spec (R_{\ell}) \times \overline{\square}^2$. 

\medskip

We first check that $C_{\ell} \in z_{{\rm d}} ^1 (\Spec (R_{\ell}), 2)$. To do this, let's inspect its extended faces $\overline{C}_{\ell} \cap (\Spec (R_{\ell})\times \overline{F})$ for proper faces $F \subset \square^2$. Let $F \subset \square^2$ be a codimension $1$ face. Then by direct calculations, in the space $ \overline{\square}^2_{R_{\ell}}$ the extended face $\overline{C} _{\ell} \cap (\Spec (R_{\ell}) \times \overline{F})$ is equal to the following:
\begin{enumerate}
\item [(a)] ($F= \{ y_i = 0\}$:) $ = \{ y_i = 0 , (-1)^{d_i} c_{\ell} ^{(i)} (1- y_i') =0\} = \{ y_i = 0, y_i ' = 1 \}$.

\item [(b)] ($F= \{ y_i = \infty \}$:) $=^{\dagger} \{ y_i = \infty, 1- y_i ' = 0 \}$.

\item [(c)] ($F= \{ y_i ' = 0 \}$:) $=\{ \overline{P}_{\ell} ^{(i)} (y_i) = 0, y_i' = 0 \} $.

\item [(d)] ($F=\{ y_i ' = \infty \}$:) $=\{ ( y_i -1)^{d_i -1} ( y_i - c_{\ell} ^{(i)}) = 0, y_i ' = \infty \}  =  (d_i-1) \{ y_i = 1 , y_i ' = \infty \} + \{ y_i = c_{\ell} ^{ (i)}, y_{i}' = \infty \}$.

\end{enumerate}
where $\dagger$ uses that the polynomial $\overline{P}_{\ell} ^{(i)} (y_i) \in R_{\ell} [ y_i]$ is monic in $y_i$. When $F \subset \square^2$ is a codimension $2$ face, we can compute $\overline{C} _{\ell} \cap (\Spec (R_{\ell}) \times \overline{F})$ by taking the intersections of two of those in (a) $\sim$ (d), but one notes that they are all empty by inspection. In particular, we deduce that $C_{\ell}$ satisfies the condition $(GP)_*$.

\medskip

To check the condition $(SF)_*$ for $C_{\ell}$, we need to inspect $\overline{C}_{\ell} \cap ( \{ p\} \times \overline{F})$ for faces $F \subset \square^2$, where $p\in \Spec (R_{\ell})$ is the closed point, where we use Lemma \ref{lem:henselian} to see that there is a unique closed point. If $F \subset \square^2$ is of codimension $2$, by the above, we have $\overline{C}_{\ell} \cap ( \{ p\} \times \overline{F}) = \emptyset$. If $F \subset \square^2$ is of codimension $1$, by inspecting the extended faces in (a) $\sim$ (d), one deduces that the intersection is proper on $\overline{\square}^2_{R_{\ell}}$. 

It remains to check the case when $F= \square^2$. Note from (a) $\sim$ (d) that $\overline{C}_{\ell}$ has an extended codimension $1$ face that is already surjective over $X$ (e.g. for $\{ y_i' = \epsilon \}$), so that $\overline{C}_{\ell} \to X$ is surjective. Hence $\overline{C}_{\ell} \not \subset \{ p \} \times \overline{\square}^2$, in particular, the intersection $\overline{C}_{\ell} \cap ( \{ p \} \times \overline{\square}^2)$ is proper on $\overline{\square}^2_{R_{\ell}}$. Hence we checked that $C_{\ell}$ satisfies the condition $(SF)_*$, proving that $C_{\ell} \in z_{{\rm d}} ^1 (\Spec (R_{\ell}), 2)$.

\medskip

Once again from the calculations in (a) $\sim$ (d), restricted onto the open subscheme $\Spec (R_{\ell}) \times \square^2$, we deduce that 
$$
\partial C_{\ell} = - \{ \overline{P}_{\ell} ^{(i)} (y_i) = 0 \} + [c_{\ell} ^{(i)} ]  \in z^1 _{{\rm d}} (\Spec (R_{\ell}) , 1).
$$

\medskip

For $1 \leq j \leq n$, let $\alpha_{\ell} ^{(j)}$ be the image of $y_i$ in $A [ y_1, \cdots, y_n]/ \mathcal{I} (\overline{Z}_{\ell})$, where $ \mathcal{I} (\overline{Z}_{\ell})$ is the ideal of $\overline{Z}_{\ell}$. 

For $j>i$, since we supposed that $d_j = 1$, we have
$$
A [ y_1, \cdots, y_n]/ \mathcal{I} (\overline{Z}_{\ell}) = \widetilde{R}_{\ell},
$$
so that under this identification, $\alpha_{\ell} ^{(j)}$ coincides with $c_{\ell} ^{(j)}$ for $j > i$.

\medskip

Let $(Y_1, \cdots, Y_{i-1}, Y_i, Y_i', Y_{i+1}, \cdots, Y_n)$ be the coordinates of $\square^{n+1}$ in $\Spec (R_{\ell}) \times \square^{n+1}$. Define the cycle $\widetilde{C}_{Z_{\ell}} \in z^n _{{\rm d}} (\Spec (R_{\ell}), n+1)$ given by the system of polynomials
$$
\tuborg
P_{\ell} ^{(1)} (Y_1), \\
\vdots\\
P_{\ell} ^{(i-1)} (Y_1, \cdots, Y_{i-1}), \\
Q_{Z_{\ell}} (Y_i, Y_i'), \\
P_{\ell} ^{(i+1)} (Y_1, \cdots, Y_{i+1}), \\
\vdots\\
P_{\ell} ^{(n)} (Y_1, \cdots, Y_n)
\sluttuborg
$$
in $R_{\ell} [ Y_1, \cdots, Y_{i-1}, Y_i, Y_i', Y_{i+1}, \cdots, Y_n]$. Taking \eqref{eqn:446} into account, we can express $\widetilde{C}_{Z_{\ell}}$ in terms of the concatenation notations as
$$
\widetilde{C}_{Z_{\ell}} = Z_{\ell} ^{(i-1)} \boxtimes C_{\ell} \boxtimes [ c_{\ell} ^{(i+1)}] \boxtimes \cdots \boxtimes [ c_{\ell} ^{(n)}].
$$
For the finite surjective morphism $\pi_{\ell}= f_{\ell} ^{ (i-1)/ 0}: \Spec (R_{\ell}) \to X$, and the associated push-forward map $\pi_{\ell, *}$ as in Lemma \ref{lem:fpf}-(1), define
$$
C_{Z_{\ell}} := (-1)^i \pi_{\ell, *} \left( \widetilde{C}_{Z_{\ell}} \right) \in z^n _{{\rm d}} (X, n+1).
$$

Since the boundary operators $\partial$ commute with the push-forward operators (Lemma \ref{lem:fpf}), the above allows us to compute $\partial C_{Z_{\ell}}$ by a straightforward calculation, namely
\begin{equation}\label{eqn:process final}
\partial C_{Z_{\ell}} = (-1)^i \pi_{\ell, *} ( \partial \widetilde{C}_{Z_{\ell}}) = Z_{\ell} -f_{\ell, *} ^{ i/0} ( \widetilde{Z}_{\ell}'),
\end{equation}
where $\widetilde{Z}_{\ell} \subset \Spec ( \widetilde{R}_{\ell}) \times \square^n$ is defined by the polynomials in $\widetilde{R}_{\ell} [ Y_1, \cdots, Y_n]$ 
$$
\widetilde{Z}_{\ell}' : \ \ \{ Y_1 - \alpha_{\ell} ^{(1)}, \cdots, Y_{i-1} - \alpha_{\ell} ^{ (i-1)}, Y_i - c_{\ell} ^{(i)}, \cdots, Y_n - c_{\ell} ^{ (n)} \}.
$$
Since all $\alpha_{\ell} ^{ (j)}$ for $j<i$ and $c_{\ell} ^{ (j)}$ for $j \geq i$ are in $\widetilde{R}_{\ell}$, the above is an integral cycle. Hence, letting $Z_{\ell}':= f_{\ell, *} ^{i/0} (\widetilde{Z}_{\ell} ')$, the equation \eqref{eqn:process final} gives for $\ell = 1,2$
$$
\partial C_{Z_{\ell}} = Z_{\ell} - Z_{\ell} ',
$$
proving (1)-(i).

\medskip

Let's check that the cycles $Z_{\ell}'$ satisfy the remaining desired properties. By definition this is a positive integer multiple of an integral cycle, so we can define their degree vectors. Since $Z_1 \sim_{I^{m+1}} Z_2$, by Lemma \ref{lem:4.4.4} we know that $Z_1 ^{(i-1)} \sim_{I^{m+1}} Z_2 ^{(i-1)}$ so that $c_1 ^{(i)} \equiv c_2 ^{(i)}$ in $R_1/ I^{m+1} R_1 = R_2 / I^{m+1} R_2$. We also already know that $Z_1 ^{(i)} \sim_{I^{m+1}} Z_2 ^{(i)}$ so that $c_1 ^{(j)} \equiv c_2 ^{(j)}$ in $\widetilde{R}_1/ I^{m+1} \widetilde{R}_1 = \widetilde{R}_2 / I^{m+1} \widetilde{R}_2$. Hence $Z_1' \sim_{I^{m+1}} Z_2'$, proving (1)-(ii).

In particular, $Z_1', Z_2'$ have the common degree vector $\mathbf{d}' = (d_1 ', \cdots, d_n')$, where $d_j' = d^{ j/(n-1)} (Z_{\ell}')$ for both $\ell = 1,2$. 

Since $c_{\ell} ^{(i)} \in R_{\ell}$, and $(Z_{\ell}') ^{(i)} = Z_{\ell} ^{ (i-1)} \boxtimes [ c_{\ell} ^{(i)}]$, the coordinate rings of $(\overline{Z}_{\ell}' )^{(i)}$ and $(\overline{Z}_{\ell}')^{(i-1)}$ are both $R_{\ell}$. Thus $d^{i/(i-1)} (Z_{\ell} ') = d_i = 1$. Here, the degrees $d_j$ in $\mathbf{d}$ with $j<i$ do not change, while by the minimality of $i$, we have $d_i \not = 1$. Hence we have
$$\mathbf{d}' < \mathbf{d}$$
in the lexicographic order on $\mathbb{N}^n$, proving (1)-(iii), and we have the desired cycles $C_{Z_{\ell}}$ and $Z_{\ell}'$ for $\ell = 1,2$.

\medskip

For (2), we now suppose $Z_{\ell}$ for $\ell = 1,2$ are strict vanishing cycles of order $ \geq r$. Since $Z_{\ell}$ have vanishing coordinates and they must match up as they are mod $I^{m+1}$-equivalent, after reindexing the variables, we may assume that $y_1$ is a common vanishing coordinate such that $P_{\ell} ^{(1)} (y_1) = 0$ modulo $I^{m+1}$ has only $y_1 = 1$ as the solutions for $\ell = 1, 2$, so that the cycles $\widetilde{C}_{Z_{\ell}}$ are automatically strict vanishing cycles of order $\geq r$.

Thus being push-forwards of strict vanishing cycles of order $\geq r$, we deduce from Lemma \ref{lem:fpf} that $C_{Z_{\ell}} \in z^n _{{\rm v}, \geq r} (X, n+1)$. This proves (2).
\end{proof}

\medskip

Now we get to:

\begin{proof}[Proof of Proposition \ref{prop:strong graph}]
For the given pair $(Z_1, Z_2)$, with $\mathbf{d} > (1, \cdots, 1)^t$ in $\mathbb{N}^n$, we can apply Lemma \ref{lem:4.4.6} repeatedly. Here, the set $\mathbb{N}^n$ is a well-ordered set with the lexicographic order, so after a finite number of applications of the Lemma \ref{lem:4.4.6}, we will get to a pair of mod $I^{m+1}$-equivalent graph cycles $(Z_1', Z_2')$. 

Thus, taking the sum of the all relations, after the cancellations of all the intermediate cycles whose degree vectors between $(1, \cdots, 1)^t$ and $\mathbf{d}$, we eventually obtain $\partial C_{\ell} = Z_{\ell} - Z_{\ell}'$ for $\ell = 1,2$, with the degree vectors of the pair $(Z_1', Z_2')$ are both equal to $(1, \cdots, 1)^t$ and $Z_1' \sim_{I^{m+1}} Z_2'$. This finishes the proof.
\end{proof}

\subsection{The comparison isomorphisms}\label{sec:conseq strong graph}
We continue to suppose that $X= \Spec (A)$ is an integral regular henselian local $k$-scheme of dimension $1$ with the residue field $k= A/I$, and the function field $\mathbb{F}= {\rm Frac} (A)$. We now finish the proof of Theorem \ref{thm:intro main v d}.

\begin{cor}\label{cor:gr_n surj}
 $gr_{{\rm d}}: \widehat{K}_n ^M (A) \to \CH^n_{{\rm d}} (X, n)$ of \eqref{eqn:graph final 1} is surjective. 
\end{cor}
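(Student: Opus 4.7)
The plan is to deduce surjectivity directly from Proposition \ref{prop:strong graph}, using the trivial observation that any integral cycle is mod $I^{m+1}$-equivalent to itself. Since both the source $\widehat{K}^M_n(A)$ and the target $\CH^n_{{\rm d}}(X, n)$ are abelian groups and $gr_{{\rm d}}$ is $\mathbb{Z}$-linear, it suffices to show that the class $[Z]$ of every integral cycle $Z \in z^n_{{\rm d}}(X, n)$ lies in the image of $gr_{{\rm d}}$.

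First, I would take such an integral $Z$, fix any integer $m \geq 1$, and observe that the pair $(Z_1, Z_2) := (Z, Z)$ is tautologically mod $I^{m+1}$-equivalent in the sense of Definition \ref{defn:mod t^{m+1}}. Applying Proposition \ref{prop:strong graph}-(1) to this pair produces graph cycles $Z_1', Z_2' \in z^n_{{\rm d}}(X, n)$ (each a positive integer combination of integral graph cycles $\Gamma_{(a_1, \ldots, a_n)}$ with $a_i \in A^{\times}$, by the construction in Lemma \ref{lem:4.4.6}) together with cycles $W_1, W_2 \in z^n_{{\rm d}}(X, n+1)$ satisfying $Z - Z_i' = \partial W_i$ for $i = 1, 2$. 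Taking $i = 1$ gives the equality $[Z] = [Z_1']$ in $\CH^n_{{\rm d}}(X, n)$.

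Next, I would invoke Proposition \ref{prop:gr mult n infty} (more precisely, the construction of $gr_{{\rm d}}$ in Corollary \ref{cor:graph final}) together with Lemma \ref{lem:graph adm 0} to see that each integral graph cycle $\Gamma_{(a_1, \ldots, a_n)}$ appearing as a summand of $Z_1'$ is exactly the image under $gr_{{\rm d}}$ of the Milnor symbol $\{a_1, \ldots, a_n\} \in \widehat{K}^M_n(A)$. Consequently, $[Z_1']$ lies in the image of $gr_{{\rm d}}$, and therefore so does $[Z]$. Extending $\mathbb{Z}$-linearly yields the surjectivity of $gr_{{\rm d}}$.

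Since the serious technical work, namely the inductive "reduction of degree" argument, has already been carried out in Lemma \ref{lem:4.4.6} and packaged into Proposition \ref{prop:strong graph}, there is no genuine new obstacle here: the corollary is essentially immediate from the existing machinery. If anything, the only minor point to verify is that the Proposition, although formulated for a pair of distinct cycles, applies equally well to the diagonal pair $(Z, Z)$, which is evident from its statement and proof.
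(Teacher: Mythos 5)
Your proposal is correct and follows the paper's own proof essentially verbatim: apply Proposition \ref{prop:strong graph}-(1) to the diagonal pair $(Z, Z)$ (using the trivial fact that $Z \sim_{I^{m+1}} Z$), obtain a graph cycle $Z'$ with $[Z] = [Z']$ in $\CH^n_{{\rm d}}(X, n)$, and observe that graph cycles lie in the image of $gr_{{\rm d}}$ by construction. The extra detail you supply — citing Lemma \ref{lem:graph adm 0} and Corollary \ref{cor:graph final} to verify that each $\Gamma_{(a_1,\ldots,a_n)}$ is indeed $gr_{{\rm d}}(\{a_1,\ldots,a_n\})$ — is sound and merely spells out what the paper leaves implicit.
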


\begin{proof}
Let $Z \in z^n_{{\rm d}} (X, n)$ be an integral cycle. We apply the Proposition \ref{prop:strong graph}-(1) with $Z_1 = Z_2 = Z$. Then $Z$ is equivalent to a graph cycle $Z' \in z^n_{{\rm d}} (X, n)$ modulo a boundary, so that the classes of $Z$ and $Z'$ in $\CH^n_{{\rm d}} (X, n)$ are equal. However, the latter belongs to the image of $gr_{{\rm d}}$ by definition. Thus $gr_{{\rm d}}$ is surjective.
\end{proof}

\begin{cor}\label{cor:gr_n surj m}
$gr_{{\rm d}}: \widehat{K}_n ^M (A/I^{m+1}) \to \CH^n_{{\rm d}} (X/ (m+1), n)$ of \eqref{eqn:graph final 3} is surjective.
\end{cor}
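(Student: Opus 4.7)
The plan is to deduce this surjectivity from the already-established Corollary \ref{cor:gr_n surj} at the level of $X$ together with a commutative diagram relating the two graph maps. Concretely, I would consider the square
\[
\xymatrix{
\widehat{K}_n^M(A) \ar[r]^-{gr_{{\rm d}}} \ar[d]_{\pi_{m+1}} & \CH^n_{{\rm d}}(X, n) \ar[d]^{s_m^*} \\
\widehat{K}_n^M(A/I^{m+1}) \ar[r]^-{gr_{{\rm d}}} & \CH^n_{{\rm d}}(X/(m+1), n),
}
\]
where the vertical maps are the canonical reductions (the right one being supplied by Lemma \ref{lem:reduction mod m}, and the left one induced by $A \twoheadrightarrow A/I^{m+1}$). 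Commutativity of this square follows from the very construction of $gr_{{\rm d}}$ in Proposition \ref{prop:gr mult n lv m} and Corollary \ref{cor:graph final}: the graph cycle attached to a Milnor symbol $\{a_1,\dots,a_n\}$ only depends on the classes of $a_i$ modulo $I^{m+1}$ up to the naive mod $I^{m+1}$-equivalence.

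Next I would verify that the two vertical arrows are surjective. The right arrow $s_m^*$ is surjective by construction, since $\CH^n_{{\rm d}}(X/(m+1), n)$ is presented as a quotient of $z_{{\rm d}}^n(X, n)$ by $\partial z_{{\rm d}}^n(X, n+1) + \mathcal{N}^n(m+1)$. For the left arrow, since $A$ is local, the natural map $A^\times \twoheadrightarrow (A/I^{m+1})^\times$ is surjective (see, e.g., \cite[Lemma 5.2]{HP}), so $K_n^M(A) \twoheadrightarrow K_n^M(A/I^{m+1})$ is surjective by multilinearity; and for local rings $K_n^M \twoheadrightarrow \widehat{K}_n^M$ is surjective by Theorem \ref{thm:Khat_univ}(1), whence $\pi_{m+1}\colon \widehat{K}_n^M(A) \twoheadrightarrow \widehat{K}_n^M(A/I^{m+1})$ is surjective as well.

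Finally, by Corollary \ref{cor:gr_n surj} the top graph map is surjective. Composing with $s_m^*$, the composite $s_m^* \circ gr_{{\rm d}}\colon \widehat{K}_n^M(A) \to \CH^n_{{\rm d}}(X/(m+1), n)$ is therefore surjective. By commutativity of the square this composite coincides with $gr_{{\rm d}} \circ \pi_{m+1}$, which forces the bottom graph map $gr_{{\rm d}}\colon \widehat{K}_n^M(A/I^{m+1}) \to \CH^n_{{\rm d}}(X/(m+1), n)$ to be surjective, as desired. There is no genuine obstacle here; the entire argument is a short diagram chase whose nontrivial content is concentrated in the already proven Corollary \ref{cor:gr_n surj}, which in turn rests on the reduction to graph cycles carried out in Lemma \ref{lem:4.4.6} and Proposition \ref{prop:strong graph}.
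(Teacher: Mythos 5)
Your proof is correct and is essentially the paper's argument: the same commutative square, with surjectivity deduced from Corollary \ref{cor:gr_n surj} together with surjectivity of the canonical reduction maps. The only difference is that you spell out the surjectivity of $\widehat{K}_n^M(A) \to \widehat{K}_n^M(A/I^{m+1})$ in detail (which the paper leaves implicit under the phrase ``canonical surjections''), and in fact that step is not even needed for the diagram chase since the surjectivity of $s_m^* \circ gr_{{\rm d}} = gr_{{\rm d}} \circ \pi_{m+1}$ already forces the bottom $gr_{{\rm d}}$ to be surjective.
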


\begin{proof}
It follows from the commutativity of the diagram
$$
\xymatrix{ \widehat{K}_n ^M (A) \ar[rr] \ar[d] & & \CH_{{\rm d}} ^n (X, n) \ar[d] \\
\widehat{K}_n ^M (A/I^{m+1}) \ar[rr] & & \CH_{{\rm d}} ^n (X/ (m+1), n),}
$$
where the vertical maps are the canonical surjections, and the top horizontal map is surjective by Corollary \ref{cor:gr_n surj}. 
\end{proof}

\begin{thm}\label{thm:gr_n iso summary}
For the map $\phi_{{\rm d}} : \CH_{{\rm d}} ^n (X, n) \to K_n ^M (\mathbb{F})$ in \eqref{eqn:phi d 0}, we have:
\begin{enumerate}
\item The image of $\phi_{{\rm d}}$ belongs to the image of the injective homomorphism $\widehat{K}_n ^M (A)\to \widehat{K}_n ^M (\mathbb{F})$ induced by $A \hookrightarrow \mathbb{F}$, so that we can write $\phi_{{\rm d}}$ as
\begin{equation}\label{eqn:phi gr final}
\phi_{{\rm d}}: \CH^n_{{\rm d}} (X, n) \to \widehat{K}_n ^M (A).
\end{equation}
\item The above $\phi_{{\rm d}}$ in \eqref{eqn:phi gr final} satisfies $\phi_{{\rm d}} \circ gr_{{\rm d}} = {\rm Id}$. In particular $gr_{{\rm d}}$ is an isomorphism.

\item When $|k| > M_n$, $gr_{{\rm d}}: K^M_n (A) \to \CH_{{\rm d}} ^n (X, n)$ is also an isomorphism.
\end{enumerate}
\end{thm}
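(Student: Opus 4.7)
The plan is to deduce all three assertions from the reduction-to-graph-cycles established in Proposition \ref{prop:strong graph}, the image computation in Lemma \ref{lem:regulator graph}, the surjectivity in Corollary \ref{cor:gr_n surj}, and the general properties of $\widehat{K}_n ^M$ recalled in Theorem \ref{thm:Khat_univ}. The core work has already been done in the earlier sections; here the task is essentially an assembly argument, and the main subtlety lies in combining the \emph{a priori} target $\widehat{K}_n ^M (\mathbb{F})$ of $\phi_{{\rm d}}$ with the Gersten injectivity to land inside $\widehat{K}_n ^M (A)$.

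For (1), I would argue as follows. Let $Z \in z^n _{{\rm d}} (X, n)$ be an integral cycle. Applying Proposition \ref{prop:strong graph}-(1) with $Z_1 = Z_2 = Z$ yields a graph cycle $Z' = \Gamma_{(a_1, \ldots, a_n)}$ and a cycle $W \in z^n _{{\rm d}} (X, n+1)$ with $Z - Z' = \partial W$. Since $\phi_{{\rm d}}$ vanishes on $\partial z^n_{{\rm d}}(X, n+1)$ by \eqref{eqn:phi d 0}, we obtain $\phi_{{\rm d}} (Z) = \phi_{{\rm d}} (Z')$, which by Lemma \ref{lem:regulator graph} equals the symbol $\{a_1, \ldots, a_n\} \in \widehat{K}_n ^M (\mathbb{F})$. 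This symbol is manifestly the image under $\widehat{K}_n ^M (A) \to \widehat{K}_n ^M (\mathbb{F})$ of the corresponding symbol in $\widehat{K}_n ^M (A)$, and by the Gersten conjecture for $\widehat{K}_n^M$ (Theorem \ref{thm:Khat_univ}-(4)) this map is injective. So $\phi_{{\rm d}}(Z)$ has a unique preimage in $\widehat{K}_n ^M (A)$, giving the factorization \eqref{eqn:phi gr final}.

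For (2), Lemma \ref{lem:regulator graph} gives $\phi_{{\rm d}} (gr_{{\rm d}} (\{a_1, \ldots, a_n\})) = \phi_{{\rm d}} (\Gamma_{(a_1, \ldots, a_n)}) = \{a_1, \ldots, a_n\}$ for every Milnor symbol, so $\phi_{{\rm d}} \circ gr_{{\rm d}}$ coincides with the identity on the image of $K_n ^M (A) \to \widehat{K}_n ^M (A)$. Since this image is all of $\widehat{K}_n ^M (A)$ by Theorem \ref{thm:Khat_univ}-(1), the composition is the identity on $\widehat{K}_n ^M (A)$. Combined with the surjectivity of $gr_{{\rm d}}$ from Corollary \ref{cor:gr_n surj}, both $gr_{{\rm d}}$ and $\phi_{{\rm d}}$ are mutually inverse isomorphisms. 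Finally for (3), when $|k| > M_n$ the canonical map $K_n ^M (A) \to \widehat{K}_n ^M (A)$ is an isomorphism by Theorem \ref{thm:Khat_univ}-(2)(c) (applied to the local ring $A$, whose residue field is $k$), so composing with the isomorphism of (2) proves that $gr_{{\rm d}}: K_n^M(A) \to \CH_{{\rm d}}^n(X,n)$ is an isomorphism as well. The heavy lifting has already been done in Proposition \ref{prop:strong graph}, so the only remaining conceptual input here is the Gersten-type injectivity used in (1).
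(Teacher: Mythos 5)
Your proof is correct and takes essentially the same route as the paper: reduction of an arbitrary $Z$ to a graph cycle via Proposition \ref{prop:strong graph}, combined with Lemma \ref{lem:regulator graph}, Corollary \ref{cor:gr_n surj}, and the Gersten injectivity from Theorem \ref{thm:Khat_univ}. The only cosmetic difference is that you unpack Corollary \ref{cor:gr_n surj} inline in step (1) and make the use of surjectivity explicit in step (2), where the paper simply cites the corollary and leaves the latter implicit.
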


We remark again that the injectivity of $\widehat{K}_n ^M (A)\to \widehat{K}_n ^M (\mathbb{F})$ holds by the Gersten conjecture for $\widehat{K}_n ^M$ proven by M. Kerz \cite{Kerz finite}.

\begin{proof}
(1) By Corollary \ref{cor:gr_n surj}, the map $gr_{{\rm d}}: K_n ^M (A) \to \CH^n_{{\rm d}} (X, n)$ is surjective. In particular, $\CH_{{\rm d}} ^n (X, n)$ is generated by the graph cycles of the form $Z=\Gamma_{(a_1, \cdots, a_n)}$ for $a_i \in A^{\times}$. However, for such cycles, by Lemma \ref{lem:regulator graph}, we know that $\phi_{{\rm d}} (Z)$ belongs to the image of $\widehat{K}_n ^M (A) \to \widehat{K}_n ^M (\mathbb{F})$. Thus ${\rm im} (\phi_{{\rm d}}) = \widehat{K}^M_n (A)$.

\medskip

(2) is immediate by checking at the symbols $\{ a_1, \cdots, a_n \} \in \widehat{K}_n ^M (A)$ for $a_i \in A^{\times}$.

\medskip

(3) This holds because $K^M _n (A) = \widehat{K}^M_n (A)$ when $|k| > M_n$ by Theorem \ref{thm:Khat_univ}.
\end{proof}

We now go modulo $I^{m+1}$:

\begin{thm}\label{thm:local main 1} %\label{thm:multiplicative main}
Let $k$ be an arbitrary field. Let $X= \Spec (A)$ be an integral regular henselian local $k$-scheme of dimension $1$ with the residue field $k= A/I$. Let $n, m\geq 1$ be integers. Then:

\begin{enumerate}
\item The graph homomorphisms
\begin{equation}\label{eqn:local main 1 0}
gr_{{\rm d}}:  \widehat{K}_n ^M (A/I^{m+1}) \overset{\simeq}{\to} \CH^n_{{\rm d}}( X/ (m+1), n),
\end{equation}
are isomorphisms, where we recall that $A/ I^{m+1} \simeq k_{m+1}$.
\item In case $|k|> M_n$, the homomorphisms
$$
gr_{{\rm d}}:  {K}_n ^M (A/I^{m+1}) \overset{\simeq}{\to} \CH^n_{{\rm d}}( X/ (m+1), n),
$$
are also isomorphisms.
\end{enumerate}

\end{thm}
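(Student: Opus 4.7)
The plan is to construct an inverse to $gr_{{\rm d}}$ by descending the regulator $\phi_{{\rm d}}$ of Theorem \ref{thm:gr_n iso summary} to the quotient group on the right of \eqref{eqn:CHd N}. Let $\pi: \widehat{K}_n^M(A) \twoheadrightarrow \widehat{K}_n^M(A/I^{m+1})$ be the canonical surjection. I claim the composition $\pi \circ \phi_{{\rm d}}: z^n_{{\rm d}}(X, n) \to \widehat{K}_n^M(A/I^{m+1})$ vanishes on $\partial z^n_{{\rm d}}(X, n+1) + \mathcal{N}^n(m+1)$, where $\mathcal{N}^n(m+1) = \mathcal{M}^n(m+1, n)$ by Lemma \ref{lem:comparison mod}. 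Vanishing on the boundary subgroup is the reciprocity already built into the construction of $\phi_{{\rm d}}$ in Theorem \ref{thm:gr_n iso summary}.

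The vanishing on $\mathcal{N}^n(m+1)$ is the heart of the argument, and it rests entirely on Proposition \ref{prop:strong graph}. Given a generator $Z_1 - Z_2 \in \mathcal{N}^n(m+1)$ with $Z_1, Z_2 \in z^n_{{\rm d}}(X, n)$ integral and $Z_1 \sim_{I^{m+1}} Z_2$, the proposition produces graph cycles $Z_\ell' = c_\ell \cdot \Gamma_{(a_1^{(\ell)}, \ldots, a_n^{(\ell)})}$ (positive integer multiples of single integral graph cycles) still satisfying $Z_1' \sim_{I^{m+1}} Z_2'$, together with $W_\ell \in z^n_{{\rm d}}(X, n+1)$ with $Z_\ell - Z_\ell' = \partial W_\ell$. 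The boundary vanishing gives $\phi_{{\rm d}}(Z_\ell) = \phi_{{\rm d}}(Z_\ell') = c_\ell \{a_1^{(\ell)}, \ldots, a_n^{(\ell)}\}$ by Lemma \ref{lem:regulator graph}. The mod $I^{m+1}$-equivalence of the two graph cycles forces $c_1 = c_2$ and $a_i^{(1)} \equiv a_i^{(2)} \pmod{I^{m+1}}$ for each $i$, so the two Milnor symbols become equal after $\pi$. Hence $\pi(\phi_{{\rm d}}(Z_1 - Z_2)) = 0$ in $\widehat{K}_n^M(A/I^{m+1})$.

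This yields a well-defined homomorphism $\bar{\phi}_{{\rm d}}: \CH^n_{{\rm d}}(X/(m+1), n) \to \widehat{K}_n^M(A/I^{m+1})$. Evaluating on Milnor symbols $\{\bar{a}_1, \ldots, \bar{a}_n\}$, with the $\bar{a}_i \in (A/I^{m+1})^\times$ lifted to $a_i \in A^\times$ as in the proof of Proposition \ref{prop:gr mult n lv m}, Lemma \ref{lem:regulator graph} together with the construction of $gr_{{\rm d}}$ give $\bar{\phi}_{{\rm d}} \circ gr_{{\rm d}} = \id$. Combined with the surjectivity of $gr_{{\rm d}}$ from Corollary \ref{cor:gr_n surj m}, this proves (1). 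For (2), under the assumption $|k| > M_n$, Theorem \ref{thm:Khat_univ}-(2)(c) gives $K_n^M(A/I^{m+1}) = \widehat{K}_n^M(A/I^{m+1})$, since the residue field of the local ring $A/I^{m+1}$ is $k$, so (2) reduces to (1).

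The main obstacle is precisely the descent of $\phi_{{\rm d}}$ through $\mathcal{N}^n(m+1)$, resolved by the simultaneous reduction to graph cycles for a pair of mod $I^{m+1}$-equivalent cycles in Proposition \ref{prop:strong graph}. Without that simultaneous preservation of the equivalence, one could only reduce individual cycles to graphs separately and lose the comparability of their images under $\phi_{{\rm d}}$, and the descent would break down.
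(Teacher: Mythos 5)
Your proof is correct and takes essentially the same route as the paper: surjectivity from Corollary \ref{cor:gr_n surj m}, descent of $\phi_{{\rm d}}$ to $\CH^n_{{\rm d}}(X/(m+1), n)$ via Proposition \ref{prop:strong graph} (with the identification $\mathcal{M}^n(m+1,n)=\mathcal{N}^n(m+1)$ from Lemma \ref{lem:comparison mod}), verification that the composite with $gr_{{\rm d}}$ is the identity, and part (2) via Theorem \ref{thm:Khat_univ}. You spell out the descent step in more detail than the paper's terse citation of Proposition \ref{prop:strong graph} and Theorem \ref{thm:gr_n iso summary}, but the underlying mechanism is identical.
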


\begin{proof}
(1) By Corollary \ref{cor:gr_n surj m}, the map $gr_{{\rm d}}$ of \eqref{eqn:local main 1 0} is surjective. On the other hand, Proposition \ref{prop:strong graph} and Theorem \ref{thm:gr_n iso summary} imply that we have $\phi_{{\rm d}}: \CH^n _{{\rm d}} (X/(m+1), n) \to K_n ^M (A/I^{m+1})$.

The composite $\phi_{{\rm d}}\circ gr_{{\rm d}}$ is the identity on $\widehat{K}_n ^M (A/I^{m+1})$. This proves that $gr_{{\rm d}}$ of \eqref{eqn:local main 1 0} is an isomorphism.

\medskip

(2) This holds because $K_n ^M (A) = \widehat{K}_n ^M (A)$ when $|k|> M_n$ by Theorem \ref{thm:Khat_univ}.
\end{proof}

We deduce the relative version of Theorems \ref{thm:gr_n iso summary} and  \ref{thm:local main 1} as well, using Proposition \ref{prop:strong graph}:

\begin{thm}\label{thm:local main 2}
Let $k$ be an arbitrary field. Let $X= \Spec (A)$ be an integral regular henselian local $k$-scheme of dimension $1$ with the residue field $k= A/I$. Let $n, m, r\geq 1$ be integers.  

\begin{enumerate}
\item The graph homomorphisms
$$
gr_{{\rm v}, \geq r } : \widehat{K}_n ^M (A, I^r)  \overset{\simeq}{\to} \CH^n_{{\rm v}, \geq r} (X, n)
$$
are isomorphisms. If $1 \leq r \leq m +1$, then
$$
gr_{{\rm v}, \geq r}:  \widehat{K}_n ^M (A/I^{m+1}, I^r) \overset{\simeq}{\to} \CH^n_{{\rm v}, \geq r}( X/ (m+1), n),
$$
are also isomorphisms.

\item If $|k|> M_n$, then the above holds with $\widehat{K}_n ^M$ replaced by $K_n ^M$.
\end{enumerate}

\end{thm}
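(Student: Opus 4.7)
The plan is to mirror the proof of Theorem \ref{thm:local main 1}, but now systematically leveraging Proposition \ref{prop:strong graph}-(2), which is designed precisely to preserve the $({\rm v}, \geq r)$-condition throughout the degree-reduction process. The surjectivity of $gr_{{\rm v}, \geq r}$ on the absolute level will proceed as follows: given an integral cycle $Z \in z^n_{{\rm v}, \geq r}(X, n)$, apply Proposition \ref{prop:strong graph}-(2) with $Z_1 = Z_2 = Z$ to produce a positive integer multiple of a graph cycle $Z' = \Gamma_{(a_1, \ldots, a_n)}$ together with $W \in z^n_{{\rm v}, \geq r}(X, n+1)$ such that $Z - Z' = \partial W$. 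Since $Z'$ lies in $z^n_{{\rm v}, \geq r}(X, n)$, at least one $a_i$ must satisfy $a_i \equiv 1 \bmod I^r$, so by Lemma \ref{lem:KS hat} the symbol $\{a_1, \ldots, a_n\}$ lives in $\widehat{K}_n^M(A, I^r)$ and maps to $\Gamma_{(a_1, \ldots, a_n)}$ under $gr_{{\rm v}, \geq r}$.

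For injectivity on the absolute level, I would construct the inverse by restricting $\phi_{{\rm d}}$ from Theorem \ref{thm:gr_n iso summary}. Surjectivity onto graph cycles together with Lemma \ref{lem:regulator graph} shows $\phi_{{\rm d}}$ sends a representative graph cycle $\Gamma_{(a_1,\ldots,a_n)}$ (with some $a_i \equiv 1 \bmod I^r$) to $\{a_1, \ldots, a_n\} \in \widehat{K}_n^M(A, I^r)$, yielding a well-defined $\phi_{{\rm v}, \geq r}: \CH^n_{{\rm v}, \geq r}(X, n) \to \widehat{K}_n^M(A, I^r)$. The composition $\phi_{{\rm v}, \geq r} \circ gr_{{\rm v}, \geq r}$ is then the identity (checked on Kato--Saito generators from Lemma \ref{lem:KS hat}), and combined with the surjectivity step we obtain the first isomorphism.

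For the mod $I^{m+1}$ version, given $Z_1 \sim_{I^{m+1}} Z_2$ in $z^n_{{\rm v}, \geq r}(X, n)$, I apply Proposition \ref{prop:strong graph} in its full pair-version to obtain graph cycles $Z'_1, Z'_2 \in z^n_{{\rm v}, \geq r}(X, n)$ with $Z'_1 \sim_{I^{m+1}} Z'_2$ and $Z_i - Z'_i = \partial W_i$ for $W_i \in z^n_{{\rm v}, \geq r}(X, n+1)$. Writing $Z'_\ell = \Gamma_{(a_{\ell,1}, \ldots, a_{\ell,n})}$, the mod $I^{m+1}$-equivalence forces $a_{1,i} \equiv a_{2,i} \bmod I^{m+1}$ for all $i$ (after matching up the vanishing coordinates), and hence the images in $\widehat{K}_n^M(A/I^{m+1}, I^r)$ agree. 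Thus both $gr_{{\rm v}, \geq r}$ and $\phi_{{\rm v}, \geq r}$ descend to the mod $I^{m+1}$ level and remain mutual inverses. Part (2), concerning the non-improved Milnor $K$-theory $K_n^M$, follows at once from Theorem \ref{thm:Khat_univ}-(2)(c), which identifies $K_n^M(R) = \widehat{K}_n^M(R)$ whenever the residue field of $R$ has cardinality exceeding $M_n$, an identification that passes to relative subgroups.

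The main obstacle is not in the structural outline above but in guaranteeing that the pair-version of the reduction argument (Proposition \ref{prop:strong graph}-(2)) simultaneously preserves the $({\rm v}, \geq r)$-condition for \emph{both} cycles of the pair while also respecting the mod $I^{m+1}$-equivalence relation. This is precisely the content built into Proposition \ref{prop:strong graph}-(2): the intermediate cycles $C_{Z_\ell}$ produced by the Totaro-style construction using the common vanishing coordinate are themselves strict vanishing cycles of order $\geq r$, because the common vanishing coordinate for the pair persists throughout the construction. So the argument reduces to carefully tracking these structural properties through the chain of reductions, and no genuinely new geometric input is required beyond what has already been established.
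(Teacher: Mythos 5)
Your proposal is correct and follows essentially the same route the paper takes: Theorem \ref{thm:local main 2} is stated in the paper without a separate proof, being prefaced only by the remark that it is deduced from Theorems \ref{thm:gr_n iso summary} and \ref{thm:local main 1} using Proposition \ref{prop:strong graph}, and your argument spells out exactly that deduction (surjectivity via Proposition \ref{prop:strong graph}-(2) with $Z_1 = Z_2 = Z$, construction of $\phi_{{\rm v},\geq r}$ as the restriction of $\phi_{{\rm d}}$, descent modulo $\mathcal{M}^n_{{\rm v},\geq r}(m+1,n)$ via the pair version of Proposition \ref{prop:strong graph}, and part (2) via Theorem \ref{thm:Khat_univ}).
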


\section{The Bass-Tate-Kato norms for the Artin $k$-algebras}\label{sec:applications}

We deduce some consequences of Theorems \ref{thm:gr_n iso summary}, \ref{thm:local main 1} and \ref{thm:local main 2}.

\subsection{The short exact sequences}

\begin{cor}\label{cor:ses final}
Let $k$ be an arbitrary field. Let $X= \Spec (A)$ be an integral regular henselian local $k$-scheme of dimension $1$ with the residue field $k = A/I$. Let $m, n, r \geq 1$ be integers. Then:

\begin{enumerate}
\item We have the short exact sequence
$$
0 \to \CH^n _{{\rm v}} (X, n) \to \CH^n_{{\rm d}} (X, n) \to \CH^n (k, n) \to 0.
$$
\item For $1 \leq r \leq m+1$, we have the short exact sequences
$$
 0 \to \CH^n_{{\rm v}, \geq r} (X/ (m+1), n) \to \CH^n_{{\rm d}} (X/ (m+1), n) \to \CH^n_{{\rm d}} (X/r, n) \to 0.
 $$
\end{enumerate}
\end{cor}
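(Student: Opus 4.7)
The plan is to transport both sequences from the corresponding tautological short exact sequences of improved Milnor $K$-groups through the isomorphisms of Theorems~\ref{thm:gr_n iso summary}, \ref{thm:local main 1}, and \ref{thm:local main 2}, together with the Nesterenko-Suslin-Totaro identification of Theorem~\ref{thm:NST}. The content that is specific to cycles is not the existence of the sequence, but the verification that the natural cycle-theoretic maps (inclusion of ${\rm v}$-cycles; specialization ${\rm ev}_p$; the mod $I^r$ reduction of Lemma~\ref{lem:reduction mod m}) correspond, under the graph isomorphisms, to the standard algebraic maps on $\widehat{K}^M_n$.

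For part (1), I would start from the short exact sequence
\begin{equation*}
0 \to \widehat{K}^M_n(A, I) \to \widehat{K}^M_n(A) \to \widehat{K}^M_n(k) \to 0,
\end{equation*}
whose exactness at the middle is definitional and whose surjectivity on the right follows because $A$ is local, so $A^{\times} \twoheadrightarrow k^{\times}$ lifts to $K^M_n(A) \twoheadrightarrow K^M_n(k)$ and hence to $\widehat{K}^M_n$. Using $gr_{{\rm d}}$ and $gr_{{\rm v}}$ from Theorems~\ref{thm:gr_n iso summary} and~\ref{thm:local main 2}, and identifying $\CH^n(k,n) \simeq K^M_n(k) = \widehat{K}^M_n(k)$ via Theorem~\ref{thm:NST} (the rightmost equality since $k$ is a field, by Theorem~\ref{thm:Khat_univ}), I would assemble the commutative ladder whose top row is the above algebraic sequence and whose bottom row is the one to be proven. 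The left square commutes because $gr_{{\rm v}}$ is by construction the restriction of $gr_{{\rm d}}$ to the relative subgroup. The right square commutes because the graph cycle $\Gamma_{(a_1,\dots,a_n)}$ satisfies $(SF)_*$, so ${\rm ev}_p$ is defined on it (Lemma~\ref{lem:specialization}) and sends it to $\Gamma_{(\bar a_1,\dots,\bar a_n)}$, which is precisely the image of $\{\bar a_1,\dots,\bar a_n\}$ under the Nesterenko-Suslin-Totaro map. Since all vertical arrows are isomorphisms, the five lemma delivers exactness of the bottom row.

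For part (2), the identical strategy applies with $A$ replaced by $A/I^{m+1}$ and $k$ replaced by $A/I^r$. The algebraic input is
\begin{equation*}
0 \to \widehat{K}^M_n(A/I^{m+1}, I^r) \to \widehat{K}^M_n(A/I^{m+1}) \to \widehat{K}^M_n(A/I^r) \to 0,
\end{equation*}
where exactness at the middle is by definition of the kernel subgroup (cf.\ the argument in Lemma~\ref{lem:higher rel Milnor K}), and surjectivity on the right again follows from the surjection of unit groups $(A/I^{m+1})^{\times} \twoheadrightarrow (A/I^r)^{\times}$. The three vertical arrows are the isomorphisms of Theorems~\ref{thm:local main 1} and~\ref{thm:local main 2}, and the right-hand cycle-theoretic map is $s_r^{m*}$ of Lemma~\ref{lem:reduction mod m}; commutativity of the ladder reduces, as in (1), to checking what the graph map does to symbols.

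The one step requiring actual care, which I expect to be the main (but not severe) obstacle, is confirming that $gr_{{\rm d}}$ carries the relative subgroup $\widehat{K}^M_n(A/I^{m+1}, I^r) \subset \widehat{K}^M_n(A/I^{m+1})$ \emph{into} the subgroup $\CH^n_{{\rm v}, \geq r}(X/(m+1), n) \subset \CH^n_{{\rm d}}(X/(m+1), n)$, so that the left square is actually a square of the stated groups rather than merely one of supergroups. This is exactly the content of Lemma~\ref{lem:graph adm 0}(2) applied to the generators $\{a_1,\dots,a_n\}$ with $a_1 \equiv 1 \bmod I^r$ furnished by Lemma~\ref{lem:KS hat}, and the restricted map is then $gr_{{\rm v}, \geq r}$ by construction. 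Once this identification is in place, the five lemma concludes both parts.
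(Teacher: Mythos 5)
Your proposal is correct and follows essentially the same route as the paper: both transport the tautological short exact sequences of (improved) Milnor $K$-groups through the graph isomorphisms of Theorems~\ref{thm:gr_n iso summary}, \ref{thm:local main 1}, \ref{thm:local main 2}, and the Nesterenko-Suslin-Totaro identification of Theorem~\ref{thm:NST}. The extra care you devote to commutativity of the ladder and to checking that $gr_{{\rm d}}$ carries $\widehat{K}^M_n(A/I^{m+1}, I^r)$ into $\CH^n_{{\rm v},\geq r}(X/(m+1),n)$ is implicit in the paper's terse citation of the theorems, and is worth making explicit, but is not a deviation from its strategy.
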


\begin{proof}
(1) We have the apparent short exact sequence
$$
0 \to \widehat{K}_n ^M (A, I) \to \widehat{K}_n ^M (A) \to K_n ^M (k) \to 0.
$$
Then (1) follows by combining Theorems \ref{thm:gr_n iso summary}, and Theorem \ref{thm:NST}.

\medskip

(2) By definition, we have the short exact sequence
$$
 0 \to \widehat{K}^M _n (A/I^{m+1}, I^r) \to \widehat{K}^M_n (A/I^{m+1}) \to \widehat{K}^M_n (A/I^r) \to 0.
 $$

The corollary then follows by combining the isomorphisms in Theorems \ref{thm:local main 1} and \ref{thm:local main 2}.
\end{proof}

\begin{remk}
Corollary \ref{cor:ses final} is stated separately due to the following reason: if we define $K_{\bullet}:=\ker ({\rm ev}: z^q _{{\rm d}} (X, \bullet) \to z^q (k, \bullet)$, where ${\rm ev}$ is the reduction mod $I$, then in general we can obtain $z^q _{{\rm v}} (X, \bullet) \subset K_{\bullet}$, and we can deduce the following part of the long exact sequence
$$
\CH_{{\rm v}} ^n (X, n) \to \CH_{{\rm d}} ^n (X, n)  \to \CH^n (k, n) \to 0.
$$
Here, the injectivity of the first arrow is not immediately deducible by looking at the connecting homomorphism $\CH^n (k, n+1) \to \CH_{{\rm v}} ^n (X, n)$. However, through Theorems \ref{thm:gr_n iso summary} and \ref{thm:local main 2}, we see that the first map must be an injection.
%The same holds for the mod $I^{m+1}$-version.
\qed
\end{remk}

\subsection{Norms and traces for the Milnor $K$-groups}
For a finite separable extension $k \hookrightarrow k'$ of fields, the base change $\pi :\Spec (k_{m+1}') \to \Spec (k_{m+1})$ is finite \'etale. Hence we know that there exists the norm maps ${\rm N}_{k'/k}: \widehat{K}^M_n (k_{m+1}') \to \widehat{K}^M_n (k_{m+1})$ by Theorem \ref{thm:Khat_univ}. Now our new results Theorems \ref{thm:local main 1} and \ref{thm:local main 2} allow us to improve it to any finite extension of fields:

\begin{defn}
Let $k \hookrightarrow k'$ be a finite extension of fields and let $\pi: \Spec (k') \to \Spec (k)$ be the associated morphism. Let $m, n , r \geq 1$ be integers such that $1 \leq r \leq m+1$. Let $X$ be an integral regular henselian local $k$-scheme of dimension $1$ with the residue field $k$.

Define the norm and the trace maps ${\rm N}_{k'/k}$, ${\rm Tr}_{k'/k}$ on $\widehat{K}_n ^M (k'_{m+1})$ and $\widehat{K}_n ^M (k'_{m+1}, (t^r))$, respectively, to be the composites 
$$
{\rm N}_{k'/k}: \widehat{K}_n ^M (k'_{m+1}) \simeq \CH_{{\rm d}} ^n (X_{k'} / (m+1), n)$$
$$
 \overset{\pi_*}{\to} \CH_{{\rm d}} ^n (X/ (m+1), n) \simeq \widehat{K}_n ^M (k_{m+1}),$$
and
$$
{\rm Tr}_{k'/k}: \widehat{K}_n ^M k_{m+1} ', (t^r)) \simeq \CH_{{\rm v}, \geq r} ^n (X_{k'}/ (m+1), n)$$
$$
 \overset{\pi_*}{\to} \CH_{{\rm v}, \geq r} ^n (X/ (m+1), n) \simeq \widehat{K}_n ^M (k_{m+1}, (t^r)),$$
where $\pi_*$ between the cycle class groups are the push-forwards given by Lemma \ref{lem:fpf}. 
\qed
\end{defn}

Since the push-forwards on cycles are transitive, we deduce that:

\begin{cor}
The norms ${\rm N}_{k'/k}$ and traces ${\rm Tr}_{k'/k}$ defined for all finite extensions $k \hookrightarrow k'$ are transitive. 
\end{cor}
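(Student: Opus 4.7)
The plan is to reduce the transitivity of the norms and traces on Milnor $K$-groups to the transitivity of the push-forward of cycles, using the isomorphisms $gr_{{\rm d}}$ and $gr_{{\rm v}, \geq r}$ of Theorems \ref{thm:local main 1} and \ref{thm:local main 2} as intertwiners.

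Consider a tower of finite extensions of fields $k \hookrightarrow k' \hookrightarrow k''$, and fix an integral regular henselian local $k$-scheme $X = \Spec(A)$ of dimension $1$ with residue field $k$. Set $X' := X_{k'} = \Spec(A \otimes_k k')$ and $X'' := X_{k''} = \Spec(A \otimes_k k'')$. First I would observe that $X'$ and $X''$ are again integral regular henselian local schemes of dimension $1$, with residue fields $k'$ and $k''$ respectively: this uses Lemma \ref{lem:henselian} (henselianity is preserved under finite extensions) together with the fact that $k \hookrightarrow k'$ being finite makes $A \otimes_k k'$ finite over the DVR $A$. Let $\pi_1: X'' \to X'$, $\pi_2: X' \to X$, and $\pi = \pi_2 \circ \pi_1: X'' \to X$ be the associated morphisms, all of which are finite and surjective, so that Lemma \ref{lem:fpf} applies to each.

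The key step is then to verify that the push-forward of cycles is transitive on the level of the complexes $z_{{\rm d}}^n(-/(m+1),\bullet)$ and $z_{{\rm v},\geq r}^n(-/(m+1),\bullet)$, i.e.\ that
\[
\pi_* = \pi_{2*} \circ \pi_{1*}
\]
as maps on these complexes. This follows from the standard transitivity of the proper push-forward on cycles together with the fact, established in Lemma \ref{lem:fpf}-(4), that each individual $\pi_{i*}$ respects the mod $I^{m+1}$-equivalence (both in the ${\rm d}$-cycle and the $({\rm v},\geq r)$-cycle versions). Therefore on passing to the Chow groups $\CH^n_{{\rm d}}(-/(m+1),n)$ and $\CH^n_{{\rm v},\geq r}(-/(m+1),n)$ we obtain $\pi_* = \pi_{2*}\circ\pi_{1*}$.

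It remains to transport this identity through the isomorphisms of Theorems \ref{thm:local main 1} and \ref{thm:local main 2}. By the very definitions
\[
{\rm N}_{k''/k} = gr_{{\rm d}}^{-1} \circ \pi_* \circ gr_{{\rm d}}, \qquad {\rm N}_{k''/k'} = gr_{{\rm d}}^{-1} \circ \pi_{1*} \circ gr_{{\rm d}}, \qquad {\rm N}_{k'/k} = gr_{{\rm d}}^{-1} \circ \pi_{2*} \circ gr_{{\rm d}},
\]
and similarly for traces, so the identity $\pi_* = \pi_{2*}\circ \pi_{1*}$ immediately yields ${\rm N}_{k''/k} = {\rm N}_{k'/k} \circ {\rm N}_{k''/k'}$ and ${\rm Tr}_{k''/k} = {\rm Tr}_{k'/k} \circ {\rm Tr}_{k''/k'}$. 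The main obstacle—if one should even call it that—is purely bookkeeping: one must check that the isomorphism $gr_{{\rm d}}$ constructed over $X'$ is the genuine base change of the one over $X$, so that it commutes with the definition of ${\rm N}_{k'/k}$ rather than producing some twist; but this is immediate from the construction of $gr_{{\rm d}}$ and $gr_{{\rm v},\geq r}$ in Corollary \ref{cor:graph final} via graph cycles, which is manifestly natural in the base.
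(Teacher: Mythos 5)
Your proposal is correct and takes essentially the same approach as the paper: the paper's proof is a one-line remark that the push-forward of cycles is transitive, from which the transitivity of $\mathrm{N}_{k'/k}$ and $\mathrm{Tr}_{k'/k}$ follows via the comparison isomorphisms $gr_{\rm d}$, $gr_{{\rm v},\geq r}$ of Theorems \ref{thm:local main 1} and \ref{thm:local main 2}. You have simply spelled out the same argument in more detail, including the (correct) verification that $X_{k'}$ remains an integral regular henselian local scheme of dimension one with residue field $k'$.
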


\bigskip

\noindent\textbf{Acknowledgments.}  Part of this work was done while the author was on his sabbatical leave at the Center for Complex Geometry (CCG) of the Institute for Basic Science of South Korea in the years 2022-2023. The author thanks the Director Jun-Muk Hwang and the members of CCG for their hospitality. % and the working environment. 

This work was supported by Samsung Science and Technology Foundation under Project Number SSTF-BA2102-03.

\end{document}